\title{Holonomic functions and prehomogeneous spaces}
\author{Andr\'as Cristian L\H{o}rincz}
\address{Humboldt--Universit\"at zu Berlin, Institut f\"ur Mathematik, Berlin, Germany}
\email[Andr\'as Cristian L\H{o}rincz]{lorincza@hu-berlin.de}
\subjclass[2020]{Primary 14F10, 14L30, 13A50, 11S90, 16S32, 32S40}
 \newtheorem{theorem}{Theorem}[section]
 \newtheorem{lemma}[theorem]{Lemma}
 \newtheorem{corollary}[theorem]{Corollary}
 \newtheorem{prop}[theorem]{Proposition}
 \newtheorem{definition}[theorem]{Definition}
 \newtheorem{example}{Example}[section]
\theoremstyle{remark}
 \newtheorem{remark}[theorem]{Remark}
\newcommand{\defi}[1]{{\upshape\sffamily #1}}
\DeclareMathOperator{\ShHom}{\mathscr{H}\text{\kern -3pt {\calligra\large om}}\,}
\newcommand{\C}{\mathbb{C}}
\newcommand{\bo}{\bigoplus}
\newcommand{\D}{\mathcal{D}}
\renewcommand{\det}{\textrm{det}}
\newcommand{\lie}{\mathfrak{g}}
\newcommand{\G}{\Gamma}
\renewcommand{\ll}{\lambda}
\newcommand{\LL}{\Lambda}
\newcommand{\oo}{\otimes}
\newcommand{\holo}{\mc{O}_X^{an}}
\newcommand{\Gf}{\mc{G}^{fin}}
\newcommand{\rank}{\operatorname{rank}}
\newcommand{\Sol}{\operatorname{Sol}}
\newcommand{\F}{\mathcal{F}}
\renewcommand{\O}{\mathcal{O}}
\newcommand*{\conn}{\mathscr{C}\text{\kern -3pt {\calligra\large onn}}}
\newcommand*{\alg}{\mathscr{A}\text{\kern -3pt {\calligra\large lg}}}
\newcommand{\Ann}{\operatorname{Ann}}
\newcommand{\Ext}{\operatorname{Ext}}
\newcommand{\GL}{\operatorname{GL}}
\newcommand{\Hom}{\operatorname{Hom}}
\newcommand{\End}{\operatorname{End}}
\newcommand{\SL}{\operatorname{SL}}
\newcommand{\Sing}{\operatorname{Sing}}
\newcommand{\Sym}{\operatorname{Sym}}
\newcommand\charC{{\operatorname{CharC}}}
\newcommand{\Char}{\operatorname{Char}}
\newcommand{\Supp}{\operatorname{Supp}}
\newcommand{\codim}{\operatorname{codim}}
\renewcommand{\det}{\operatorname{det}}
\renewcommand{\ker}{\operatorname{ker}}
\newcommand{\opmod}{\operatorname{mod}}
\newcommand{\opMod}{\operatorname{Mod}}
\newcommand{\op}{\operatorname}
\newcommand{\bb}[1]{\mathbb{#1}}
\newcommand{\mc}[1]{\mathcal{#1}}
\newcommand{\mf}[1]{\mathfrak{#1}}
\newcommand{\ol}[1]{\overline{#1}}
\newcommand{\ul}[1]{\underline{#1}}
\numberwithin{equation}{section}
\begin{document}

\begin{abstract} 
A function that is analytic on a domain of $\mathbb{C}^n$ is holonomic if it is the solution to a holonomic system of linear homogeneous differential equations with polynomial coefficients. We define and study the Bernstein-Sato polynomial of a holonomic function on a smooth algebraic variety. We analyze the structure of certain sheaves of holonomic functions, such as the algebraic functions along a hypersurface, determining their direct sum decompositions into indecomposables, that further respect decompositions of Bernstein-Sato polynomials. When the space is endowed with the action of a linear algebraic group $G$, we study the class of $G$-finite analytic functions, i.e. functions that under the action of the Lie algebra of $G$ generate a finite dimensional rational $G$-module. These are automatically algebraic functions on a variety with a dense orbit. When $G$ is reductive, we give several representation-theoretic techniques toward the determination of Bernstein-Sato polynomials of $G$-finite functions. We classify the $G$-finite functions on all but one of the irreducible reduced prehomogeneous vector spaces, and compute the Bernstein-Sato polynomials for distinguished $G$-finite functions. The results can be used to construct explicitly equivariant $\mathcal{D}$-modules.
\end{abstract}

\maketitle

\section*{Introduction}\label{sec:intro}

The solution-space of an overdetermined (i.e. holonomic) system of algebraic linear differential equations is finite dimensional. The solutions are special analytic functions, that are also called holonomic. Conversely, given a holonomic function, it is a basic problem to find a (especially the largest) corresponding holonomic system of algebraic linear differential equations that the function satisfies. This type of interplay between functions and $\D$-modules is the starting point of the paper.

While being analytic by definition, holonomic functions carry finite representability given solely through algebraic means, therefore suitable for various calculations (cf. \cite{zeilberg}, \cite{hgm}), and for algebro-geometric purposes in general. We call an analytic function defined on a domain of a smooth algebraic variety $X$ holonomic if it generates a holonomic $\D_X$-module. The basic property of such a holonomic $\D_X$-module is that it is $\mc{O}_X$-torsionfree (or Weyl-closed), which is rather well-behaved (cf. Corollary \ref{cor:singtor}), while the process of taking Weyl closure being essentially just a localization problem (cf. Theorem \ref{thm:codim2}).

Locally, holonomic functions can be always viewed as holonomic on the affine space through an \'etale coordinate transformation. Globally, they can be viewed as (multivalued) functions away from an algebraic hypersurface in $X$. We define the Bernstein--Sato polynomial of a holonomic function intrinsically (see Definition \ref{def:bho}), that depends on the holonomic function itself and the ambient smooth variety $X$. Its existence goes back to the work of M. Kashiwara. This definition is natural as it also generalizes the case of polynomials, and is inextricably linked to monodromy, being capable of detecting functions (and $\D$-modules) of geometric origin through rationality of roots (see Proposition \ref{prop:rat} and Corollary \ref{cor:integral}). As always, Bernstein--Sato polynomials are essential in understanding the $\D$-module structure of localizations (see Lemma \ref{lem:bcomp}) and the structure of $V$-filtrations \cite{vfilkash},\cite{vfilmal},\cite{sabbah3}.

Moreover, Bernstein--Sato polynomials are computable on an affine space by methods based on (non-commutative) Gr\"obner bases, e.g. implemented in the computer algebra systems \cite{M2}, \cite{singular}. Alternatively, they can also be computed through analytic expansions of holonomic functions. The latter point of view is essential for our calculations in Theorem \ref{thm:paramb1}, as the analogous computations based on algorithms for $\D$-modules failed to terminate in that case. Other analytic methods can be further used for finding roots of Bernstein--Sato polynomials of holonomic functions (e.g. Proposition \ref{prop:binroot}). 

In Section \ref{sec:holofun} we define various sheaves of holonomic functions that provide convenient ambient working objects. These are quasi-coherent $\mc{O}_X$-algebras equipped with a $\D_X$-module and monodromy action, and are intrinsic in nature (see Proposition \ref{prop:connsheaf}). An interesting case of holonomic functions are the algebraic functions, that is, functions that (locally) satisfy a polynomial relation with coefficients in $\mc{O}_X$. In particular, the sheaf of algebraic functions along a hypersurface is completely understood (see Theorem \ref{thm:algdecomp}) which, for instance, can be used for determining Bernstein--Sato polynomials (cf. Proposition \ref{prop:bfunalg}).

Starting from Section \ref{sec:gfin}, we consider functions in an equivariant setting. We introduce and study a class of functions called $G$-finite functions, that corresponds naturally to the class of equivariant $\D$-modules by Lemma \ref{lem:gfindmod}. In Section \ref{sec:bin0} we consider $G$-finite functions on binary forms that are built from roots of generic polynomials, which are of fundamental importance as all algebraic functions can be obtained from them via pullback. In Section \ref{sec:multfree}, we provide a multiplicity-free criterion that is our main technique for calculating Bernstein--Sato polynomials (of one or several variables) of $G$-finite functions. It is based on the original ideas of M. Sato for computing $b$-functions of semi-invariants for prehomogeneous vector spaces.

If the space $X$ has a dense orbit (i.e. it is prehomogeneous), $G$-finite functions turn out to be algebraic functions, and so the algebra of $G$-finite functions has a transparent structure (Corollary \ref{cor:gfinalg}). We introduce the notion of witness representations (see Section \ref{sec:red}), that can be thought of the right generalization of semi-invariants (or relative invariants) of prehomogeneous vector spaces: they satisfy the multiplicity-free condition, their Bernstein--Sato polynomials are symmetric with respect to duality, they detect the multiplicity of torsion-free simples in the composition series of torsion-free equivariant modules, they allow constructing simple torsion-free equivariant modules through explicit presentations etc. We find witness representations via a mix of representation theory and Bernstein--Sato polynomials, cf. Proposition \ref{prop:witcri}. 

In the last section, we provide a classification of $G$-finite functions, witness representations, and their Bernstein--Sato polynomials for all but one of the irreducible reduced prehomogeneous vector spaces (as in \cite{saki}). This is in analogy with the classification of $b$-functions of semi-invariants (see \cite{kimu}) which has been completed based on sophisticated microlocal techniques \cite{skko}. Further, we give a relation between Bernstein--Sato polynomials of $G$-finite functions related under castling (cf. Theorem \ref{thm:castle}), which can be used to extrapolate the results to infinitely many irreducible prehomogeneous vector spaces.

Historically, semi-invariants on prehomogeneous vector spaces were the first class of polynomials for which Bernstein--Sato polynomials were studied systematically. As far as we are aware, the $G$-finite functions introduced in this work constitute the first non-polynomial class of functions for which the same systematic study has been undertaken. As we further demonstrate, much of the theory of prehomogeneous vector spaces, that was initiated by M. Sato, can be extended naturally to $G$-finite functions. Such extensions should be possible in other aspects of the theory as well (see \cite{preh}). 

In the same spirit, we expect that many results on Bernstein--Sato polynomials have extensions from the case of polynomials to various classes of holonomic functions. 

Part of our motivation for these considerations comes from the problem of building equivariant $\D$-modules on representations with finitely many orbits in an explicit manner (see Open Problem 3 in \cite[Section 6]{mac-vil} and Section \ref{sec:bin0}), as this can lead to solving further complicated problems, e.g. determining local cohomology modules and related invariants (see \cite{bindmod, mike, senary, lHorincz2018iterated, locsub,perlyub}). One of the main ingredients in all of these articles is the Bernstein--Sato polynomial of semi-invariants, since it gives a filtration of localizations \cite[Proposition 4.9]{catdmod}, which is essential for understanding the category of equivariant $\D$-modules. The computations in Section \ref{sec:hard} will be used for this very same purpose in a subsequent work (see also Section \ref{sec:bin}).

\section{Holonomic functions and their Bernstein--Sato polynomials}\label{sec:prelim}

Let $X$ be a smooth, connected complex algebraic variety and $X^{an}$ the associated complex manifold. We write $\D_X$ for the sheaf of algebraic differential operators on $X$. We denote by $\opMod(\D_X)$ (resp. $\opmod(\D_X)$) the category of all $\O_X$-quasi-coherent (resp. $\D_X$-coherent) $\D_X$-modules. Throughout $\holo$ denotes the sheaf of holomorphic functions on $X^{an}$, and $\D_X^{an}$ the sheaf of differential operators on $X^{an}$ with holomorphic coefficients. Recall that there is a faithfully exact analytification functor that sends an $\O_X$-module (resp. $\D_X$-module) $\mc{M}$ to a $\holo$-module (resp. $\D_X^{an}$-module) $\mc{M}^{an}$. We say a subset $\Omega\subset X$ is a domain if it is a connected and open subset of $X^{an}$ in the complex analytic topology.

\subsection{Weyl closure}\label{sec:weylclosure}

For a coherent $\D$-module $\mc{M}$, we denote by $\Char \mc{M} \subset T^* X$ the characteristic variety of $\mc{M}$. The characteristic cycle $\charC(\mc{M})$ is the sum of the irreducible components of $\Char \mc{M}$ counted with multiplicities. Let $\pi: T^*X \to X$ denote the projection. We define the \defi{singular locus} of $\mc{M}$ to be 
\[
\Sing \mc{M}:=\ol{\pi(\Char \mc{M} \setminus T_{X}^*X)} \subset X.
\]
If the support $\Supp \mc{M}$ of $\mc{M}$ is not $X$, then $\Supp \mc{M} = \Sing \mc{M}$. Let $\xi$ be the generic point of $X$. The \defi{rank} of a $\D$-module $\mc{M}$ is defined to be
\[
\rank \mc{M} = \dim_{\O_{X,\xi}}  \mc{M}_\xi.
\]
In turn, $\rank \mc{M}$ is the multiplicity of the zero section $T_X^* X$ in $\charC \mc{M}$. Another interpretation of rank is in terms of solutions. For a domain $\Omega\subset X$ with embedding $i: \Omega \to X$,  let
\begin{equation}\label{eq:sol}
\Sol_\Omega(\mc{M}):=\Hom_{\D_X}(\mc{M}, i_*^{an}\mc{O}_{\Omega}^{an}) = \Hom_{\D^{an}_\Omega}(\mc{M}^{an}_{|\Omega}, \mc{O}_{\Omega}^{an})
\end{equation}
denote the space of solutions to $\mc{M}$ (here we view  $i_*^{an}\mc{O}_{\Omega}^{an}$ as a $\D_X$-module). When $X$ is affine, then $\Sol_\Omega(\mc{M})=\Hom_{\D_X}(\mc{M}, \holo(\Omega))$ and if $\mc{M}\cong D_{X}/\mc{I}$, the space $\Sol_\Omega(\mc{M})$ is identified with the space of analytic functions on $\Omega$ that are annihilated by $\mc{I}$. 

We begin with the following classical result (e.g. see  \cite[Section 4.3]{htt}).

\begin{theorem}[Cauchy--Kowalevskii--Kashiwara]\label{thm:ranksol}
Let $\mc{M}$ be a coherent $\D$-module and $\Omega\subset X\setminus \Sing \mc{M}$ be a simply-connected domain. We have
\[\dim_{\bb{C}} \Sol_\Omega(\mc{M}) = \rank \mc{M}.\]
\end{theorem}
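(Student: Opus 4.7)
The plan is to reduce the statement to the classical existence and uniqueness theorem for flat sections of an integrable connection, by exploiting that $\M|_{X \setminus \Sing \M}$ is an $\O$-coherent $\D$-module.

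First, since $\Omega \sube U := X \setminus \Sing \M$, both sides of the desired equality depend only on the restriction $\M|_U$, so I may replace $X$ by $U$ and assume $\Char \M \sube T^*_X X$. By the standard fact that a coherent $\D_X$-module whose characteristic variety is contained in the zero section is $\O_X$-coherent, and since $\M$ is generically free of rank $\rank \M$ over a smooth base, the module $\M$ is then a locally free $\O_X$-module of rank $r := \rank \M$; this is consistent with the earlier observation that the multiplicity of the zero section in $\charC \M$ equals $\rank \M$. Analytifying, $\M^{an}|_\Omega$ is a holomorphic vector bundle of rank $r$ on $\Omega$ equipped with a flat holomorphic connection $\nabla$.

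Next, using the second identification in \eqref{eq:sol}, a $\D_\Omega^{an}$-linear homomorphism $(\M^{an}|_\Omega, \nabla) \to (\O_\Omega^{an}, d)$ is the same datum as a global flat section of the dual connection $((\M^{an}|_\Omega)^\vee, \nabla^\vee)$. The claim therefore reduces to showing that a flat holomorphic vector bundle of rank $r$ on a simply-connected complex manifold has an $r$-dimensional space of global flat sections; this follows from the holomorphic Frobenius / Cauchy--Kowalevskii theorem, which implies that the sheaf of flat sections is a rank-$r$ local system on $\Omega$, combined with the triviality of local systems on simply-connected spaces.

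The main obstacle is conceptual rather than computational: correctly matching the algebraic picture (coherent $\D$-modules with characteristic variety in the zero section, forcing $\O$-coherence) with the classical analytic picture of integrable connections and their flat sections. The only genuinely analytic input is the local existence and uniqueness of flat sections with any prescribed initial value, which is the standard Cauchy--Kowalevskii theorem for linear first-order systems with holomorphic coefficients.
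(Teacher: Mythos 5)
The paper states Theorem \ref{thm:ranksol} as a classical result and cites \cite[Section 4.3]{htt} rather than giving its own proof, so there is nothing internal to compare against; that said, your argument is correct and is the standard direct proof of this special case of Cauchy--Kowalevskii--Kashiwara. You correctly note that both sides of the equality depend only on $\mc{M}|_U$ with $U = X \setminus \Sing \mc{M}$, that $\Char(\mc{M}|_U) \subseteq T^*_U U$ forces $\mc{M}|_U$ to be an integrable connection (locally free of rank $r = \rank \mc{M}$, with the degenerate case $\Supp \mc{M} \neq X$ giving $\mc{M}|_U = 0$ and $r = 0$), that $\Sol_\Omega(\mc{M})$ is the space of flat sections of the dual connection by \eqref{eq:sol}, and that holomorphic Frobenius plus simple-connectedness of $\Omega$ gives a rank-$r$ space of global flat sections. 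The only thing glossed over is the passage ``$\mc{O}_X$-coherent with $\Char \subseteq T^*_X X$'' $\Rightarrow$ ``locally free,'' which needs the standard fact that a coherent module with a flat connection on a smooth variety is automatically locally free (not merely generically so); you invoke this implicitly and it is exactly \cite[Proposition 2.2.5]{htt}, already referenced in the paper. By contrast, the reference \cite[Section 4.3]{htt} proves the much more general non-characteristic restriction comparison, of which your statement is the special case of restriction to a point in $\Omega$ followed by monodromy triviality; your route is more elementary for this particular corollary.
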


We call a $\D_X$-module $\mc{M}$ a (algebraic, integrable) \defi{connection} on $X$, if it is locally free of finite rank as an $\O_X$-module. This is equivalent to $\Char \mc{M}=T^*_X X$ \cite[Proposition 2.2.5]{htt}.

Let $D\subset X$ be a union of (irreducible) hypersurfaces. Given a $\D$-module $\mc{M}$, we denote by 
\[\mc{M}(*D):= j_*j^*\mc{M}\]
the $\D_X$-module of the localization of $\mc{M}$ at $D$, where $j:X\!\setminus \! D\to X$ denotes the open embedding. We call $\mc{M}$ a (algebraic, integrable) \defi{meromorphic connection} along $D$, if $\mc{M}\cong j_* \mc{N}$, where $\mc{N}$ is a connection on $U$. As can be easily seen, a holonomic $\D_X$-module $\mc{M}$ is a meromorphic connection along $D$ if and only if $\Sing \mc{M}\subset D$ and $\mc{M}=\mc{M}(*D)$.

Given a $\D_X$-module $\mc{M}$, its $\O_X$-torsion subsheaf is the sheaf generated by all the torsion sections of $\mc{M}$. It is again a $\D_X$-module. Next, we introduce some terminology, following \cite{sst}.

\begin{definition}\label{def:weylclosure}
Let $\mc{M}$ be a $\D_X$-module. We define the \defi{Weyl closure} $\mc{M}^w$ of $\mc{M}$ to be the quotient of $\mc{M}$ by its torsion subsheaf. When $\mc{I}\subset \D_{X}$ is an ideal, the ideal $\mc{I}^w:= \D_X \cap (\O_{X,\xi} \cdot \mc{I})$ in $\D_{X}$ is called the \defi{Weyl closure} of $\mc{I}$.
\end{definition}

Note that when $\mc{M}=\D_{\C^n}/\mc{I}$, we have $\mc{M}^w = \D_{\C^n}/\mc{I}^w$ -- in fact, it is known that any holonomic $\D_{\C^n}$-module is cyclic. Further, an algorithm for computing Weyl closure in this case is implemented in \defi{Macaulay2} \cite{M2} and \defi{Singular} \cite{singular} (see \cite{leytsai}, \cite{sing2}, \cite{tsai}).

We have $\mc{M}=\mc{M}^w$ if and only if $\mc{M}$ is a torsion-free $\O_X$-module. The corresponding ideals are sometimes called Weyl-closed in the literature. Weyl closure satisfies the following universal property. If $f: \mc{N}\to \mc{M}$ is a map of $\D$-modules, and $\mc{M}$ is  $\O_X$-torsion-free, then $f$ factors through $\mc{N}^w$ as the composition $\mc{N} \to \mc{N}^w \to \mc{M}$.

Clearly, $\rank \mc{M}=\rank \mc{M}^w$, and $\mc{M}^w$ can be characterized as the smallest quotient of $M$ with this property (when $\rank \mc{M} < \infty$).

Among others, the importance of Weyl closure comes from finding annihilators of functions (e.g. see Lemma \ref{lem:witpres}). When $X$ is affine, given an ideal $\mc{I}\subset \D_X$ and $h\in \Sol_{\Omega}(\mc{I})$, clearly $\mc{I}^w\subset \Ann(h)$. It is natural to investigate the case when equality holds.

\begin{lemma}\label{lem:weylsol}
Let $X$ be affine and $\mc{M}=\D_X/\mc{I}$. If $\Omega\subset X$ is a domain such that $\dim \Sol_\Omega(\mc{I})= \rank M < \infty$, then $I^w \cong \Ann(\Sol_\Omega(\mc{I}))$. Moreover, $\Ann(h)=\mc{I}^w$ for any non-zero $h\in \Sol_\Omega(\mc{I})$ if and only if $(\bb{D}(\mc{M}^w))^w$ is a simple $\D_X$-module.
\end{lemma}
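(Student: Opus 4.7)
The plan has two parts. For the equality $\mc{I}^w = \Ann(\Sol_\Omega(\mc{I}))$, I first observe that $\Sol_\Omega(\mc{I}) = \Sol_\Omega(\mc{I}^w)$: since $i_*^{an}\O_\Omega^{an}$ is $\O_X$-torsion-free, every $\D_X$-morphism $\mc{M}\to i_*^{an}\O_\Omega^{an}$ kills the torsion subsheaf and factors through $\mc{M}^w$, so the canonical injection $\Sol_\Omega(\mc{M}^w)\hookrightarrow \Sol_\Omega(\mc{M})$ is an equality. The inclusion $\mc{I}^w\subseteq \Ann(\Sol_\Omega(\mc{I}))$ is immediate: for $P\in \mc{I}^w$ with $fP\in \mc{I}$, $0\ne f\in \O_X$, we have $f\cdot P(h)=(fP)(h)=0$ for every solution $h$, forcing $P(h)\equiv 0$ on the connected $\Omega$. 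For the reverse, suppose $P\in \Ann(\Sol_\Omega(\mc{I}))$ with $\ol P\ne 0$ in $\mc{M}^w$; then $\D_X \ol P\subseteq \mc{M}^w$ is a nonzero torsion-free submodule of positive rank, so $\rank(\mc{M}^w/\D_X \ol P) < \rank \mc{M}^w = \rank \mc{M}$. But $\Sol_\Omega(\mc{M}^w/\D_X \ol P)\supseteq \Sol_\Omega(\mc{I})$ by the hypothesis on $P$, of dimension $\rank \mc{M}$, contradicting the bound $\dim_\C \Sol_\Omega\le \rank$ (the inequality form of Theorem \ref{thm:ranksol}, obtained by passing to a simply-connected cover of $\Omega$).

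For the moreover, I reinterpret the LHS via $\Sol_\Omega(\mc{M}^w) = \Hom_{\D_X}(\mc{M}^w, i_*^{an}\O_\Omega^{an})$: each nonzero solution $h$ corresponds to a nonzero $\D_X$-morphism $\phi_h\colon \mc{M}^w\to i_*^{an}\O_\Omega^{an}$ with image $\D_X h\cong \mc{M}^w/\ker \phi_h$, and $\Ann(h)=\mc{I}^w$ is the statement that $\phi_h$ is injective. Hence the LHS reads: no proper nonzero quotient of $\mc{M}^w$ admits a nonzero analytic solution on $\Omega$. Applied to the short exact sequence $0\to\mc{K}\to\mc{M}^w\to\mc{M}^w/\mc{K}\to 0$, the left-exactness of $\Sol_\Omega$ together with Theorem \ref{thm:ranksol} gives $\dim_\C \Sol_\Omega(\mc{M}^w/\mc{K}) = \rank(\mc{M}^w/\mc{K})$ for every $\D_X$-submodule $\mc{K}\subseteq \mc{M}^w$, so the condition is equivalent to the statement that every nonzero $\D_X$-submodule $\mc{K}\subseteq \mc{M}^w$ has $\rank \mc{K} = \rank \mc{M}^w$. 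Since $\mc{M}^w$ is torsion-free and every $\D_{X,\xi}$-submodule $V\subseteq \mc{M}^w_\xi$ pulls back to the $\D_X$-submodule $V\cap \mc{M}^w\subseteq \mc{M}^w$ of generic rank $\dim V$, this is in turn equivalent to the simplicity of the generic fiber $\mc{M}^w_\xi$ as a $\D_{X,\xi}$-module.

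To match this with $(\bb{D}\mc{M}^w)^w$, I restrict to the smooth locus $U=X\setminus \Sing\mc{M}^w$ along $j\colon U\hookrightarrow X$: the connection $\mc{E}:=j^*\mc{M}^w$ satisfies $j^*\bb{D}\mc{M}^w = \bb{D}\mc{E}$, and since the torsion of $\bb{D}\mc{M}^w$ is supported in $D:=X\setminus U$, one has $j^*(\bb{D}\mc{M}^w)^w\cong \bb{D}\mc{E}$. In particular the generic fibers of $(\bb{D}\mc{M}^w)^w$ and $\mc{M}^w$ are $\D_{X,\xi}$-duals of each other, and one is simple iff the other is. The last ingredient is the identification $(\bb{D}\mc{M}^w)^w\cong j_{!*}\bb{D}\mc{E}$ as the minimal (intermediate) extension of $\bb{D}\mc{E}$ to $X$; this implies that $(\bb{D}\mc{M}^w)^w$ is simple as a $\D_X$-module iff $\bb{D}\mc{E}$ is simple as a $\D_U$-module iff $\mc{M}^w_\xi$ is simple, matching the LHS.

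The main obstacle I anticipate is establishing $(\bb{D}\mc{M}^w)^w\cong j_{!*}\bb{D}\mc{E}$ rigorously. This requires duality for (possibly non-Cohen--Macaulay) holonomic $\D$-modules and a careful identification of the $H^0$-torsion of $\bb{D}\mc{M}^w$ with the ``excess'' by which $\mc{M}^w$ differs from the maximal extension $j_*\mc{E}$ on the dual side. Everything else is routine rank-and-solution-dimension bookkeeping based on Theorem \ref{thm:ranksol}.
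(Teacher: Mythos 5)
Your argument is correct and, at its core, runs along the same lines as the paper's: reduce the ``moreover'' to the absence of submodules of $\mc{M}^w$ of intermediate rank, pass through holonomic duality, and use Weyl closure to discard the torsion. The paper simply cites \cite[Proposition 2.1.9]{tsai} for the first equality $\mc{I}^w = \Ann(\Sol_\Omega(\mc{I}))$, which you prove from scratch, and its duality step is a one-liner (``$\mc{M}^w$ has no nonzero submodule of smaller rank iff $\bb{D}\mc{M}^w$ has no nonzero submodule of smaller rank''), whereas you route through the generic fiber $\mc{M}^w_\xi$ and the intermediate extension; both are fine, and your version is in fact a bit more careful about rank-zero (torsion) submodules of $\bb{D}\mc{M}^w$, which the paper's phrasing glosses over.

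The obstacle you flag at the end is not actually an obstacle and requires no delicate duality for non-Cohen--Macaulay modules. Since $\mc{M}^w$ is $\O_X$-torsion-free, it has no nonzero submodule supported on $Z:=\Sing\mc{M}^w$; dualizing, $\bb{D}\mc{M}^w$ has no nonzero \emph{quotient} supported on $Z$ (a surjection $\bb{D}\mc{M}^w\twoheadrightarrow T$ with $\Supp T\subset Z$ would dualize to a nonzero torsion submodule $\bb{D}T\hookrightarrow \mc{M}^w$). The Weyl closure $(\bb{D}\mc{M}^w)^w$ is a further quotient of $\bb{D}\mc{M}^w$, so it too has no nonzero quotient supported on $Z$, and by construction it has no nonzero submodule supported on $Z$ either. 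Since $\Sing\,\bb{D}\mc{M}^w = Z$ (duality preserves characteristic varieties) and Weyl closure only removes torsion, $(\bb{D}\mc{M}^w)^w$ restricts to the connection $\bb{D}\mc{E}$ on $U=X\setminus Z$, and the two ``no sub, no quotient supported on $Z$'' conditions are exactly the characterization of the intermediate extension, giving $(\bb{D}\mc{M}^w)^w \cong j_{!*}\bb{D}\mc{E}$ as you wanted. So the whole proof goes through.
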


\begin{proof}
The first part is immediate (see \cite[Proposition 2.1.9]{tsai}).
Next, take any non-zero $h\in \Sol_\Omega(\mc{I})$.  We have an exact sequence of the form
\[ 0 \to \mc{K} \to \mc{M}^w \to \D_X/\Ann(h) \to 0.\]
Clearly, $\rank \mc{K} < \rank \mc{M}$, and $\mc{I}^w=\Ann(h)$ if and only if $K=0$. But $\mc{M}^w$ has no non-zero submodules with smaller rank if and only if the holonomic dual $\bb{D}(\mc{M}^w)$ has no non-zero submodules of smaller rank. The latter happens if and only if $(\bb{D}(\mc{M}^w))^w$ has no non-trivial submodules.
\end{proof}

The following is a consequence of the fact that if $\mc{M}$ has finite rank then the support of the torsion submodule of $\mc{M}$ is contained in $\Sing \mc{M}$.

\begin{lemma}\label{lem:weylclosure}
Let $\mc{M}$ be a coherent $\D$-module with finite rank and $Z$ any closed subset of $X$ with $\Sing \mc{M}\subseteq Z\subsetneq X$. Denote by $j:X\setminus Z \to X$ the open embedding and consider the exact sequence of $\D$-modules
\[0 \to \Gamma_Z (\mc{M}) \to \mc{M} \xrightarrow{\alpha} j_*j^*(\mc{M}) \xrightarrow{\beta} \mc{H}^1_Z(\mc{M}) \to 0.\]
Then $\mc{M}^w \cong \op{im} \alpha = \ker \beta =\mc{M}/\Gamma_Z(\mc{M})$.
\end{lemma}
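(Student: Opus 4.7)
The plan is to prove the single stronger statement that the $\O_X$-torsion submodule $T \subseteq \mc{M}$ is exactly $\Gamma_Z(\mc{M})$. Granted this equality, the lemma is immediate: by definition $\mc{M}^w = \mc{M}/T = \mc{M}/\Gamma_Z(\mc{M})$, while in the displayed four-term exact sequence one has $\ker \alpha = \Gamma_Z(\mc{M})$ by the very definition of local sections supported on $Z$, and hence $\op{im} \alpha = \ker \beta = \mc{M}/\Gamma_Z(\mc{M})$.

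For the inclusion $T \subseteq \Gamma_Z(\mc{M})$, I would invoke directly the fact advertised just before the lemma: since $\mc{M}$ has finite rank, $\Supp T \subseteq \Sing \mc{M}$, and by hypothesis $\Sing \mc{M} \subseteq Z$. So every local section of $T$ is supported on $Z$ and thereby lies in $\Gamma_Z(\mc{M})$.

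The reverse inclusion $\Gamma_Z(\mc{M}) \subseteq T$ is the more delicate point, since $\mc{M}$ is only $\D_X$-coherent, not $\O_X$-coherent, so the classical statement ``a local section of a coherent $\O_X$-module supported on $Z$ is annihilated by a power of the ideal sheaf $\mc{I}_Z$'' does not apply to $\mc{M}$ itself. My plan is to reduce to the $\O$-coherent setting by passing to a cyclic subsheaf: for any $s \in \Gamma_Z(\mc{M})(U)$, consider the $\O_U$-submodule $\O_U \cdot s \subseteq \mc{M}|_U$. This is a cyclic, hence $\O_U$-coherent, submodule, and $s$ still has support in $Z \cap U$ inside it. The classical result therefore supplies, locally, a power of $\mc{I}_Z$ that annihilates $s$. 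Since $X$ is smooth and connected (hence integral) and $Z \subsetneq X$, the ideal sheaf $\mc{I}_Z$ is locally nonzero, so any nonzero local section of such a power of $\mc{I}_Z$ is a regular function annihilating $s$. This exhibits $s$ as $\O_X$-torsion, i.e.\ $s \in T$.

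The main obstacle is precisely this passage from $\D$-coherence to $\O$-coherence; once the one-line device of replacing $\mc{M}$ by the cyclic $\O_X$-submodule generated by an individual section is in place, everything else is standard sheaf theory and the structural information given by the hypotheses $\rank \mc{M} < \infty$ and $\Sing \mc{M} \subseteq Z$.
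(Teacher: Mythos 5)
Your proof is correct, and it is essentially the argument the paper has in mind: the lemma is stated without proof, with the preceding remark that it follows from $\Supp T \subseteq \Sing \mc{M}$ for $\mc{M}$ of finite rank, and you correctly reduce everything to the identity $T = \Gamma_Z(\mc{M})$, proving the nontrivial inclusion $T \subseteq \Gamma_Z(\mc{M})$ from exactly that fact. The one thing you overcomplicate is the reverse inclusion: the statement that a local section supported on $Z$ is locally annihilated by a power of $\mc{I}_Z$ holds for arbitrary quasi-coherent sheaves on a Noetherian scheme (on an affine $\Spec A$, $\Supp(As) \subseteq V(I)$ gives $I \subseteq \sqrt{\Ann(s)}$, and $I^N \subseteq \Ann(s)$ for some $N$ since $I$ is finitely generated), so there is no need to pass to the cyclic $\O_X$-submodule $\O_U \cdot s$ to invoke coherence; the argument applies to $\mc{M}$ directly.
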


A consequence of the above lemma is the following (see also \cite[Theorem 3.2.3]{htt}).
\begin{lemma}\label{lem:rankhol}
If $\mc{M}$ is a coherent $\D$-module, then $\rank \mc{M}$ is finite if and only if $\mc{M}^w$ is holonomic.
\end{lemma}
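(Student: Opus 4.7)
The plan is to prove the easy direction directly from the fact that holonomic modules have finite generic rank, and to prove the hard direction by using Lemma \ref{lem:weylclosure} to embed $\mc{M}^w$ as a coherent submodule of an explicit holonomic module constructed as a localization.

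For the easy implication ($\mc{M}^w$ holonomic $\Rightarrow$ finite rank), I would note that the multiplicity of the zero section $T^*_X X$ in the characteristic cycle $\charC \mc{M}^w$ is a finite non-negative integer which is precisely $\rank \mc{M}^w$; since $\mc{M}$ and $\mc{M}^w$ agree at the generic point $\xi$, we obtain $\rank \mc{M} = \rank \mc{M}^w < \infty$.

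For the hard implication, assume $\rank \mc{M} < \infty$. The first step is to verify $\Sing \mc{M} \neq X$: since $\mc{M}_\xi$ is finite-dimensional over $\O_{X,\xi}$, it spreads out to a coherent $\O_U$-module on some Zariski dense open $U$, and being simultaneously $\D_U$-coherent, $\mc{M}|_U$ is then a connection; hence $\Char \mc{M} = T^*_U U$ over $U$ and $\Sing \mc{M} \cap U = \emptyset$. Since both Weyl closure and holonomicity are local on $X$, I would reduce to the affine case $X = \Spec A$ and pick a nonzero $f \in A$ vanishing on $\Sing \mc{M}$. On the principal open $X \setminus V(f)$, the restriction of $\mc{M}$ is $\D$-coherent with empty singular locus, hence a connection $\mc{N}$. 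Applying Lemma \ref{lem:weylclosure} with $Z = V(f)$ identifies $\mc{M}^w$ with the image of the canonical map
\[
\alpha : \mc{M} \longrightarrow j_*j^*\mc{M} \;\cong\; \mc{N}[1/f],
\]
where $j : X \setminus V(f) \hookrightarrow X$ is the open embedding.

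The concluding step is to invoke the standard invariance of holonomicity under localization at a regular function (equivalently, under direct image along an open affine embedding with divisor complement), which guarantees that $\mc{N}[1/f]$ is holonomic. Since $\mc{M}^w = \op{im}(\alpha)$ is $\D$-coherent as the image of a coherent $\D$-module under a $\D$-linear map, and sits as a $\D$-coherent submodule of the holonomic module $\mc{N}[1/f]$, its characteristic variety is Lagrangian, so $\mc{M}^w$ is holonomic. The main obstacle is precisely this appeal to Bernstein's theorem for the localized connection; once that is in hand, the rest is a formal assembly out of Lemma \ref{lem:weylclosure} and the local character of the notions involved.
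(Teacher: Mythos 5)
Your proposal is correct and takes essentially the same route the paper intends: the paper gives no written proof, stating only that the lemma is a consequence of Lemma~\ref{lem:weylclosure} together with \cite[Theorem 3.2.3]{htt}, and your argument is exactly the unpacking of that — reduce to a connection on the complement of a divisor $Z \supseteq \Sing\mc{M}$, identify $\mc{M}^w$ with $\op{im}\alpha \subset j_*j^*\mc{M}$ via Lemma~\ref{lem:weylclosure}, and conclude by the holonomicity of direct image along the open affine embedding (Kashiwara's/Bernstein's theorem, which is precisely the cited \cite[Theorem 3.2.3]{htt}). The only cosmetic point is that writing $\mc{N}[1/f]$ is a slight abuse since $\mc{N}$ lives on $D(f)$; what you mean is $j_*\mc{N} = \mc{M}[1/f]$ on the affine $X$.
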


Monodromy can be used to decide the irreducibility of $\D$-modules as follows.

\begin{lemma}\label{lem:simpmon}
Let $\mc{M}$ be a holonomic $\D$-module with $\rank \mc{M} \in \bb{Z}_{>0}$, and denote by $\rho: \pi_1 (X\setminus \Sing \mc{M}) \to \GL_{\rank \mc{M}}(\C)$ the corresponding monodromy representation. Then the following statements hold:
\begin{itemize}
\item[(a)] Assume that the representation $\rho$ is irreducible. Then $\mc{M}$ is a simple $\D$-module if and only if both $\mc{M}$ and its dual $\bb{D} \mc{M}$ are $\O_X$-torsion-free.
\item[(b)] Conversely, assume that $\mc{M}$ is a regular holonomic simple $\D$-module. Then $\rho$ is irreducible.
\item[(c)] Assume that $\mc{M}$ is $\mc{O}_X$-torsion-free. If $\rho$ is indecomposable, then so is $\mc{M}$.
\end{itemize}
\end{lemma}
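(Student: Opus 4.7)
\medskip

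\noindent\textbf{Proof proposal.} The common strategy is to restrict to the open set $U:=X\setminus \Sing\mc{M}$, where $\mc{M}_{|U}$ is a connection (by the characterization $\Char\mc{M}=T^*_X X$ and the fact that this holds on $U$ by definition), and exploit the equivalence between connections on $U$ and representations of $\pi_1(U)$ to translate direct sum / composition data between $\mc{M}_{|U}$ and $\rho$. The nontrivial point in each part is extending or lifting such data from $U$ back to $X$. A repeatedly used fact is that any coherent $\D_U$-submodule or $\D_U$-quotient of a connection is itself a connection, since its characteristic variety is contained in the zero section. Another is that a $\D_X$-submodule of $\mc{M}$ whose restriction to $U$ vanishes is supported on $\Sing\mc{M}$, hence $\O_X$-torsion.

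For part (a), the direction ``$\mc{M}$ simple $\Rightarrow$ torsion-free'' is immediate: the $\O_X$-torsion submodule of $\mc{M}$ has rank strictly less than $\rank\mc{M}>0$, so it must be zero by simplicity; applying the same argument to the simple holonomic module $\bb{D}\mc{M}$, which has the same rank, gives torsion-freeness of $\bb{D}\mc{M}$. For the converse, assume both $\mc{M}$ and $\bb{D}\mc{M}$ are torsion-free and that $\rho$ is irreducible, and suppose $\mc{N}\subsetneq \mc{M}$ is a non-zero coherent submodule. Then $\mc{N}_{|U}$ is a sub-connection of $\mc{M}_{|U}$, corresponding to a sub-representation of $\rho$; irreducibility forces either $\mc{N}_{|U}=0$ or $\mc{N}_{|U}=\mc{M}_{|U}$. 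In the first case $\mc{N}$ is supported on $\Sing\mc{M}$, hence $\O_X$-torsion, contradicting torsion-freeness of $\mc{M}$. In the second case $\mc{M}/\mc{N}$ is supported on $\Sing\mc{M}$ and holonomic, so $\bb{D}(\mc{M}/\mc{N})$ is again supported on $\Sing\mc{M}$; but by holonomic duality it embeds into $\bb{D}\mc{M}$, providing a non-zero torsion submodule of $\bb{D}\mc{M}$, a contradiction.

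For part (b), the goal is to show that simplicity of $\mc{M}$ forces simplicity of the connection $\mc{M}_{|U}$, which is equivalent to irreducibility of $\rho$. The key construction is that given any proper non-zero sub-connection $\mc{L}\subsetneq \mc{M}_{|U}$, the sheaf
\[
\mc{N}:=\ker\bigl(\mc{M}\longrightarrow j_*(\mc{M}_{|U}/\mc{L})\bigr),
\]
with $j:U\hookrightarrow X$ the open embedding, is a $\D_X$-submodule of $\mc{M}$ with $\mc{N}_{|U}=\mc{L}$. Since $\mc{L}\neq 0$ we have $\mc{N}\neq 0$, and since $\mc{L}\neq \mc{M}_{|U}$ we have $\mc{N}\neq \mc{M}$, contradicting simplicity of $\mc{M}$. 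Hence no such $\mc{L}$ exists and $\rho$ is irreducible. (Regularity is not strictly needed for this argument, but it guarantees that $\mc{M}$ is determined by $\rho$ via the Riemann--Hilbert correspondence, so the statement is most natural in that setting.)

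For part (c), suppose $\mc{M}=\mc{M}_1\oplus \mc{M}_2$ with both summands non-zero. Each $\mc{M}_i$ is a submodule of the torsion-free $\mc{M}$, hence torsion-free, so $\mc{M}_i{}_{|U}\neq 0$; being an $\O_U$-direct summand of the locally free sheaf $\mc{M}_{|U}$, it is a connection. The induced decomposition $\mc{M}_{|U}=\mc{M}_1{}_{|U}\oplus \mc{M}_2{}_{|U}$ translates under Riemann--Hilbert on $U$ to a non-trivial decomposition of $\rho$, contradicting indecomposability. The main potential obstacle, in parts (a) and (c), is to ensure the step from ``coherent $\D_U$-sub/summand of a connection'' to ``connection on $U$''; this is handled cleanly by the characteristic-variety criterion already recorded in the excerpt.
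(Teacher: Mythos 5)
Your arguments for all three parts are sound, and since the paper states this lemma without proof, yours serves as a legitimate reconstruction; the overall strategy (restrict to $U=X\setminus\Sing\mc{M}$, use the characteristic-variety criterion to recognize coherent $\D_U$-submodules and summands as connections, translate to the monodromy representation, and handle the extension across $\Sing\mc{M}$ via supports, $j_*$, and holonomic duality) is exactly the natural one.

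The one thing to fix is the parenthetical at the end of part (b): regularity is essential there, not merely ``natural.'' Your $\mc{N}$-construction indeed uses no regularity and shows that simplicity of $\mc{M}$ forces the $\D_U$-module $\mc{M}_{|U}$ to be simple. But the final step, passing from ``$\mc{M}_{|U}$ is a simple connection'' to ``$\rho$ is irreducible,'' relies on the Riemann--Hilbert correspondence, which is an equivalence only for regular connections. For irregular connections the functor sending a connection to its local system of solutions is not fully faithful, and simple irregular connections can have reducible (even decomposable) monodromy: the Airy module $\D_{\bb{A}^1}/(\partial^2-x)$ is simple, is a connection on all of $\bb{A}^1$ (empty singular locus), yet its monodromy is the trivial rank-two representation of $\pi_1(\bb{A}^1)=1$. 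So the sentence ``Regularity is not strictly needed for this argument'' is accurate only for the $\mc{N}$-construction in isolation and should either be deleted or explicitly restricted to that step, since part (b) as stated genuinely requires the regularity hypothesis.
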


The case $\rank \mc{M}=1$ in part (a) gives a criterion to verify in a computable way the irreducibility of a $\D_{\bb{C}^n}$-module, e.g.  used in the proof of \cite[Proposition 3.6]{stat}. We have the following (compare with \cite[Proposition 4.8.16]{bjork}).

\begin{lemma}\label{lem:entire}
Let $\mc{M}$ be a coherent $\D_X$-module. Then $\mc{M}$ is a connection if and only if it is $\O_X$-torsion-free and $\dim \Sol_\Omega(\mc{M})=\rank \mc{M}<\infty$ for any simply connected domain $\Omega\subset X$.
\end{lemma}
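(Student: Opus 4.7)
The forward direction is essentially Theorem~\ref{thm:ranksol}: a connection has $\Sing \mc{M} = \emptyset$ and is $\mc{O}_X$-locally free (hence torsion-free), so any simply-connected domain $\Omega$ lies in $X \setminus \Sing \mc{M}$ and Cauchy--Kowalevskii--Kashiwara yields $\dim \Sol_\Omega(\mc{M}) = \rank \mc{M}$.

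For the converse, the question is Zariski-local and, via the identification of characteristic varieties under analytification, reduces to showing $\mc{M}^{an}$ is $\mc{O}_X^{an}$-coherent on a neighbourhood of every point: once this is achieved, the standard fact that an $\mc{O}^{an}$-coherent $\D^{an}$-module on a smooth variety is locally free (the analytic analogue of \cite[Proposition~2.2.5]{htt}) concludes the proof. Lemma~\ref{lem:rankhol} already makes $\mc{M}$ holonomic. Fix $x \in X$ and a simply-connected polydisk $\Omega$ around $x$, set $r = \rank \mc{M}$, and pick a $\C$-basis $\phi_1, \dots, \phi_r \in \Sol_\Omega(\mc{M})$. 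The key construction is the $\D_X^{an}$-linear map
\[
\Phi := (\phi_1, \dots, \phi_r) \colon \mc{M}^{an}|_\Omega \longrightarrow (\mc{O}^{an}_\Omega)^r.
\]
If I can show $\Phi$ is injective, then each stalk $\mc{M}^{an}_x$ embeds in the finitely generated $\mc{O}_{X,x}^{an}$-module $(\mc{O}_{X,x}^{an})^r$ over the Noetherian local ring $\mc{O}_{X,x}^{an}$, hence is itself finitely generated; thus $\mc{M}^{an}|_\Omega$ is $\mc{O}^{an}$-coherent after possibly shrinking $\Omega$, and the proof finishes.

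For injectivity I first verify that $\Phi|_U$ is an isomorphism on $U := \Omega \setminus \Sing \mc{M}$, where source and target are both rank-$r$ connections. Picking a simply-connected open $\Omega_0 \subset U$ trivialises both connections there, and in horizontal trivialisations $\Phi|_{\Omega_0}$ is represented by the constant matrix $(\phi_i(e_j))$, which is invertible by $\C$-linear independence of the $\phi_i$. Since $\Phi|_U$ is a morphism of connections, $\det \Phi|_U$ is a horizontal section of a rank-one connection and is therefore either nowhere zero or identically zero; the computation on $\Omega_0$ rules out the latter, so $\Phi|_U$ is an iso. Consequently $\ker \Phi$ is a $\D^{an}$-submodule of $\mc{M}^{an}|_\Omega$ supported on $\Omega \cap \Sing \mc{M}$. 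Now $\mc{M} = \mc{M}^w$ together with Lemma~\ref{lem:weylclosure} embeds $\mc{M}$ into $j_* j^* \mc{M}$ for $j \colon X \setminus \Sing \mc{M} \hookrightarrow X$; analytifying shows $\mc{M}^{an}$ has no nonzero sections supported on $\Sing \mc{M}$, which forces $\ker \Phi = 0$.

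The main obstacle I anticipate is verifying that the $r$ chosen solutions really generate a full-rank map $\Phi|_U$ on all of $U$ rather than only on a simply-connected piece: a priori the solution local system on $U$ could have nontrivial monodromy around codimension-one components of $\Sing \mc{M}$, but the hypothesis $\dim \Sol_\Omega(\mc{M}) = r$ applied to the simply-connected $\Omega$ supplies $r$ globally defined solutions that restrict to $r$ linearly independent global horizontal sections of a rank-$r$ local system on $U$, killing the monodromy; the horizontal-determinant argument above is a clean way to encode this without explicitly invoking Riemann--Hilbert. Everything else --- the transfer of torsion-freeness along $\mc{O}_X \hookrightarrow \mc{O}_X^{an}$ through the Weyl-closure description, the coherence upgrade from stalks via Noetherianity, and the final passage to local freeness --- is routine.
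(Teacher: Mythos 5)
Your proposal is correct and follows essentially the same route as the paper: both proofs analytify, use the flatness of $\O_X \to \O_X^{an}$ and the compatibility of characteristic varieties, and exploit the hypothesis by building the map $\Phi\colon \mc{M}^{an}|_\Omega \to (\O^{an}_\Omega)^{\oplus r}$ from a basis of $r$ solutions, then killing its kernel via torsion-freeness (the paper phrases this as $\rank \mc{K} = 0$ plus torsion-freeness; you trace the kernel's support to $\Sing \mc{M}$ and invoke Weyl closure, which is the same point). The only presentational difference is that the paper argues by contradiction on a single ill-chosen $\Omega$, whereas you upgrade $\O^{an}$-coherence of each stalk to local freeness directly; you also spell out the horizontal-determinant argument for full rank of $\Phi$ on $\Omega\setminus\Sing\mc{M}$, a step the paper leaves implicit.
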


\begin{proof}
If $\mc{M}$ is a connection, it is $\O_X$-torsion-free with $\Sing \mc{M}= \emptyset$ and all solutions can be extended to any simply-connected domain by Theorem \ref{thm:ranksol}.

Now for the other direction, let $\mc{M}$ be $\O_X$-torsion-free with $\dim \Sol_\Omega(\mc{M})=\rank \mc{M} <\infty$ for all simply-connected domains $\Omega$. Assume by contradiction that $\Sing \mc{M}$ is not empty, and take a simply-connected $\Omega$ with $\Sing \mc{M} \cap \Omega \neq \emptyset$. Since $\O_X^{an}$ is flat over $\O_X$, the module $\mc{M}^{an}_{|\Omega}$ is also $\O^{an}_\Omega$-torsion-free. Moreover, $\mc{M}^{an}_{|\Omega}$ is not an analytic connection as $\Char (\mc{M}^{an})= (\Char \mc{M})^{an}$, and by assumption it has a basis of $\rank \mc{M}$ solutions on $\Omega$. Such a basis yields a map $\mc{M}^{an}_{|\Omega} \to (\O^{an}_\Omega)^{\oplus \rank \mc{M}}$ with kernel $\mc{K}$ satisfying $\rank \mc{K} =0$. Since $\mc{M}^{an}_{|\Omega}$ is $\O^{an}_\Omega$-torsion-free, we obtain $\mc{K}=0$. But this implies that $\mc{M}^{an}_{|\Omega}$ is an analytic connection, a contradiction. Hence, $\mc{M}$ is an (algebraic) connection.
\end{proof}

The following is a basic structural result on the effects of Weyl closure.

\begin{theorem}\label{thm:codim2}
Let $\mc{M}$ be coherent $\D_X$-module of finite rank. Put $\Sing \mc{M} = D \, \bigcup \, C$, where $D$ is a hypersurface and $\codim_X C \geq 2$, and let $j:X\setminus \Sing \mc{M} \to X$ be the open embedding. Then $j_*j^*(\mc{M})$ is a meromorphic connection along $D$ and $\Sing \mc{M}^w \subset D$. Furthermore, if $\mc{M}(*D)$ is holonomic then have a decomposition of $\D_X$-modules
\[\mc{M}(*D) \,\, = \,\, \Gamma_C(\mc{M}(*D))\, \oplus \, j_*j^*(\mc{M}).\]
\end{theorem}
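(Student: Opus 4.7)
The plan is to handle the two statements in turn, with Lemma \ref{lem:entire} as the main engine. Factor $j = j_D \circ k$ with $k: U \hookrightarrow V := X \setminus D$ and $j_D : V \hookrightarrow X$, and set $C' := V \cap C$, which has $\codim_V C' \geq 2$. On $U$, $j^*\mc{M}$ is a connection, as its characteristic variety is contained in $T_U^* U$. To show $\mc{N} := k_*\, j^* \mc{M}$ is a connection on $V$, I would verify Lemma \ref{lem:entire}: $\O_V$-torsion-freeness is automatic for the pushforward under an open immersion, and for any simply connected $\Omega \subseteq V$, $\Omega \setminus C'$ is still simply connected (real codimension $\geq 4$), so Theorem \ref{thm:ranksol} yields $\rank \mc{N}$ holomorphic solutions on $\Omega \setminus C'$ that extend uniquely to $\Omega$ by Hartogs' theorem. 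Hence $j_*j^*\mc{M} = j_{D*}\mc{N}$ is a meromorphic connection along $D$. Since Lemma \ref{lem:weylclosure} realizes $\mc{M}^w$ as a $\D_X$-submodule of $j_*j^*\mc{M}$ and characteristic varieties respect inclusions, $\Sing\mc{M}^w \subseteq D$.

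For the decomposition, assume $\mc{M}(*D)$ is holonomic, so $\mc{M}|_V$ is too. Let $\mc{T} := \Gamma_{C'}(\mc{M}|_V)$; since $\mc{M}|_U$ is already a connection, every $\O_V$-torsion section of $\mc{M}|_V$ is supported on $C'$, so $\mc{T}$ is the full $\O_V$-torsion submodule. Set $\mc{Q} := \mc{M}|_V / \mc{T}$; a second application of Lemma \ref{lem:entire}, with the same Hartogs extension of solutions, shows $\mc{Q}$ is a connection on $V$. Comparing with $\mc{N}$: the natural restriction-extension map $\mc{M}|_V \to \mc{N}$ factors through $\mc{Q}$ as a rank-preserving injection of vector bundles whose cokernel is supported on $C'$, and because such a cokernel must have pure codimension-one support (vanishing of the determinant), we conclude $\mc{Q} = \mc{N}$.

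The remaining task is to split the exact sequence $0 \to \mc{T} \to \mc{M}|_V \to \mc{Q} \to 0$, for which I would prove $\Ext^1_{\D_V}(\mc{Q}, \mc{T}) = 0$. Using $\mc{Q}^\vee \otimes_{\O_V} -$ reduces this to $\Ext^1_{\D_V}(\O_V, \mc{Q}^\vee \otimes \mc{T}) \cong H^1(V, \mathrm{DR}(\mc{Q}^\vee \otimes \mc{T}))$, and as $\mc{Q}^\vee \otimes \mc{T}$ is a holonomic $\D_V$-module supported on $C'$ of codimension $\geq 2$, Kashiwara's equivalence forces its de Rham complex into cohomological degrees $\geq 2$, killing $H^1$. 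The resulting splitting $\mc{M}|_V = \mc{T} \oplus \mc{Q}$, pushed forward by $j_{D*}$ (exact on quasi-coherents when $V$ is affine, which is the case locally), gives $\mc{M}(*D) = j_{D*}\mc{T} \oplus j_{D*}\mc{Q}$, and one identifies $j_{D*}\mc{T} = \Gamma_C(\mc{M}(*D))$ (the $C$-supported sections of $\mc{M}(*D)$ coincide with the $C'$-supported sections of $\mc{M}|_V$) and $j_{D*}\mc{Q} = j_{D*}\mc{N} = j_*\, j^*\mc{M}$.

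The main obstacle is the $\Ext^1$ vanishing: this is the step that actually invokes the holonomicity of $\mc{M}(*D)$, via the de Rham / Kashiwara machinery. A more concrete alternative would be to lift a local $\O_V$-frame of $\mc{Q}$ to $\mc{M}|_V$ and adjust the lift so that it becomes $\D$-stable, but the obstruction to such an adjustment is precisely the same cohomology group.
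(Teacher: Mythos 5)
Your proof is correct, and for the first two claims (that $j_*j^*\mc{M}$ is a meromorphic connection and $\Sing\mc{M}^w\subset D$) it follows exactly the same route as the paper: factor $j$ through $V=X\setminus D$, show via Lemma \ref{lem:entire}, Theorem \ref{thm:ranksol} and Hartogs that $\mc{N}=k_*j^*\mc{M}$ is a connection on $V$, and embed $\mc{M}^w$ into $j_*j^*\mc{M}$ using Lemma \ref{lem:weylclosure}.

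For the splitting your route is genuinely different, and it is worth contrasting the two. You identify $\mc{Q}=(\mc{M}|_V)^w$ with $\mc{N}$ by a determinant argument on the rank-preserving injection of vector bundles, and then kill the extension by proving $\Ext^1_{\D_V}(\mc{Q},\mc{T})=0$: since $\mc{Q}$ is a flat connection, $\Ext^i_{\D_V}(\mc{Q},\mc{T})\cong\Ext^i_{\D_V}(\O_V,\mc{Q}^\vee\otimes_{\O_V}\mc{T})\cong\bb{H}^i(V,\mathrm{DR}(\mc{Q}^\vee\otimes\mc{T}))$, and because $\mc{Q}^\vee\otimes\mc{T}$ is holonomic with support of codimension $\geq 2$, Kashiwara's theorem puts $\mathrm{DR}$ in cohomological degrees $\geq 2$, so $\bb{H}^1$ vanishes. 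The paper instead dualizes the short exact sequence, reducing the splitting to $\Ext^1_{\D_V}(\mc{S},\mc{Q})=0$ for $\mc{S}$ simple supported in codimension $\geq 2$ and $\mc{Q}$ a connection, and proves \emph{that} vanishing by an elementary case analysis: given $0\to\mc{Q}\to\mc{E}\to\mc{S}\to 0$, apply $\Gamma_Z$ with $Z=\Supp\mc{S}$; since $\mc{Q}$ is torsion-free, $\Gamma_Z\mc{E}\hookrightarrow\mc{S}$, and either $\Gamma_Z\mc{E}=\mc{S}$ (giving a splitting) or $\Gamma_Z\mc{E}=0$, in which case $\mc{E}$ is torsion-free and the same Hartogs argument used for $\mc{N}$ would force $\mc{E}$ to be a connection, contradicting $\Sing\mc{E}=Z\neq\emptyset$. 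Your de Rham/Kashiwara argument is a clean \lq\lq hammer", and is the more standard way to see the vanishing; the paper's argument is more elementary and self-contained, as it reuses the torsion-free-implies-connection criterion just established, and avoids de Rham functors entirely. Both are correct. Two small remarks: (i) you do not need exactness of $j_{D*}$ — pushing forward a direct-sum decomposition preserves the direct sum regardless of exactness, so the parenthetical about $V$ being affine locally is superfluous; and (ii) the paper in effect gets the identification $\mc{Q}=\mc{N}$ for free, by observing that the cokernel map $\mc{N}\to\mc{H}^1_{C'}(j_2^*\mc{M})$ vanishes because $\mc{N}$ is torsion-free and the target is supported in codimension $\geq 2$, which is a little slicker than the determinant bound, though your argument is also fine.
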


\begin{proof}
Put $U'=X\setminus \Sing \mc{M}$ and $U= X\setminus D$, and let $j_1: U' \to U$ and $j_2: U\to X$ be the open embeddings. We first show that the holonomic $\D_U$-module $\mc{N}:=j_{1*}j^*\mc{M}$ is a connection by using Lemma \ref{lem:entire}. Take any simply-connected domain $\Omega\subset U$. Since $\codim_X C\geq 2$, the open $\Omega\setminus (\Omega\cap C)$ is also simply-connected. By Theorem \ref{thm:ranksol} we have $\dim \Sol_{\Omega\setminus (\Omega\cap C)}(\mc{N}) = \rank \mc{N}$. Again, due to $\codim_X C\geq 2$, any holomorphic function on $\Omega\cap C$ can be extended to $\Omega$ by Hartogs' Theorem. Hence, by Lemma \ref{lem:entire} we obtain that $\mc{N}$ is a connection on $U$, and further that $j_*j^*(\mc{M})$ a meromorphic connection along $D$. Since $\mc{M}^w\subset j_*j^*(\mc{M})$, we obtain $\Sing \mc{M}^w \subset D$. 

Now assume that $\mc{M}(*D)$ is holonomic and consider the following exact sequence of holonomic $\D_{U}$-modules
\[0\to \Gamma_{C'}(j_2^*\mc{M}) \to j_2^*\mc{M} \to \mc{N} \to 0,\]
where $C'=C\setminus D$. Here the last map is surjective, as $\mc{N}\to \mc{H}^1_{C'}(j_2^*\mc{M})$ is the zero map, since $\mc{N}$ is a connection. It is enough to show that the sequence above splits, since from this the claim follows by applying $j_{2*}$. Using holonomic duality $\bb{D}$ it is enough to prove that for any simple holonomic $\D_U$-module $\mc{S}$ with $\codim_U\Supp \mc{S}\geq 2$ and any connection $\mc{Q}$ on $U$, we have $\Ext^1(\mc{S},\mc{Q})=0$. Assume that there is an exact sequence of the form
\begin{equation}\label{eq:ext10}
0\to \mc{Q} \to \mc{E} \to \mc{S} \to 0.
\end{equation}
Then $Z:=\Sing \mc{E} = \Supp \mc{S}$ is non-empty. Applying $\Gamma_{Z}$ to the sequence (\ref{eq:ext10}), we get an injective map $\Gamma_{Z} \mc{E} \to \mc{S}$. Assume by contradition that this map is not surjective. Then $\Gamma_{Z} \mc{E}=0$, which shows that $\mc{E}$ is $\O_U$-torsion-free. As we showed for $\mc{N}$ in the first half of the proof, this implies that $\mc{E}$ is a connection, a contradiction. Hence, $\mc{S}=\Gamma_{Z} \mc{E} \subset \mc{E}$ splits the sequence (\ref{eq:ext10}). 
\end{proof}

\begin{remark} The result above implies that Weyl closure $\mc{M}^w$ of a holonomic module $\mc{M}$ can be computed as the image of the composition $\mc{M}\to \mc{M}(*D) \to j_*j^* \mc{M}$, passing first through the localization map at $D$ and then the idempotent endomorphism corresponding to the projection onto the second factor of the decomposition in Theorem \ref{thm:codim2}.
\end{remark}

The following is a consequence of Lemma \ref{lem:entire} and Theorem \ref{thm:codim2}.

\begin{corollary}\label{cor:singtor}
A holonomic $\D$-module $\mc{M}$ is $\O_X$-torsion-free if and only if it is a submodule of a meromorphic connection. In this case $\Sing \mc{M}$ is a hypersurface, and for any domain $\Omega\subset X$ with $\Omega \cap \Sing \mc{M} \neq \emptyset$ we have $\dim \Sol_\Omega(\mc{M}) < \rank \mc{M}$.
\end{corollary}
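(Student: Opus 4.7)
\emph{Proof plan.} The plan is to treat the biconditional first, then the hypersurface claim, then the solution-count assertion; everything is essentially an assembly of Theorem~\ref{thm:codim2}, Lemma~\ref{lem:weylclosure}, and Lemma~\ref{lem:entire}.

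For the easy direction of the equivalence, if $\mc{N}$ is a meromorphic connection along $D$, then $\mc{N}=j_*\mc{N}'$ for a connection $\mc{N}'$ on $X\setminus D$, and sections of $\mc{N}$ over an open $U$ are literally sections of $\mc{N}'$ over $U\setminus D$. Since $\mc{N}'$ is locally $\O_{X\setminus D}$-free and $U\setminus D$ is dense in $U$, the module $\mc{N}$ is $\O_X$-torsion-free, and the same is inherited by any $\D_X$-submodule. Conversely, suppose $\mc{M}$ is holonomic and $\O_X$-torsion-free. Decompose $\Sing \mc{M}=D\cup C$ with $D$ a hypersurface and $\codim_X C\geq 2$, and let $j:X\setminus \Sing\mc{M}\to X$ denote the open embedding. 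The torsion-freeness of $\mc{M}$ forces $\Gamma_{\Sing\mc{M}}(\mc{M})=0$, and Lemma~\ref{lem:weylclosure} then identifies $\mc{M}$ with $\mc{M}^w=\op{im}(\mc{M}\to j_*j^*\mc{M})$, which embeds $\mc{M}$ into $j_*j^*\mc{M}$. By Theorem~\ref{thm:codim2}, this target is a meromorphic connection along $D$, and moreover $\Sing \mc{M}=\Sing\mc{M}^w\subset D$; combined with $D\subset \Sing \mc{M}$ this shows $\Sing \mc{M}=D$ is a hypersurface.

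For the last assertion, let $\Omega$ be a domain with $\Omega\cap \Sing\mc{M}\neq \emptyset$ and pick a simply-connected subdomain $\Omega'\subset \Omega$ still meeting $\Sing\mc{M}$ (a small coordinate ball about a singular point will do). The identity principle for analytic functions makes the restriction map $\Sol_\Omega(\mc{M})\hookrightarrow \Sol_{\Omega'}(\mc{M})$ injective, so it suffices to prove $\dim\Sol_{\Omega'}(\mc{M})<\rank \mc{M}$. If equality held, then, as in the proof of Lemma~\ref{lem:entire}, a basis of $\rank \mc{M}$ solutions on the simply-connected $\Omega'$ produces a map $\mc{M}^{an}|_{\Omega'}\to (\O^{an}_{\Omega'})^{\oplus \rank\mc{M}}$ whose kernel has rank $0$ and is $\O^{an}_{\Omega'}$-torsion-free (by flatness of $\O_X^{an}$ over $\O_X$), hence zero. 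This would realize $\mc{M}^{an}|_{\Omega'}$ as an analytic connection, contradicting $\Omega'\cap \Sing\mc{M}\neq \emptyset$.

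No new computation is required, and the only points that need genuine care are (i) checking that $\mc{M}=\mc{M}^w$ for torsion-free $\mc{M}$ (so that the embedding into $j_*j^*\mc{M}$ is available and $\Sing\mc{M}\subset D$) and (ii) the reduction from an arbitrary $\Omega$ to a simply-connected $\Omega'$ via the identity principle. Both are routine, so the main substance of the corollary is carried entirely by Theorem~\ref{thm:codim2} and the argument already run in Lemma~\ref{lem:entire}.
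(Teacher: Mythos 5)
Your proof is correct and follows exactly the route the paper signals (Lemma \ref{lem:entire} together with Theorem \ref{thm:codim2}, plus the auxiliary Lemma \ref{lem:weylclosure}): torsion-freeness gives $\mc{M}=\mc{M}^w \hookrightarrow j_*j^*\mc{M}$, Theorem \ref{thm:codim2} makes the target a meromorphic connection along $D$ and pins $\Sing\mc{M}=D$, and the solution-count bound is extracted by the same flatness-plus-rank-zero-kernel argument used in the proof of Lemma \ref{lem:entire}. The only detail left tacit is that $\dim\Sol_{\Omega'}(\mc{M})\leq \rank\mc{M}$ in the first place (so that ruling out equality proves strict inequality); this follows by restricting further to a simply-connected subdomain disjoint from $\Sing\mc{M}$ and applying Theorem \ref{thm:ranksol} together with the same identity-principle injectivity you already used.
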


Note that even if $h$ is an entire function on $X$, the locus $\Sing \D_X h$ can still be non-empty with $\D_{X} h$ being irregular holonomic (e.g. $h=\sin(x)/x$, see also \cite{stat}). 
We conclude the section with a useful fact on characteristic cycles \cite[Theorem 3.2]{ginz} (see also \cite[Lemma 5.6]{gyoja2}).

\begin{lemma}\label{lem:charc}
Let $\mc{M}$ be an $\O_X$-torsion-free regular holonomic module, and put $D=\Sing \mc{M}$. Then
\[\charC \left( \mc{M}(*D) \right) = \rank \mc{M} \, \cdot \, \charC \left(\O_X(*D)\right).\]
\end{lemma}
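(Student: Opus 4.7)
The plan is to reduce to the case of a regular holonomic meromorphic connection along $D$ and then argue that its characteristic cycle scales linearly in the rank.

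First, the reduction. By Corollary~\ref{cor:singtor}, $D=\Sing\mc{M}$ is a hypersurface and $\mc{M}\hookrightarrow\mc{M}(*D)$; applying Theorem~\ref{thm:codim2} with $C=\emptyset$ identifies $\mc{M}(*D)$ with $j_*j^*\mc{M}$, which is a meromorphic connection along $D$. Since $\mc{M}$ is regular holonomic and localization at a divisor preserves regular holonomicity, $\mc{M}(*D)$ is a regular holonomic meromorphic connection along $D$ of rank $r:=\rank\mc{M}$. Thus it suffices to prove: for any regular holonomic meromorphic connection $\mc{N}$ along $D$ of rank $r$,
\[\charC(\mc{N})\;=\;r\cdot\charC(\O_X(*D)).\]

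Second, the local comparison of multiplicities. Fix a Whitney stratification of $X$ for which $D$ is a union of strata; then $\charC(\mc{N})$ is a non-negative integer combination of the conormal bundles $T^*_Z X$ as $Z$ ranges over strata, and the same holds for $\charC(\O_X(*D))$ with the identical set of supports (since both are meromorphic connections along $D$). It thus suffices to compare the multiplicity of each $T^*_Z X$. At a generic point of a given $Z\subset D$, I would pass to a transverse slice, on which Deligne's theorem for regular meromorphic connections presents the restriction of $\mc{N}$ as an iterated extension of rank-one regular meromorphic connections of the form $\O(*D)\cdot f^{\alpha}$. Each such rank-one twist has the same characteristic cycle as $\O(*D)$, since the monodromy character does not contribute to characteristic multiplicities. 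Additivity of $\charC$ in short exact sequences then multiplies the rank-one value by $r$, yielding the claim.

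The main obstacle is the transverse-slice step and the precise behavior of $\charC$ under restriction; this is a standard but nontrivial piece of microlocal technology. A conceptually cleaner alternative is via Riemann--Hilbert: one has $\op{DR}(\mc{N})\cong Rj_*\L[\dim X]$ for a rank-$r$ local system $\L$ on $X\setminus D$, and the Kashiwara--Dubson microlocal index formula expresses the multiplicity of each $T^*_Z X$ in $\charC(\mc{N})$ as an alternating sum of local Euler characteristics of $Rj_*\L$ that depend additively on $\L$; comparing with the case of the constant sheaf $\C_{X\setminus D}$, which corresponds under Riemann--Hilbert to $\O_X(*D)$, gives the scaling by $r$. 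Either route relies crucially on regular holonomicity: for irregular meromorphic connections, extra microlocal components typically arise and the identity fails.
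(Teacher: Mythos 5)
The paper does not actually prove this lemma; it is quoted from Ginzburg and Gyoja (the sentence preceding it cites \cite[Theorem 3.2]{ginz} and \cite[Lemma 5.6]{gyoja2}). So there is no in-paper argument to compare against, and your proposal must stand on its own.

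Your reduction to a rank-$r$ regular meromorphic connection along $D$ is fine. However, your first route (transverse slice plus Deligne, writing the restricted connection as an iterated extension of rank-one twists $\O(*D)\cdot f^\alpha$) has a genuine gap: triangulating a rank-$r$ local system into rank-one subquotients requires the relevant local fundamental group to be abelian. This is automatic when $D$ has normal crossings on the transverse slice, but not in general. For instance, in the paper's own example of the discriminant of binary cubics, the normal slice at a generic point of the cuspidal stratum meets $D$ in a cusp, and the local fundamental group of the complement is the braid group $B_3$, which is non-abelian; a rank-$r$ regular connection there need not be an extension of rank-one ones. In addition, even when the triangulation exists, the claim that $\charC(\O_X(*D)\cdot f^\alpha)=\charC(\O_X(*D))$ is exactly the rank-one case of the lemma, so it cannot be invoked as a free black box.

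Your second route, via Riemann--Hilbert and the Kashiwara--Dubson microlocal index formula, is the correct one and is in line with the cited references. The key fact, which you should make explicit, is the multiplicativity of Euler characteristics for local-system coefficients: for a compact (or finitely triangulated) space $Y$ and a local system $\L$ of rank $r$ one has $\chi(Y,\L)=r\cdot\chi(Y,\C)$. Applying this to the local complements $B_x\cap(X\!\setminus\!D)$ that enter the index formula shows that all the local Euler characteristics of $Rj_*\L$ are exactly $r$ times those of $Rj_*\C_{X\setminus D}$, hence each multiplicity in $\charC$ scales by $r$. This is the mechanism that makes the characteristic cycle depend on $\L$ only through its rank, and it relies essentially on regular holonomicity through the Riemann--Hilbert identification $\op{DR}(\mc{M}(*D))\cong Rj_*\L[\dim X]$, exactly as you observed.
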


\subsection{Holonomic functions}\label{sec:holofun}

We continue with the notation from the previous section.

\begin{definition}\label{def:holonomic}
Let $\Omega\subset X$ be a domain and $h\in \holo(\Omega)$. We say that $h$ is holonomic if $D_X \cdot h$ is a holonomic $\D_X$-module.
\end{definition}

Clearly, $\D_X h$ is $\O_X$-torsion-free, and $\D_X h \cong \D_X/\Ann(h)$, where $\Ann(h)\subset \D_X$ denotes the ideal of differential operators that annihilate $h$. Conversely, when $X$ is affine then any holonomic $\O_X$-torsion-free $\D$-module $\mc{M}$ can be written as the quotient of $\D_X$ by the annihilating ideal of a $k$-tuple of functions, for some $k\leq \min\{ \rank \mc{M}, \, l(\mc{M})\}$ (see Lemma \ref{lem:weylsol}), where $l(\mc{M})$ denotes the $\D$-module length of $\mc{M}$. 

Holonomic functions satisfy pleasant (computable) closure properties \cite{zeilberg}, \cite{holotool}. For example, holonomic functions on a domain form an algebra that is also a $\D$-module. Pulling back holonomic functions along algebraic morphisms give holonomic functions. In fact, compositions of algebraic functions with holonomic functions are also holonomic. The geometric argument for this can be sketched as follows. Suppose we have a map $\bb{C}^m \to \bb{C}^n$ that is given by algebraic functions $a_1,\dots, a_n$, each satisfying a polynomial equation $p_1,\dots , p_n$ with coefficients in $\bb{C}[x_1,\dots,x_m]$. Let $Z$ be the (reduced) subvariety of $\bb{C}^{m+n}$ given by the vanishing of $p_i(x_1,\dots, x_m, y_i) \in \bb{C}[x_1,\dots,x_m, y_1,\dots,y_n]$, with $i=1,\dots,n$. Let $U$ be the smooth locus of $Z$ (away from the discriminantal locus). We have a diagram (with $p_1$ being \'etale)
\begin{equation}\label{eq:riemann}
\vcenter{\xymatrix@C-0.1pc@R-0.1pc{
& \ar[dl]_{p_1} \! U  \ar[dr]^{p_2} & \!\!\!\!\!\!\!\!\!\!\! \subset \, \bb{C}^{m+n} \\
\bb{C}^m & & \bb{C}^n
}}\end{equation}
If a $\D_{\bb{C}^n}$-module $\mc{M}$ has $h(y_1,\dots,y_n)$ as a solution on some domain, then $p_{1*} p_2^*(\mc{M})$ will have as a solution $h(a_1(x_1,\dots,x_m), a_2(x_1,\dots, x_m), \dots, a_n(x_1,\dots,x_m))$. Properties such as holonomicity, regularity, quasi-unipotent monodromy (see Proposition \ref{prop:rat}) follow by the corresponding functorial properties of $\D$-modules. More generally, the following holds.

\begin{lemma}\label{lem:etale}
Assume that a morphism of algebraic varieties $\phi: X_1 \to X_2$ is \'etale at $x\in X$, and $h$ is holonomic on a domain $\Omega_1 \subset X_1$ containing $x$. Take a local analytic inverse  $\psi$ of $\phi$ on a neighborhood $\Omega_2 \subset X_2$ of $\phi(x)$ with $\psi(\Omega_2) \subset \Omega_1$. Then $h'=h \circ \psi$ is a holonomic function on $\Omega_2\subset X_2$.
\end{lemma}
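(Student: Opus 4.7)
The plan is to show that $\mc{N} := \D_{X_2} \cdot h'$ has finite rank by comparing it to the holonomic module $\mc{M} := \D_{X_1} \cdot h$ via a local analytic isomorphism, and then invoking Lemma \ref{lem:rankhol} together with $\O_{X_2}$-torsion-freeness of $\mc{N}$.

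First, since $\phi$ is \'etale at $x$, I would shrink to analytic open neighborhoods $V_1 \subset \Omega_1$ of $x$ and $V_2 \subset \Omega_2$ of $\phi(x)$ such that $\phi|_{V_1}\colon V_1 \to V_2$ is a biholomorphism with inverse $\psi|_{V_2}$. The cyclic module $\mc{M}^{an}|_{V_1} = \D^{an}_{V_1} \cdot h|_{V_1}$ is holonomic, as analytification preserves holonomicity. Transporting along the biholomorphism $\phi|_{V_1}$ identifies the sheaves of analytic differential operators on $V_1$ and $V_2$ and sends the generator $h|_{V_1} = h' \circ \phi|_{V_1}$ to $h'|_{V_2}$; hence $\mc{N}^{an}|_{V_2} = \D^{an}_{V_2} \cdot h'|_{V_2}$ is holonomic as well.

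Next I would bridge from analytic holonomicity on $V_2$ to finite algebraic rank of $\mc{N}$. Since $\Char(\mc{N})^{an} = \Char(\mc{N}^{an})$ and its restriction to $T^*V_2$ is Lagrangian, every irreducible component $C$ of $\Char(\mc{N}) \subset T^*X_2$ whose projection $\pi(C)$ meets $V_2$ satisfies $\dim C \leq \dim X_2$. Letting $U \subset X_2$ be the Zariski open complement of the union of the projections $\pi(C)$ of the remaining components (so $\phi(x) \in U$), the module $\mc{N}|_U$ is holonomic; in particular $\rank \mc{N} = \rank(\mc{N}|_U) < \infty$. Since $\mc{N}$ is a submodule of $(j_2)_*\O^{an}_{\Omega_2}$, with $j_2\colon \Omega_2 \hookrightarrow X_2$ the inclusion, it is $\O_{X_2}$-torsion-free, so $\mc{N} = \mc{N}^w$; Lemma \ref{lem:rankhol} then yields that $\mc{N}$ is holonomic, i.e., $h'$ is a holonomic function on $\Omega_2$.

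The main obstacle is the bridge between the analytic holonomicity of $\mc{N}^{an}|_{V_2}$ and finiteness of the algebraic rank of $\mc{N}$ on all of $X_2$: a priori, $\Char(\mc{N})$ could have irreducible components lying over subvarieties of $X_2$ disjoint from $V_2$, to which the local analytic argument is blind. Restricting to a suitable Zariski open $U$ before invoking Lemma \ref{lem:rankhol} is what handles this subtlety, since rank is determined at the generic point and torsion-freeness of $\mc{N}$ is preserved under such a restriction.
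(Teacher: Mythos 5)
The proposal has a genuine gap in the ``bridge'' step. The key unjustified assertion is the identification $\mc{N}^{an}|_{V_2} = \D^{an}_{V_2}\cdot h'|_{V_2}$ (and, symmetrically, the initial assertion $\mc{M}^{an}|_{V_1} = \D^{an}_{V_1}\cdot h|_{V_1}$). In general the analytic annihilator of a section is strictly larger than the analytification of its algebraic annihilator, so one only has a surjection $\mc{N}^{an}|_{V_2}\twoheadrightarrow \D^{an}_{V_2}\cdot h'|_{V_2}$, not an isomorphism. A concrete illustration: take $X=\C$, $h=\log z$ near $z=1$; then $\mc{M}=\D_{\C}\cdot\log z$ has rank $2$, and $\mc{M}^{an}_p\cong \O^{an}_p\oplus\O^{an}_p$ is free of rank $2$, while $\D^{an}_p\cdot\log z=\O^{an}_p$ has rank $1$ (because $\log z\in\O^{an}_p$, so the relation $1\cdot\log z-(\log z)\cdot 1=0$ kills a summand). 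Consequently, knowing that the analytic transport of $\mc{M}^{an}|_{V_1}$ (a holonomic $\D^{an}_{V_2}$-module) surjects onto $\D^{an}_{V_2}\cdot h'|_{V_2}$ gives no control on $\mc{N}^{an}|_{V_2}$: both of those modules surject onto the same cyclic $\D^{an}$-module, but they need not coincide, and a module surjecting onto a holonomic one need not itself be holonomic.

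The underlying problem is that the biholomorphism $\psi$ transports algebraic operators on $X_1$ to merely \emph{analytic} operators on $V_2$, so the transported annihilator ideal $\mc{J}'$ has no a priori relationship to $\D^{an}_{V_2}\cdot\Ann_{\D_{X_2}}(h')$. The paper's proof avoids this by staying entirely algebraic: since $\phi$ is algebraically \'etale, one has an algebra injection $\phi^*\colon\D_{X_2}\hookrightarrow\D_{X_1}$, the pushforward along $\phi$ is restriction of scalars via $\phi^*$, and $\D_{X_1}\cdot h$ is then holonomic as a $\D_{X_2}$-module. The cyclic submodule $\D_{X_2}\cdot h'\cong\phi^*(\D_{X_2})\cdot h\subset\D_{X_1}\cdot h$ is therefore holonomic, with no analytic-to-algebraic passage needed. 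If you want to preserve the spirit of your argument, the essential fix is to replace the analytic transport by this algebraic embedding of operator rings: that is exactly what makes the \'etale hypothesis do real work, rather than being merely a local biholomorphism.
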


\begin{proof}
We can assume that $\phi$ is an \'etale morphism of affine varieties. In this case, we have an injection $\phi^*:\D_{X_2} \to \D_{X_1}$ and the $\D$-module pushforward functor along $\phi$ is just given by restriction of scalars. Therefore, $\D_{X_1} \cdot h$ is holonomic as a $\D_{X_2}$-module. Hence, so is $\D_{X_2} \cdot h' \, \cong \, \phi^*(\D_{X_2}) \cdot h \, \subset \, \D_{X_1} \cdot h$.
\end{proof}

Thus, locally every holonomic function on a domain of $X$ can be written as a holonomic function on a domain of $\bb{C}^{n}$, with $n=\dim X$. Namely, by the Noether normalization lemma we can take an affine open $U\subset X$ and a finite \'etale morphism $\phi: U \to V \subset \bb{C}^{n}$, for some affine open on $V$. By the above, $\D_{\bb{C}^n} \cdot h'$ is then holonomic. The \defi{Mathematica} package \defi{HolonomicFunctions} is designed for computing with holonomic functions on an affine space \cite{Koutschan09}, \cite{holopack}.

\begin{definition}\label{def:bho}
Let $h$ be a holonomic function defined on some domain in $X$, and $D= \Sing (\D_X h)$ the associated reduced divisor  (see Corollary \ref{cor:singtor}). Consider $f \in \C[U]$ defining $D$ in some affine neighborhood $U \subset X$. Then the monic polynomial $b_{h,U}(s) \in \bb{C}[s]$ of minimal degree satisfying an equation of the form
\begin{equation}\label{eq:bu}
P(s) \cdot f^{s+1}h = b_{h,U}(s) \cdot f^s h,
\end{equation}
for some $P(s)\in \D_{U}[s]$, is called the Bernstein--Sato polynomial (or $b$-function) of the holonomic function $h$ on $U$. The (global) Bernstein--Sato polynomial $b_h(s)$ of $h$ on $X$ is 
\[b_h(s) := \underset{i \in I}{\op{lcm}} \,\, b_{h,U_i}(s),\]
for an open affine cover $\{U_i\}_{i\in I}$ of $X$ that trivializes $\O_X(D)$.
\end{definition}

Similarly, for a point $x\in X$ one can define the local Bernstein--Sato polynomial $b_{h,x}(s)$ of the holonomic function $h$ as the polynomial $b_{h,U}(s)$ of minimal degree in (\ref{eq:bu}) over affine open neighborhoods $x\in U \subset X$.

For the existence of $b_{h,U}(s)$ as well as the fact that $b_h(s)$ is well-defined, see \cite[Theorem 2.7]{kashiII}, \cite[Section 5]{gyoja2}, and \cite[Lemma 2.5.2]{gyoja}.

\begin{example}\label{ex:bfun}
The roots of the Bernstein--Sato polynomial of a holonomic function are not necessarily rational. When $X=\C$ and $h = \sin ( \log x)$, we have $b_h(s) = (s+1-i)(s+1+i)$ (see \cite[Section 3.3]{uli}).
\end{example}

With the knowledge of the annihilating ideal of $h$, the routine \defi{globalB} in \defi{Macaulay2} \cite{M2}, can be used to find $b_h(s)$ on an affine space (cf. \cite[Section 3.3]{uli}).

Rationality of the roots is guaranteed if the module is of geometric origin, due to the quasi-unipotent property. We say that a connection on an open $U\subset X$ has \defi{quasi-unipotent local monodromy} with respect to $X$, if for some embedded resolution of $(X, X\setminus U)$ the eigenvalues of the local monodromy operators around the divisors at infinity are roots of unity. The following is a basic result that is implicitly used in the theory of (rational) $V$-filtrations and vanishing cycles \cite{vfilkash}, \cite{vfilmal}, \cite{sabbah3}.

\begin{prop}\label{prop:rat}
Let $h$ be a holonomic function and put $U=X\setminus \Sing(\D_X h)$. Assume that the connection $\D_U h$ has quasi-unipotent local monodromy with respect to $X$. Then the roots of $b_h(s)$ are rational.
\end{prop}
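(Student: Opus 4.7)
The plan is to identify the roots of $b_h(s)$ with eigenvalues of the $-\pd_t t$-action on graded pieces of a Kashiwara--Malgrange $V$-filtration, and then to use Kashiwara's theorem interpreting these as monodromy eigenvalues of a nearby cycles functor, where the quasi-unipotence assumption becomes directly visible. First, by the defining formula $b_h(s)=\op{lcm}_i b_{h,U_i}(s)$, it suffices to show that each local $b_{h,U}(s)$ has rational roots. Fix such an affine open $U$ with $f\in\C[U]$ defining $D\cap U$, set $\mc{M}:=\D_U h$, and note that the quasi-unipotence hypothesis on $\D_U h|_{U\setminus V(f)}$ implies via Deligne's classical theorem that $\mc{M}(*f)$ is regular holonomic; since $\mc{M}\subset \mc{M}(*f)$ by Corollary \ref{cor:singtor}, so is $\mc{M}$.

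Following \cite{vfilkash,vfilmal,sabbah3}, consider the graph embedding $i_f:U\hookrightarrow U\times\C_t$ and set $\mc{N}:=(i_f)_+\mc{M}$. Being regular holonomic, $\mc{N}$ admits a $V$-filtration along $t=0$, and the standard dictionary translates the Bernstein--Sato functional equation (\ref{eq:bu}) into the statement that the roots of $b_{h,U}(s)$ are (up to sign and shifts by integers) those $\a\in\C$ for which $\gr^\a_V\mc{N}\neq 0$. By Kashiwara's theorem, the complex numbers $e^{-2\pi i\a}$ that arise this way are precisely the eigenvalues of the monodromy acting on the nearby cycles $\psi_f(\mc{M})$. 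Rationality of the roots of $b_{h,U}(s)$ is therefore equivalent to quasi-unipotence of the monodromy on $\psi_f(\mc{M})$.

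To conclude, take an embedded resolution $\pi:\tl X\to X$ of $(X,D)$ witnessing the quasi-unipotence assumption: $\pi$ is an isomorphism over $U\setminus V(f)$, and $\pi^{-1}(D)$ is a simple normal crossing divisor along which $\pi^*\mc{M}$ has quasi-unipotent monodromy. Locally in \'etale coordinates on $\tl X$ where $\pi^{-1}(D)=\{x_1\cdots x_k=0\}$, $\pi^*\mc{M}$ is spanned by multivalued sections of the form $\prod_i x_i^{\a_i}\cdot P(\log x_1,\ldots,\log x_k)$ with $\a_i\in\bb{Q}$ and $P$ a polynomial in the $\log x_j$'s with holomorphic coefficients. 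Combining this with the compatibility of nearby cycles with the proper direct image $\pi_+$ realizes $\psi_f(\mc{M})$ as a subquotient of $\pi_+\psi_{f\circ\pi}(\pi^*\mc{M})$, whose monodromy is quasi-unipotent by the explicit SNC model above. Hence each $\a$ lies in $\bb{Q}$, which completes the proof.

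The main obstacle I anticipate is in the last paragraph: carefully tracking monodromy eigenvalues through the proper pushforward and invoking the right compatibility of nearby cycles with $\pi_+$ (and checking one does not pick up irrational eigenvalues from the SNC stratification). The preceding ingredients -- regularity of $\mc{M}$, the existence of the $V$-filtration on $\mc{N}$, and the identification of $\gr_V\mc{N}$ with nearby cycles of $\mc{M}$ -- are by now classical once one knows $\mc{M}$ is regular, which is guaranteed by the hypothesis.
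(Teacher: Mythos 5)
The overall strategy you adopt (local reduction, graph embedding, the Kashiwara--Malgrange $V$-filtration, Kashiwara's identification of $\gr_V$ with nearby cycles, and an embedded resolution to expose quasi-unipotence) is exactly the approach the paper has in mind by citing \cite{vfilkash,vfilmal,sabbah3}. However, there is a genuine gap right at the start: you claim that quasi-unipotence of the local monodromy of $\D_U h$ implies, ``via Deligne's classical theorem,'' that $\mc{M}(*f)$ is regular holonomic. This is false. Quasi-unipotent --- even trivial --- monodromy does not entail regularity: take $h = e^{1/x}$ on $X=\C$, $U=\C^*$. Here $\D_U h = \O_U$ has trivial monodromy and the divisor $\{0\}$ is already simple normal crossing, yet $\D_\C h$ and its localization along $\{0\}$ are irregular holonomic. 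Deligne's theorems say that every local system with quasi-unipotent monodromy \emph{can be realized} by a regular meromorphic connection, and that finite monodromy forces algebraicity; neither says that a given connection whose monodromy happens to be quasi-unipotent must itself be regular.

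Because Proposition \ref{prop:rat} carries no regularity hypothesis, this gap is not cosmetic: everything downstream in your argument --- the $V$-filtration with rational jumps on $\mc{N}=(i_f)_+\mc{M}$, Kashiwara's theorem identifying $\gr_V^\alpha\mc{N}$ with monodromy eigenspaces of nearby cycles, and the SNC local model $\prod_i x_i^{\alpha_i}\cdot P(\log x_1,\dots,\log x_k)$ for $\pi^*\mc{M}$ --- is valid only in the regular holonomic setting. Your proof does establish the statement under the added hypothesis that $\D_X h$ is regular holonomic, which in fact is what is used in the paper's applications (Corollary \ref{cor:integral} and Theorem \ref{thm:bfunmult} both operate in the regular case), so you may prefer to record that assumption explicitly rather than try to derive it. Covering the irregular case, if intended, requires a different idea --- e.g., a reduction to the formal Levelt--Turrittin decomposition along $D$, in which the purely irregular exponential factors contribute no fractional $V$-filtration jumps, so that the $b$-function sees only the regular part, whose monodromy is the given quasi-unipotent one.
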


We point out the slight discrepancy in terminology in the case of polynomials $p$, when the Bernstein--Sato polynomial $b_{p^{-1}}(s)$ of the holonomic function $p^{-1}$ is the Bernstein--Sato polynomial of $p$ in the usual sense (up to a shift in $s$). In this case the connection is the structure sheaf $\O_U$ with the rationality of roots established in \cite{kashi2}.
 
The following is a consequence of Proposition \ref{prop:rat}., as an algebraic function has finite monodromy, and integration preserves regularity and quasi-unipotence (e.g. see \cite{nilsson} for a more precise statement based on functions).

\begin{corollary}\label{cor:integral}
Let $h$ be a holonomic function such that $b_h(s)$ has an irrational root. Then $h$ cannot be represented as an integral over an algebraic function.
\end{corollary}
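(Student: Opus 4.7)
The plan is to argue by contrapositive: if $h$ can be written as an integral over an algebraic function, then Proposition \ref{prop:rat} will force $b_h(s)$ to have only rational roots. Thus the task reduces to verifying that the connection $\D_U h$ on $U := X \setminus \Sing(\D_X h)$ has quasi-unipotent local monodromy with respect to $X$.

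The two inputs I would use are the following. First, an algebraic function $g$ on a smooth variety $Y$ is, away from its discriminant, a direct summand of the pushforward of $\O$ along a finite \'etale cover, so the $\D_Y$-module $\D_Y g$ is regular holonomic and its monodromy representation has finite image; in particular it is trivially quasi-unipotent. Second, $\D$-module direct image along an algebraic morphism preserves both regularity (Kashiwara--Kawai) and quasi-unipotence of local monodromy along divisors at infinity. These are the ``integration preserves regularity and quasi-unipotence'' statements alluded to in the hint, and a careful analytic formulation tailored to integrals of functions is given in \cite{nilsson}.

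Next I would translate the hypothesis ``$h$ is an integral of an algebraic function $g$'' into saying that $h$ is a section of a $\D_X$-module direct image $\pi_+ \mc{N}$, where $\pi: Y \to X$ is the projection onto the parameters of $h$ and $\mc{N}$ is the regular holonomic $\D_Y$-module generated by $g$ (possibly tensored with a rank-one local system recording orientation and boundary data for the cycle of integration). Combining the two inputs, $\pi_+ \mc{N}$ is regular holonomic with quasi-unipotent local monodromy with respect to a smooth compactification. Since $\D_X h$ is a cyclic quotient of $\pi_+ \mc{N}$ generated by a single section, it inherits both properties, and so does its restriction $\D_U h$. Proposition \ref{prop:rat} then delivers the desired rationality. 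The main subtlety I anticipate is making the notion of ``integral representation'' precise enough to invoke the direct-image formalism and to handle boundary contributions or multivaluedness of the cycle, which is the point at which one appeals to \cite{nilsson}.
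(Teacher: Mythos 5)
Your proof is correct and follows essentially the same route as the paper: both argue by contrapositive, observe that an algebraic function has finite (hence quasi-unipotent) monodromy and generates a regular holonomic module, invoke preservation of regularity and quasi-unipotence under $\D$-module direct image (integration), and then conclude by Proposition \ref{prop:rat}. The paper compresses this into a one-sentence remark citing \cite{nilsson}; you have simply unwound the same argument, including the translation of ``integral over an algebraic function'' into a section of $\pi_+\mc{N}$ and the stability of the required properties under passing to the cyclic submodule $\D_X h$.
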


As an example, we determine the Bernstein--Sato polynomial of the hypergeometric function.

\begin{example}\label{ex:hyper}
Put $X=\bb{P}^1$ and consider the hypergeometric function $F(z) = \, _2F_1(a,b;c;z)$ that is annihilated (on $\bb{C}^1$) by the operator
\begin{equation}\label{eq:hyp}
z(z-1)\frac{d^2}{dz^2} + [(a+b+1)z-c] \frac{d}{dz} + ab.
\end{equation}
The regular holonomic $\D$-module $\D_X \cdot F$ has singular locus $\{0,1,\infty\}$. An elementary calculation (using either the operator (\ref{eq:hyp} or the hypergeometric series) shows that we have the equation
\[ Q\cdot z^{s+1}F = (s+1)(s+2-c) \cdot z^s F, \mbox{ where } \]
\[ Q=z(z-1)\frac{d^2}{dz^2} + [(a+b-2s-1)z-c+2s+2] \frac{d}{dz} + s^2+(2-a-b)s+(a-1)(b-1).\]
Thus, the local Bernstein--Sato polynomial of $F$ at $z=0$ is $b_{F,0} (s) = (s+1)(s+2-c)$. Around $z=1$ (resp. $z=\infty$), the function $\,_2F_1(a,b;1+a+b-c;1-z)$ (resp. $z^{-a}\,_2F_1(a,1+a-c;1+a-b;z^{-1})$) is annihilated by (\ref{eq:hyp}). From the result at $z=0$, we get $b_{F,1}(s)=(s+1)(s+1+c-a-b)$ and $b_{F,\infty}(s)=(s+a+1)(s+b+1)$. Hence, the global Bernstein--Sato polynomial of $F$ on $X$ is (assuming that the roots are distinct)
\[b_F(s) = (s+1)(s+1+a)(s+1+b)(s+2-c)(s+1+c-a-b).\]
In particular, the roots of $b_F(s)$ are rational if and only if $a,b,c \in \bb{Q}$. Thus, using Corollary \ref{cor:integral} and Euler's integral representation of $F$, we see that $F$ can be written as an integral over an algebraic function if and only if $a,b,c\in \bb{Q}$ (so that the equation given by (\ref{eq:hyp}) is of geometric origin). This method offers an alternative approach that avoids calculating monodromy operators explicitly.
\end{example}

Let $D\subset X$ be a hypersurface and $\mc{M}$ a $\D_X$-module such that $M$ is holonomic on $X\setminus D$. As in Definition \ref{def:bho} one can define the Bernstein--Sato polynomial $b_{m,D}(s)$ for any element $m \in \Gamma(X,M)$ \cite[Theorem 2.7]{kashiII}. If $\mc{M}$ is torsion-free and holonomic, we put $b_{m}(s):=b_{m,\, \Sing \D_X m}(s)$ -- equivalently, one can define the Bernstein--Sato polynomial of a tuple of holonomic functions in this way.

The following gives a criterion for normalization of Bernstein--Sato polynomials, analogous to the fact that $-1$ is a root of the Bernstein--Sato polynomial of a polynomial (the case $h=1$).

\begin{lemma}\label{lem:normalize}
Let $h$ be a holonomic function on a domain $\Omega \subset X$, and $D\subset X$ a hypersurface such that there exist $x \in \Omega \cap D$ with $h(x) \neq 0$. Then $b_{h,D}(-1)=0$.
\end{lemma}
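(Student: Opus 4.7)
The plan is to derive the vanishing of $b_{h,D}(-1)$ directly from the defining functional equation by specializing $s=-1$ and observing an incompatibility between holomorphy on $\Omega$ and the pole of $f^{-1}h$ at $x$. Since (as in Definition \ref{def:bho}) the global $b_{h,D}(s)$ is the least common multiple of the local polynomials $b_{h,U_i}(s)$ over an affine open cover on which $\O_X(D)$ trivializes, it suffices to work locally: choose an affine open $U\subset X$ containing $x$, shrink $\Omega$ so that $\Omega\subset U$, and pick $f\in\C[U]$ cutting out $D\cap U$. I will show $b_{h,U}(-1)=0$.

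First I would take the functional equation
\[
P(s)\cdot f^{s+1}h \; = \; b_{h,U}(s)\cdot f^s h, \qquad P(s)\in \D_U[s],
\]
and specialize at $s=-1$ inside the module of $\D_U$-sections of the localization $(\D_Xh)(*D)$. This yields the honest equality
\[
P(-1)\cdot h \; = \; b_{h,U}(-1)\cdot f^{-1}h,
\]
valid as holomorphic sections on $\Omega\setminus D$. Now suppose for contradiction that $b_{h,U}(-1)\neq 0$; dividing we obtain
\[
f^{-1}h \; = \; \frac{1}{b_{h,U}(-1)}\,P(-1)\cdot h \qquad \text{on } \Omega\setminus D.
\]
The right-hand side is holomorphic on all of $\Omega$: indeed $h\in\holo(\Omega)$ and $P(-1)$ is a differential operator with regular (hence holomorphic) coefficients on $U\supset\Omega$. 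On the other hand the left-hand side cannot be extended holomorphically to $x$, since $f(x)=0$ while $h(x)\neq 0$, so $f^{-1}h$ has a genuine pole at $x$. This contradiction forces $b_{h,U}(-1)=0$, and taking the LCM over a cover completes the proof.

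The only subtle point is the legitimacy of specializing $s=-1$ in the formal equation. This is standard for Bernstein--Sato polynomials of holonomic functions (cf. the references cited after Definition \ref{def:bho}): the symbol $f^sh$ generates a $\D_U[s]$-submodule inside a larger module where specialization at integer values is well-defined, and sends $f^sh\mapsto f^{-1}h$ and $f^{s+1}h\mapsto h$, while $P(s)\mapsto P(-1)$ acts as an actual differential operator. Once this is granted, the argument above is the complete proof; no further analytic or representation-theoretic input is needed.
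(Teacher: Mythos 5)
Your proof is correct and takes essentially the same approach as the paper's: specialize the functional equation at $s=-1$, and observe that $P(-1)\cdot h$ is holomorphic at $x$ while $f^{-1}h$ has a genuine pole there since $h(x)\neq 0$ and $f(x)=0$. The only cosmetic difference is that the paper reads off $b_{h,D}(-1)=0$ directly from this comparison rather than phrasing it as a proof by contradiction.
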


\begin{proof}
We can assume that $X$ is affine and $D$ is defined by $f$.  We have an equation of the form $P \cdot f^{s+1}h = b_{h,D}(s) \cdot f^s h$. Putting $s=-1$ we get $P \cdot h = b_{h,D}(-1) \cdot h/f$. We plug $x$ into the equation. As $P \cdot h$ is an analytic function at $x$, and $h/f$ has a pole, we must have $b_{h,D}(-1) = 0$.
\end{proof}

In practice, it often happens that for a holonomic function $h$ it is possible to find some annihilating differential operators that generate an ideal of finite rank, but finding a complete set of generators for the entire annihilating ideal is difficult. In this case we have the following comparison result.

\begin{lemma}\label{lem:bcomp}
Let $h$ be a holonomic function with $D=\Sing D_X h$. Let $\mc{M}=\D_X/\mc{I}$ such that $\mc{I} \cdot h = 0$ and $M$ is holonomic on $X\setminus D$. Then $b_h(s)$ divides $b_{m,D}(s)$, where $m=\overline{1}\in \D_X/\mc{I}$.
\end{lemma}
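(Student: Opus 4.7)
The plan is to pull back the defining equation of $b_{m,D}(s)$ through the canonical $\D_X$-linear surjection $\mc{M} \twoheadrightarrow \D_X h$, and then invoke minimality of $b_{h}(s)$. The map $\mc{M} = \D_X/\mc{I} \to \D_X h$, sending $\overline{Q} \mapsto Q \cdot h$, is well-defined because $\mc{I} \cdot h = 0$ by hypothesis, and it is manifestly $\D_X$-linear. Since both source and target are modules on which localization at $f$ is a right exact functor, this surjection extends to $\mc{M}(*D) \twoheadrightarrow (\D_X h)(*D)$ whenever $f$ locally cuts out $D$.

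Working locally, I would fix an affine open cover $\{U_i\}_{i \in I}$ of $X$ which trivializes $\mc{O}_X(D)$ and simultaneously witnesses the definition of $b_{h}(s)$ as $\op{lcm}_i b_{h,U_i}(s)$. On each such $U_i$ with $f \in \mc{O}(U_i)$ defining $D \cap U_i$, there is a local equation
\[
P(s) \cdot f^{s+1} m \,=\, b_{m,D,U_i}(s) \cdot f^s m
\]
in $\mc{M}(*D)[s] \cdot f^s$ for some $P(s) \in \D_{U_i}[s]$, where $b_{m,D,U_i}(s)$ divides $b_{m,D}(s)$. Tensoring the localized surjection with the formal symbol $f^s$ yields a $\D_X[s]$-linear map
\[
\mc{M}(*D)[s] \cdot f^s \,\twoheadrightarrow\, (\D_X h)(*D)[s] \cdot f^s,
\]
which sends $f^s m \mapsto f^s h$ and commutes with multiplication by $f$ and with differential operators by construction of the $\D_X[s]$-action. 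Applying this map to the local equation above gives
\[
P(s) \cdot f^{s+1} h \,=\, b_{m,D,U_i}(s) \cdot f^s h
\]
on $U_i$. By minimality of $b_{h,U_i}(s)$ in Definition \ref{def:bho}, it follows that $b_{h,U_i}(s)$ divides $b_{m,D,U_i}(s)$, and hence divides $b_{m,D}(s)$. Taking lcm over $i \in I$ gives $b_h(s) \mid b_{m,D}(s)$.

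The one point requiring a little care is the functoriality step: one must verify that the natural map induced by $\mc{M} \to \D_X h$ on the formal modules $\mc{M}(*D)[s] f^s$ and $(\D_X h)(*D)[s] f^s$ is indeed $\D_X[s]$-linear and sends the distinguished generator to the distinguished generator. Once this is observed, the proof is essentially a matter of chasing the equation through the quotient, so I do not expect any genuine obstacle.
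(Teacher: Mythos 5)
Your proposal is correct and follows essentially the same route as the paper: construct the canonical $\D_X$-linear map $\mc{M}\to \D_X h$ from $\mc{I}\cdot h=0$, push the functional equation for $m$ along the induced map $\D_X[s]\,f^s m\to \D_X[s]\,f^s h$, and invoke minimality of $b_{h,U_i}(s)$. Your treatment merely spells out the local-to-global lcm step a bit more explicitly than the paper does.
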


\begin{proof}
The statement being local, we can assume that $X$ is affine and $D$ is defined by $f \in \bb{C}[X]$. Then for some $P\in \D_X[s]$, we have an equation $P\cdot f^{s+1}m =b_{m,D}(s) f^{s} m$. This clearly descends to an equation $P\cdot f^{s+1}h =b_{m,D}(s) f^{s} h$ via the induced map $\D_X[s] f^s m \to \D_X[s] f^s h$, hence $b_h(s) \, | \, b_{m,D}(s)$.
\end{proof}

The next result is an important application of Bernstein--Sato polynomials towards understanding the $\D$-module structure of localizations.

\begin{lemma}\label{lem:bgen}
Let $X$ be affine, and assume that $\mc{M}$ is a torsion-free holonomic $\D_X$-module and that $f \in \C[X]$ defines $\Sing \mc{M}$. For an element $m\in \mc{M}_f$ with $\mc{M}\subset \D_X m$, let $\alpha$ (resp $\beta$) denote the largest (resp. smallest) integer root of $b_m(s)$ (if none, put $\alpha=-1$ and $\beta=0$). Then $f^{\alpha+1} \cdot m \in \mc{M}$, and $f^\beta \cdot m$ generates $\mc{M}_f$ as a $\D_X$-module, with $\beta$ being the largest integer with this property (assuming $b_m(s)$ has at least one integer root).
\end{lemma}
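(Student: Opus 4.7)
All three assertions exploit the defining functional equation $P(s) \cdot f^{s+1} m = b_m(s) \cdot f^s m$ with $P(s) \in \D_X[s]$, specialized at integer values of $s$ in opposite directions, together with the minimality of $b_m(s)$.

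For $f^{\alpha+1} m \in \mc{M}$, pick $N \gg 0$ with $f^N m \in \mc{M}$, available since $m \in \mc{M}_f$. For every integer $\alpha < k \leq N$, the maximality of $\alpha$ gives $b_m(k-1) \neq 0$, so specializing the functional equation at $s = k-1$ produces $f^{k-1} m = b_m(k-1)^{-1} P(k-1) \cdot f^k m$, which lies in $\mc{M}$ whenever $f^k m$ does; descending induction from $k = N$ down to $k = \alpha+1$ completes this step. For the generation claim, localizing $\mc{M} \subseteq \D_X m$ yields $(\D_X m)_f = \mc{M}_f = \bigcup_{N \geq 0} \D_X f^{-N} m$. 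Running the specialization in the opposite direction --- for integer $k < \beta$ one has $b_m(k) \neq 0$, hence $\D_X f^k m \subseteq \D_X f^{k+1} m$ --- and inducting upward gives $\D_X f^{-N} m \subseteq \D_X f^\beta m$ for every $N$, while $f^k m = f^{k-\beta} \cdot f^\beta m \in \D_X f^\beta m$ for $k \geq \beta$. Hence $\mc{M}_f = \D_X f^\beta m$.

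The maximality of $\beta$ is the main obstacle. Since $\D_X f^\gamma m \subseteq \D_X f^{\beta+1} m$ for every integer $\gamma \geq \beta+1$, it suffices to rule out $f^\beta m \in \D_X f^{\beta+1} m$ under the hypothesis $b_m(\beta) = 0$. The plan is to deduce from the would-be relation $\D_X f^\beta m = \D_X f^{\beta+1} m$ a functional equation with polynomial $\bar b(s) := b_m(s)/(s - \beta) \in \C[s]$ of strictly smaller degree, contradicting the minimality of $b_m$. The natural object is $K := \D_X[s] f^s m / \D_X[s] f^{s+1} m$: the centrality of $\C[s] \subset \D_X[s]$ together with the minimality of $b_m$ makes the class $[f^s m] \in K$ have exact $\C[s]$-annihilator $(b_m(s))$, so that $K$ contains a faithful copy of $\C[s]/(b_m(s))$. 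The Artinian decomposition of $\C[s]/(b_m(s))$ produces a nonzero $\beta$-isotypic summand of $K$ on which $s - \beta$ is nilpotent, and a Nakayama argument then yields $K/(s-\beta)K \neq 0$. The remaining --- and most delicate --- step is to identify $K/(s-\beta)K$ with the specialization $\D_X f^\beta m / \D_X f^{\beta+1} m$ via the natural map coming from the $\D_X[s]$-cyclicity of $\D_X[s] f^s m$, which then contradicts the assumed equality and finishes the proof.
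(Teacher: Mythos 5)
Your first part---the descending induction via the functional equation $P(s)\cdot f^{s+1}m=b_m(s)\cdot f^sm$ specialized at $s=k-1$ for $k>\alpha+1$---is exactly the paper's argument for $f^{\alpha+1}m\in\mc{M}$, so that step is fine. For everything concerning $\beta$, the paper simply cites Torrelli's Proposition 4.2, whereas you attempt a self-contained argument, and this is where the gaps are.

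For the generation claim, you write $\mc{M}_f=(\D_X m)_f=\bigcup_N\D_X f^{-N}m$ as though the last equality were automatic; it is not. The set $\bigcup_N \D_X f^{-N}m$ is the union of the $\D_X$-submodules generated by the $f^{-N}m$, and an arbitrary element $f^{-N}Qm$ of $(\D_X m)_f$ does not visibly live in any $\D_X f^{-K}m$, since $f^{-N}Qf^N$ acquires poles along $f$ of unbounded order as $Q$ varies. The equality is true, but it is the nontrivial theorem of Kashiwara that $f^k$ times a generator generates the localization for $k\ll0$---the very type of result the paper is outsourcing to Torrelli. With a citation this would be fine; as written it is an unflagged gap. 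The more serious problem is in the maximality argument. You construct the surjection $K/(s-\beta)K\twoheadrightarrow \D_X f^\beta m/\D_X f^{\beta+1}m$ (which does exist and factor as you say) and correctly show $K/(s-\beta)K\neq 0$ using the $\C[s]/(b_m(s))$-module structure of $K$, but a nonzero module can surject onto zero, so this alone proves nothing about the target. The conclusion you want, namely that the map is an isomorphism, is equivalent to $\ker\bigl(\D_X[s]f^sm\xrightarrow{s\mapsto\beta}\D_X f^\beta m\bigr)=(s-\beta)\,\D_X[s]f^sm$, and \emph{that} is precisely where the hypothesis that $\beta$ is the smallest integer root of $b_m$ must enter. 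You name this as ``the most delicate step'' and leave it unproved; but this step is the entire content of Torrelli's Proposition 4.2 (going back to Kashiwara), so what remains is not a finishing detail but the theorem itself. The surrounding scaffolding---the module $K$, the Artinian decomposition, the Nakayama-type observation---is correct and in the right spirit, but the proof as written does not close.
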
 

\begin{proof}
We have $\Sing (\D_X \cdot m) = \{f=0\}$. Hence, for some $P \in \D_X[s]$ we have an equation
\[ P \cdot f^{s+1}m = b_m(s) \cdot f^s m.\]
Clearly $f^k \cdot m \in \mc{M}$, for some integer $k\gg 0$. When $k>\alpha+1$, we put $s=k-1$ in this equation to get $f^{k-1} \cdot m \in \mc{M}$. Thus, $f^{\alpha+1} \in \mc{M}$. The part regarding $\beta$ follows from \cite[Proposition 4.2]{torrelli}.
\end{proof}

We denote by $F_\bullet \D_X$ the order filtration on $\D_X$. The following shows, in particular, that modulo the integers the roots are intrinsic to the $\D$-module.

\begin{lemma}\label{lem:brootgen}
Let $h_1,h_2$ be holonomic functions with an isomorphism $\phi: \D_X h_1 \xrightarrow{\cong} \D_X h_2$. Write $\phi(h_1) \in F_{d_2} \D_X \cdot h_2$ and $\phi^{-1}(h_2)  \in \, F_{d_1} \D_X \cdot h_1$, for some $d_1,d_2 \in \bb{N}$. Then $b_{h_1}(s)$ divides
\[b_{h_2}(s-d_2) \cdot b_{h_2}(s-d_2+1) \cdots b_{h_2}(s+d_1-1) \cdot b_{h_2}(s+d_1).\]
\end{lemma}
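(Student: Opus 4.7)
The plan is to work inside the Bernstein--Sato module $\tilde N := \mc{M}_f[s]\cdot f^s$, where $\mc{M}:=\D_X h_1 = \D_X h_2$ (identified via $\phi$) and $f$ defines $\Sing\mc{M}$. Writing $\phi(h_1)=Q_2 h_2$ and $\phi^{-1}(h_2)=Q_1 h_1$ with $Q_i\in F_{d_i}\D_X$, it suffices to show that $B(s)\cdot f^s h_1 \in \D_X[s]\cdot f^{s+1} h_1$ inside $\tilde N$, where $B(s):=\prod_{j=-d_2}^{d_1} b_{h_2}(s+j)$. Chaining the functional equation $P_2(s)\cdot f^{s+1} h_2 = b_{h_2}(s)\cdot f^s h_2$ through its integer shifts $s\mapsto s+j$ for $j=-d_2,\ldots,d_1$, I first obtain
\[ B(s)\cdot f^{s-d_2} h_2 \;=\; \tilde P(s)\cdot f^{s+d_1+1} h_2 \quad \text{in } \tilde N \]
for some $\tilde P(s)\in\D_X[s]$.

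The key intermediate claim is that $f^{s+d_1+1} h_2 \in \D_X[s]\cdot f^{s+1} h_1$. Using $h_2=Q_1 h_1$ and chain-rule-expanding $Q_1\cdot(f^{s+d_1+1} h_1)$, the leading term equals $f^{s+d_1+1} h_2$, while the other terms have the form $p_k(s)\,(R_k h_1)\,f^{s+d_1+1-k}$ with $R_k\in F_{d_1-k}\D_X$. An induction on operator order shows that $(Rh_1)\cdot f^{s+j}\in \D_X[s]\cdot f^{s+1} h_1$ whenever $R\in F_m \D_X$ and $j\geq m+1$, since any further chain-rule expansion can lower the $f$-exponent by at most $m$, keeping it $\geq s+1$. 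Combining this with the chained identity and multiplication by $f^{d_2-k}\in\mc{O}_X$ (which preserves $\D_X[s]\cdot f^{s+1} h_1$), I conclude that $B(s)\cdot f^{s-k} h_2 \in \D_X[s]\cdot f^{s+1} h_1$ for every $0\leq k\leq d_2$.

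Finally, expanding $Q_2\cdot(f^s h_2) = f^s h_1 + \sum_{k=1}^{d_2} p_k(s)(R_k h_2) f^{s-k}$ with $R_k\in F_{d_2-k}\D_X$ and multiplying by $B(s)$ gives
\[ B(s)\cdot f^s h_1 \;=\; Q_2\cdot\bigl[B(s)\cdot f^s h_2\bigr] \;-\; \sum_{k=1}^{d_2} p_k(s)\,B(s)\,(R_k h_2)\, f^{s-k}. \]
The first summand lies in $\D_X[s]\cdot f^{s+1} h_1$ by the previous paragraph. Each remaining summand is reduced by expanding $R_k\cdot(f^{s-k} h_2)$ via the chain rule: after multiplication by $B(s)$, this yields $R_k\cdot[B(s)\cdot f^{s-k} h_2]\in\D_X[s]\cdot f^{s+1} h_1$ minus terms of the form $B(s)\,(R_{k,l} h_2)\, f^{s-k-l}$ with $R_{k,l}$ of strictly smaller order and exponents still inside $[s-d_2, s]$. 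Induction on operator order thus places every term in $\D_X[s]\cdot f^{s+1} h_1$, concluding the proof. The main subtlety is bookkeeping: one must ensure that the $f$-exponents arising on the right of the chained identity (up to $s+d_1+1$) and those produced by the chain-rule expansion involving $Q_2$ (down to $s-d_2$) lie precisely in the range where the previous steps provide membership in $\D_X[s]\cdot f^{s+1} h_1$, which explains the product length $d_1+d_2+1$.
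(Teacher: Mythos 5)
Your proof is correct and rests on the same chain-rule observation that the paper's proof uses before deferring the remaining bookkeeping to Gyoja's Lemma 5.13. The final operator-order recursion can be bypassed: applying the observation $f^{t+\deg Q}(Q\cdot h)\in \D_X[t]\cdot f^{t}h$ with $Q=Q_2$, $t=s-d_2$ gives $f^{s}h_1 = f^{(s-d_2)+d_2}(Q_2h_2)\in\D_X[s]\cdot f^{s-d_2}h_2$ outright, so multiplying by $B(s)$ and combining your chained identity with your intermediate claim $f^{s+d_1+1}h_2\in\D_X[s]\cdot f^{s+1}h_1$ already yields $B(s)f^{s}h_1\in\D_X[s]\cdot f^{s+1}h_1$, without needing to track exponent ranges through the induction (where, incidentally, one should note that the chain rule lowers the operator order by at least as much as it lowers the $f$-exponent -- this is what keeps the exponents in $[s-d_2,s]$, not merely that the order strictly decreases).
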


\begin{proof}
The statement being local, we can assume that $X$ is affine such that $f \in \C[X]$ defines $\Sing \mc{M}$. It is easy to see that for any operator $Q\in \D_X$, we have $\D_X[s] \cdot f^{s+\deg Q} (Q\cdot h_i) \subset \D_X[s] \cdot f^s h_i$. The argument follows now as in \cite[Lemma 5.13]{gyoja2}.
\end{proof}

\begin{lemma}\label{lem:bsum}
Let $D \subset X$ be a hypersurface and $h_1, h_2$ two holonomic functions on a domain. Assume that we have $\D_X h_1 \, \bigcap \, \D_X h_2 =0$. Then $\op{lcm} (b_{h_1,D}(s), b_{h_2,D}(s))$ divides $b_{h_1+h_2,D}(s)$.
\end{lemma}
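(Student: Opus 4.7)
\medskip

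\textbf{Proof proposal.} The plan is to exploit the hypothesis $\D_X h_1 \cap \D_X h_2 = 0$ to realize the $b$-function equation for $h_1+h_2$ as a pair of simultaneous $b$-function equations for $h_1$ and $h_2$, forcing $b_{h_i,D}(s)$ to divide $b_{h_1+h_2,D}(s)$.

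The setup is as follows. Since $\D_X h_1 \cap \D_X h_2 = 0$ inside $\O_X^{an}(\Omega)$, the natural $\D_X$-linear addition map $\D_X h_1 \oplus \D_X h_2 \to \D_X h_1 + \D_X h_2$ is an isomorphism. Set $N := \D_X h_1 \oplus \D_X h_2$; this is a torsion-free holonomic $\D_X$-module, and contains the submodule $\D_X(h_1+h_2)$ generated by the element $n := (h_1,h_2) \in N$ corresponding to $h_1+h_2$. The statement being local, we may assume $X$ affine and $D = \{f = 0\}$. The Bernstein--Sato polynomial $b_{n,D}(s)$ of the element $n$ in the torsion-free holonomic module $N$ coincides with $b_{h_1+h_2,D}(s)$, since the defining functional equations are matched under the embedding $\D_X(h_1+h_2) \hookrightarrow N$.

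Now pick any functional equation
\[
P(s) \cdot f^{s+1} n = b_{h_1+h_2,D}(s) \cdot f^s n
\]
inside $N[f^{-1}][s] \cdot f^s$, which exists by \cite[Theorem 2.7]{kashiII}. Since direct sums commute with localization at $f$ and with the twist by $f^s$, we have
\[
N[f^{-1}][s] \cdot f^s \;=\; \D_X h_1[f^{-1}][s] \cdot f^s h_1 \;\oplus\; \D_X h_2[f^{-1}][s] \cdot f^s h_2,
\]
and $f^s n$ decomposes as $f^s h_1 + f^s h_2$ with $P(s)$ acting componentwise. Projecting the displayed equation onto each summand yields
\[
P(s) \cdot f^{s+1} h_i \;=\; b_{h_1+h_2,D}(s) \cdot f^s h_i, \qquad i=1,2.
\]
Thus $b_{h_1+h_2,D}(s)$ is a valid (nonzero) choice of polynomial satisfying the $b$-function relation for each $h_i$, so by minimality of $b_{h_i,D}(s)$ we get $b_{h_i,D}(s) \mid b_{h_1+h_2,D}(s)$ for $i=1,2$, and the lemma follows.

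The only subtlety, and the part worth double-checking, is the identification of $b_{h_1+h_2,D}(s)$ with the $b$-function of $n \in N$: one needs to verify that any functional equation for $n$ in $N[f^{-1}][s] f^s$ restricts to one for $h_1+h_2$ in its own $\D_X[s][f^{-1}] \cdot f^s(h_1+h_2)$, and conversely; this is routine since the latter module embeds in the former and both definitions extract the minimal monic annihilator of $s$ on the relevant quotient. No other step presents difficulty.
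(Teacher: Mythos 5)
Your proposal is correct and follows the same underlying idea as the paper's proof — namely, that the hypothesis $\D_X h_1 \cap \D_X h_2 = 0$ forces the functional equation for $h_1+h_2$ to split into separate functional equations for $h_1$ and $h_2$ — but the two arguments justify the splitting differently. The paper works directly with the equation $P(s)\cdot f^{s+1}(h_1+h_2) = b(s) f^s(h_1+h_2)$, sets $A(s) := P(s) f^{s+1}h_1 - b(s) f^s h_1$, observes $A(s) = -(P(s)f^{s+1}h_2 - b(s) f^s h_2)$, and shows $A(s)=0$ by specializing $s$ to a large integer $k$: then $A(k)$ lands in $\D_X h_1 \cap \D_X h_2 = 0$, while $A(s)\neq 0$ would force $A(k)\neq 0$ for $k \gg 0$. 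Your route instead realizes the equation inside the module $N = \D_X h_1 \oplus \D_X h_2$ (which is legitimate since the addition map $N \to \D_X h_1 + \D_X h_2$ is a $\D_X$-isomorphism under the hypothesis), so the splitting is automatic from the direct sum structure of $N[f^{-1}][s]\,f^s$. The price you pay is needing the identification $b_{n,D}(s) = b_{h_1+h_2,D}(s)$ for $n=(h_1,h_2)\in N$, which you flag correctly; this holds because $\D_X n \cong \D_X(h_1+h_2)$ as cyclic torsion-free $\D_X$-modules, and the minimal functional equation depends only on that data (localization at $f$ is exact, so the equation holds in $\D_X n[f^{-1}][s]f^s$ iff it holds in $N[f^{-1}][s]f^s$). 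Both approaches are sound; yours trades the elementary specialization trick for a small module-theoretic lemma.
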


\begin{proof}
We can assume that $X$ is affine and $D$ is defined by $f \in \bb{C}[X]$. Let $P \cdot f^{s+1}h =b_{h,D}(s) \cdot f^s h$, for $P\in \D_X[s]$. Note that the latter gives $A(s):=P \cdot f^{s+1} h_1 - b_{h,D}(s)f^s h_1 = - (P \cdot f^{s+1} h_2 - b_{h,D}(s)f^s h_2)$. Assume that $A(s) \neq 0$. Then we can evaluate $A(s)$  at some large $k\in \bb{N}$ such that $0 \neq A(k) \in \D_X h_1 \cap \D_X h_2$, a contradiction. Hence, we have $A(s)=0$, which implies that $b_{h_i,D}(s) \, \vert \, b_{h,D}(s)$, thus finishing the proof.
\end{proof}

Under some additional hypotheses, we have the following converse statement to the one above.

\begin{lemma}\label{lem:bsum2}
Let $D\subset X$ be a hypersurface, write $U=X\setminus D$, and consider holonomic functions $h_1,\dots, h_n$ on a domain. Assume that for any $1\leq i < j \leq n$, the $\D_U$-modules $\D_{U} h_i$ and $\D_{U} h_j$ have no common simple composition factors. Then for each $1\leq i \leq n$, there exist $d_i, p_i \in \bb{N}$ such that $h_i \in \mc{O}(k_i D) \cdot F_{d_i} \D_X \cdot h$, where $h=h_1 + \dots + h_n$. In such case, $b_{h , D}(s)$ divides $\underset{1\leq i \leq n}{\op{lcm}} b_{h_i,D}(s)\cdot b_{h_i,D}(s+1) \cdots b_{h_i,D}(s+d_i+p_i)$.
\end{lemma}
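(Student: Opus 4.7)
The plan is to proceed in two parts: (A) establish the containment $h_i \in \mc{O}(p_i D) \cdot F_{d_i}\D_X \cdot h$, which reduces locally to the membership $h_i \in \D_U h$; and (B) use the $b$-functional equations for the $h_i$, iterated an appropriate number of times, to produce the corresponding equation for $h$. Throughout I work on an affine chart where $D = (f)$.

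For (A), consider $\mc{N} := \sum_i \D_U h_i \subset \mc{O}^{an}$. The hypothesis that the $\D_U h_i$ have pairwise disjoint composition factors forces the sum to be an \emph{internal} direct sum: for each $i$, the intersection $\D_U h_i \cap \sum_{j \ne i} \D_U h_j$ is simultaneously a submodule of $\D_U h_i$ and a quotient of $\bigoplus_{j \ne i} \D_U h_j$, so has no composition factors, hence is zero. The cyclic submodule $\D_U h \subset \mc{N}$ projects onto each summand via $h \mapsto h_i$, and I claim $\D_U h = \mc{N}$. For $n = 2$, $\mc{N}/\D_U h$ is simultaneously a quotient of $\D_U h_1$ and of $\D_U h_2$ (since $\D_U h_i + \D_U h = \mc{N}$ for both $i$), so has no composition factors, hence vanishes; induction on $n$ (peeling off one summand at a time) extends this. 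Thus each $h_i \in \D_U h$; writing $h_i = R_i h$ with $R_i \in \D_U = \bigcup_p f^{-p}\D_X$ and decomposing $R_i = f^{-p_i} Q_i$ with $Q_i \in F_{d_i}\D_X$ gives the first assertion.

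For (B), set $N_i := d_i + p_i + 1$ and $B_i(s) := \prod_{k=0}^{d_i + p_i} b_{h_i, D}(s + k)$. Iterating the defining equation $P_i(s) f^{s+1} h_i = b_{h_i, D}(s) f^s h_i$ (shifting $s \mapsto s + k$ for $k = 0, \ldots, N_i - 1$ and composing) gives $B_i(s) \cdot f^s h_i \in \D_X[s] \cdot f^{s + N_i} h_i$. Substituting $f^{s + N_i} h_i = f^{s + d_i + 1}(f^{p_i} h_i) = f^{s + d_i + 1} Q_i h$, I am reduced to showing $f^{s + d_i + 1} Q_i h \in \D_X[s] f^{s+1} h$. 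I prove this by induction on $d$, in the stronger form: for any $R \in F_d\D_X$, $f^{s + 1 + d} R h \in \D_X[s] f^{s+1} h$. The inductive step exploits the twist $R \cdot f^{s+1} = f^{s+1} \tilde R(s+1)$ with $\tilde R(s+1) = R + \sum_{k=1}^{d}(s+1)^k f^{-k} A_k$ and $A_k \in F_{d-k}\D_X$; after multiplying by $f^d$ and rearranging, one obtains
\[
f^{s + 1 + d} R h \, = \, (f^d R) \cdot f^{s+1} h \, - \, \sum_{k=1}^{d} (s+1)^k \cdot f^{s + 1 + (d - k)} (f^k A_k) h,
\]
in which the leading term lies in $\D_X[s] f^{s+1} h$ because $f^d R \in \D_X$, while each correction term involves $f^k A_k \in F_{d-k}\D_X$ with $d - k < d$ and is handled by the inductive hypothesis. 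The base case $d = 0$ (multiplication by a function) is immediate.

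Having established $B_i(s) \cdot f^s h_i \in \D_X[s] f^{s+1} h$ for each $i$, set $B(s) := \op{lcm}_i B_i(s)$. Writing $B(s) = C_i(s) B_i(s)$ with $C_i(s) \in \bb{C}[s]$ yields $B(s) \cdot f^s h_i \in \D_X[s] f^{s+1} h$ for every $i$; summing and using $f^s h = \sum_i f^s h_i$ gives $B(s) \cdot f^s h \in \D_X[s] f^{s+1} h$, whence $b_{h, D}(s) \mid B(s) = \op{lcm}_i \prod_{k=0}^{d_i + p_i} b_{h_i, D}(s+k)$ by minimality of $b_{h,D}(s)$. The main technical hurdle is the inductive claim in (B), which requires careful bookkeeping to ensure that the $f^{-k}$ poles introduced by the twist are absorbed by the $f^d$ factor and then recursively reduced via operators of strictly smaller order.
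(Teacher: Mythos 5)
Your proposal is correct and follows essentially the same route as the paper: you establish $\D_U h = \bigoplus_i \D_U h_i$ from disjointness of composition factors, extract a relation $f^{p_i}h_i = Q_i h$ with $Q_i\in F_{d_i}\D_X$, and iterate the $b$-functional equations; your detailed induction on the order of $R$ replaces the paper's citation of the analogous fact used in the proof of Lemma~\ref{lem:brootgen}. One small correction to that induction: the conjugation $R\cdot f^{s+1}=f^{s+1}\tilde R(s+1)$ has $\tilde R(s+1)=R+\sum_{k=1}^{d}c_k(s)\,f^{-k}A_k$ with $A_k\in F_{d-k}\D_X$, but the coefficients $c_k(s)$ are polynomials of degree at most $k$, not $(s+1)^k$ (for $R=\partial^2$ one gets $c_2(s)=(s+1)s$), and the rearranged identity should read $f^{s+1+d}Rh=(f^dR)\cdot f^{s+1}h-\sum_{k}c_k(s)\,f^{s+1+d-k}A_k h$ rather than carrying a spurious factor $f^k$. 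Neither slip affects the argument, since each correction term is of strictly smaller order and multiplication by a power of $f$ preserves membership in $\D_X[s]f^{s+1}h$.
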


\begin{proof}
We can assume that $X$ is affine and $D$ is defined by $f \in \bb{C}[X]$. Consider the embedding $\D_U h \to \D_U h_1 \oplus \dots \oplus \D_U h_n$. Since the induced maps $\D_U h \to \D_U h_i$ via projections are surjective, $\D_U h $ contains all the composition factors of $\D_U h_i$, for $i=1,\dots,n$. The latter factors are disjoint by assumption, which implies that $\D_U h = \D_U h_1 \oplus \dots \oplus \D_U h_n$. Hence, we have an exact sequence
\[0 \to \D_X h \to \D_X h_1 \oplus \dots \oplus \D_X h_n \to L \to 0,\]
with $\op{supp} L \subset D$. By the Nullstellensatz, there exists some $p_i\in \bb{N}$ and $P_i \in \D_X$ such that $P_i \cdot h =  f^{p_i} \cdot h_i$, for all $i=1,\dots, n$. As seen in the proof of Lemma \ref{lem:brootgen} this implies that there is $Q_i \in \D_X[s]$ such that $Q_i \cdot f^{s+1} h = f^{s+d_i+p_i+1} \cdot h_i$, where $d_i$ is the degree of the operator $P_i$. 

Let $T_i(s) \cdot f^{s+1} h_i =b_{h_i,D}(s) \cdot f^s h_i$, for some $T_i(s) \in \D_X[s]$, where $1\leq i \leq n$. Then we have $Q \cdot f^{s+1}h = b(s) \cdot f^s h$, where $b(s)=\underset{1\leq i \leq n}{\op{lcm}} b_{h_i,D}(s)\cdot b_{h_i,D}(s+1) \cdots b_{h_i,D}(s+d_i+p_i)$ and
\[Q= b(s) \cdot \sum_{i=1}^n \left(\prod_{j=0}^{d_i+p_i} \frac{1}{b_{h_i,D}(s+j)} \cdot T_i(s+j) \right) \cdot Q_i.\]
\end{proof}

We recall that the solutions of regular holonomic $\D$-modules are of Nilsson class (i.e. of moderate growth, see \cite[Sections 4.8 and 4.9]{bjork},  \cite[Section 2.5]{sst}).

We now define sheaves of \lq\lq holonomic functions of Nilsson class on $X$". Fix a simply-connected domain $\Omega\subset X$ with open embedding $i: \Omega \to X$, and let $N\triangleleft \pi_1(X)$ be a normal subgroup. We define $\conn_X^N$ to be the sum of all regular (algebraic) connections in $i_*^{an} \mc{O}^{an}_\Omega$ with trivial $N$-monodromy. In more explicit terms, for a Zariski-open subset $U\subset X$, we have
\[
\conn_X^N(U)=\{h: U\cap \Omega \to \bb{C} \mid \D_U h \mbox{ regular connection with monodromy factoring via } \pi_1(U) \to \pi_1(X)/N \}.\]

More generally, for $D\subset X$ a hypersurface with open embedding $j: X\setminus D \to X$, let $\Omega\subset X\setminus D$ be a simply-connected domain, and consider a normal subgroup $N\triangleleft \pi_1(X\!\setminus\!D)$. We put $\conn_X^N(D):= j_* \conn_{X\!\setminus\!D}^N$, which stands for the sheaf of holonomic functions of Nilsson class with singularities along $D$ and trivial $N$-monodromy.

For two hypersurfaces $D_1 \subset D_2$, and $N_i\triangleleft \pi_1(X\!\setminus\!D_i)$ such that the image of $N_2$ under the map $\pi_1(X\!\setminus\!D_2) \to \pi_1(X\!\setminus\!D_1)$ is contained in $N_1$, we have $\conn_X^{N_1}(D_1) \subset \conn_X^{N_2}(D_2)$. For a hypersurface $D$, the minimal object is $\conn_X^{\pi_1(G)}(D) = \mc{O}_X(*D)$, while the maximal being $\conn_{X}(D):=\conn_{X}^{\{1\}}(D)$.

Let $\opMod_{D,w}^{N.rh}(\D_X)$ be the full subcategory of $\opMod(\D_X)$ consisting of modules $\mc{M}$ with the property that for any coherent $\D_X$-submodule $\mc{N}\subset \mc{M}$, its Weyl closure $\mc{N}^w$ is regular holonomic with $\Sing \mc{N}^w \subset D$ and trivial $N$-monodromy on $X\setminus D$. One can see that $\opMod_{D,w}^{N.rh}(\D_X)$ is closed under subquotients in $\opMod(\D_X)$.

\begin{prop}\label{prop:connsheaf}
$\conn_{X}^N(D)$ is an $\mc{O}_X$-quasi-coherent sheaf of $\D_X \times \pi_1(X\!\setminus\!D)/N$-algebras. Furthermore: 
\begin{itemize}
\item[(a)] $\conn_X^N(D)$ is an injective object in $\opMod_{D,w}^{N,rh}(\D_X)$.
\item[(b)] For all $\mc{M}$ in $\opMod_{D,w}^{N,rh}(\D_X)$, we have a natural isomorphism $\Sol_\Omega(\mc{M}) \cong \Hom_{\D_X}(\mc{M},\conn_X^N(D)).$ 
\item[(c)] $\conn_X^N(D)$ does not depend on the choice of $\Omega\subset X\setminus D$ (up to isomorphism).
\end{itemize}
\end{prop}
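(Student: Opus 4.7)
The plan is to handle the four assertions in a convenient order: the sheaf-of-$\mc{O}_X$-algebras structure together with the $\D_X$- and $\pi_1(X\!\setminus\!D)/N$-actions first, then (c), then (b), and finally (a) as a consequence of (b) combined with exactness of $\Sol_\Omega$ on $\opMod_{D,w}^{N,rh}(\D_X)$. The guiding principle is that after restricting to $X\!\setminus\!D$, every piece of $\conn_X^N(D)$ lies inside a regular connection with trivial $N$-monodromy; on the simply-connected $\Omega$ such a connection is determined by its finite-dimensional space of flat sections (together with the $\pi_1/N$-action), so all verifications reduce to linear algebra once the reduction to coherent pieces is in place.

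For the structural claim, $\conn_X^N(D)=j_*\conn_{X\setminus D}^N$ is a filtered union of the $j_*$-extensions of individual regular connections with trivial $N$-monodromy; each constituent is $\mc{O}_X$-quasi-coherent (being a meromorphic connection along $D$, cf.\ Corollary \ref{cor:singtor}), and filtered unions preserve quasi-coherence. The algebra structure comes from pointwise multiplication inside $i_*^{an}\mc{O}^{an}_\Omega$: the product $h_1 h_2$ of two local sections lies in the image of the tensor product $(\D_X h_1)^w \otimes_{\mc{O}_X} (\D_X h_2)^w$, which on $X\!\setminus\!D$ is again a regular connection with trivial $N$-monodromy (monodromy of a tensor is the tensor of monodromies). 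The $\D_X$-action and the $\pi_1(X\!\setminus\!D)/N$-action by monodromy are built into the definition. For (c), given a second simply-connected $\Omega'\subset X\!\setminus\!D$, choose a chain of overlapping simply-connected opens linking $\Omega$ to $\Omega'$; analytic continuation of flat sections preserves regularity, rank, and $N$-triviality of monodromy, hence induces an isomorphism of the two sheaves (depending on a path class, but well-defined up to isomorphism).

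For (b), the inclusion $\conn_X^N(D)\hookrightarrow i_*^{an}\mc{O}^{an}_\Omega$ yields a natural map $\Hom_{\D_X}(\mc{M},\conn_X^N(D))\to \Sol_\Omega(\mc{M})$, and the plan is to produce an inverse. Given $\phi\colon \mc{M}\to i_*^{an}\mc{O}^{an}_\Omega$ and a local section $m\in\mc{M}(U)$, note that $i_*^{an}\mc{O}^{an}_\Omega$ is $\mc{O}_X$-torsion-free (stalks are either zero or embed in a ring of germs of holomorphic functions on a nonempty open), so the map $\D_U m\to i_*^{an}\mc{O}^{an}_\Omega$ factors through $(\D_U m)^w$. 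By hypothesis the latter is regular holonomic with $\Sing\subset D$ and trivial $N$-monodromy on $U\!\setminus\!D$, hence on $U\!\setminus\!D$ it is a regular connection with trivial $N$-monodromy by Lemma \ref{lem:entire}. Its $\D_U$-module quotient $\D_U\cdot\phi(m)|_{U\setminus D}$ is then again a regular connection with trivial $N$-monodromy, since sub- and quotient modules of a connection on a smooth variety remain connections and regularity and $N$-triviality pass to subquotients. Thus $\phi(m)\in\conn_X^N(D)(U)$, producing the inverse.

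For (a), by (b) it suffices to show $\Sol_\Omega(-)$ is exact on $\opMod_{D,w}^{N,rh}(\D_X)$. Given $0\to \mc{M}_1\to \mc{M}_2\to \mc{M}_3\to 0$ in this category, index each $\mc{M}_i$ by the coherent submodules of $\mc{M}_2$, setting $\mc{N}_1=\mc{N}_2\cap \mc{M}_1$ and $\mc{N}_3=\mc{N}_2/\mc{N}_1$. Since any $\D$-linear map to $\mc{O}^{an}_\Omega$ factors through the Weyl closure, $\Sol_\Omega(\mc{N}_i)=\Sol_\Omega(\mc{N}_i^w)$; on $\Omega$ the sequence of Weyl closures remains short exact (torsion is supported in $\Sing\subset D$ by Lemma \ref{lem:weylclosure}, hence absent on $\Omega$) and consists of regular connections, so on the simply-connected $\Omega$ the flat-section functor yields a short exact sequence of finite-dimensional vector spaces by Theorem \ref{thm:ranksol}. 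Passing to the inverse limit over the index set preserves surjectivity (Mittag--Leffler being trivial for finite-dimensional spaces), giving $\Sol_\Omega(\mc{M}_2)\onto \Sol_\Omega(\mc{M}_1)$. The main subtlety throughout is that objects of $\opMod_{D,w}^{N,rh}(\D_X)$ are allowed arbitrary $\mc{O}_X$-torsion; the key device is that $\mc{O}_X$-torsion-freeness of $i_*^{an}\mc{O}^{an}_\Omega$ forces every morphism into it to factor through the Weyl closure, at which point the classical theory of regular connections applies.
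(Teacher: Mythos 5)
Your overall architecture (reduce to coherent pieces, use torsion-freeness of $i_*^{an}\mc{O}^{an}_\Omega$ to factor maps through Weyl closures, apply Theorem \ref{thm:ranksol} to get finite dimensionality, pass to limits with Mittag--Leffler) matches the paper's, and your part (b) is a more detailed version of the same idea. However, two steps are flawed.

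In part (a), the claim that \emph{``on $\Omega$ the sequence of Weyl closures remains short exact (torsion is supported in $\Sing\subset D$ by Lemma \ref{lem:weylclosure}, hence absent on $\Omega$)''} conflates $\Sing \mc{N}_i$ with $\Sing \mc{N}_i^w$. The defining property of $\opMod_{D,w}^{N,rh}(\D_X)$ controls only $\Sing$ of Weyl closures of coherent submodules; it places no constraint on $\Sing \mc{N}_i$ itself, which is where Lemma \ref{lem:weylclosure} locates the torsion. For instance, a coherent torsion module supported at a point $p\notin D$ belongs to $\opMod_{D,w}^{N,rh}(\D_X)$ (its Weyl closure is $0$) yet its torsion sits nowhere near $D$. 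Moreover, Weyl closure is not an exact functor: for $0\to\mc{N}_1\to\mc{N}_2\to\mc{N}_3\to 0$ the induced sequence $0\to\mc{N}_1^w\to\mc{N}_2^w\to\mc{N}_3^w\to 0$ fails to be exact in the middle in general, since the kernel of $\mc{N}_2^w\to\mc{N}_3^w$ is the image of the saturation $\{x\in\mc{N}_2:\ fx\in\mc{N}_1 \text{ for some } f\neq 0\}$, which can be strictly larger than $\mc{N}_1+\operatorname{tor}(\mc{N}_2)$. The correct route, used in the paper, is rank additivity: $\rank\mc{N}_2=\rank\mc{N}_1+\rank\mc{N}_3$, and by part (b) plus Theorem \ref{thm:ranksol} each $\Sol_\Omega(\mc{N}_i)$ has dimension $\rank\mc{N}_i$, so the left-exact sequence of solution spaces is forced to be exact on the right by dimension count; then Mittag--Leffler finishes as you indicate.

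In part (c), ``choose a chain of overlapping simply-connected opens linking $\Omega$ to $\Omega'$'' does not yet produce a morphism of \emph{sheaves}: a section over a Zariski open $U$ is a function on $U\cap\Omega$, so the analytic continuation must stay inside $U$, and the chain therefore has to be chosen compatibly for every Zariski open $U$ at once, so that the continuation commutes with restriction maps. The paper handles this by fixing a single curve $\gamma$ from $\Omega$ to $\Omega'$, taking a tubular neighborhood $T$ of $\gamma$, and observing that for any Zariski open $U$ the set $T\setminus(X\setminus U)$ is connected (because $X\setminus U$ has real codimension $\geq 2$) and that the resulting continuation is path-independent because $T$ is simply-connected and the monodromy of any section of $\conn_X^N$ factors through $\pi_1(X)/N$. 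Your sketch would have to be upgraded to an argument of this kind to actually produce the isomorphism of sheaves.
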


\begin{proof}
It is clear from its definition that $\conn_{X}^N(D)$ belongs to $\opMod_{D,w}^{N,rh}(\D_X)$. Take $\mc{M}\in\opMod_{D,w}^{N,rh}(\D_X)$. From (\ref{eq:sol}) we have an injective map $\phi: \Hom_{\D_X}(\mc{M},\conn_X^N(D))  \to \Sol_\Omega(\mc{M})$. Writing $\mc{M} = \bigcup \mc{M}_i$ with $\mc{M}_i$ coherent, we have
\[\Sol_\Omega(\mc{M}_i) \cong \Sol_\Omega(\mc{M}_i^w)=\Hom_{\D_X}(\mc{M}_i,\conn_X^N(D)),\]
for all $i$. Taking the limit with respect to $i$, we obtain that $\phi$ is an isomorphism, thus proving (b).

For part (a), take a monomorphism $\psi: \mc{N} \to \mc{M}$ in  $\opMod_{D,w}^{N,rh}(\D_X)$, and define an exhaustive filtration of $\mc{N}$ as $\mc{N}_i = \psi^{-1}(\mc{M}_i)$. Then $\mc{N}_i$ is also coherent, and we have monomorphisms $\psi_i : \mc{N}_i \to \mc{M}_i$. As $\rank$ is additive on holonomic modules, by part (b) we have surjective maps $\Hom_{\D_X}(\mc{M}_i,\conn_X^N(D)) \to \Hom_{\D_X}(\mc{N}_i,\conn_X^N(D))$ of finite dimensional spaces induced by $\psi$. Since the Mittag--Leffler conditions are satisfied, taking the limit with respect to $i$ gives a surjective morphism $\Hom_{\D_X}(\mc{M},\conn_X^N(D)) \to \Hom_{\D_X}(\mc{N},\conn_X^N(D))$ induced by $\psi$, showing (a).

For part (c) it is enough to take the case $D=\emptyset$. Let $\Omega' \subset X$ be another simply-connected domain. Denote by $\mc{C}$ (resp, $\mc{C}'$) the $\D_X$-module $\conn_X^N$ constructed on the domain $\Omega$ (resp. $\Omega'$). We will construct an isomorphism of $\D_X \times \pi_1(X)/N$-algebras between $\mc{C}$ and $\mc{C}'$ explicitly.

Fix the point $x$ (resp. $x'$) in $\Omega$ (resp. $\Omega'$), and take a smooth curve $\gamma:[0,1] \to X$ with $\gamma(0) = x$ and $\gamma(1)=x'$. Take an arbitrary Zariski-open subset $U \subset X$ with complement $Z=X\setminus U$. Consider a tubular neighborhood (i.e. normal tube) $T$ of the curve $\gamma$. Since $Z$ has real codimension $\geq 2$, $T\setminus S$ is still connected. Take $y \in \Omega \cap (T\setminus S)$ (resp. $y' \in \Omega \cap (T\setminus S)$), and let $\phi:[0,1] \to T\setminus S$ be a path with $\phi(0)=y$ and $\phi(1)=y'$. We define a morphism $\alpha_U$ from $\mc{C}(U)$ to $\mc{C}'(U)$ by analytic continuation of a section on $U\cap \Omega$ along the path $\phi$. Clearly, $\alpha_U$ is a well-defined isomorphism of $\Gamma(U,\D_X)\times \pi_1(X)/N$-algebras. Since the monodromy of a section of $\mc{C}(U)$ factors through $\pi_1(X)$ and $T$ is simply-connected, it follows that $\alpha_U$ does not depend on the choice of $y,y',\phi,T$. Due to this fact, it is now straightforward to see the compatibility of the construction with restriction maps, yielding an isomorphism $\alpha : \mc{C} \to \mc{C}'$ of sheaves.
\end{proof}

For a group $H$, we denote by $\opmod(H)$ the category of its finite dimensional complex representations. An object $M$ in $\opmod(H)$ is called a \defi{generator} if any $Z \in \opmod(H)$ has a surjection $M^{\oplus p} \to Z$ for some $p\in \bb{N}$.

\begin{theorem}\label{thm:generator}
The $\D_X$-module $\conn_{X}^{N}(D)$ is coherent if and only if $\opmod(\pi_1(X\!\setminus\!D)/N)$ has a generator. In this case, $\opmod(\pi_1(X\!\setminus\!D)/N)$ has finitely many irreducibles $\chi_1,\dots, \chi_k$ (up to isomorphism), whose projective covers correspond to regular meromorphic connections $\mc{I}_1,\dots ,\mc{I}_k$ on $X$ via the Riemann--Hilbert correspondence, and we have a decomposition into indecomposable $\D_X$-modules
\[\conn_X^N(D) \, \cong \, \bigoplus_{i=1}^k \, \mc{I}_i^{\, \oplus \dim \chi_i}.\]
\end{theorem}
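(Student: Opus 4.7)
The plan is to transport the entire problem across the Riemann--Hilbert correspondence: writing $H = \pi_1(X\setminus D)/N$, the solution functor $\Sol_\Omega$ gives a contravariant equivalence between the full subcategory $\mc{A}\subset \opMod_{D,w}^{N,rh}(\D_X)$ of regular meromorphic connections along $D$ with trivial $N$-monodromy, and $\opmod(H)$. For $V\in\opmod(H)$, write $\mc{M}_V:=\Sol_\Omega^{-1}(V)$; the key input throughout is Proposition~\ref{prop:connsheaf}(b), which reads
$$\dim \Hom_{\D_X}(\mc{M}_V, \conn_X^N(D)) \;=\; \dim \Sol_\Omega(\mc{M}_V) \;=\; \dim V.$$

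For the forward direction, I would assume $\conn_X^N(D)$ is coherent, so that it is itself an object of $\mc{A}$ and corresponds to some $W:=\Sol_\Omega(\conn_X^N(D))\in\opmod(H)$. Proposition~\ref{prop:connsheaf}(a) says $\conn_X^N(D)$ is injective in $\opMod_{D,w}^{N,rh}$ (hence in $\mc{A}$), which under the contravariant equivalence translates to $W$ being projective in $\opmod(H)$. Writing the Krull--Schmidt decomposition $W = \bigoplus P_i^{\oplus a_i}$ and applying the displayed formula at $V=\chi_j$, together with $\dim \Hom_H(P_i,\chi_j)=\delta_{ij}$, forces $a_j=\dim\chi_j$. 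This yields both the finiteness of the number of simples and the announced decomposition $\conn_X^N(D)\cong \bigoplus \mc{I}_j^{\oplus \dim\chi_j}$ with $\mc{I}_j:=\Sol_\Omega^{-1}(P_j)$; moreover $W$ is a generator, because it contains every indecomposable projective $P_j$ as a summand, so any $V$ (lifted via its projective cover) is a quotient of some $W^{\oplus p}$.

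For the converse, set $A:=\bb{C}[H]/\Ann(M)$, a finite-dimensional algebra giving $\opmod(H)=\opmod(A)$ with $A\cong \bigoplus P_j^{\oplus \dim\chi_j}$, and define $\mc{N}:=\Sol_\Omega^{-1}(A)=\bigoplus \mc{I}_j^{\oplus \dim\chi_j}\in \mc{A}$. Using a sufficiently generic element of $\Sol_\Omega(\mc{N})=\Hom_{\D_X}(\mc{N}, i_*^{an}\mc{O}_\Omega^{an})$ in the spirit of Lemma~\ref{lem:weylsol}, one produces an injective morphism $\iota:\mc{N}\hookrightarrow i_*^{an}\mc{O}_\Omega^{an}$ whose image lies in $\conn_X^N(D)$ by definition. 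To finish, for any other embedded $\mc{M}_V\hookrightarrow i_*^{an}\mc{O}_\Omega^{an}$ the natural map $\iota_*:\Hom_{\D_X}(\mc{M}_V,\mc{N})\to\Hom_{\D_X}(\mc{M}_V, i_*^{an}\mc{O}_\Omega^{an})$ is injective (since $\iota$ is), and both source and target have the same finite dimension $\dim V$ (using the displayed formula together with $\dim \Hom_A(A,V)=\dim V$ via RH), so $\iota_*$ is an isomorphism; in particular every such embedding of $\mc{M}_V$ factors through $\iota$, whence $\conn_X^N(D)\subseteq \iota(\mc{N})$ and the two coincide.

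The main obstacle I expect is the existence of the injective embedding $\iota$ in the converse direction, for which I plan to reduce to a cyclic presentation and invoke the generic-annihilator argument from Lemma~\ref{lem:weylsol}. A secondary point needing care is the RH convention: because $\Sol_\Omega$ is contravariant, the injective $\conn_X^N(D)$ on the $\D$-side corresponds to a \emph{projective} object on the representation-side, which is precisely the reason projective covers (rather than injective envelopes) appear in the statement.
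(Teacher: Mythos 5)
Your overall architecture is sound and, in the converse direction, genuinely different from the paper's. The paper never constructs the module $\mc{N}$ or the embedding $\iota$; instead it argues that every coherent $\D_X$-submodule $\mc{M}\subset\conn_X^N(D)$ has length bounded by $\sum_i\rank\mc{P}_i$, via the multiplicity computation $[\mc{M}:\mc{S}_i]=\dim\Hom(\mc{P}_i,\mc{M})\leq\rank\mc{P}_i$, which forces $\conn_X^N(D)$ to already be coherent; the decomposition then falls out from injectivity (Proposition~\ref{prop:connsheaf}(a)) and the multiplicity count $m_i=\dim\Hom(\mc{S}_i,\conn_X^N)=\rank\mc{S}_i$. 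Your approach buys an explicit identification $\conn_X^N(D)\cong\iota(\mc{N})$; the paper's buys a cleaner avoidance of the embedding problem. Your forward direction is essentially the paper's argument repackaged, and it does deliver the decomposition a little more directly by reading off the Krull--Schmidt multiplicities of $W=\Sol_\Omega(\conn_X^N(D))$.

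However, the step you flag is a real gap, and the fill you propose does not close it. Lemma~\ref{lem:weylsol} says $\Ann(h)=\mc{I}^w$ for \emph{any} nonzero $h\in\Sol_\Omega(\mc{I})$ only under the hypothesis that $(\bb{D}(\mc{M}^w))^w$ is simple; without that hypothesis a generic $h$ need not have minimal annihilator (e.g.\ for $\mc{M}=\D_\C/\D_\C\partial^2$ a generic solution $a+bx$ is also killed by $(a+bx)\partial-b$). Here $\mc{N}=\bigoplus_j\mc{I}_j^{\oplus\dim\chi_j}$ is far from satisfying such a simplicity condition, so reducing to a cyclic presentation and invoking that lemma will not produce an injective $\iota$. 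What does work is a socle argument: since each $\mc{I}_j$ is the injective envelope of $\mc{S}_j$, the socle $\operatorname{soc}\mc{N}=\bigoplus_j\mc{S}_j^{\oplus\dim\chi_j}$ is essential, so $\iota$ is injective iff $\iota|_{\operatorname{soc}\mc{N}}$ is; one has $\dim\Hom_{\D_X}(\mc{S}_j,\conn_X^N(D))=\rank\mc{S}_j=\dim\chi_j$ by Proposition~\ref{prop:connsheaf}(b), so one can choose $\dim\chi_j$ linearly independent solutions of each $\mc{S}_j$ to embed $\operatorname{soc}\mc{N}$, and then extend to all of $\mc{N}$ using the injectivity of $\conn_X^N(D)$ in $\opMod_{D,w}^{N,rh}(\D_X)$ from Proposition~\ref{prop:connsheaf}(a) (the extension stays injective because the socle is essential). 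With this repair the rest of your argument --- the dimension count showing $\iota_*$ is an isomorphism and hence $\conn_X^N(D)\subseteq\iota(\mc{N})$ --- goes through correctly.
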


\begin{proof}
Consider the case $D=\emptyset$. Assume first that $\conn_X^N$ is coherent, in which case it must be a regular connection, since it belongs to $\opMod_{D,w}^{N.rh}(\D_X)$. Denote by $P$ the representation of $\pi_1(X)/N$ which corresponds to $\conn_X^N$ by the Riemann--Hilbert correspondence. By Proposition \ref{prop:connsheaf}(a) $P$ is a projective module in $\opmod(\pi_1(X)/N)$. Moreover, by Proposition \ref{prop:connsheaf}(b)  $P$ surjects onto any irreducible representation of $\pi_1(X)/N$. This implies that $P$ is a (pro)generator in $\opmod(\pi_1(X)/N)$.

Conversely, assume that $\opmod(\pi_1(X)/N)$ has a generator. It follows by the Gabriel--Popescu theorem \cite{gabpop} that the category $\opmod(\pi_1(X)/N)$ is equivalent to the category of finite dimensional reprentations of a finite dimensional algebra (see also \cite[Theorem 2.11]{paquette}). In particular, $\opmod(\pi_1(X)/N)$ has finitely many simples $\chi_1,\dots, \chi_k$ corresponding to connections $\mc{S}_1, \dots, \mc{S}_k$, with projective covers $P_1,\dots,P_k$ (resp. injective envelopes $I_1,\dots, I_k$) in $\opmod(\pi_1(X)/N)$ and corresponding regular connections $\mc{I}_1,\dots ,\mc{I}_k$ (resp. $\mc{P}_1,\dots,\mc{P}_k$) on $X$.

Take an arbitrary coherent $\D_{X}$-submodule $\mc{M}\subset \conn_X^N$, which must be regular holonomic. We want to show that the length $l(\mc{M})$ of $\mc{M}$ is bounded above by $b:=\sum_{i=1}^k \rank \mc{P}_i$. For any $1\leq i \leq k$, we denote by $[\mc{M}:\mc{S}_i]$ the multiplicity of $\mc{S}_i$ in a composition series of $\mc{M}$. By Proposition \ref{prop:connsheaf}(c), we have the following 
\begin{equation}\label{eq:multrank}
[\mc{M}:\mc{S}_i] = \dim \Hom_{\D_X}(\mc{P}_i, \mc{M}) \, \leq \, \dim \Hom_{\D_X}(\mc{P}_i, \conn_X^N) = \rank \mc{P}_i,
\end{equation}
So $l(\mc{M})\leq b$, as we claimed. Since $\mc{M}$ was an arbitrary finitely generated $\D$-submodule of $ \conn_X^N$, this shows that $\conn_X^N$ is itself a regular connection, of length $b$. 

By Proposition \ref{prop:connsheaf}(b), we must have a decomposition
\[
\conn_X^N \, = \, \bigoplus_{i=1}^k \,\, \mc{I}_i^{\,\oplus  m_i},
\]
for some $m_i \in \bb{N}$. Using this, we obtain as in (\ref{eq:multrank}) the equalities
\[m_i = \dim \Hom_{\D_X}(\mc{S}_i, \conn_{N,X}) = \rank \mc{S}_i = \dim \chi_i.\]

When $D$ is not empty, we are left to show is that the pushforward $j_* \mc{I}$ along $j: X\!\setminus\!D \to X$ of an indecomposable holonomic $\D_{X\!\setminus D}$-module $\mc{I}$ is indecomposable. This follows by Lemma \ref{lem:simpmon} (c).
\end{proof}

\begin{remark}
As seen in the proof above, the condition that $\opmod(H)$ has a generator means that it is equivalent to the category of finite dimensional modules of a finite dimensional algebra. Since the requirement is only on the level of finite dimensional modules, this is a much weaker condition than Morita-equivalence -- in fact, the latter holds only for finite groups \cite[Theorem 1]{connell}. For example, the Higman group \cite{higman} has no non-trivial finite dimensional representations.
\end{remark}

Let $h$ be analytic in some domain $\Omega\subset X$. We say that the function $h$ is \emph{algebraic} on $X$ if for some (hence, any) open affine $U\subset X$, its restriction $\left.h\right|_{\Omega\cap U}$ satisfies a polynomial equation with coefficients in $\bb{C}[U]$. We define the \defi{sheaf of algebraic functions} $\alg_X$ on $X$ as we did for $\conn_{X}$ by requiring additionally $h$ to be algebraic in its definition.

A basic result in differential Galois theory is that a function $h$ is algebraic if and only if it is of Nilsson class with finite monodromy, in which case this coincides with its Galois group $\op{Gal}(K(h)/K)$, where $K=\bb{C}(X)$. 
In other words, we have as $\D_X \times \pi_1(X)$-modules
\begin{equation}\label{eq:algfin}
\alg_X = \displaystyle\sum_{\substack{N \triangleleft \, \pi_1(X) \\ |\pi_1(X):\,H| \text{ finite}}} \!\!\!\! \conn_X^N \qquad \subset \conn_X.
\end{equation}

Similarly, for a hypersurface $D$ with embedding $j:X\!\setminus\!D \to X$, we put $\alg_X(D)=j_* \alg_{X\!\setminus\!D}$ for the sheaf of algebraic functions with singularities along $D$. Note that the discriminant of a (monic) polynomial equation that is satisfied by a (local) section of $\alg_X(D)$ vanishes along $D$. 

We have a precise $\D$-module-theoretic description of $\alg_X(D)$ as follows. For a finite dimensional (complex) representation $\chi$ of $\pi_1(X\!\setminus\!D)$, let $\mc{S}^\chi$ be the simple $\D_X$-module whose restriction to $X\!\setminus\! D$ corresponds to $\chi$ via the Riemann--Hilbert correspondence. Consider the \'etale fundamental group $\pi_1^{\op{\acute{e}t}}(X\!\setminus\!D)$ of $X$, which is equal to the profinite completion of $\pi_1(X\!\setminus\!D)$. We write $\Lambda(\pi_1^{\op{\acute{e}t}}(X\!\setminus\!D))$  for the set of all isomorphism classes of (continuous) finite dimensional irreducible representations of $\pi_1^{\op{\acute{e}t}}(X\!\setminus\!D)$.

\begin{theorem}\label{thm:algdecomp}
The algebra $\alg_X(D)$ has a direct sum decomposition into indecomposable $\D_X \times \pi_1(X\!\setminus\!D)$-modules as:
\[\alg_X(D) = \bigoplus_{\chi\in \, \Lambda(\pi_1^{\op{\acute{e}t}}(X\!\setminus\!D))} \! \mc{S}^\chi(*D) \oo \chi.\]
\end{theorem}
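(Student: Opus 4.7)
The plan is to use \eqref{eq:algfin} to reduce to analyzing each $\conn_X^N(D)$ for finite-index normal $N \triangleleft \pi_1(X\!\setminus\!D)$, apply Theorem~\ref{thm:generator} to each such $N$, then pass to the colimit over $N$.

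First, I fix a finite-index normal $N\triangleleft \pi_1(X\!\setminus\! D)$ and set $G = \pi_1(X\!\setminus\! D)/N$. Since $G$ is finite, Maschke's theorem gives semisimplicity of $\bb{C}[G]$; in particular $\opmod(G)$ admits a generator (the regular representation), and every irreducible representation of $G$ coincides with its own projective cover. Theorem~\ref{thm:generator} then yields a $\D_X$-module decomposition
\[\conn_X^N(D) \cong \bigoplus_{\chi} \mc{S}^\chi(*D)^{\oplus \dim \chi},\]
where $\chi$ runs over the irreducibles of $G$: the regular meromorphic connection attached to the projective cover $\chi$ is exactly $\mc{S}^\chi(*D)$, since its underlying local system on $X\!\setminus\!D$ is $\chi$ itself.

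Next, I upgrade this to a $\D_X\times G$-equivariant decomposition. By Proposition~\ref{prop:connsheaf}(b), the multiplicity space of $\mc{S}^\chi(*D)$ in $\conn_X^N(D)$ equals
\[\Hom_{\D_X}(\mc{S}^\chi(*D),\, \conn_X^N(D)) \;\cong\; \Sol_\Omega(\mc{S}^\chi(*D))\]
as a $G$-module, and the latter is identified with $\chi$ via the Riemann--Hilbert correspondence applied to horizontal sections on the simply-connected $\Omega$. This gives $\conn_X^N(D) \cong \bigoplus_\chi \mc{S}^\chi(*D) \oo \chi$ as $\D_X\times G$-modules.

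Finally I pass to the colimit. By \eqref{eq:algfin}, $\alg_X(D) = \bigcup_N \conn_X^N(D)$, and for $N' \subset N$ the inclusion $\conn_X^N(D) \subset \conn_X^{N'}(D)$ respects isotypic components, since every irreducible of $\pi_1/N$ inflates to an irreducible of $\pi_1/N'$. Continuous finite-dimensional irreducible representations of $\pi_1^{\op{\acute{e}t}}(X\!\setminus\!D)$ are precisely those of $\pi_1(X\!\setminus\!D)$ factoring through some finite quotient, so the indexing set of the colimit is exactly $\Lambda(\pi_1^{\op{\acute{e}t}}(X\!\setminus\!D))$. Indecomposability of each summand $\mc{S}^\chi(*D)\oo\chi$ as a $\D_X\times \pi_1$-module is immediate from the irreducibility of $\chi$ together with the pairwise non-isomorphism of the $\mc{S}^\chi(*D)$ as $\D_X$-modules. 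The main subtlety I expect is the explicit identification of $\Sol_\Omega(\mc{S}^\chi(*D))$ with $\chi$ (rather than $\chi^*$) as a $G$-module; this is a bookkeeping matter of fixing a direction for the Riemann--Hilbert correspondence and does not pose a substantive obstacle.
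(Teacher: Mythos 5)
Your proposal is correct and follows essentially the same route as the paper: reduce via \eqref{eq:algfin} to finite-index $N$, apply Maschke and Theorem~\ref{thm:generator} to decompose $\conn_X^N(D)$ as a $\D_X$-module, identify the multiplicity spaces with the corresponding irreducible $G$-representations, and pass to the colimit over $N$. The one place where you diverge from the paper is in how you upgrade the $\D_X$-decomposition to a $\D_X\times G$-decomposition: the paper first reduces to $D=\emptyset$, so that the summands $\mc{S}^{\chi_i}$ are genuine \emph{simple} $\D_X$-modules, and then applies Schur's lemma directly to the $\D_X$-endomorphism $g\in\pi_1(X)$, obtaining $\conn_X^N\cong\bigoplus \mc{S}^{\chi_i}\oo V_i$ and identifying $V_i=\chi_i$ from the tautological observation that a function generating $\mc{S}^{\chi_i}$ has monodromy $\chi_i$; the $D\neq\emptyset$ case is then recovered by applying $j_*$, which preserves direct sums and indecomposability by Lemma~\ref{lem:simpmon}(c). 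You instead work directly with the meromorphic connections $\mc{S}^\chi(*D)$, which are only indecomposable, not simple; this means that to read off the multiplicity space as $\Hom_{\D_X}(\mc{S}^\chi(*D),\conn_X^N(D))$ and to know the evaluation map is an isomorphism, you implicitly need $\End_{\D_X}(\mc{S}^\chi(*D))\cong\bb{C}$ and $\Hom_{\D_X}(\mc{S}^\chi(*D),\mc{S}^{\chi'}(*D))=0$ for $\chi\neq\chi'$. These do hold -- $j_*$ is fully faithful on connections, so the computation reduces to Schur's lemma for the simple restrictions $j^*\mc{S}^\chi$ -- but you should spell this out (or simply make the same $D=\emptyset$ reduction the paper makes to sidestep it). The $\chi$-vs-$\chi^*$ point you flag is indeed just a matter of matching the paper's Riemann--Hilbert convention, which is pinned down by the paper's own remark that functions generating $\mc{S}^{\chi}$ have monodromy $\chi$.
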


\begin{proof}
As in the proof of Theorem \ref{thm:generator}, we can assume for simplicity that $D=\emptyset$. Take any $N\triangleleft\, \pi_1(X)$) of finite index. By Theorem \ref{thm:generator} and Maschke's Theorem, $\conn_{X}^{N}\subset \alg_X$ is a semi-simple regular connection with a $\D_X$-module decomposition 
\begin{equation}\label{eq:commute}
\conn_{X}^{N} = \bigoplus_{i=1}^k (\mc{S}^{\chi_i})^{\oplus \dim \chi_i}.
\end{equation}
Take any $g\in \pi_1(X)$. Since the action of $\D_X$ and $\pi_1(X)$ commute, $g$ induces a $\D_X$-module endomorphism of $\conn_{X}^{N}$. As the modules $\mc{S}^{\chi_i}$ are simple, by Schur's lemma we get a $\D_X \times \pi_1(X)$ decomposition (as we vary $g\in \pi_1(X)$) of the form
\[\conn_{X}^{N} = \bigoplus_{i=1}^k \mc{S}^{\chi_i} \oo V_i,\]
where $V_i$ is a representation of $\pi_1(X)$ with $\dim V_i = \dim \chi_i$. By construction, functions generating a $\D_X$-module isomorphic to $\mc{S}^{\chi_i}$ have monodromy $\chi_i$. Therefore, $V_i = \chi_i$ and $\conn_{X}^{N}$ is a semi-simple $\D_X \times \pi_1(X)$-module. By (\ref{eq:algfin}) $\alg_X$ is then a semi-simple $\D_X \times \pi_1(X)$-module with the required decomposition.
\end{proof}

Note that the above proof shows that the natural map $\bb{C}[\pi_1(X\!\setminus\!D)/N] \xrightarrow{\,\cong\,} \End_{\D_X}(\conn_X^N(D))$ is an algebra isomorphism. The semi-simplicity of $\alg_X$ as a $\D_X$-module follows also from the Decomposition Theorem (see Theorem \cite[Theorem 8.2.26]{htt}) applied to a map $p_1$ in a construction similar to (\ref{eq:riemann}).

Based on the result above, we give a procedure to compute (part of) the Bernstein--Sato polynomial of an algebraic function, by reducing the calculation to (a single copy of) each indecomposable $\D_X$-module.

\begin{prop}\label{prop:bfunalg}
Let $h$ be an algebraic function with $\Sing \D_X h =D$ and Galois group $H$. Write 
\[h = \sum_{\chi\in \, \Lambda(H)} h_\chi, \quad \mbox{ where } \, h_\chi \in \Gamma(X, \, \mc{S}^\chi(*D) \oo \chi) \,\, \bigcap \,\, \bb{C}[H] \cdot h. \]
\begin{itemize}
\item[(a)] For some $k\in \bb{N}$ we have $ \underset{\chi\in \, \Lambda(H)}{\op{lcm}} b_{h_\chi,D}(s) \,\,\, \vert  \,\,\, b_{h}(s) \,\,\, \vert \,\,\,\,  \underset{\chi\in \, \Lambda(H)}{\op{lcm}} b_{h_\chi,D}(s) \cdots b_{h_\chi,D}(s+k)$.
\item[(b)] Fix $\chi \in \Lambda(H)$. There exists $h_{\chi}' \in  \Gamma(X, \, \mc{S}^\chi(*D))$ such that   $h_{\chi}'\oo v \in \bb{C}[H] \cdot h_\chi$, with $v\in \chi$. For such a function $h_{\chi}'$, we have $b_{h_{\chi}',D}(s) \,\, \vert \,\, b_{h_\chi,D}(s)$.
\end{itemize}
\end{prop}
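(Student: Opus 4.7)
Both parts rest on the $\D_X\times H$-decomposition $\alg_X(D)=\bigoplus_\chi\mc{S}^\chi(*D)\oo\chi$ from Theorem~\ref{thm:algdecomp}. Since $H$ is finite (being the Galois group of an algebraic function), $\bb{C}[H]$ is semisimple, and the summand $\mc{S}^\chi(*D)\oo\chi$ is the image of the central idempotent $e_\chi\in\bb{C}[H]$; thus $h_\chi=e_\chi\cdot h$, so the requirement $h_\chi\in\bb{C}[H]\cdot h$ is automatic.

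For part (a), the lower divisibility follows by iterating Lemma~\ref{lem:bsum}. Fix $\chi_0\in\Lambda(H)$ and set $g_1=h_{\chi_0}$, $g_2=\sum_{\chi\neq\chi_0}h_\chi$. The inclusions $\D_Xg_1\subset\mc{S}^{\chi_0}(*D)\oo\chi_0$ and $\D_Xg_2\subset\bigoplus_{\chi\neq\chi_0}\mc{S}^\chi(*D)\oo\chi$ land in complementary direct summands of $\alg_X(D)$, whence $\D_Xg_1\cap\D_Xg_2=0$. Lemma~\ref{lem:bsum} then yields $\op{lcm}(b_{h_{\chi_0},D}(s),b_{g_2,D}(s))\mid b_{h,D}(s)$, and induction on $|\Lambda(H)|$ gives $\op{lcm}_\chi b_{h_\chi,D}(s)\mid b_h(s)$. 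For the upper bound we apply Lemma~\ref{lem:bsum2} to the decomposition $h=\sum_\chi h_\chi$: on $U=X\setminus D$ each $\D_Uh_\chi$ embeds into $(\mc{S}^\chi)^{\oplus\dim\chi}$ and therefore admits only $\mc{S}^\chi$ as a simple composition factor, while the $\mc{S}^\chi$ are pairwise non-isomorphic simple $\D_U$-modules. Taking $k$ to be the maximum of the exponents $d_\chi+p_\chi$ produced by Lemma~\ref{lem:bsum2} delivers the upper divisibility.

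For part (b), the key input is Burnside's theorem: since $\chi$ is an irreducible finite-dimensional complex representation of the finite group $H$, the image of $\bb{C}[H]$ in $\End_\bb{C}(\chi)$ equals $\End_\bb{C}(\chi)$. Choose any nonzero $v\in\chi$ and pick $\phi\in\bb{C}[H]$ acting on $\chi$ as the projection onto $\bb{C} v$. Writing $h_\chi=\sum_i s_i\oo v_i$ with $s_i\in\Gamma(X,\mc{S}^\chi(*D))$ and $\phi(v_i)=c_iv$, set $h_\chi':=\sum_ic_is_i\in\Gamma(X,\mc{S}^\chi(*D))$, so that $\phi\cdot h_\chi=h_\chi'\oo v\in\bb{C}[H]\cdot h_\chi$. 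Since the actions of $\D_X$ and $\bb{C}[H]$ on $\alg_X(D)$ commute, applying $\phi$ to an equation $P(s)\cdot f^{s+1}h_\chi=b_{h_\chi,D}(s)\cdot f^sh_\chi$ yields the same equation for $h_\chi'\oo v$; under the $\D_X$-embedding $\mc{S}^\chi(*D)\hookrightarrow\mc{S}^\chi(*D)\oo\chi$, $s\mapsto s\oo v$, this is precisely the defining equation for $h_\chi'$, so $b_{h_\chi',D}(s)\mid b_{h_\chi,D}(s)$.

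The most delicate step is the disjointness $\D_Xg_1\cap\D_Xg_2=0$ in part~(a): it is not visible at the level of functions and is really a global statement about $\alg_X(D)$ supplied by Theorem~\ref{thm:algdecomp}. Everything else is essentially formal once Lemmas~\ref{lem:bsum} and \ref{lem:bsum2} and Burnside's theorem are in hand.
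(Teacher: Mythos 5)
Your proof is correct and follows essentially the same route as the paper: part (a) just expands the paper's terse ``follows readily from Lemmata \ref{lem:bsum}, \ref{lem:bsum2} and Theorem \ref{thm:algdecomp}'', and in part (b) you replace the paper's appeal to Schur's lemma with Burnside's density theorem, which plays the same role here. One point to tighten in (b): for an arbitrary $v$, a rank-one projection $\phi$ onto $\bb{C}v$ may kill $h_\chi$ (giving $h_\chi'=0$ and a vacuous conclusion), so one should note -- as the paper does via Schur's lemma, which gives $\bb{C}[H]\cdot h_\chi = W'\oo\chi$ with $W'\neq 0$ -- that $v$ and $\phi$ can be chosen with $\phi\cdot h_\chi\neq 0$.
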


\begin{proof}
Part (a) follows readily from Lemmata \ref{lem:bsum}, \ref{lem:bsum2} and Theorem \ref{thm:algdecomp}. For part (b), let $h_\chi=\sum_{i=1}^n w_i \oo v_i$ with $v_i\in \chi$, and write $W= \op{span}\{w_1,\dots,w_n\} \subset \Gamma(X, \, \mc{S}^\chi(*D))$. Let $Z=\bb{C}[H] \cdot h_\chi \subset W \oo \chi$. By Schur's lemma, we must have $Z = W' \oo \chi$ as an $H$-representation with $\emptyset \neq W'\subset W$ and $H$ acting trivially on $V$. Then we can pick any non-zero $h'_\chi  \oo v \in W' \oo \chi$. Similarly, we see by Schur's lemma that $h_\chi \in \bb{C}[H] \cdot h$. Since the action of $H$ and $\D_X$ commute, clearly $b_{z,D}(s) \, \vert \, b_{h_\chi,D}(s)$ for any $z \in Z$.
\end{proof}

In particular, by the above and Lemma \ref{lem:bgen} we obtain that for algebraic functions along a fixed hypersurface $D$ and with a fixed Galois group $\pi_1(X\!\setminus\!D) \to H$, there are only a finite number of possibe roots for their Bernstein--Sato polynomials modulo the integers.
Also, note that the decomposition in Proposition \ref{prop:bfunalg} takes place in the Galois extension of $h$.

\section{$G$-finite functions}\label{sec:gfin}

Throughout $G$ stands for a connected affine algebraic group, acting algebraically on a connected smooth complex algebraic variety $X$.

\subsection{Equivariant $\D$-modules} \label{sec:equivd}

A \defi{rational representation} of $G$ is a vector space on which $G$ acts by linear transformations such that any of its elements is contained in a finite dimensional $G$-stable subspace on which $G$ acts algebraically.

A $\D_X$-module $\mc{M}$ is \defi{equivariant} if we have a $\D_{G\times X}$-isomorphism $p^*\mc{M} \rightarrow m^*\mc{M}$, where $p: G\times X\to X$ is the projection and $m: G\times X\to X$ the action map, satisfying the usual compatibility conditions (see \cite[Definition 11.5.2]{htt}). 

We denote by $\mf{g}$ the Lie algebra of $G$. Differentiating the $G$-action on $X$ we get a map from $\mf{g}$ to space of vector fields on $X$, which in turn yields a map $\mf{g} \to \D_X$. When $X$ is affine, equivariance of a $\D_X$-module means that the $\mf{g}$-action induced by latter map can be integrated to a rational $G$-action. The category $\opMod_G(\D_X)$ (resp. $\opmod_G(\D_X)$) of equivariant $\D$-modules is a full subcategory of the category $\opMod(\D_X)$ (resp. $\opmod(\D_X)$) of all $\mc{O}_X$-quasi-coherent (resp. coherent) $\D$-modules, closed under taking subquotients. Equivariance is a functorial property, preserved along equivariant maps, duality etc.

\begin{theorem}\label{thm:orbit}
Let $G/H$ be a homogeneous space. Then any equivariant coherent $\D_{G/H}$-module is regular holonomic, and the category $\opmod_G(\D_{G/H})$ is equivalent to the category of finite dimensional representations of the finite group $\Gamma = H/H^0$ (where $H^0$ denotes the connected component of $H$ containing the identity).
\end{theorem}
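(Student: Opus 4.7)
The plan is to establish the equivalence via the fiber functor $\Phi\colon \mathcal{M}\mapsto \mathcal{M}/\mathfrak{m}_{eH}\mathcal{M}$ at the base coset $eH$, showing along the way that every $\mathcal{M}\in\opmod_G(\D_{G/H})$ is a regular connection. For the first step, note that $\Char \mathcal{M}$ is a closed conic $G$-stable subset of $T^*(G/H)$. A standard structural fact for equivariant $\D$-modules states that on a variety with finitely many $G$-orbits, the characteristic variety lies in the union of the conormal bundles to the orbits; since $G/H$ consists of a single orbit, $\Char\mathcal{M}\subset T^*_{G/H}(G/H)$, the zero section. Hence $\mathcal{M}$ is a connection, hence $\mathcal{O}$-locally free of finite rank, and $V:=\Phi(\mathcal{M})$ is a finite-dimensional vector space.

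Next I would show that $V$ is naturally a representation of $\Gamma=H/H^0$. The fiber $V$ inherits an algebraic $H$-action from the $G$-equivariant $\mathcal{O}$-module structure (the stabilizer of $eH$ preserves $\mathcal{M}_{eH}$), and differentiating yields an $\mathfrak{h}$-action on $V$. On the other hand, the infinitesimal action map $\mathfrak{g}\to \D_{G/H}$ sends $\mathfrak{h}$ to vector fields on $G/H$ vanishing at $eH$; any such vector field, locally expressible as $\sum f_i \partial_i$ with $f_i\in \mathfrak{m}_{eH}$, acts on $V=\mathcal{M}/\mathfrak{m}_{eH}\mathcal{M}$ as zero. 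Thus the induced $\mathfrak{h}$-action on $V$ via the $\D$-structure vanishes, and the equivariance compatibility condition forces the differentiated $H$-action on $V$ to vanish as well. Since $H^0$ is connected, it acts trivially on $V$, so $V$ factors to a finite-dimensional representation of $\Gamma$.

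For the quasi-inverse, given $V\in\opmod(\Gamma)$, promote it to an $H$-representation with trivial $H^0$-action and form the associated bundle $\Psi(V):=G\times^H V$. The trivial flat connection on $G\times V$ is $H$-equivariant (using precisely that $\mathfrak{h}$ acts as zero on $V$), so it descends to a $G$-equivariant connection on $\Psi(V)$; a direct check then shows that $\Phi$ and $\Psi$ are mutually quasi-inverse equivalences of categories. Regularity finally follows because the monodromy representation of $\Psi(V)$ factors through the finite group $\Gamma$, and any connection with finite monodromy on a smooth variety is regular. I expect the main obstacle to be the first step: if one wishes to avoid invoking the general theorem on equivariant characteristic varieties, one can argue directly that $\mathcal{M}$ is $\mathcal{O}$-coherent by using the $G$-action to spread a local $\D$-coherent generating set into a global $\mathcal{O}$-coherent one, and then apply Lemma \ref{lem:entire} (together with $G$-stability of $\Sing\mathcal{M}$) to conclude $\mathcal{M}$ is a connection.
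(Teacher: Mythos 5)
Your proposal is essentially correct and is the standard argument for this classical result. The paper itself states the theorem without proof (it is a well-known fact, implicit in references such as \cite[Theorem~11.6.1]{htt}), and then only records the inverse construction: $\mc{S}^V = (\pi_*(\O_G\oo V^*))^H$ with the $\D_{G/H}$-structure inherited through $(\pi_*\D_G)^H\to\D_{G/H}$, which coincides with your $\Psi(V)=G\times^H V$ up to a dualization convention. Your three main steps are all sound: (i) for an equivariant coherent $\D$-module, $\Char\mc{M}\subset\mu^{-1}(0)$, and on a single orbit the zero fiber of the moment map is exactly the zero section, so $\mc{M}$ is an $\O$-coherent connection; (ii) the differentiated $H$-action on the fiber $V=\mc{M}_{eH}/\m_{eH}\mc{M}_{eH}$ agrees, by the strong-equivariance compatibility, with the $\D$-action of $\mathfrak h$ through vector fields vanishing at $eH$, and hence kills $H^0$, so $V$ is a $\Gamma$-representation; (iii) the monodromy of $\Psi(V)$ factors through the surjection $\pi_1(G/H)\twoheadrightarrow\pi_0(H)=\Gamma$ coming from the fibration $G\to G/H$, and a connection with finite monodromy is regular (equivalently, $\Psi(V)$ is a direct summand of the pushforward of $\O_{G/H^0}$ along the finite \'etale cover $G/H^0\to G/H$, which preserves regularity).

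One caveat: the alternative route you sketch in your final sentence does not work as stated. A $\D$-coherent module is only locally generated as a $\D$-module by finitely many sections, and applying $G$-translates to such a generating set does not yield $\O$-coherence; moreover Lemma~\ref{lem:entire} presupposes $\rank\mc{M}<\infty$ and a bound on $\Sing\mc{M}$, and $G$-stability of $\Sing\mc{M}$ on a single orbit merely says $\Sing\mc{M}\in\{\emptyset,\,G/H\}$, which does not exclude the latter for a non-holonomic coherent $\D$-module. You do need the moment-map bound on the characteristic variety (or some equivalent argument with an equivariant good filtration) for the first step.
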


Let us construct the equivariant connections on $\D_{G/H}$ explicitly. Denote by $\pi: G \to G/H$ the principal fibration. For a finite dimensional representation $V$ of $\Gamma$, we put the corresponding connection $\mc{S}^V$ to have $\O_{G/H}=(\pi_*\O_G)^H$-structure $(\pi_*(\O_G \oo V^*))^H$, whose spectrum is the homogeneous vector bundle $G\times_H V$. It inherits a $\D_{G/H}$-module action from its $(\pi_*\D_{G})^H$-action, through the surjective map
\begin{equation}\label{eq:invdiff}
(\pi_*\D_{G})^H \longrightarrow \D_{G/H}.
\end{equation}
This descends to an action on $\mc{S}^V=(\pi_*(\O_G \oo V^*))^H$, as $(\pi_*\D_{G})^H$ is locally isomorphic to $\D_{G/H} \oo U\mathfrak{h}$, and $U\mathfrak{h}$ acts trivially on the fibers $V^*$ (here $U\mathfrak{h}$ is the universal enveloping algebra of $\mathfrak{h}$).

When $G$ acts on $X$ with finitely many orbits as in the cases (1)--(6) in Section \ref{sec:class}, every module in $\opmod_G(\D_X)$ is regular and holonomic \cite[Theorem~11.6.1]{htt}, and the category $\opmod_G(\D_X)$ is equivalent to the category of finitely generated modules over a finite dimensional algebra (see \cite[Theorem 4.3]{vilonen} or  \cite[Theorem 3.4]{catdmod}). For more details on categories of equivariant $\D$-modules, cf. \cite{catdmod}.

A special class of objects in $\opmod_G(\D_X)$ come from local cohomology functors $\mc{H}^i_Z(\bullet)$, for $Z$ a $G$-stable (locally) closed subset in $X$. Namely, for any $i\geq 0$ and $\mc{M}\in\opMod_G(\D_X)$ we denote the $i$-th local cohomology module of $\mc{M}$ with support in $Z$ by $\mc{H}^i_Z(\mc{M})$, which is an element of the category $\opMod_G^Z(\D_X)$ of equivariant $\D_X$-modules supported in $Z$.

Given a rational representation $V$ of $G$, we can induce a corresponding $G$-equivariant $\D_X$-module by (see \cite[Section 2]{catdmod})
\begin{equation}\label{eq:PV}
\mc{P}(V):= \D_X \oo_{U\lie} V.
\end{equation}
For an equivariant $\D$-module $\mc{M}$, we have
\begin{equation}\label{eq:PVHom}
\Hom_{\D_X} (\mc{P}(V), \mc{M}) = \Hom_{G}(V,\Gamma(X,\mc{M})).
\end{equation}

For the rest of this subsection, we assume that $G$ is a (connected) linearly reductive group.

Let $\LL(G)$ be the set of isomorphism classes of finite dimensional irreducible representations of the group $G$, which can be identified with the set of dominant integral weights. For $\ll\in\LL(G)$, we denote by $\ll^*$ the dual representation. Any rational $G$-module is semi-simple.

We call a rational $G$-representation $N$ is \defi{admissible} if each representation in $\LL(G)$ appears (up to isomorphism) with finite multiplicity in $N$. In this case $N$ decomposes as
\[
N\,\cong \,\bo_{\ll \in \LL(G)} \, V_\ll^{\,\oplus m_{\ll}(N)},
\]
with $m_{\ll}(N)\in\bb{N}$. We write $N_\lambda \cong  V_\ll^{\oplus m_{\ll}(N)}$ for the isotypical component corresponding to $\ll\in \LL(G)$.

In the case when $G$ acts on $X$ with finitely many orbits, the space of sections of any $\mc{M}\in \op{mod}_G(\D_X)$ has an admissible $G$-module structure \cite[Proposition 3.14]{catdmod}.

For an irreducible representation $V_\ll$  of dominant weight $\lambda$, we abbreviate $\mc{P}(\ll):=\mc{P}(V_\ll)$.

When $X$ is a $G$-module, another construction of objects in $\opmod_G(\D_X)$ comes from considering the (twisted) Fourier transform \cite[Section 4.3]{catdmod}. This functor gives a self-equivalence
\[\F : \opmod_G(\D_X) \xrightarrow{\sim} \opmod_G(\D_X).\]
For $\mc{M}\in \opmod_G(\D_X)$ we have as $G$-modules
\begin{equation}\label{eq:fourier}
\F(\mc{M}) \cong \mc{M}^* \cdot  \det X.
\end{equation}

By the same reasoning, we have the following result.

\begin{lemma}\label{lem:four}
For a finite dimensional rational representation $V$ of $G$, we have $\F(\mc{P}(V)) = \mc{P}(V^* \oo \det X)$.
\end{lemma}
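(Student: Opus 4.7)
The plan is to invoke the Yoneda lemma by showing that for every $\mc{N} \in \opmod_G(\D_X)$ one has a natural isomorphism $\Hom_{\D_X}(\F(\mc{P}(V)), \mc{N}) \cong \Hom_{\D_X}(\mc{P}(V^*\oo\det X), \mc{N})$. The right-hand side is immediate from \eqref{eq:PVHom}, namely $\Hom_{\D_X}(\mc{P}(V^*\oo\det X), \mc{N}) \cong \Hom_G(V^*\oo\det X, \Gamma(X, \mc{N}))$. For the left-hand side, since $\F$ is a self-equivalence one has $\Hom_{\D_X}(\F(\mc{P}(V)), \mc{N}) \cong \Hom_{\D_X}(\mc{P}(V), \F^{-1}(\mc{N}))$, which by a second application of \eqref{eq:PVHom} equals $\Hom_G(V, \Gamma(X, \F^{-1}(\mc{N})))$.

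Next I would invoke \eqref{eq:fourier}: applied to $\mc{M} = \F^{-1}(\mc{N})$ it gives $\mc{N} \cong (\F^{-1}\mc{N})^* \oo \det X$ as $G$-modules, and dualizing (together with the reflexivity of admissible $G$-modules and the one-dimensionality of $\det X$) yields $\F^{-1}(\mc{N}) \cong \mc{N}^* \oo \det X$. The remaining identity then reduces to a routine $G$-module adjunction, $\Hom_G(V, W^* \oo \det X) \cong \Hom_G(V^* \oo \det X, W)$, valid for finite-dimensional $V$, one-dimensional $\det X$, and admissible $W = \Gamma(X, \mc{N})$; it follows by decomposing $W$ into isotypic components and matching multiplicities via Schur's lemma, as both sides count the multiplicity in $W$ of the single irreducible constituent $V^* \oo \det X$. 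By Yoneda the desired isomorphism $\F(\mc{P}(V)) \cong \mc{P}(V^*\oo\det X)$ follows.

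The main delicate point will be to ensure that the $G$-module isomorphism \eqref{eq:fourier} is sufficiently natural in $\mc{M}$ to propagate through the chain above; this should be built into the construction of $\F$ in \cite{catdmod}, but if preferred one can argue directly with the presentation $\mc{P}(V) = \D_X \oo_{U\lie} V$. Its defining relations $\theta_A \oo v = 1 \oo \rho_V(A) v$ for $A \in \lie$, where $\theta_A$ is the vector field on $X$ induced by $A$, get carried by $\F$ to relations involving $\F(\theta_A)$. An elementary coordinate computation using $\F(x_i)=\partial_i$, $\F(\partial_i) = -x_i$, and $\partial_j x_i - x_i \partial_j = \delta_{ij}$ expresses $\F(\theta_A)$ as a vector field realizing the dual $\lie$-action on $X^*$ shifted by the scalar $\tn{tr}_X(A)$. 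Since $\tn{tr}_X$ is exactly the infinitesimal character of $\det X$ and dualizing the $\lie$-action corresponds to replacing $V$ with $V^*$, these transformed relations are precisely the defining relations of $\mc{P}(V^*\oo\det X)$.
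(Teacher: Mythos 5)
Your primary route — the covariant Yoneda argument — has a genuine gap that is more than a ``delicate point to check.'' The $G$-module isomorphism $\F(\mc{M})\cong\mc{M}^*\cdot\det X$ of \eqref{eq:fourier} is not, and cannot be, a natural isomorphism of functors on $\opmod_G(\D_X)$: the left side is covariant in $\mc{M}$ (since $\F$ is an equivalence) while the right side is contravariant (because of the dual). The same variance clash kills your final step: the identification $\Hom_G(V, W^*\oo\det X)\cong\Hom_G(V^*\oo\det X, W)$ is an equality of dimensions but not a natural isomorphism in $W$ — one side is contravariant in $W$, the other covariant. So the chain you build from $\Hom_{\D_X}(\F\mc{P}(V),\mc{N})$ to $\Hom_{\D_X}(\mc{P}(V^*\oo\det X),\mc{N})$ is only a pointwise bijection, and Yoneda does not apply. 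Dimension-counting alone can identify projective objects in a Krull--Schmidt category, but you would then have to establish that you are in such a category and that $\F\mc{P}(V)$ sits in it; that is a different (and nontrivial) argument, not what you wrote.

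Your fallback — the direct computation with $\F(x_i)=\partial_i$, $\F(\partial_i)=-x_i$ applied to the defining relations $\theta_A\oo v = 1\oo\rho_V(A)v$ of $\mc{P}(V)=\D_X\oo_{U\lie}V$ — is the right approach and essentially what the paper means by ``by the same reasoning'' (i.e., the same computation that establishes \eqref{eq:fourier} in \cite[Section 4.3]{catdmod}). But the sketch hides the crucial point: the coordinate calculation of the \emph{untwisted} $\F$ gives $\F(\theta_A)=\theta_A^{X^*}-\tn{tr}_X(A)$, which would yield a module with relations matching $V\oo\det X$ on the dual space, not $V^*\oo\det X$. The dual on $V$ enters through the \emph{twist} built into the twisted Fourier transform (the identification of $X^*$ with $X$ together with the conjugation of the $G$-action), which is exactly what makes $\F$ an endofunctor of $\opmod_G(\D_X)$. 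Without spelling out how the twist acts on $\theta_A$, the claimed relations for $\mc{P}(V^*\oo\det X)$ do not fall out of the computation as written. If you carry the twist through explicitly, this becomes a complete proof and is the one the paper intends.
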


\subsection{$G$-finite functions}

We define the following class of functions.

\begin{definition}
Let $\Omega\subset X$ be a domain. An analytic function $f\in \holo(\Omega)$ is called $\lie$-\defi{finite}, if the space  $U\lie \cdot f\subset \holo(\Omega)$ is finite dimensional. If this space can be integrated to a rational $G$-action, then the function $f$ is called $G$-\defi{finite}.
\end{definition}

We note that the $G$-module action on a $G$-finite function is not necessarily the same globally as the one on polynomials given by $(g\cdot f)(x) =f(gx)$, but only locally, simply because it might not be possible to extend its domain of definition to a $G$-saturated open subset. In the next statement we work exclusively with the analytic topology.

\begin{prop}\label{prop:analcont}
Consider a $G$-finite function $f$ on a domain $\Omega\subset X$. Let $\gamma: [0,1] \to G$ be a continuous path with $\gamma(0)=1$, and take the path $\gamma_{x_0}:[0,1] \to X$ given by $t\mapsto \gamma(t) \cdot x_0$, for some fixed $x_0 \in \Omega$. Then $f$ has an analytic continuation $f_t$ along $\gamma_{x_0}$. Moreover, if $\gamma(1)=1$ then $f_0 = f_1$ (in a neighborhood of $x_0$).
\end{prop}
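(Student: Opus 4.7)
The plan is to use the rational $G$-module structure on $V := U\lie \cdot f$ to translate $f$ globally by any $g \in G$, and then read off the required germs along $\gamma_{x_0}$. Since $V \subset \holo(\Omega)$ is finite-dimensional and carries a rational $G$-action, for each $g \in G$ and $H \in V$ the translate $g \cdot H \in V$ is a single globally-defined holomorphic function on $\Omega$. The crucial ingredient, and the main technical obstacle, is a local compatibility: for $(g, x)$ in a neighborhood of $\{1\} \times \Omega$ inside $G^{an} \times X^{an}$ and any $H \in V$, one has $(g \cdot H)(x) = H(g^{-1} x)$ whenever $g^{-1} x \in \Omega$. This follows from uniqueness of integration of the $\lie$-action on the finite-dimensional $V$: the $G$-module action and the analytic pullback share the same infinitesimal generators (the vector fields arising from $\lie \to \D_X$), and finite-dimensionality of $V$ ensures convergence of $\exp(\xi) \cdot H = \sum_k \xi^k \cdot H / k!$ in $V$ for small $\xi \in \lie$, pinning down the action of $G^{an}$ near $1$.

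Granting this compatibility, fix a basis $f_1 = f, \ldots, f_n$ of $V$ with matrix coefficients $a_{ij} \in \bb{C}[G]$ satisfying $g \cdot f_i = \sum_j a_{ij}(g) f_j$. The function
\[
F : G^{an} \times \Omega \to \bb{C}, \qquad F(g, x) := (g \cdot f)(x) = \sum_j a_{1j}(g) f_j(x),
\]
is jointly holomorphic. For each $t \in [0,1]$, define
\[
f_t(y) := F\bigl(\gamma(t)^{-1},\, \gamma(t)^{-1} y\bigr)
\]
on an open neighborhood $U_t$ of $\gamma(t) x_0$ chosen so that $\gamma(t)^{-1} \cdot U_t \subset \Omega$; such $U_t$ exists since $\gamma(t)^{-1} \gamma(t) x_0 = x_0 \in \Omega$ and the action of $\gamma(t)^{-1}$ is an algebraic automorphism of $X$. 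Each $f_t$ is holomorphic on $U_t$, and trivially $f_0(y) = F(1, y) = f(y)$ on $U_0$.

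To verify that $\{(U_t, f_t)\}$ defines an analytic continuation of $f$ along $\gamma_{x_0}$, I would check that for each $t_0 \in [0,1]$ and all $t$ sufficiently close to $t_0$, $f_t$ and $f_{t_0}$ coincide on a common open neighborhood of $\gamma(t_0) x_0$. Writing $\gamma(t)^{-1} = h^{-1} \gamma(t_0)^{-1}$ with $h := \gamma(t_0)^{-1} \gamma(t)$ close to $1$ in $G^{an}$, associativity of the $G$-module action gives $\gamma(t)^{-1} \cdot f = h^{-1} \cdot (\gamma(t_0)^{-1} \cdot f)$. Setting $H := \gamma(t_0)^{-1} \cdot f \in V$ and applying the local compatibility to $h^{-1}$ acting on $H$ at the point $z := \gamma(t)^{-1} y$ (noting that both $z$ and $hz = \gamma(t_0)^{-1} y$ lie in $\Omega$ for $y$ in a common neighborhood of $\gamma(t_0) x_0$), one computes
\[
f_t(y) = (h^{-1} \cdot H)(z) = H(hz) = H(\gamma(t_0)^{-1} y) = f_{t_0}(y),
\]
as required. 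The \emph{moreover} statement is then immediate: if $\gamma(1) = 1$, then $f_1(y) = F(1, y) = f(y) = f_0(y)$ on a neighborhood of $x_0$.
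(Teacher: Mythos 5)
Your proposal is correct and follows essentially the same route as the paper: define $f_t(y)=(\gamma(t)^{-1}\cdot f)(\gamma(t)^{-1}y)$ using the finite-dimensional $G$-module structure on $U\lie\cdot f$, and verify well-definedness via the local compatibility $(g\cdot H)(x)=H(g^{-1}x)$ near the identity, which is exactly the paper's $F(gv)=(g^{-1}\cdot F)(v)$. The only difference is that you spell out the justification of this compatibility (uniqueness of integration, convergence of the $\exp$-series in the finite-dimensional $V$), which the paper takes as a standard consequence of finite-dimensionality.
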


\begin{proof}
We can take open neighborhoods $V\subset \Omega$ with $x_0 \in V$ and $W\subset G$ with $1\in W$, such that we have $F(gv) = (g^{-1} \cdot F)(v)$, for all $g\in W,v\in V$, and $F\in \, U\lie \cdot f$ (since the latter space is finite dimensional). For $0\leq t \leq 1$, we write $g_t = \gamma(t)$, put $V_t = g_t \cdot V \, \subset X$ and define the analytic function $f_t:V_t\to \bb{C}$ by $f_t(g_t \cdot v) = (g_t^{-1} \cdot f)(v)$, for $v\in V$. 

We now check that this construction is a well-defined continuation. Fix $t \in [0,1]$. There exists some $\varepsilon >0$ such that for all $t' \in [0,1]$ with $|t-t'|<\varepsilon$, we have $g_{t'} \in  g_t \cdot W$ and $\gamma_{x_0}(t') \in V_t$. Take $y \in V_t \cap V_{t'}$, and write $y=g_t \cdot v = g_{t'} \cdot v'$ for some $v,v' \in V$. Then the element $g'=g_t^{-1} \cdot g_{t'}$ lies in $W$ and $v=g' \cdot  v'$. We have
\[f_t(y)=f_t(g_t v) = (g_t^{-1} \cdot f) (v) = (g_t^{-1} \cdot f)(g' \cdot  v') = (g_{t'}^{-1} \cdot f)(v') =f_{t'}(g_{t'} v') = f_{t'}(y).\]
Thus, the continuation is well-defined. Clearly, if $\gamma$ is closed then $f_1 = f_0 = f_{|V}$.
\end{proof}

In a similar vein, we have the following characterization of $G$-finite functions using $\D$-modules.

\begin{lemma}\label{lem:gfindmod}
An analytic function $f$ is $G$-finite if and only if $\D_X f$ is a $G$-equivariant $\D$-module.
\end{lemma}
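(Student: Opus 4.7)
The plan is to leverage the induced equivariant module $\mc{P}(V) = \D_X \otimes_{U\mathfrak{g}} V$ of (\ref{eq:PV}) in the forward direction, and the affine-case characterization of equivariance as integrability of the $\mathfrak{g}$-action (recalled in Section \ref{sec:equivd}) in the converse direction.

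For ($\Rightarrow$), suppose $f$ is $G$-finite and set $V := U\mathfrak{g} \cdot f \subset \holo(\Omega)$, which by hypothesis is a finite-dimensional rational $G$-module. The inclusion $V \hookrightarrow \D_X f$ is $\mathfrak{g}$-linear for the derivation action inherited from $\mathfrak{g} \to \D_X$, so by the universal property of the induced module it extends to a $\D_X$-linear map
\[
\mc{P}(V) = \D_X \otimes_{U\mathfrak{g}} V \longrightarrow \D_X f, \qquad P \otimes v \,\mapsto\, P v,
\]
which is surjective since its image contains $f = 1 \otimes f$. Because $\mc{P}(V)$ is $G$-equivariant by construction and the subcategory of equivariant modules is closed under quotients in $\opMod(\D_X)$, we conclude that $\D_X f$ is $G$-equivariant.

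For ($\Leftarrow$), assume $\D_X f$ is $G$-equivariant. After restricting to an affine setting where the criterion from Section \ref{sec:equivd} applies, the $\mathfrak{g}$-action on $\D_X f = \D_X/\Ann(f)$ induced by $\mathfrak{g} \to \D_X$ integrates to a rational $G$-action. In particular, the generator $\overline{1}$ lies in a finite-dimensional $G$-stable subspace. Since $G$ is connected, this subspace is $\mathfrak{g}$-stable and contains the minimal such subspace $U\mathfrak{g} \cdot \overline{1}$, which is therefore itself finite-dimensional and $G$-stable. Transporting along the injection $\D_X/\Ann(f) \hookrightarrow \holo(\Omega)$, $\overline{P} \mapsto Pf$, identifies this with $U\mathfrak{g} \cdot f$ (with matching $\mathfrak{g}$-actions), so $f$ is $G$-finite.

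The main technical point I anticipate is the reduction in the converse: the clean affine-case criterion applies directly only when $X$ is affine, so one must either assume this from the start (the conclusion being local on $\Omega$) or carefully compare the globally defined $\D_{G \times X}$-isomorphism encoding equivariance with the derivation action on sections over a suitable open. For connected $G$, the uniqueness of integrations of a $\mathfrak{g}$-action to a rational $G$-action ensures that the abstract $G$-action on $U\mathfrak{g} \cdot f$ coming from equivariance agrees with the function-theoretic one, making this identification essentially automatic once the reduction is in place.
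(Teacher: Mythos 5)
Your proposal is correct and matches the paper's argument: the $G$-finite $\Rightarrow$ equivariant direction uses the same surjection $\mc{P}(V) \twoheadrightarrow \D_X f$ together with closure of equivariant modules under quotients, and your converse simply unpacks the paper's one-line observation that a $G$-equivariant $\D$-module has rational $G$-module sections, via the affine integrability criterion and the identification of $U\mathfrak{g}\cdot\overline{1}$ with $U\mathfrak{g}\cdot f$.
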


\begin{proof}
If $\mc{M}:=\D_X f$ is $G$-equivariant, then $f$ is $G$-finite since the space of sections of $\mc{M}$ is a rational $G$-module. Conversely, assume that $f$ is $G$-finite defined on some domain $\Omega$, and let $V$ be a finite dimensional $G$-stable subspace of $\holo(\Omega)$ containing $f$. By (\ref{eq:PVHom}) we have a surjection $\mc{P}(V)\twoheadrightarrow \mc{M}$. Since $\mc{P}(V)$ is equivariant, so is its quotient $\mc{M}$.
\end{proof}

We want to understand when $G$-finite functions are holonomic. It turns out that this is automatic when $X$ has a dense orbit (see Corollary \ref{cor:gfinalg}). We have the following converse statement.

\begin{prop}\label{prop:negative}
Assume that $X$ does not have a dense $G$-orbit. Then there exists a $\lie$-invariant analytic function that is not holonomic.
\end{prop}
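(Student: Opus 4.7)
The plan is to construct a $\lie$-invariant non-holonomic analytic function of the form $h = F \circ \phi$, where $\phi \in \C(X)^G$ is a nonconstant $G$-invariant rational function (which exists by Rosenlicht's theorem, since $G$ has no dense orbit on $X$) and $F$ is a carefully chosen non-holonomic analytic function of one variable. The key analytic input is the existence of such an $F$; a standard example is $F(z) = \sum_{n \geq 1} z^n/(n!)^n$, which is entire but not $D$-finite (its Taylor coefficients grow too fast to satisfy any polynomial recurrence).

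Concretely, I would first invoke Rosenlicht: the generic $G$-orbit has positive codimension, so $\C(X)^G$ has positive transcendence degree over $\C$, yielding a nonconstant $G$-invariant (hence $\lie$-invariant) rational function $\phi$. Pick a point $p$ at which $\phi$ is regular and $d\phi_p \neq 0$, complete $\phi$ to a system of regular functions $(\phi, f_2, \dots, f_n)$ whose differentials span $T^*_p X$, and shrink to obtain an \'etale morphism $\psi \colon V \to \C^n$ with $\phi = z_1 \circ \psi$; fix a local analytic inverse $\psi^{-1}$ around $\psi(p)$. Define $h := F \circ \phi$ on a suitable analytic domain; then $h$ is $\lie$-invariant, since $\xi \cdot (F \circ \phi) = (F' \circ \phi) \cdot (\xi \cdot \phi) = 0$ for every $\xi \in \lie$.

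To verify $h$ is not holonomic, assume the contrary. By Lemma \ref{lem:etale} applied to $\psi$, the function $h \circ \psi^{-1} = F(z_1)$ would be holonomic on a domain of $\C^n$, so the cyclic module $M := \D_{\C^n} \cdot F(z_1)$ would have finite generic rank. Since $\partial_{z_i} F(z_1) = 0$ for $i \geq 2$, the module $M$ is generated as an $\O_{\C^n}$-module by $\{F^{(k)}(z_1)\}_{k \geq 0}$. A short specialization argument (set $z_i = c_i$ for generic $c_i$, $i \geq 2$) shows that non-holonomicity of $F$ is equivalent to linear independence of $F, F', F'', \dots$ over $\C(z_1, \dots, z_n)$, which then forces $\rank M = \infty$, contradicting holonomicity. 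The only delicate point is the existence of a non-holonomic analytic function of one variable (classical but non-trivial, e.g.\ via sufficiently lacunary power series); everything else is bookkeeping with \'etale coordinates and the fact that holonomic $\D$-modules have finite generic rank.
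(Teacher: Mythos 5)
Your proof is correct, but it takes a genuinely different route from the paper's. The paper sets $h = 1/\sin(f)$ for a nonconstant $G$-invariant rational $f$ (from Rosenlicht), and argues by contradiction with a single geometric observation: if $\D_X h$ were holonomic, $\Sing(\D_X h)$ would be a closed algebraic subvariety of $X$ of codimension $\le 1$, yet it would have to contain the infinitely many pairwise-disjoint hypersurfaces $f^{-1}(k\pi)$, $k\in\Z$, which is impossible for an algebraic set. Your approach composes the invariant rational function with an entire non-$D$-finite $F$, then reduces via \'etale coordinates (Lemma \ref{lem:etale}) to $F(z_1)$ on a domain of $\C^n$ and observes the rank is infinite; this is more systematic and aligns with the paper's philosophy, stated right after Lemma \ref{lem:etale}, that holonomicity is a local, \'etale-invariant notion reducible to $\C^n$. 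The trade-off is the analytic input: the paper needs only the familiar $\sin$, whereas you need the existence of an entire function that is not $D$-finite. That is classical, but your specific candidate $F(z)=\sum_{n\ge 1} z^n/(n!)^n$ needs a non-obvious growth estimate to rule out a polynomial recurrence; a cleaner choice is $F(z)=e^{e^z}$, for which $F^{(k)}=p_k(e^z)F$ with $p_k$ a polynomial of degree $k$, so $F,F',F'',\dots$ are linearly independent over $\C(z)$ since $e^z$ is transcendental over $\C(z)$, giving infinite rank immediately. With such a choice, your specialization step and the application of Lemma \ref{lem:etale} are both sound, and the proof goes through.
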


\begin{proof}
By a theorem of Rosenlicht (see \cite[Section 2.3]{popvin}), the assumption implies that there exists a non-constant $G$-invariant rational function $f\in \bb{C}(X)$. Consider the $\lie$-invariant holomorphic function (on a domain)
\[h:=\dfrac{1}{\sin \circ \, f}.\] 
We show that $\mc{M}=\D_X h$ is not a holonomic $\D$-module. Assume by contradiction that it is, then $Z=\Sing \mc{M}$ is closed algebraic subvariety of $X$ with $\codim_X Z \leq 1$. Clearly, $Z$ contains all the level sets $f^{-1}(k\pi)$, with $k\in \bb{Z}$. These form an infinite number of disjoint hypersurfaces, contradicting that $Z$ is algebraic.
\end{proof}

\begin{remark}
According to a theorem of Luna \cite{luna}, when $G$ is reductive and $X$ affine, essentially all $G$-invariant holonomic functions can be constructed as above. Namely, they can be obtained as compositions of holomorphic functions with $G$-invariant regular functions. In fact, the statement holds for (global) $G$-finite functions as well (see Lemma \ref{lem:globgfin}).
\end{remark}

We proceed further with some topological considerations. For a fixed point $x_0 \in X$, consider the morphism $\psi_{X,x_0}: \pi_1(G,1) \to \pi_1(X,x_0)$ given by $\gamma \mapsto \gamma_{x_0}$ as in Proposition \ref{prop:analcont}. It is easy to see that the image of $\psi_{X,x_0}$ is a central subgroup of $\pi_1(X,x_0)$ \, (for example, consider the map of fundamental groups induced by the multiplication map $G\times X \to X$).  Let $\pi_1^G(X,x_0)$ denote their quotient, which we call the \defi{equivariant fundamental group} of $X$. Since $X$ is connected, it follows that this does not depend on the choice of the point $x_0 \in X$, so we may suppress it in the notation (in particular, if the action of $G$ on $X$ has a fixed point, then $\pi_1^G(X)  = \pi_1(X)$). We have an exact sequence
\begin{equation}\label{eq:eqfun}
\pi_1(G) \xrightarrow{\,\,\psi_X\,\,} \pi_1(X) \to \pi_1^G(X)  \to 1.
\end{equation}

Fix a domain $\Omega\subset X$ with embedding $i:\Omega\to X$ and consider again $i_*^{an} \mc{O}_\Omega^{an}$ as a $\D_X$-module. We define $\Gf_{\Omega,X}$ to be the \defi{sheaf of $G$-finite functions} on $\Omega$ by taking it to be the sum of all equivariant coherent $\D_X$-submodules of $i_*^{an} \mc{O}_\Omega^{an}$. By Lemma \ref{lem:gfindmod}, the space of sections $\Gamma(X, \Gf_{\Omega,X})$ is precisely the algebra of $G$-finite functions on $\Omega$. Furthermore, based on Proposition \ref{prop:analcont} one can see that the monodromy action on a section in $\Gf_{\Omega,X}(U)$ on some open $U \subset X$ factors through $\pi_1(U) \to \pi_1(X) \to \pi_1^G(X)$.

Next, we see that the solutions to equivariant $\D$-modules are $G$-finite functions (cf. Proposition \ref{prop:connsheaf}).

\begin{lemma}\label{lem:gfinsol}
$\Gf_{\Omega,X}$ is an $\mc{O}_X$-quasi-coherent sheaf of equivariant $\mc{D}_X$-algebras. Furthermore, for any $\mc{M} \in \opMod_G(\D_X)$, we have a natural isomorphism $\Sol_V(\mc{M}) \cong \Hom_{\D_X}(\mc{M},\Gf_{\Omega,X}).$ 
\end{lemma}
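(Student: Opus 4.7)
The plan is to mirror the strategy used for Proposition \ref{prop:connsheaf}, unpacking the definition of $\Gf_{\Omega,X}$ as a filtered sum of equivariant coherent $\D_X$-submodules of $i_*^{an}\O_\Omega^{an}$, and then checking the universal/representability property against an arbitrary equivariant quasi-coherent $\D_X$-module.

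First I would establish the structural statement. By construction $\Gf_{\Omega,X}=\sum_\alpha \mc{N}_\alpha$, where the $\mc{N}_\alpha\subset i_*^{an}\O_\Omega^{an}$ range over equivariant coherent $\D_X$-submodules. Each $\mc{N}_\alpha$ is quasi-coherent as an $\O_X$-module, and the filtered union of quasi-coherent subsheaves inside an ambient sheaf is quasi-coherent (using that $X$ is Noetherian), so $\Gf_{\Omega,X}$ is $\O_X$-quasi-coherent. Sums of equivariant $\D_X$-submodules in a common ambient $\D_X$-module are again equivariant, so $\Gf_{\Omega,X}\in\opMod_G(\D_X)$. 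For the algebra structure, given two local sections $s_1,s_2$ of $\Gf_{\Omega,X}$, each lies in a finite dimensional $G$-stable subspace $V_i\subset\Gamma(U,\mc{N}_i)$, whence $s_1s_2$ lies in the (still finite dimensional, $G$-stable) subspace $V_1\cdot V_2\subset i_*^{an}\O_\Omega^{an}(U)$; hence $\D_X\cdot (V_1V_2)$ is an equivariant coherent $\D_X$-submodule of $i_*^{an}\O_\Omega^{an}$ containing $s_1s_2$, so $s_1s_2\in\Gf_{\Omega,X}(U)$. The $G$- and $\D_X$-actions are evidently compatible with this product.

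Next I would prove the Hom--Sol isomorphism. Since $\Gf_{\Omega,X}\subset i_*^{an}\O_\Omega^{an}$ as $\D_X$-modules, (\ref{eq:sol}) yields a natural injection
\[
\Hom_{\D_X}(\mc{M},\Gf_{\Omega,X})\hookrightarrow \Hom_{\D_X}(\mc{M},i_*^{an}\O_\Omega^{an})=\Sol_\Omega(\mc{M}).
\]
For surjectivity, write $\mc{M}=\bigcup_i \mc{M}_i$ as a filtered union of equivariant coherent $\D_X$-submodules. Given a solution $\phi\in\Sol_\Omega(\mc{M})$, each restriction $\phi_i:=\phi|_{\mc{M}_i}$ has image $\phi(\mc{M}_i)$, which is a quotient of an equivariant coherent $\D_X$-module, hence itself an equivariant coherent $\D_X$-submodule of $i_*^{an}\O_\Omega^{an}$. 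By the very definition of $\Gf_{\Omega,X}$ we have $\phi(\mc{M}_i)\subset \Gf_{\Omega,X}$ for every $i$, and passing to the union gives $\phi(\mc{M})\subset \Gf_{\Omega,X}$. Naturality in $\mc{M}$ is clear from the construction.

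The main obstacle is a bookkeeping one rather than a conceptual one: justifying that every $\mc{M}\in\opMod_G(\D_X)$ can indeed be written as a filtered union of equivariant coherent $\D_X$-submodules. This is the equivariant refinement of the standard fact that any $\O_X$-quasi-coherent $\D_X$-module is the filtered union of its coherent $\D_X$-submodules, and it is obtained by starting with a coherent $\D_X$-submodule $\mc{N}\subset \mc{M}$ and replacing it by the $\D_X$-submodule generated by $m_*p^*\mc{N}$ along the action map $m\colon G\times X\to X$, which remains coherent (since $G$ is of finite type) and is automatically equivariant.
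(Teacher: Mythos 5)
Your proof follows exactly the strategy the paper intends: Lemma \ref{lem:gfinsol} is stated with only the cross-reference ``cf.\ Proposition \ref{prop:connsheaf}'', and your argument is the correct $G$-equivariant transcription of that proof. In particular, the surjectivity step is right: given a solution $\phi\in\Sol_\Omega(\mc M)$ and $\mc M=\bigcup_i\mc M_i$ an exhaustion by equivariant coherent $\D_X$-submodules, each image $\phi(\mc M_i)\subset i_*^{an}\O_\Omega^{an}$ is a torsion-free quotient of an equivariant coherent module, hence an equivariant coherent $\D_X$-submodule of $i_*^{an}\O_\Omega^{an}$, which by definition lies inside $\Gf_{\Omega,X}$; this replaces the Weyl-closure step $\Sol_\Omega(\mc M_i)\cong\Sol_\Omega(\mc M_i^w)=\Hom_{\D_X}(\mc M_i,\conn_X^N(D))$ in the paper's proof of Proposition \ref{prop:connsheaf}(b).

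Two small points. First, the bookkeeping in your last paragraph is unnecessary: the paper records in Section \ref{sec:equivd} that $\opMod_G(\D_X)$ is closed under subquotients inside $\opMod(\D_X)$, so the ordinary exhaustion of any $\mc M\in\opMod_G(\D_X)$ by coherent $\D_X$-submodules is automatically an exhaustion by \emph{equivariant} coherent submodules --- there is no need for the saturation construction, and the expression $m_*p^*\mc N$ does not literally define a subsheaf of $\mc M$ in any case. Second, the algebra-structure argument implicitly reduces to affine $U$ (so that sections of an equivariant module form a rational $G$-module); this is harmless since quasi-coherence and the $\D_X$-algebra structure are local questions, but it is worth making explicit. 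Finally, note that the statement in the paper has a typo, $\Sol_V$ for $\Sol_\Omega$, which you silently correct.
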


In the case of holonomic functions of Nilsson class, $G$-finiteness can be checked using monodromy.

\begin{prop}\label{prop:gfinreg}
Let $D\subset X$ be a $G$-stable hypersurface, $U=X\setminus D$ and $\Omega \subset U$ a simply-connected domain. Then
\[\Gf_{\Omega,X} \,\, \bigcap \,\, \conn_X(D) \,\, = \,\, \conn_X^{\op{im} \psi_{U}}(D) \quad \subset i_*^{an} \mc{O}_\Omega^{an}.\]
\end{prop}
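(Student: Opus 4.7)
The plan is to reduce the statement, via the Riemann--Hilbert correspondence, to the fact that a regular connection on $U = X \setminus D$ is $G$-equivariant if and only if the monodromy of its solution local system factors through $\pi_1^G(U) = \pi_1(U)/\op{im}\psi_U$. Both sides of the claimed equality are pushforwards under $j: U \to X$ of subsheaves on $U$ (the left since $\Gf_{\Omega, X} \cap \conn_X(D) \subset \conn_X(D) = j_*\conn_U$, the right by definition), so it suffices to check the equality after restricting to $U$, where being a section of $\Gf_{\Omega,X}$ is equivalent to $\D_U h$ being equivariant by Lemma \ref{lem:gfindmod}.

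For the implication ``equivariant implies monodromy factors through $\pi_1^G(U)$'', let $\mc{N} = \D_U h$ be equivariant with solution local system $\mc{L}$. The equivariance isomorphism $p^*\mc{N} \cong m^*\mc{N}$ on $G \times U$ translates under Riemann--Hilbert to $p^*\mc{L} \cong m^*\mc{L}$. Restricting over $G \times \{x_0\}$ for $x_0 \in \Omega$, the left side becomes the constant local system on $G$ with fiber $\mc{L}_{x_0}$, while the right side becomes $\alpha^*\mc{L}$ for the orbit map $\alpha: G \to U$, $g \mapsto g \cdot x_0$ (whose induced map on fundamental groups is exactly $\psi_U$ by definition). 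Hence $\alpha^*\mc{L}$ is trivial on $G$, so the composition $\pi_1(G) \xrightarrow{\psi_U} \pi_1(U) \xrightarrow{\rho} \GL(\mc{L}_{x_0})$ is trivial, as desired.

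For the converse, assume the monodromy representation $\rho$ factors through $\pi_1^G(U)$. On $\pi_1(G \times U) = \pi_1(G) \times \pi_1(U)$, the monodromy of $p^*\mc{L}$ sends $(\gamma, \eta) \mapsto \rho(\eta)$, while that of $m^*\mc{L}$ sends $(\gamma, \eta) \mapsto \rho(\psi_U(\gamma) \cdot \eta) = \rho(\eta)$ by assumption. Hence $p^*\mc{L} \cong m^*\mc{L}$ as local systems, and one can choose an isomorphism $\phi$ extending the canonical trivialization of $\alpha^*\mc{L}$ so that the cocycle condition on $G \times G \times U$ holds; this verification is the main technical obstacle, but it is the standard upgrade from the equality of monodromy data to a genuine equivariant structure (equivalently, it encodes the well-known correspondence between $G$-equivariant local systems on $U$ and representations of $\pi_1^G(U)$). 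Translating back through Riemann--Hilbert makes $\D_U h$ equivariant. Its pushforward $j_*\D_U h = \D_X h(*D)$ is then equivariant, and the coherent $\D_X$-submodule $\D_X h \subset \D_X h(*D)$ generated by $h$ is equivariant as well (since $\opMod_G(\D_X)$ is closed under subquotients), so $h$ is $G$-finite by Lemma \ref{lem:gfindmod}.
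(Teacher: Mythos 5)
Your proposal follows the same route as the paper: reduce to the open set $U = X\setminus D$ (the paper phrases this as ``reduce to the case $D=\emptyset$ since $j_*$ preserves equivariance''), then invoke the Riemann--Hilbert correspondence, under which equivariance of a regular connection on $U$ is equivalent to its monodromy factoring through $\pi_1^G(U)$. You simply flesh out the monodromy dictionary that the paper leaves to the reader.

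One imprecision in the reduction step: the justification ``the left since $\Gf_{\Omega,X}\cap\conn_X(D)\subset\conn_X(D)=j_*\conn_U$'' does not show the left-hand side is a pushforward, since a subsheaf of $j_*\mathcal{B}$ need not itself be of the form $j_*(\,\cdot\,)$. What you actually need (and what the paper's terse ``$j_*$ preserves equivariance'' is pointing at) is: for $\subseteq$, any section $h$ of $\conn_X(D)$ is determined by its restriction to $U$, so it suffices to show $h|_U$ lies in $\conn_U^{\op{im}\psi_U}$; for $\supseteq$, use that $j_*$ carries an equivariant regular connection on $U$ to a coherent equivariant $\D_X$-module, hence a summand of $\Gf_{\Omega,X}$. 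Since both containments are easy separately, the gap is cosmetic, not fatal. The remaining content — explicitly identifying $\alpha^*\mc{L}$ with the monodromy along $\psi_U$, and the (indeed standard but worth flagging, as you do) cocycle verification for the converse — is a correct expansion of what the paper asserts.
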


\begin{proof}
Let $j:U\to X$ be the open embedding. Since $j_*$ preserves equivariance, we can reduce to the case $D=\emptyset$. Then the statement follows readily from the Riemann--Hilbert correspondence, as a regular connection is equivariant if and only if it is an equivariant local system, which is the case if and only if it descends to a $\pi_1^G(X)$-representation.
\end{proof}

For the rest of this subsection, we let $X$ be affine with $G$ reductive. Denote by $X/\!/G=\op{Spec}(\bb{C}[X]^G)$ the affine GIT quotient, and write $p: X \to X/\!/G$ for the induced surjective quotient map. The next result follows as in  \cite[Corollary 6.9]{schwarz}.

\begin{lemma}\label{lem:globgfin}
For any open Stein subset $Q\subset (X/\!/G)^{an}$ we have 
\[\Gf_{p^{-1}(Q),X} \, = \,\bb{C}[X] \, \oo_{\bb{C}[X]^G} \, \mc{O}_{X/\!/G}^{an}(Q).\]
\end{lemma}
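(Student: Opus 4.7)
The plan is to prove the two inclusions separately, with the easy direction being $\supseteq$ and the harder direction $\subseteq$ reducing, via isotypic decomposition, to Schwarz's theorem on analytic covariants over an affine GIT quotient.

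For the inclusion $\supseteq$, I would observe first that the right hand side makes sense as a subspace of $\Gamma(p^{-1}(Q), \mc{O}^{an})$ via the multiplication map $f \oo h \mapsto f \cdot p^*(h)$, which is well-defined over $\bb{C}[X]^G$ since $p^*(c) = c$ for $c \in \bb{C}[X]^G$. Then $\bb{C}[X] \subset \Gamma(X, \mc{O}_X)$ is a rational $G$-module, hence consists of $G$-finite functions, and $p^*(\mc{O}^{an}(Q))$ is a subalgebra of $G$-invariant holomorphic functions on $p^{-1}(Q)$, so in particular $G$-finite. Since $\Gf_{p^{-1}(Q),X}$ is a sheaf of algebras by Lemma \ref{lem:gfinsol}, the product $f \cdot p^*(h)$ is $G$-finite for $f \in \bb{C}[X]$ and $h \in \mc{O}^{an}(Q)$, yielding the inclusion.

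For $\subseteq$, let $f \in \Gamma(p^{-1}(Q), \Gf_{p^{-1}(Q),X})$. Then $f$ is contained in a finite dimensional rational $G$-submodule $V \subset \mc{O}^{an}(p^{-1}(Q))$. Since $G$ is reductive, $V$ decomposes into isotypic components, and so it suffices to treat the case where $f$ lies in an isotypic component of type $\ll \in \LL(G)$. The $\ll$-isotypic part of the $G$-finite functions equals $V_\ll \oo \Hom_G(V_\ll, \mc{O}^{an}(p^{-1}(Q)))$, so the problem reduces to showing
\[
\Hom_G(V_\ll, \mc{O}^{an}(p^{-1}(Q))) \,=\, \Hom_G(V_\ll, \bb{C}[X]) \oo_{\bb{C}[X]^G} \mc{O}_{X/\!/G}^{an}(Q).
\]
Equivalently, writing this as $G$-invariants in $V_\ll^* \oo (\text{-})$, the content is that the natural map
\[
\bigl(V_\ll^* \oo \bb{C}[X]\bigr)^G \,\oo_{\bb{C}[X]^G}\, \mc{O}_{X/\!/G}^{an}(Q) \,\longrightarrow\, \bigl(V_\ll^* \oo \mc{O}^{an}(p^{-1}(Q))\bigr)^G
\]
is an isomorphism for every Stein open $Q \subset (X/\!/G)^{an}$.

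The hard part is precisely this last isomorphism, which is Schwarz's theorem on the structure of analytic $G$-covariants on an affine variety with reductive group action (\cite{schwarz}, Corollary~6.9). The essential inputs are: (i) $(V_\ll^* \oo \bb{C}[X])^G$ is a finitely generated $\bb{C}[X]^G$-module by Hilbert's finite generation theorem, so a presentation $(\bb{C}[X]^G)^a \to (\bb{C}[X]^G)^b \to (V_\ll^* \oo \bb{C}[X])^G \to 0$ exists; (ii) taking $G$-invariants is exact on rational $G$-modules since $G$ is reductive; and (iii) the $\mc{O}_{X/\!/G}^{an}(Q)$-module $\mc{O}^{an}(p^{-1}(Q))$ is flat over $\bb{C}[X]^G$ for Stein $Q$, which is where Cartan's Theorem B and the Stein hypothesis on $Q$ enter crucially. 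Combining these, tensoring the presentation with $\mc{O}_{X/\!/G}^{an}(Q)$ produces on both sides of the above map the same cokernel, giving the asserted isomorphism. Once this is in hand, the reduction in the previous paragraph finishes the proof.
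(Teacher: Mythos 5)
Your proof is correct and is, in spirit, exactly what the paper does: the paper gives no argument at all, stating only that the result \emph{``follows as in \cite[Corollary 6.9]{schwarz}''}. Your proposal unpacks that citation along the expected lines (isotypic decomposition, finite generation of covariant modules, exactness of $G$-invariants, and the Stein/Cartan B input ensuring compatibility of the algebraic and analytic covariants over the quotient), which is precisely the content of Schwarz's theorem being invoked; the only small point left implicit is that the multiplication map from the tensor product is injective, but this too is part of what the Schwarz isotypic isomorphism delivers.
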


Along the same principle, we have the following result on $G$-finite holonomic functions.

\begin{prop}\label{prop:hologfin}
Let $x \in X$ with $G_x$ trivial and $G\cdot x$ closed.  Take $f_1,\dots, f_n \in \C[X]^G$ algebraically independent (where $n=\dim X/\!/G$), and let $h$ be a $G$-finite holonomic function on some domain of $X$. Then we can write 
\[h= \sum_{i=1}^k  \, p_i \cdot h_i(f_1, \dots, f_n), \]
with $p_i\in \bb{C}[U]$, for a $G$-stable affine neighborhood $U$ of $x$ with $U=p^{-1}(p(U))$, and $h_i$ holonomic on a domain of $\bb{C}^n$, for all $i=1,\dots,k$.
\end{prop}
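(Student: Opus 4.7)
The plan is to reduce locally to a product $G\times S$ via Luna's étale slice theorem \cite{luna}, apply Lemma~\ref{lem:globgfin} to decompose $h$ on $U$, and then use the étaleness of $(f_1,\dots,f_n)$ restricted to the slice to realize the resulting $G$-invariant factors as pullbacks of holonomic functions on $\C^n$.

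Since $G_x=\{1\}$ and $G\cdot x$ is closed, Luna's theorem yields a smooth affine slice $S\subset X$ of dimension $n$ through $x$, a $G$-equivariant strongly étale morphism $\varphi\colon G\times S\to U\subset X$ whose image is a $G$-saturated affine open neighborhood $U=p^{-1}(p(U))$ of $G\cdot x$, and an étale morphism $\psi\colon S\to p(U)\subset X/\!/G$ induced on GIT quotients. Because the $f_i$ are algebraically independent on $X/\!/G$, the composite $F:=(f_1,\dots,f_n)\circ\psi\colon S\to\C^n$ is dominant between smooth $n$-dimensional varieties, hence étale on a dense open; after shrinking $U$ to a smaller $G$-stable principal affine open (which preserves $U=p^{-1}(p(U))$), I may assume $F$ is étale on an open neighborhood of $x$ in $S$.

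Applying Lemma~\ref{lem:globgfin} on $U$ yields an expansion $h=\sum_j q_j\cdot\tilde g_j$ with $q_j\in\C[U]$ and $G$-invariant analytic $\tilde g_j\in\O^{an}(p(U))$. The key remaining step is to show that each $\tilde g_j$ has the form $h_j(f_1,\dots,f_n)$ for some $h_j$ holonomic on a domain of $\C^n$. I would establish this via the following sharpening for the product $G\times S$: any $G$-finite holonomic function on $G\times S$ decomposes as $\sum_i a_i(g)\tilde h_i(s)$ with $a_i\in\C[G]$ linearly independent and $\tilde h_i$ \emph{holonomic} on $S$. The existence of such a decomposition with merely analytic $\tilde h_i$ follows from Lemma~\ref{lem:globgfin} for $G\times S$ together with $\C[G\times S]=\C[G]\otimes\C[S]$; holonomicity of each $\tilde h_i$ is then obtained by specializing $g$ at $k$ generic points to invert a linear system, expressing each $\tilde h_i$ as a $\C$-linear combination of left $G$-translates of the input (all holonomic by $G$-equivariance of the $\D$-module generated). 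Applied to $\tilde g_j\circ p\circ\varphi$ (holonomic on $G\times S$ via Lemma~\ref{lem:etale}), this forces $\tilde g_j\circ\psi$ to be holonomic on $S$. Étaleness of $F$ near $x$ combined with Lemma~\ref{lem:etale} then gives $\tilde g_j\circ\psi=h_j\circ F$ for some $h_j$ holonomic on a domain of $\C^n$, and since $\psi$ is a local analytic isomorphism near $x$ this descends to $\tilde g_j=h_j(f_1,\dots,f_n)$ on $p(U)$, as required.

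The main obstacle I anticipate is justifying the holonomicity of $\tilde g_j\circ p\circ\varphi$ on $G\times S$ from holonomicity of $h$ alone, since a priori holonomicity of a sum does not imply holonomicity of individual summands. I would handle this by working with the equivariant holonomic $\D_X$-module $\D_U\cdot h$ provided by Lemma~\ref{lem:gfindmod}: its $G$-isotypic decomposition splits off $(\D_U\cdot h)^G$, and under the étale slice $\varphi$ this $G$-invariant part corresponds to a (holonomic) $\D_S$-module by descent along the principal $G$-bundle $G\times S\to S$, which lets me isolate each $\tilde g_j\circ p\circ\varphi$ inside a holonomic module and hence conclude its holonomicity.
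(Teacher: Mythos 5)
Your proposal ultimately converges on the paper's argument, but takes a more roundabout route with a couple of imprecise intermediate steps.

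The paper's proof is very short: Luna's slice theorem (cited as \cite{luna2}) gives a $G$-stable $p$-saturated affine neighborhood $U$ of $x$ on which $p_0 := p|_U : U \to p(U)$ is a principal $G$-bundle (this uses $G_x=\{1\}$ and $G\cdot x$ closed); descent for $\D$-modules along this bundle gives $\mc{M} := \D_U h \cong p_0^*\big((p_{0*}\mc{M})^G\big)$, exhibiting $h$ as a $\C[U]$-linear combination of sections of the holonomic $\D_{p(U)}$-module $\mc{N} := (p_{0*}\mc{M})^G$; and then the Noether-normalization/étale argument preceding and proving Lemma~\ref{lem:etale} transfers those sections to holonomic functions of $f_1,\dots,f_n$ on a domain of $\C^n$. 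Your second paragraph is exactly this: $G$-isotypic component, descent to a holonomic $\D_S$-module, done. So in substance you and the paper agree.

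Where your write-up deviates: the first-paragraph detour through Lemma~\ref{lem:globgfin} and the claimed ``sharpening'' that any $G$-finite holonomic function on $G\times S$ decomposes as $\sum_i a_i(g)\tilde h_i(s)$ with each $\tilde h_i$ \emph{holonomic} on $S$ is not established by the specialization argument you sketch. Fixing $g=g_j$ is a \emph{restriction} of a function on $G\times S$ to the fiber $\{g_j\}\times S$, not a ``left $G$-translate''; restriction of a holonomic function to a subvariety does not follow from $G$-equivariance of $\D_U h$ and requires a non-characteristicity hypothesis or a genuine $\D$-module pullback argument. You sense this (``a priori holonomicity of a sum does not imply holonomicity of individual summands'') and then replace it with the descent argument, which is what actually carries the proof. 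So the first-paragraph mechanism is redundant and, as stated, has a genuine gap.

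A second, smaller point to flag: ``after shrinking $U$ to a smaller $G$-stable principal affine open\dots{} I may assume $F$ is étale on an open neighborhood of $x$ in $S$'' is not justified as written. The map $F=(f_1,\dots,f_n)\circ\psi$ is only \emph{generically} étale on $S$, and if $p(x)$ lies on its ramification locus, no shrinking of $U$ around $x$ will make $F$ étale there. What is actually needed (and what the argument ``as in Lemma~\ref{lem:etale}'' provides) is a point in the domain of the $G$-invariant holonomic pieces at which $F$ is étale; this can be arranged because that domain may be moved and the holonomic functions analytically continued, independently of $x$. The paper is terse about this too, but you should not anchor the étaleness at $x$.
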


\begin{proof}
Put $\mc{M}=\D_{X_0} h$. By Luna's slice theorem \cite{luna2}, there is an open affine $p$-saturated neighborhood $U \subset X$ of $x$, and the restriction $p_0$ of $p$ to $U$ is a principal bundle. Therefore, by descent for $\D$-modules we have $p^*_0 \,( (p_{0*} \mc{M})^G) = \mc{M}$. This, together with the argument as in Lemma \ref{lem:etale}, yields the required decomposition.
\end{proof}

Note that for a simply-connected Stein open subset $Q\subset p(U \setminus \Sing \D_X h) \subset (X/\!/G)^{an}$, the decomposition above is in accordance with Lemma \ref{lem:globgfin}, as in this case $\pi_1^G(p_0^{-1}(Q))=1$, therefore $h$ can be extended to $p^{-1}_0(Q)$ by Proposition \ref{prop:analcont}. For a sample application of the result above, see \cite[Theorem 3.4]{stat}.

The map $p$ induces an algebra map $\D_X^G \to \D_{X/\!/G}$ from the ring of invariant operators on $X$ to the ring of differential operators on $X/\!/G$. In the following statement, reductivity of $G$ is only used to ensure that $X/\!/G$ is an algebraic variety.

\begin{prop}\label{prop:invalg}
The quotient map $p$ induces an isomorphism $\,p^*: \alg_{X/\!/G}\xrightarrow{\,\cong\,} (\alg_X)^{\lie}\,$ of $\D_X^G$-algebras.
\end{prop}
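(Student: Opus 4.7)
The plan is to verify well-definedness and injectivity formally, then attack surjectivity by combining a minimal-polynomial argument on the algebraic side with the descent provided by Lemma \ref{lem:globgfin}. Well-definedness is immediate: pulling back an algebraic function $\bar h$ on $V \subseteq X/\!/G$ preserves its defining polynomial (its coefficients now live in $\bb{C}[V] \subseteq \bb{C}[p^{-1}(V)]$), and the result is $\lie$-invariant because $p$ is $G$-invariant. The $\D_X^G$-algebra compatibility is encoded in the natural map $\D_X^G \to \D_{X/\!/G}$ together with naturality of $p$. For injectivity, I would use that $p$ is surjective on closed points, which is where reductivity enters.

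For surjectivity, fix a Zariski open $V \subseteq X/\!/G$ and $h \in (\alg_X)^\lie(p^{-1}(V))$. My first step is to extract the defining polynomial for the intended $\bar h$: since $h$ is algebraic there is a minimal monic $P(T) = T^n + a_1 T^{n-1} + \dots + a_n \in \bb{C}[p^{-1}(V)][T]$ with $P(h) = 0$. Applying any $\xi \in \lie$ to this relation and using $\xi \cdot h = 0$ gives $\sum (\xi \cdot a_i) \, h^{n-i} = 0$, and minimality forces $\xi \cdot a_i = 0$ for all $i$. Hence the coefficients lie in $\bb{C}[p^{-1}(V)]^\lie = \bb{C}[p^{-1}(V)]^G = \bb{C}[V]$, where the first equality uses connectedness of $G$ and the second uses that $p^{-1}(V)$ is $G$-saturated and $p$ is the categorical quotient. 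Therefore $P(T) \in \bb{C}[V][T]$.

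My second step is to produce $\bar h$ analytically: $h$ is $G$-finite (since $U\lie \cdot h = \bb{C} \cdot h$ carries the trivial, hence integrable, $G$-action) and, by Proposition \ref{prop:analcont}, $G$-invariant. On a simply-connected Stein open $Q \subseteq V^{an}$, taking $G$-invariants in Lemma \ref{lem:globgfin} together with the Maschke-style decomposition of $\bb{C}[X]$ as a $\bb{C}[X]^G$-module (using reductivity) yields
\[\Gf_{p^{-1}(Q),X}^G \, = \, \bb{C}[X]^G \oo_{\bb{C}[X]^G} \mc{O}_{X/\!/G}^{an}(Q) \, = \, p^* \mc{O}_{X/\!/G}^{an}(Q),\]
so $h\vert_{p^{-1}(Q)} = p^* \bar h_Q$ for some unique $\bar h_Q \in \mc{O}_{X/\!/G}^{an}(Q)$. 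Covering $V^{an}$ by such opens and gluing on overlaps via injectivity of $p^*$ produces $\bar h \in \mc{O}_{X/\!/G}^{an}(V^{an})$, and since $\bar h$ satisfies the polynomial $P(T) \in \bb{C}[V][T]$ from Step~1 it is algebraic, giving the desired section of $\alg_{X/\!/G}(V)$. The main obstacle I anticipate is the sheaf-theoretic bookkeeping required to reconcile the reference-domain conventions used to define $\alg_{X/\!/G}$ with the local analytic construction on the Stein opens $Q$; beyond this, the essential content is the minimal-polynomial trick and the $G$-invariants computation via Lemma \ref{lem:globgfin}.
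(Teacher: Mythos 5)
Your proof is correct, but it diverges from the paper's on both halves of the surjectivity argument, and it is worth spelling out the trade-offs.

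For the invariance of the coefficients, the paper proceeds by Galois theory: since the $\lie$-action commutes with the Galois (monodromy) action on $\alg_X$ (this is the content of Theorem \ref{thm:algdecomp}), $\lie$-invariance of $h$ propagates to all of its Galois conjugates, and then Vieta's formulas show the coefficients of the minimal polynomial are $\lie$-invariant. You instead apply $\xi \in \lie$ directly to the relation $P(h)=0$ and invoke linear independence of $1, h, \dots, h^{n-1}$ over $\bb{C}(X)$ to force $\xi \cdot a_i = 0$. Both arguments are valid and yield the same conclusion; yours is more elementary (it does not need the $\lie$-Galois commutativity statement), while the paper's makes transparent why the conjugates are holomorphic and the coefficients single-valued, which you are implicitly using when you write $P(T) \in \bb{C}[p^{-1}(V)][T]$ rather than $\bb{C}(X)[T]$ — that inclusion requires knowing all conjugates of $h$ are pole-free, which follows from $\alg_X$ being Galois-stable (again Theorem \ref{thm:algdecomp}), so you are not entirely avoiding the paper's ingredient.

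Your Step 2 (producing $\bar h$ via Luna-type descent on Stein opens and Lemma \ref{lem:globgfin}) is a genuine addition: the paper's proof stops once the coefficients are shown to lie in $\bb{C}[X]^G$ and leaves the analytic identification of $h$ with a pullback from $X/\!/G$ implicit. Your construction via $\Gf^G_{p^{-1}(Q),X} = p^*\mc{O}^{an}_{X/\!/G}(Q)$ and gluing is sound modulo the reference-domain bookkeeping you flag yourself. The one thing you should make explicit is the observation just mentioned — that the $a_i$ are regular and not merely rational — since your derivation argument by itself only gives $\xi \cdot a_i = 0$ for $a_i \in \bb{C}(X)$; without regularity, you would obtain $\bb{C}(X)^G$ rather than $\bb{C}[X]^G$, and the descended polynomial would have coefficients in $\bb{C}(X/\!/G)$ rather than $\bb{C}[X/\!/G]$.
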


\begin{proof}
It is easy to see that pulling back by $p$ gives a well-defined $\D_X^G$-algebra monomorphism. To see surjectivity, pick $y \in (\alg_X)^{\lie}$. Let $f$ be the minimal monic polynomial of $y$ in the Galois extension of $y$. Since the action of $\lie$ commutes with the Galois (monodromy) action, all the roots of $f$ are $\lie$-invariant. Thus, by Vieta's formulas the coefficients of $f$ are in $(\bb{C}(X) \cap \alg_X)^\lie = \bb{C}[X]^G$ (see Theorem \ref{thm:algdecomp}).
\end{proof}

\begin{lemma}\label{lem:git}
Let $N \triangleleft \pi_1(X)$ be such that $\op{im} \psi_X \subset N$,and assume that $\opmod(\pi_1(X)/N)$ has a generator. Then $\conn_X^N$ is an admissible $G$-representation if and only if $X$ has a unique closed $G$-orbit.
\end{lemma}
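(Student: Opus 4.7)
The plan is to use Theorem \ref{thm:generator} to make $\conn_X^N$ concrete as a coherent $G$-equivariant regular connection on $X$, and then reduce the admissibility question to the familiar question of admissibility of coordinate rings under reductive group actions.

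First, I would verify that $\conn_X^N$ is $G$-equivariant. By Proposition \ref{prop:gfinreg} (applied with $D=\emptyset$), the containment $\op{im}\psi_X \subset N$ forces $\conn_X^N \subset \Gf_{\Omega,X}$, so every section is $G$-finite; combined with Lemma \ref{lem:gfindmod}, any coherent $\D_X$-submodule generated by such sections is $G$-equivariant, and since $\conn_X^N$ is the sum of such submodules it is itself $G$-equivariant. Under the generator hypothesis, Theorem \ref{thm:generator} moreover tells us that $\conn_X^N$ is $\D_X$-coherent, and being a regular connection it is locally free of finite rank as an $\mc{O}_X$-module. Hence $\Gamma(X, \conn_X^N)$ is a finitely generated $\bb{C}[X]$-module carrying a compatible rational $G$-action.

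The key elementary observation is that $\mc{O}_X$ sits inside $\conn_X^N$ as a $G$-equivariant $\D_X$-submodule: it is a regular rank-one algebraic connection embedded in $i^{an}_*\mc{O}_\Omega^{an}$ via restriction, with trivial monodromy, hence automatically trivial $N$-monodromy for any $N$. Recall also that, since $X/\!/G = \op{Spec}\bb{C}[X]^G$ is an integral affine variety whose closed points parametrize the closed $G$-orbits, unique closed orbit is equivalent to $\bb{C}[X]^G = \bb{C}$.

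For the $(\Rightarrow)$ direction, if $\conn_X^N$ is admissible as a $G$-representation then so is its $G$-subrepresentation $\bb{C}[X] = \Gamma(X, \mc{O}_X)$; in particular the trivial isotypic component $\bb{C}[X]^G$ is finite dimensional, and since $\bb{C}[X]^G$ is a $\bb{C}$-subalgebra of the domain $\bb{C}[X]$ this forces $\bb{C}[X]^G = \bb{C}$, giving a unique closed orbit. Conversely, when $\bb{C}[X]^G = \bb{C}$, I would invoke the classical finite generation theorem (Hilbert--Nagata--Hadziev): for a reductive group $G$ acting rationally on a finitely generated $\bb{C}[X]$-module $M$ compatibly with its action on $\bb{C}[X]$, each $G$-isotypic component $M^\lambda := \op{Hom}_G(V_\lambda, M)$ is a finitely generated $\bb{C}[X]^G$-module. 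Applied to $M = \Gamma(X,\conn_X^N)$ with $\bb{C}[X]^G=\bb{C}$, each $M^\lambda$ is finite dimensional, proving admissibility.

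I do not anticipate any serious obstacle; the only points requiring care are justifying the $G$-equivariance of $\conn_X^N$ (handled via Proposition \ref{prop:gfinreg} and normality of $N$) and citing the appropriate version of the finite generation theorem for isotypic components of equivariant modules.
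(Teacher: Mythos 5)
Your proof is correct and takes essentially the same route as the paper: both rely on the containment $\mathbb{C}[X]^G \subset \conn_X^N$ for the forward direction, and on Theorem \ref{thm:generator} (coherence of $\conn_X^N$) together with the reductive finite-generation theorem for isotypic components (the paper cites \cite[Theorem 3.25]{popvin}) for the converse. You add a bit more detail on the $G$-equivariance of $\conn_X^N$ and the embedding $\mathcal{O}_X \hookrightarrow \conn_X^N$, but the argument is the same.
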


\begin{proof}
By Proposition \ref{prop:gfinreg}, we have $\conn_X^N \subset \Gf_{\Omega,X}$, for a simply-connected domain $\Omega \subset X$. By a standard GIT argument, if $X$ does not have a unique closed $G$-orbit then $\C[X]^G \subset \conn_X^N$ is infinite dimensional.

Conversely, if $X$ has a unique cloed $G$-orbit then any finitely generated $G-\C[X]$-module is admissible \cite[Theorem 3.25]{popvin}. We conclude by Theorem \ref{thm:generator}.
\end{proof}

\subsection{Binary forms} \label{sec:bin0}

Here we exhibit some of the concepts introduced with applications to the space of binary forms. We use some of the results obtained in Section \ref{sec:class} for the cubic case. 

In this section $X= \Sym^n W$ with $\dim W=2$ and $n > 1$. The group $\GL_2$ acts on $X$ with kernel $K=\{\omega I_2 \, \mid \, \omega^n=1 \}$, hence we put $G=\GL_2/K$. We let $x,y$ be a basis of $W$, and $x^n,x^{n-1}y, \dots, x y^{n-1}, y^n$ a basis of $X$, with respective coordinates $x_n, x_{n-1}, \dots, x_1, x_0$, so that we identify $\C[X]=\C[x_0,\dots, x_n]$. Throughout we work with the convention that the highest weights in $\C[X]$ are non-negative (i.e. $W=(\bb{C}^2)^*$ as a representation of $\GL_2$).

We denote by $r=r(x_0,\dots,x_n)$ an algebraic function (on some domain) that satisfies
\begin{equation}\label{eq:root0}
x_n \cdot r^n +  x_{n-1} \cdot r^{n-1} + \dots  + x_1 \cdot r + x_0 =0.
\end{equation}
Throughout $f \in \bb{C}[X]^{\SL_2}$ denotes the discriminant of the polynomial above, with $\deg f = 2n-2$. It is easy to see that $\Sing \D_X r$ is defined by $x_n \cdot f$. In particular, $r$ is not $G$-finite! Furthermore, the monodromy representation induced by the Galois group $S_n$ is not irreducible. Therefore, for our purposes it is more natural to consider a different function instead.

The action of $G$ on $X$ gives the Lie algebra map $\lie \to \D_X$, and we pick the following basis of its image:
\[g_{11}=x_1 \partial_1 + 2 x_2 \partial_2 + \dots + n x_n \partial_n, \quad     g_{12}=n x_0 \partial_1 + (n-1) x_1 \partial_2 + \dots + x_{n-1} \partial_n, \]
\[g_{21}=x_1 \partial_0 + 2 x_2 \partial_1 + \dots + n x_n \partial_{n-1}, \quad 	g_{22}=n x_0 \partial_0+(n-1) x_1\partial_1 + \dots + x_{n-1} \partial_{n-1}.\]

We denote by $Z \subset X$ the Veronese cone, which is the $G$-stable closed subvariety given as the image of the map $W \to X$, $w \mapsto w^d$. Let $D\subset X$ be reduced divisor defined by $f$, and $D_0$ its smooth locus.

\begin{theorem}\label{thm:bin}
Put $h=x_{n-1}+n x_n \cdot r$. Then $h$ is $G$-finite of highest $\GL_2$-weight $(n-1,1)$, $\D_X \cdot h$ is a simple (equivariant) $\D_X$-module with $\charC(\D_X h)= (n-1) \cdot [T^*_X X] + [\ol{T^*_{D_0}X}]$, and $\mc{F}(\D_X h) \cong \mc{H}^{n-1}_Z(\mc{O}_X)$. Moreover, the following operators generate $\Ann(h)$:
\[g_{11}-n+1, \, g_{12}^{\,n-1}, \,  g_{21}, \, g_{22}-1, (\partial_i \partial_{j+1} - \partial_{i+1} \partial_{j})^{1+\delta_{j,n-1}}, \, \mbox{ for } 0\leq i<j \leq n-1\mbox{ (}\delta \mbox{ is the Kronecker delta)}.\]
\end{theorem}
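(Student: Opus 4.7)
The plan begins with the Lie algebra action on $r$. Implicit differentiation of $\sum_i x_i r^i = 0$ along each generator yields
\[
g_{11}r = -r, \quad g_{22}r = r, \quad g_{21}r = -1, \quad g_{12}r = r^2,
\]
from which $g_{11}h = (n-1)h$, $g_{22}h = h$, $g_{21}h = 0$ follow by direct substitution into $h = x_{n-1} + n x_n r$. An inductive computation shows $g_{12}^k h$ is a polynomial of degree at most $k+1$ in $r$ with coefficients in $\mathbb{C}[X]$ (checkable by hand for small $n$, with $g_{12}^{n-1} h$ turning out to be a scalar multiple of $F(r) = \sum_i x_i r^i = 0$). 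Hence $h$ generates the $(n-1)$-dimensional irreducible $\GL_2$-representation of highest weight $(n-1, 1)$; since $\omega I_2 \in K$ acts by $\omega^n = 1$, this descends to a rational $G$-representation, so $h$ is $G$-finite, and simultaneously $g_{11}-(n-1),\, g_{22}-1,\, g_{21},\, g_{12}^{n-1}$ are seen to lie in $\Ann(h)$.

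Next I would dispose of the Hankel operators $O_{i,j} := \partial_i\partial_{j+1} - \partial_{i+1}\partial_j$. From $\partial_i r = -r^i/F'(r)$, a short direct calculation gives $O_{i,j}(r) = 0$. Since partial derivatives commute, $O_{i,j}$ commutes with each $\partial_e$, so also $O_{i,j}(\partial_e r) = \partial_e(O_{i,j} r) = 0$. Applying $O_{i,j}$ to $h$: when $j < n-1$ none of $i, i+1, j, j+1$ equals $n$, so $O_{i,j}$ passes through $x_n$ and $O_{i,j} h = n x_n \cdot O_{i,j} r + O_{i,j}(x_{n-1}) = 0$; when $j = n-1$, the product rule leaves a residual $O_{i,j} h = n \partial_i r$ (from the $\partial_n x_n$ factor), which a second application annihilates. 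This accounts for the exponent $1 + \delta_{j,n-1}$.

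The decisive step is identifying $\mc{F}(\D_X h) \cong \mc{H}^{n-1}_Z(\O_X)$, which will yield simplicity, the equality $\Ann(h) = I$ (where $I$ is the ideal generated by the listed operators), and the Fourier statement simultaneously. Under $\mc{F}$ the operators $O_{i,j}$ transform (after the binomial rescaling $x_i \leftrightarrow x_i/\binom{n}{i}$) into the $2\times 2$ Hankel minors $x_i x_{j+1} - x_{i+1} x_j$, which cut out the Veronese cone $Z = \{w^n : w \in W\}$, while the Fourier images of the Lie-algebra relations describe the $G$-isotypic structure of a canonical generator of $\mc{H}^{n-1}_Z(\O_X)$ lying in the isotype $V_{n-1,1}^* \otimes \det X$. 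Since $Z$ has rational singularities and its only $G$-orbits are $\{0\}$ and $Z \setminus \{0\}$, $\mc{H}^{n-1}_Z(\O_X)$ is a simple equivariant $\D_X$-module, equal to the intermediate extension of its restriction to $Z \setminus \{0\}$. The surjection $\mc{F}(\D_X/I) \twoheadrightarrow \mc{H}^{n-1}_Z(\O_X)$ induced from the presentation must therefore be an isomorphism. Undoing $\mc{F}$ gives $\D_X/I \cong \D_X h$, simplicity, and the Fourier statement. The rank $n-1$ can also be verified directly from $\sum_i h_i = 0$ (Vieta's identity $x_{n-1} = -x_n \sum_i r_i$) together with the generic $S_n$-monodromy acting on the branches.

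For the characteristic cycle, $\rank \D_X h = n-1$ forces $[T^*_X X]$-multiplicity $n-1$. As $\D_X h$ is simple, torsion-free, and has singular locus contained in the irreducible hypersurface $D = \{f = 0\}$, the only remaining component is $[\overline{T^*_{D_0} X}]$; its multiplicity follows by a local monodromy computation. At a smooth point of $D$, exactly two roots $r_i, r_j$ coalesce, so the local monodromy is the transposition $(ij)$, which on the standard $(n-1)$-dimensional $S_n$-representation has a single $(-1)$-eigenvalue, giving multiplicity $1$. The main technical obstacle throughout is the Fourier identification: tracking the binomial normalizations so the Fourier-transformed Hankel operators become exactly the defining equations of $Z$, and verifying that the Fourier images of the Lie-algebra relations pick out a $G$-isotypic generator of $\mc{H}^{n-1}_Z(\O_X)$ whose annihilator matches $\mc{F}(I)$.
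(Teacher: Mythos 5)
Your Lie-algebra computations are correct and in fact more explicit than the paper's: the formulas $g_{11}r=-r$, $g_{22}r=r$, $g_{21}r=-1$, $g_{12}r=r^2$ do follow from implicit differentiation, and the claim $g_{12}^{n-1}h=n!\cdot F(r)=0$ (which you only describe as "checkable by hand for small $n$") is in fact a clean identity provable for all $n$ from $g_{12}^k(x_j)=\tfrac{(n-j+k)!}{(n-j)!}x_{j-k}$ and $g_{12}^k(r)=k!\,r^{k+1}$. Your direct verification that the Hankel operators $O_{i,j}$ annihilate $r$ (hence $h$, with the exponent $2$ exactly accounting for the $\partial_n x_n$ Leibniz term when $j=n-1$) is also a valid alternative to the paper's appeal to Mayr's annihilating operators, and the Vieta argument for $\rank=n-1$ is fine.

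The genuine gap is in the step you yourself flag as "decisive." You assert that rational singularities of $Z$ together with the orbit structure $\{0\}\sqcup(Z\setminus\{0\})$ force $\mathcal{H}^{n-1}_Z(\mathcal{O}_X)$ to be simple; that is not a standard criterion and, absent a citation or direct computation, it cannot be taken for granted (rational singularities of the cone does not in general coincide with $\mathcal{H}^{\operatorname{codim}}_Z(\mathcal{O}_X)=IC_Z$). More seriously, even granting simplicity, the claimed surjection $\mathcal{F}(\mathcal{D}_X/I)\twoheadrightarrow\mathcal{H}^{n-1}_Z(\mathcal{O}_X)$ is never constructed: it requires exhibiting a cyclic generator of $\mathcal{H}^{n-1}_Z(\mathcal{O}_X)$ in the correct $G$-isotype and showing it is killed by the Fourier image of $I$ (not just by a power of $I_Z$ — note your Fourier-transformed Hankel operators involve squares of the minors containing $x_n$, so they are not literally $I_Z$). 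And the final inference "surjection onto a simple module, hence isomorphism" is a non sequitur: a surjection onto a simple module says nothing about the kernel. In fact the difficult content of $\operatorname{Ann}(h)=\mathcal{J}$ is precisely ruling out extra composition factors of $\mathcal{D}_X/\mathcal{J}$; this is why the paper first establishes equivariance of $\mathcal{D}_X h$ topologically (via $\pi_1(T)\to\pi_1(\GL_2)$ and Proposition \ref{prop:gfinreg}), then identifies $\mathcal{F}(\mathcal{D}_X h)$ with the unique full-support simple $\mathcal{M}_O$ of the two-simple, semi-simple category $\operatorname{mod}^Z_G(\mathcal{D}_X)$, deduces from semi-simplicity that $\mathcal{M}_O$ equals its injective hull $\mathcal{H}^{n-1}_Z(\mathcal{O}_X)$, and finally writes $\mathcal{D}_X/\mathcal{J}\cong(\mathcal{O}_X)^{\oplus a}\oplus(\mathcal{D}_X h)^{\oplus b}$ and pins down $a=0,\;b=1$ by explicit multiplicity computations using (\ref{eq:PVHom}) and \cite[Theorem 1.2]{claudiu5}. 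Without some substitute for this last bookkeeping, your argument does not close.
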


\begin{proof}
The monodromy of $h$ is irreducible, corresponding to the standard representation of $S_n$. Clearly, $\Sing \D_X h \subset \Sing \D_X r  = D \cup \{x_n = 0\}$. As $r$ is homogeneous with respect to scaling (i.e. annihilated by $(g_{11}+g_{22})/n$), it is easy to see that it has (locally) trivial monodromy around the line $x_n = 0$ (see also Proposition \ref{prop:analcont}). From the inequality $|x_n r| \leq |x_n|+ \max\{|x_0|,\dots, |x_{n-1}|\}$ we see that $x_n r$ is locally bounded around this line.
Hence, for any root $r$ of (\ref{eq:root0}), $x_n \cdot r$ can be extended locally to the line $x_n = 0$ by Riemann's theorem on removable singularities. By Corollary \ref{cor:singtor}, we obtain $\Sing \D_X h = D$.

Let $\mc{J}\subset \D_X$ denote the ideal generated by the operators given in the statement. For homogeneity reasons, $g_{11}-n+1$ and $g_{22}-1$ annihilate $h$ (see \cite[p. 284]{mayr}). This implies that $\D_X h$ is a $T$-equivariant $\D_X$-module, where $T \subset \GL_2$ denotes the torus of diagonal matrices. Since the induced map $\pi_1(T) \to \pi_1(\GL_2)$ is surjective and $\Sing \D_X h$ is $G$-stable, we deduce by Proposition \ref{prop:gfinreg} that $\D_X h$ is actually $\GL_2$-equivariant, and hence also $G$-equivariant. Therefore, by Lemma \ref{lem:gfindmod} $h$ is a $G$-finite function. 

For the claim on highest weight, we show that $g_{21} \cdot h =0$. For this, we note that $G$ acts on $r$ via fractional transformations as it does on $x/y$. As the operator $g_{21}$ is induced by the linear transformation $x \mapsto y,\, y\mapsto 0$, we obtain $g_{21} \cdot r = -1$, and so $g_{21}\cdot h =0$.

Using (\ref{eq:PVHom}), we have a surjection $\mc{P}((n-1,1)) \to \D_X h$, which implies that $h$ is also annihilated by $g_{12}^{\,n-1}$ (see \cite[Section 2]{catdmod}). The rest of the operators $(\partial_i \partial_{j+1} - \partial_{i+1} \partial_{j})^{1+\delta_{j,n-1}}$ are easily seen to annihilate $h$ using the annihilating operators of $r$ given in \cite[p. 284]{mayr}. Therefore, we have $\mc{J}\subset \Ann(h)$.

Put $\mc{M}=\mc{F}(\D_X h)$. Then $\mc{M}$ is a $G$-equivariant holonomic $\D_X$-module. Since $J\subset \Ann(h)$, we have $\Supp \mc{M} \subset Z$. We note that $Z = O \cup \{0\}$, with $O$ being the highest weight orbit of $X$, and the $G$-stabilizer of $O$ is connected (in contrast to the setting in \cite{claudiu5}, since $G=\GL_2/K$). As seen in Section \ref{sec:equivd}, this implies that the category $\opmod_G^Z(\D_X)$ has $2$ simple objects, $E=\mc{F}(\mc{O}_X)$ and $\mc{M}_O$, the latter being the simple $\D_X$-module corresponding to the trivial local system on $O$. As $\mc{M}_O$ has no non-trivial self-extensions in  $\opmod_G^Z(\D_X)$, and $\D_X h$ is indecomposable (see Lemma \ref{lem:simpmon} (c)) with no simple composition factors isomorphic to $\mc{O}_X$, we deduce that $\mc{M}=\mc{M}_O$. As $\rank \D_X h = n-1$, it follows that $\charC(\D_X h)= (n-1) \cdot [T^*_X X] + [\ol{T^*_{D_0}X}]$.

As in Theorem \ref{thm:algdecomp}, it is easy to see that there are no non-trivial extensions between $\mc{O}_X$ and $\D_X h$. Hence, the category $\opmod_G^Z(\D_X)$ is semi-simple. Since $\mc{H}^{n-1}_Z(\mc{O}_X)$ is the injective hull of $\mc{M}_O$ in $\opmod_G^Z(\D_X)$ (cf. \cite[Lemma 3.11]{catdmod}), we deduce that $\mc{M}=\mc{H}^{n-1}_Z(\mc{O}_X)$.

We are left to show that $\mc{J}=\Ann(h)$. As seen above, we have a surjection $\mc{P}((n-1,1)) \to \D_X/\mc{J}$, which shows that $\D_X/\mc{J}$ is $G$-equivariant. Since $\Supp \mc{F}(\D_X/\mc{J}) \subset Z$ and $\opmod_G^Z(\D_X)$ is semi-simple, we have $\D_X/\mc{J} \cong (\O_X)^{\oplus a} \oplus (\D_X h)^{\oplus b}$, with $a\geq 0, b\geq 1$. We have by (\ref{eq:PVHom})
\[a=\dim \Hom_{\D_X}(\D_X/\mc{J}, \mc{O}_X) \leq \dim \Hom_{\D_X}(\mc{P}((n-1,1)), \mc{O}_X) = m_{(n-1,1)}\left(\Sym \Sym^n \C^2\right)= 0.\]
One can conclude by an explicit rank calculation that we must have $b=1$. Alternatively, by (\ref{eq:fourier}) we have 
\[b=\dim \Hom_{\D_X}(\D_X/\mc{J}, \D_X h) \leq \dim \Hom_{\D_X}(\mc{P}((n-1,1)), \D_X h) = m_{(n-1,1)}\left(\D_X h\right)= m_\ll(M),\]
where $\ll = \big(n(n+1)/2-1, \, n(n-1)/2+1\big)$. We now see using \cite[Theorem 1.2]{claudiu5} that $m_\ll(M)=1$.
\end{proof}

We note that the above result gives an alternative approach to \cite[Theorem 4.1]{claudiu5} in this case. In addition, we have determined an explicit presentation for $\mc{H}^{n-1}_Z(\mc{O}_X)$ as well as its characteristic cycle. By \cite[Theorem 1.2]{claudiu5} and (\ref{eq:fourier}), we have a description of the $G$-module structure of $\D_X h$. In particular, $\D_X h$ is an admissible representation, which does not follow directly from Lemma \ref{lem:git}. In fact, by classical invariant theory it is known that if $n\geq 4$ then $\C[X]^{\SL_2}$ has more than $1$ generator, so we see that in fact $(\D_X h)(*D)$ is not admissible.

A basic problem is to determine $b_h(s)$, which is also closely related to \cite[Theorem 7.1]{opdam} and \cite{asrob}. We know that the roots are rational, and conjecture that they are strictly negative, as suggested by Theorem \ref{thm:bin}. We find some \lq\lq obvious" roots, which is sufficient for our purposes in Section \ref{sec:class}.

\begin{prop}\label{prop:binroot}
For $h=x_{n-1}+n x_n \cdot r$ and $n\geq 3$, we have $b_h(-1)=b_h(-3/2) =0$.
\end{prop}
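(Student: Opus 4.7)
Both roots will be obtained by local analysis at a generic smooth point $p \in D_0$ of the discriminant, where the universal polynomial has exactly one simple double root $\mu$ alongside $n-2$ further distinct simple roots $c_3,\dots,c_n$.

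For $b_h(-1)=0$, I would pick the branch of $r$ taking value $c_3$ at $p$. Since $c_3$ is a simple root, this branch extends holomorphically to a neighborhood $\Omega$ of $p$, so $h$ is analytic at $p$; Vieta's formula $x_{n-1}=-x_n\sum_i r_i$ then gives
\[ h(p) = x_n(p)\bigl((n-1) c_3 - 2\mu - \textstyle\sum_{j\geq 4} c_j\bigr), \]
which is nonzero for a generic choice of $p$. As $p\in\Omega\cap D$ with $h(p)\neq 0$, Lemma \ref{lem:normalize} immediately yields $b_h(-1)=0$.

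For $b_h(-3/2)=0$, I would instead consider the two branches $r_1,r_2$ merging to $\mu$ at $p$. Setting $m:=(r_1+r_2)/2$ and $w:=r_1-r_2$, one has $w^2=f/u$ where $u:=x_n^{2n-2}\prod_{(i,j)\neq(1,2)}(r_i-r_j)^2$ is a unit at $p$. Picking $r=r_1=m+w/2$, Vieta yields the decomposition
\[ h = h_0 + h_1, \qquad h_0 := x_n\bigl((n-2)m - \textstyle\sum_{i\geq 3}c_i\bigr), \qquad h_1 := \tfrac{n}{2} x_n\, u^{-1/2} f^{1/2}, \]
where $h_0$ is a local unit at $p$ (for generic $p$) and $h_1$ is a local unit times $f^{1/2}$. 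In a simply-connected analytic neighborhood of $p$ in $X\setminus D$, the submodule $\D_X h_0 \cong \O_X$ has trivial monodromy while $\D_X h_1$ carries $\pm 1$ monodromy around $D$, forcing $\D_X h_0 \cap \D_X h_1 = 0$ inside the sheaf of holomorphic functions.

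The main step is then a local-analytic version of Lemma \ref{lem:bsum}. Given any algebraic $P\in\D_U[s]$ with $P\cdot f^{s+1}h = b(s)f^s h$, substituting $h=h_0+h_1$ and evaluating at integers $s=k\gg 0$ places the difference $P\cdot f^{k+1}h_0 - b(k)f^k h_0$ into $\D_X h_0 \cap \D_X h_1 = 0$, so by polynomial dependence on $s$ one obtains $P\cdot f^{s+1}h_i = b(s)f^s h_i$ separately for $i=0,1$. A short computation in analytic coordinates where $f$ is itself a coordinate (using operators such as $\partial_f - (\partial_f h_0)/h_0$ for $i=0$, and an analogous one for $i=1$) shows that the analytic local $b$-function of a unit is $s+1$, and of a unit times $f^{1/2}$ is $s+3/2$. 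Hence $(s+1)(s+3/2)\mid b(s)$, and therefore $(s+1)(s+3/2)\mid b_h(s)$. The main subtlety is the interplay between algebraic and analytic settings, resolved by observing that any algebraic solution of \eqref{eq:bu} is in particular analytic, so divisibility by the analytic local $b$-functions forces divisibility of the algebraic one.
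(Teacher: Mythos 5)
Your argument for $b_h(-1)=0$ is the same as the paper's: pick a generic point $p\in D$ where a simple branch of $r$ gives an analytic, nonvanishing germ of $h$, then apply Lemma \ref{lem:normalize}. For $b_h(-3/2)=0$ your route is genuinely different, even though it rests on the same local geometry (two roots merging at a generic point of $D$). The paper passes from $h$ to the auxiliary function $h'=x_n(r_1-r_2)/\sqrt f$, observes that $h'$ extends analytically across $D$ and is nonzero there (by Riemann's removable singularity theorem, using $f=x_n^{2n-2}\prod(r_i-r_j)^2$), applies Lemma \ref{lem:normalize} a second time to get $b_{h'}(-1)=0$, and transfers this to $b_h(-3/2)=0$ via the shift $b_h(s)=b_{h_1-h_2}(s)=b_{h'}(s+\tfrac12)$ coming from $h_1-h_2=n\sqrt f\,h'$ and the fact that $h$ and $h_1-h_2$ generate the same $\D$-module. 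You instead decompose $h=h_0+h_1$ into the monodromy-symmetric and antisymmetric parts under the transposition $(12)$, note $\D_X h_0\cap\D_X h_1=0$ in a punctured analytic neighborhood of $p$, split the algebraic Bernstein--Sato identity by evaluation at integers and polynomial dependence on $s$, and then read off the two factors $s+1$ and $s+\tfrac32$ from the local analytic $b$-functions of a unit and of a unit times $f^{1/2}$ at a smooth point of $D$. Both are correct. The paper's version is shorter because it reuses Lemma \ref{lem:normalize} for both roots and needs no local $b$-function apparatus; yours is more self-contained in that it produces both roots simultaneously from one local analysis, at the cost of invoking (and correctly handling) the passage from algebraic operators to local analytic $b$-functions. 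One small point worth flagging in your write-up: the assertion $\D_X h_0\cap\D_X h_1=0$ should be justified by noting that every nonzero section of $\D_X h_1$ still has the $\pm1$ monodromy near $p$ (being a meromorphic multiple of $\sqrt f$), while every section of $\D_X h_0$ extends across $D$; you state this but it is the crux of the splitting and deserves a sentence.
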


\begin{proof}
Since $n\geq 3$, we can choose a point $\ul{a}=(a_0,\dots, a_n) \in D \subset X$ with $a_n\neq 0$ such that (\ref{eq:root}) has a root $c \neq -a_{n-1}/(n a_n)$ of multiplicity one. Using the implicit function theorem on (\ref{eq:root}), we can find an analytic function $r$ on some domain of $X$ containing ${\ul{a}}$ such that it is a solution to (\ref{eq:root}) and $r(\ul{a})=c$. By Lemma \ref{lem:normalize}, we obtain $b_h(-1)=0$.

Using monodromy, the second claim is equivalent to $b_{h'}(-1)=0$, where $h'=x_n(r_1-r_2)/\sqrt{f}$. Choose a point $\ul{b}=(b_0,\dots,b_n) \in D$ such that $b_n\neq 0$ and (\ref{eq:root}) has roots $c_1,\dots,c_n$ with the property that $c_1=c_2$ and the rest of the roots are all pairwise distinct. We can find a simply-connected domain $\Omega \subset X$ containing $\ul{b}$ such that for $3 \leq i \leq n$ the roots $r_i$ of  (\ref{eq:root}) are all analytic with $r_i(\ul{b})=c_i$ and $x_n \neq 0$. The analytic functions $r_1 - r_2$ and $\sqrt{f}$ are well-defined on any simply-connected domain of $\Omega \setminus D$, where they have the same monodromy corresponding to the sign representation of $S_n$. Hence, $h'$ can be analytically continued to $\Omega \setminus D$. Since on $\Omega \setminus D$ we have $f=x_n^{2n-2} \prod_{i<j}(r_i-r_j)^2$, we see by Riemann's theorem on removable singularities that $h'$ can be analytically continued to the whole $\Omega$, and $h'(\ul{b})\neq 0$. The conclusion follows again by Lemma \ref{lem:normalize}. 
\end{proof}

\subsection{Multiplicity-free holonomic functions}\label{sec:multfree}

In this section $X$ is finite dimensional rational representation of a (connected) reductive group $G$. Here we provide a technique for computing Bernstein--Sato polynomials of $G$-finite functions. 

The following is an immediate consequence of Corollary \ref{cor:singtor}.

\begin{lemma}\label{lem:semi}
If $h$ is a $G$-finite holonomic function, then $\Sing (\D_{X} h)$ is a hypersurface in $X$ defined by a semi-invariant polynomial.
\end{lemma}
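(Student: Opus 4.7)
The approach is essentially immediate from Corollary \ref{cor:singtor} once one invokes the $G$-equivariance of $\D_X h$, so the plan is short: reduce to a statement about $G$-stable hypersurfaces in a rational $G$-representation, then extract the defining semi-invariant via unique factorization.

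First, I would observe that $\mc{M} := \D_X h$ is an equivariant coherent $\D_X$-module by Lemma \ref{lem:gfindmod}, it is holonomic by hypothesis, and it is $\O_X$-torsion-free since it embeds in a sheaf of holomorphic functions. Corollary \ref{cor:singtor} therefore already gives that $D := \Sing \mc{M}$ is a hypersurface in $X$ (the case $D = \emptyset$ being trivial, taking the polynomial $1$).

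Next, I would argue that $D$ is $G$-stable. The characteristic variety $\Char \mc{M} \subset T^*X$ is an intrinsic invariant of $\mc{M}$, so equivariance of $\mc{M}$ forces $\Char \mc{M}$ to be stable under the induced $G$-action on $T^*X$. Consequently $\pi(\Char \mc{M} \setminus T^*_X X)$, and hence its closure $D$, is $G$-stable in $X$.

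Finally, I would decompose $D = D_1 \cup \cdots \cup D_k$ into irreducible components. Since $G$ is connected it cannot permute the components nontrivially, so each $D_i$ is individually $G$-stable. Because $X$ is a vector space, $\mathbb{C}[X]$ is a UFD and each $D_i$ is the zero locus of an irreducible polynomial $f_i \in \mathbb{C}[X]$, unique up to a nonzero scalar. The $G$-stability of $V(f_i)$ then forces $g \cdot f_i \in \mathbb{C}^* f_i$ for all $g \in G$, i.e.\ $f_i$ is a semi-invariant. The product $f := f_1 \cdots f_k$ is then a semi-invariant polynomial defining $D$.

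There is no serious obstacle here; the only point worth pausing on is the $G$-stability of $\Char \mc{M}$, which follows directly from the definition of an equivariant $\D$-module applied to the intrinsic characteristic variety, together with the fact that passing to the associated graded of a good $G$-stable filtration yields a $G$-stable coherent sheaf on $T^*X$ whose support is precisely $\Char \mc{M}$.
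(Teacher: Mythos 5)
Your proof is correct and takes essentially the same route the paper has in mind: the paper simply declares the lemma ``an immediate consequence of Corollary~\ref{cor:singtor},'' and your argument spells out the implicit steps — torsion-freeness and Corollary~\ref{cor:singtor} give the hypersurface, equivariance gives $G$-stability of the singular locus, and connectedness of $G$ plus the UFD property of $\C[X]$ give semi-invariance of the defining polynomial. No discrepancy.
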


\begin{definition}\label{def:multfree}
Let $h \in \Gf_{\Omega,X}$ and $f = \prod_{i=1}^l f_i \in \C[X]$ the semi-invariant of weight $\sigma$ defining $\Sing (\D_X h)$, with $f_i$ irreducible ($1\leq i \leq l$). Assume that $U\lie\cdot h \, \cong V_\lambda$ for some $\lambda\in \Lambda(G)$. Then $h$ is \defi{multiplicity-free} if $m_{\lambda+k\sigma}(\D_X h) = 1$ for $k=\underset{1\leq i \leq l}{\max}\{\deg f_i\}$.
\end{definition}

A necessary condition for the above to hold is $m_{k\sigma}(\C[X])=1$, which happens when $f$ is multiplicity-free in the sense \cite[Section 1.2]{bub} (e.g. if $X$ is prehomogeneous). In particular, each $f_i$ must be homogeneous (see \cite[Proposition 4.3]{saki}). Since $G$ is reductive, there is a dual semi-invariant $f^*\in \C[X^*]$ of weight $\sigma^{-1}$ of the same degree, unique up to constant. In fact, we can choose a basis of $X$ such that the the image under $G\to \GL(X)$ is stable under conjugate transpose, in which case $f^*$ can be obtained from $f$ by taking the complex conjugates of the coefficients, and replacing the variables by the dual (partial) variables \cite[Section 4]{saki}. For $f_1,\dots, f_l$, we take respective dual semi-invariants $f^*_1,\dots, f^*_l\in \C[X^*]$.

\begin{theorem}\label{thm:bfunmult}
Assume that $h$ is a multiplicity-free holonomic function. Then
\begin{equation}\label{eq:good}
f^*(\partial) \cdot f^{s+1}h = b(s) \cdot f^s h,
\end{equation}
where $b(s)$ is a polynomial with $\deg b(s) = \deg f$ and $b_h(s) \,\vert\, b(s)$. If the holonomic $\D$-module $\D_X h$ is also regular, then $b_h(s)=b(s)$ (up to a non-zero constant factor).
\end{theorem}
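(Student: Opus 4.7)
The plan is to adapt M. Sato's classical argument for semi-invariants of prehomogeneous vector spaces to the $G$-finite setting. Form the auxiliary $\D_X[s]$-module $M := \D_X h \otimes_{\C[X]} \C[X,f^{-1}][s] \cdot f^s$, where $f^s$ is a formal symbol on which $\D_X$ acts by the standard logarithmic rule. Inside $M$ both $f^{s+1} h$ and $f^s h$ are well-defined for arbitrary $s$, and the problem reduces to writing $f^*(\partial) \cdot f^{s+1} h$ as a $\C[s]$-multiple of $f^s h$.

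The first substantive step uses $G$-equivariance. Since $f$ has $G$-weight $\sigma$, the dual semi-invariant $f^*$ has weight $\sigma^{-1}$, so $f^*(\partial) \in \D_X$ is a semi-invariant operator of weight $\sigma^{-1}$ under the adjoint $G$-action. Using $U\lie \cdot h \cong V_\lambda$, the subspace $V_\lambda \cdot f^{s+1} h \subset M$ realizes, upon specializing $s$ to a nonnegative integer, a copy of $V_{\lambda + (s+1)\sigma}$, and $f^*(\partial)$ carries it into a copy of $V_{\lambda + s\sigma}$ inside $\D_X \cdot f^s h$. Definition \ref{def:multfree} guarantees that this $V_{\lambda + s\sigma}$-isotypic component is one-dimensional, spanned by $f^s h$, at $s = k-1$; a polynomial interpolation argument, tracking the dependence on $s$ of the Leibniz expansion of $f^*(\partial)(f^{s+1} h)$, upgrades this to an equation $f^*(\partial) f^{s+1} h = b(s) f^s h$ in $M$ with $b(s) \in \C[s]$ of degree at most $\deg f = \op{ord} f^*(\partial)$.

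To pin down $\deg b = \deg f$, I would compute the coefficient of $s^{\deg f}$ in $b(s)$ by isolating the top-order contribution of the Leibniz expansion: this is $s(s-1)\cdots(s-\deg f + 1)$ times a nonzero constant obtained from pairing the principal symbol of $f^*(\partial)$ with the highest-order derivatives of $f^{s+1}$, which is nonzero by nondegeneracy of the duality between $f^*$ and $f$. The divisibility $b_h(s) \mid b(s)$ is immediate from Definition \ref{def:bho}, as the pair $(f^*(\partial), b(s))$ realizes an instance of (\ref{eq:bu}) and $b_h$ is by definition the monic minimal such polynomial.

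For the regular case, the goal is $b = b_h$ up to a constant, equivalently $\deg b_h = \deg f$. Here I would invoke Proposition \ref{prop:rat} together with Kashiwara's $V$-filtration theory: regularity identifies the roots of $b_h(s)$ (modulo integers) with monodromy eigenvalues of $\D_X h$ along each irreducible component $f_i = 0$, and the multiplicity-free condition feeds into a dimension count on the corresponding nearby cycles forcing each such eigenvalue to contribute with the expected multiplicity $\deg f_i$. Summing over $i$ gives $\deg b_h \geq \deg f$; combined with $b_h \mid b$ this yields $b_h = b$. The principal obstacle will be the interpolation step of the second paragraph: the multiplicity-free condition is formulated at a single integer $k$, and rigorously turning it into polynomial-in-$s$ information inside the formal module $M$ requires careful bookkeeping of the isotypic components and their flatness over $\C[s]$, together with managing the fact that $\lambda + s\sigma$ need not be dominant for every relevant specialization.
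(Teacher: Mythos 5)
Your plan captures the broad strategy (use equivariance and multiplicity-freeness to force the functional equation) but misses the step on which the paper's argument actually hinges: factoring $f = \prod_{i=1}^{l} f_i$ into its irreducible semi-invariant components and establishing a separate equation $f_p^*(\partial)\cdot f_p f^s h' = b_p(s) f^s h'$ for each factor $f_p$, then multiplying. The hypothesis in Definition \ref{def:multfree} gives $m_{\lambda + k\sigma}(\D_X h) = 1$ only for $k = \max_i \deg f_i$, not for $k = \deg f = \sum_i \deg f_i$. Your version pairs the full $f^*(\partial)$ with $f^{s+1}h$ all at once, which needs control at $\lambda + \deg f\cdot\sigma$; since multiplication by $f^{\deg f - k}$ is injective on the torsion-free module $\D_X h$, one only gets $m_{\lambda+k\sigma} \leq m_{\lambda+\deg f\cdot\sigma}$, so when $f$ is reducible this is a genuinely stronger hypothesis than the one assumed. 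Factoring also dissolves the "interpolation" difficulty you flag yourself: applying the constant-coefficient operator $f_p^*(\partial)$ to $f_p f^s h_\lambda$ and using Leibniz produces, with no further input, $f^{s-\deg f_p}$ times a $\C[s]$-polynomial element $R(s)$ of the \emph{fixed} highest-weight line of $(\D_X h)_{\lambda+\deg f_p\cdot\sigma}$; multiplicity one at that weight then identifies $R(s)$ as $b_p(s) f^{\deg f_p} h_\lambda$ with $b_p(s)\in\C[s]$ outright, and $\lie$-translates extend the equation from $h_\lambda$ to all of $V_\lambda$. No single-$s$ specialization or flatness bookkeeping is needed.

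For the regular case your proposed route via nearby cycles and monodromy multiplicities is not convincing as stated: the assertion that the multiplicity-free condition forces "each eigenvalue to contribute with the expected multiplicity $\deg f_i$" would require a bridge from a statement about $G$-isotypic multiplicities in $\D_X h$ to a dimension count on $V$-filtration graded pieces, and you give no indication of how to build it; it is not implicit in Proposition \ref{prop:rat}. The argument the paper runs is concrete and avoids any such multiplicity count: regularity plus torsion-freeness gives, by Lemma \ref{lem:charc}, the identity $\Char((\D_X h)_f) = \Char(\C[X]_f)$ up to multiplicities, and one then locates (following Gyoja) a point of this characteristic variety at which the microdifferential operator attached to $f^*(\partial)$ is invertible. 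Microlocal invertibility forces the equation $f^*(\partial)\cdot f^{s+1}h = b(s) f^s h$ to be minimal, i.e. $b_h(s) = b(s)$ up to a nonzero constant.
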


\begin{proof}
Write $U\lie \cdot h \, \cong V_\lambda$ as in Definition \ref{def:multfree}. Fix an element $p\in \{1,\dots, l\}$, and let $\sigma_p$ be the character of $f_p$. First, we show that there is a $b_p(s) \in \C[s]$ such that
\begin{equation}\label{eq:bfac}
f_p^*(\partial) \cdot  f_p f^s h' = b_p(s) \cdot f^s h'.
\end{equation}
for all $h' \in V_\lambda$. Take a highest weight vector $h_\lambda \in V_\lambda$. Clearly, we have 
\[f_p^*(\partial) \cdot  f_p f^s h_\lambda = f^{s-\deg f_p} \cdot f_p \cdot Q(s),\]
where $Q(s) \in \C[s]\oo_{\C} (\D_X \cdot h)_{\lambda + \deg f_p \cdot \sigma - \sigma_p}$. By assumption, we have $(\D_X \cdot h)_{\lambda + \deg f_p \cdot \sigma}= f^{\deg f_p} \cdot V_\lambda$, and thus $f_p \cdot Q(s) = b_p(s) \cdot f^{\deg f_p} \cdot h_\lambda$, for some $b_p(s) \in \C[s]$. Applying $\lie$-translates and via linearity, we obtain (\ref{eq:bfac}) for any $h' \in V_\lambda$. This yields the functional equation (\ref{eq:good}) with $b(s) = b_1(s) \cdots b_l(s)$ and clearly $b_h(s) \,\vert\, b(s)$.

Next, we clearly have $\deg b(s) \leq \deg f$. To show that equality holds, note that the coefficient of $s^{\deg f}$ in $b(s)$ is the same as that in $b'(s)$ given by the equation $f^*(\partial) \cdot f^{s+1} = b'(s) \cdot f^s$ for the $b$-function of $f$, which is non-zero (see \cite[Section 4]{saki}).

Now assume that $\D_X h$ is regular holonomic. By Lemma \ref{lem:charc} we have $\Char( (\D_X h)_f ) = \Char (\C[X]_f)$. As in the proof of \cite[Corollary 2.5.10]{gyoja}
, we deduce that there is a point $y \in \Char ( (\D_X h)_f )$ such that the microdifferential operator corresponding to $f^*(\partial)$ is invertible near $y$. This implies that (up to constant)  we have $b_h(s) = b(s)$ (see also \cite[Section 5]{gyoja2}).
\end{proof}

An example of a multiplicity-free holonomic function is $h=f^{-1}$ for a multiplicity-free semi-invariant $f$ on a prehomogeneous vector space, when we recover the classical equation for the Bernstein--Sato polynomial of $f$ (see \cite[Lemma 1.6,1.7]{gyoja} and \cite[Corollary 2.5.10]{gyoja}).

We have the following notion of $b$-function of several variables (compare with \cite{sata}). For a multi-variable $\underline{s}=(s_1,\dots ,s_l)$, we let $\underline{f}^{\underline{s}}=\prod_{i=1}^l f_i^{s_i}$, and $\underline{f}^{*\underline{s}}=\prod_{i=1}^l f_i^{*s_i}$. Let $d_i = \deg f_i$. Using the analogoue of (\ref{eq:bfac}) used in the proof of Theorem \ref{thm:bfunmult}, we have the following equation.

\begin{prop}\label{prop:several}
Assume that $h$ is a multiplicity-free holonomic function. Then for any $l$-tuple $\underline{m}=(m_1,\dots,m_l)\in \bb{N}^l$ there is a polynomial $b_{h,\underline{m}}(\underline{s})$ of $l$ variables such that 
\[\underline{f}^{*\underline{m}}(\partial) \, \cdot \, \underline{f}^{\underline{s}+\underline{m}}\cdot h=b_{h,\underline{m}}(\underline{s}) \cdot \underline{f}^{\underline{m}}\cdot h.\]
Furthermore, each $b_{h,\underline{m}}(\underline{s})$ is a product of linear factors of the form  (up to a non-zero scalar)
\[b_{h,\underline{m}}(\underline{s}) = \, \prod_{i=1}^{\ul{d}\cdot \ul{m}} \, (\gamma_{i1} s_1 + \gamma_{i2} s_2 + \dots + \gamma_{il} s_l + \alpha_i),\]
where for all $i,j$ we have $\gamma_{ij} \in \bb{N}, \, \op{gcd}(\gamma_{i1},\dots, \gamma_{il})=1$, and $\alpha_i \in \bb{C}$. If $h$ is as in Proposition \ref{prop:rat} then furthermore $\alpha_i \in \bb{Q}$ for each $i$.
\end{prop}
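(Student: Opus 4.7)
The plan is to iterate a multi-variable analogue of equation \eqref{eq:bfac}. As a preliminary step, for each $p\in\{1,\dots,l\}$ and each $h'\in V_\lambda$, I would establish
\[f_p^*(\partial)\cdot f_p\,\underline{f}^{\,\underline{s}}\,h' \;=\; c_p(\underline{s})\,\underline{f}^{\,\underline{s}}\,h',\]
for some polynomial $c_p(\underline{s})\in\C[\underline{s}]$ of total degree $d_p$. The proof is the direct multi-variable generalization of the argument for \eqref{eq:bfac}: for the highest-weight vector $h_\lambda\in V_\lambda$, a Leibniz expansion shows that $f_p^*(\partial)\,f_p\,\underline{f}^{\,\underline{s}}\,h_\lambda$ is a $\C[\underline{s}]$-combination of elements of $\D_X h$ of weight $\lambda+\sum_i s_i\sigma_i$ that is polynomial of total degree $d_p$ in $\underline{s}$. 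The multiplicity-free hypothesis forces this weight-isotypic component to be freely generated by $\underline{f}^{\,\underline{s}}h_\lambda$ over $\C[\underline{s}]$, yielding $c_p(\underline{s})$; the extension to all of $V_\lambda$ follows by applying $\lie$-translates.

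I would then prove the main equation by induction on $|\underline{m}|=\sum_i m_i$, the base case $\underline{m}=0$ being trivial. Given $\underline{m}$ with $m_p>0$, the commutativity of the $f_i^*(\partial)$'s inside $\C[\partial]\subset\D_X$ gives
\[\underline{f}^{*\underline{m}}(\partial)\cdot\underline{f}^{\,\underline{s}+\underline{m}}\,h \;=\; f_p^*(\partial)\cdot\underline{f}^{*(\underline{m}-e_p)}(\partial)\cdot\underline{f}^{\,(\underline{s}+e_p)+(\underline{m}-e_p)}\,h.\]
The inductive hypothesis (with $\underline{s}+e_p$ replacing $\underline{s}$) evaluates the inner factor to $b_{h,\underline{m}-e_p}(\underline{s}+e_p)\cdot\underline{f}^{\,\underline{s}+e_p}\,h$, and applying $f_p^*(\partial)$ via the preliminary step yields
\[b_{h,\underline{m}}(\underline{s}) \;=\; b_{h,\underline{m}-e_p}(\underline{s}+e_p)\cdot c_p(\underline{s}),\]
a product of $\underline{d}\cdot(\underline{m}-e_p)+d_p=\underline{d}\cdot\underline{m}$ linear factors.

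For the factorization of $c_p(\underline{s})$ into forms $\gamma_{k1}s_1+\dots+\gamma_{kl}s_l+\beta_k$ with $\gamma_{kj}\in\bb{N}$ and row-$\gcd$ equal to $1$, I would track weights in the Leibniz expansion of $f_p^*(\partial)(f_p\,\underline{f}^{\,\underline{s}}h_\lambda)$: each application of $\partial_i$ to $\underline{f}^{\,\underline{s}}$ produces a factor $\sum_j s_j(\partial_if_j)/f_j$ linear in $\underline{s}$ with non-negative integer coefficients indexed by which $f_j$ is being differentiated, and iterating $d_p$ times decomposes $c_p$ into $d_p$ linear factors of the required shape (the $\gcd$-normalization follows by pulling out constants). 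Integrality and non-negativity are preserved under the shift $\underline{s}\mapsto\underline{s}+e_p$ in the recursion, which only modifies each intercept $\beta_k$ by $\gamma_{kp}\in\bb{N}$. For the rationality claim, if $h$ satisfies the hypothesis of Proposition \ref{prop:rat}, then specializing $\underline{s}=s\cdot(a_1,\dots,a_l)$ along a generic positive rational ray reduces $b_{h,\underline{m}}(\underline{s})$ to an ordinary one-variable $b$-function whose underlying $\D$-module inherits quasi-unipotent local monodromy; Proposition \ref{prop:rat} then forces $-\beta_k/\sum_j\gamma_{kj}a_j\in\bb{Q}$ for all admissible $\underline{a}$, and varying $\underline{a}$ yields $\beta_k\in\bb{Q}$.

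The main obstacle is the preliminary step: extracting a well-defined polynomial $c_p(\underline{s})$ requires controlling the $G$-isotypic components of $\D_X\cdot h$ along an entire cone of weights $\lambda+\sum_i s_i\sigma_i$, while Definition \ref{def:multfree} only provides control at the single weight $\lambda+k\sigma$. Making this rigorous involves working inside a formal $\D_X[\underline{s}]$-module $\D_X[\underline{s}]\cdot\underline{f}^{\,\underline{s}}h$ and using the prehomogeneous structure of $X$ (so that the subring $\C[f_1,\dots,f_l]$ detects all relevant weight-shifts) to propagate multiplicity-freeness across the weight cone; once this is in place, the polynomial identity for $c_p$ reduces to the same weight-matching argument as for \eqref{eq:bfac}.
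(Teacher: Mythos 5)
Your outline of the \emph{existence} of $b_{h,\ul{m}}(\ul{s})$ is essentially the paper's: establish a multi-variable analogue of \eqref{eq:bfac} and iterate via the recursion $b_{h,\ul{m}}(\ul{s}) = b_{h,\ul{m}-e_p}(\ul{s}+e_p)\cdot c_p(\ul{s})$. The ``main obstacle'' you flag, however, is not actually there: after applying $f_p^*(\partial)$ to $f_p\,\ul{f}^{\,\ul{s}}h_\lambda$, one factors out $\ul{f}^{\,\ul{s}-\ul{d_p}}$ (not $\ul{f}^{\,\ul{s}}$), and the remaining factor lies in $\C[\ul{s}]\oo(\D_X h)_{\lambda+d_p\sigma}$ — the \emph{same} single weight $\lambda+d_p\sigma$ that appears in the one-variable proof. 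The multiplicity-free hypothesis (which, as in Theorem \ref{thm:bfunmult}, forces $(\D_Xh)_{\lambda+d_p\sigma}=f^{d_p}V_\lambda$) therefore applies directly, with no need to ``propagate across the weight cone'' or to invoke a formal module $\D_X[\ul{s}]\cdot\ul{f}^{\,\ul{s}}h$.

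The genuine gap is in your argument for the linear factorization of $c_p(\ul{s})$, which is the substantive content of the proposition. Applying $\partial_i$ to $\ul{f}^{\,\ul{s}}$ does produce $\ul{f}^{\,\ul{s}}\sum_j s_j(\partial_if_j)/f_j$, but the coefficients $(\partial_if_j)/f_j$ are \emph{rational functions on $X$}, not non-negative integers, and the Leibniz rule for the iterated application of $\partial$'s to the product $f_p\cdot\prod_i f_i^{s_i}\cdot h_\lambda$ introduces cross-terms among $f_p$, the $f_i^{s_i}$, and $h_\lambda$; it does not produce a product of the $\ul{s}$-linear forms you extract from single applications. After matching weights, $c_p(\ul{s})$ is a degree-$d_p$ polynomial in $\ul{s}$ with \emph{constant} coefficients, and a Leibniz bookkeeping tells you nothing about whether this polynomial factors into linear forms, let alone ones with non-negative integer $\gamma$-coefficients and $\gcd$ one. (For $l=1$ the factorization is trivial over $\C$; for $l\ge 2$ it is a theorem.) The paper is explicit that this step requires the argument of \cite[Sections 3 and 4]{sata} (and, for arbitrary holonomic functions, \cite[Th\'eor\`eme 2.1]{sabbah1}); your proposal replaces this with a heuristic that does not constitute a proof.

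The rationality argument also has a gap: specializing $\ul{s}=s\cdot\ul{a}$ in the functional equation does not literally produce a one-variable $b$-function of some auxiliary $h$ with respect to $g=\ul{f}^{\,\ul{a}}$ (the operator $\ul{f}^{*\ul{m}}(\partial)$ raises $g^s$ by $\ul{f}^{\,\ul{m}}$, not by $g$, unless $\ul{m}$ is proportional to $\ul{a}$), so one cannot directly cite Proposition \ref{prop:rat} for the specialized polynomial; at best one obtains a divisibility in the wrong direction. In the paper, rationality of the $\alpha_i$ is deduced along with the factorization from the cited Sato/Sabbah machinery rather than from ray-specialization.
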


The decomposition of $b_{h,\underline{m}}(\underline{s})$ can be deduced by an argument as in \cite[Sections 3 and 4]{sata}.

We note that an equation as above with $b(\underline{s})$ having linear factors exists for an arbitrary holonomic function $h$ by \cite[Th\'eor\`eme 2.1]{sabbah1} (see also \cite[Proposition 1.2]{sabbah2} and \cite{gyoja3}).

\begin{remark}\label{rem:slight}
The arguments above show that for the results to hold we can replace the multiplicity-free condition in Definition \ref{def:multfree} with the slightly weaker condition $m_{\lambda+\deg f_i \cdot \sigma -\sigma_i}(\D_X h)=1$ for all $i=1,\dots, l$.
\end{remark}

It is immediate to recover the $b$-function $b_{h}(s)$ of one variable from the $b$-function of $b_{h,\underline{m}}(\underline{s})$ of several variables (for $h$ multiplicity-free of Nilsson class). 

\section{$G$-finite functions in the presence of a dense orbit} \label{sec:dense}

We assume throughout this section that $X$ is a connected smooth variety with a dense orbit under the action of a connected affine algebraic group $G$. We denote the open $G$-orbit by $O$ with $O\cong G/H$. Write $X\setminus O= D \cup C$, where $D$ is a hypersurface, and $\codim_X C\geq 2$.

Let $\G:=H/H^0$ denote the component group of $O$, and $\Lambda(\Gamma)$ the (finite) set of (isomorphism classes of) irreducible representations of $\G$. By (\ref{eq:eqfun}), we have $\pi_1^G(O) = \pi^G_1(X\backslash D) = \Gamma$. By Theorem \ref{thm:orbit}, there is a 1-to-1 correspondence between the elements of $\Lambda(\Gamma)$ and the isomorphism classes of simple equivariant holonomic $\D_X$-modules with full support (moreover, torsion-free). We denote by $\mc{S}^\chi \in \opmod_G(\D_X)$ the simple (regular holonomic) $\D_X$-module corresponding to $\chi \in \Lambda(\Gamma)$.

\subsection{Basic results}\label{subsec:equiv} Our starting is the following observation.

\begin{prop}\label{prop:prehomrank}
Let $\mc{M}$ be a $G$-equivariant coherent $\D_X$-module. Then $\mc{M}$ has finite rank and $\mc{M}^w$ is a regular holonomic $G$-equivariant $\D_X$-module.
\end{prop}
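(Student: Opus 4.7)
The plan is to treat the three assertions---finite rank, $G$-equivariance of $\mc{M}^w$, and regular holonomicity of $\mc{M}^w$---separately, reducing each to the known structure of equivariant $\D$-modules on the open orbit $O\cong G/H$ afforded by Theorem~\ref{thm:orbit}.

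I would first handle finite rank and the bookkeeping around Weyl closure. Since the generic point of $X$ lies in $O$, and $\mc{M}|_O$ is a coherent equivariant $\D_O$-module, Theorem~\ref{thm:orbit} makes $\mc{M}|_O$ a regular connection of rank equal to the dimension of some finite dimensional representation of $\Gamma = H/H^0$. In particular $\rank \mc{M} < \infty$, and by Lemma~\ref{lem:rankhol} the Weyl closure $\mc{M}^w$ is holonomic. Because $\mc{M}|_O$ is a connection, $\Sing \mc{M} \subseteq X\setminus O = D\cup C$, a $G$-stable closed subset; Lemma~\ref{lem:weylclosure} then identifies $\mc{M}^w$ with $\mc{M}/\Gamma_{D\cup C}(\mc{M})$. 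Since local cohomology along a $G$-stable subset preserves equivariance and $\opmod_G(\D_X)$ is closed under subquotients in $\opmod(\D_X)$, $\mc{M}^w$ is equivariant.

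The regularity step is the heart of the argument. By Theorem~\ref{thm:codim2}, $\Sing \mc{M}^w \subseteq D$, so with $j: X\setminus D \hookrightarrow X$ we obtain an equivariant inclusion $\mc{M}^w \hookrightarrow \mc{M}^w(*D) = j_*j^*\mc{M}^w$, where $j^*\mc{M}^w$ is a connection on $X\setminus D$ (its singular locus being empty) and $\mc{M}^w(*D)$ is the resulting meromorphic connection along $D$. It suffices to prove $\mc{M}^w(*D)$ is regular, since holonomic submodules of regular holonomic modules are regular holonomic. I would argue this via monodromy: equivariance of the analytification of $j^*\mc{M}^w$ forces the corresponding local system to factor through $\pi_1^G(X\setminus D)$ (as in the proof of Proposition~\ref{prop:gfinreg}); the codimension $\geq 2$ removal of $C\setminus(C\cap D)$ gives $\pi_1(X\setminus D) \cong \pi_1(O)$, so $\pi_1^G(X\setminus D) \cong \pi_1^G(O)$; and the exact sequence~(\ref{eq:eqfun}) combined with the long exact sequence of the fibration $H\to G\to O$ identifies the latter with $\pi_0(H) = \Gamma$, a finite group. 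Finite---hence quasi-unipotent---local monodromy along $D$ then yields regularity of $\mc{M}^w(*D)$ by Deligne's theorem on regular singularities.

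The main obstacle I anticipate is the computation $\pi_1^G(X\setminus D) = \Gamma$: it rests on the codimension $\geq 2$ invariance of $\pi_1$ for smooth complex varieties, on the topological identification of $\pi_1^G(O)$ with $\pi_0(H)$, and on chaining both with the equivariance-forces-$\pi_1^G$-factorization principle at the level of analytic local systems. Once these are in hand, the invocation of Deligne's regularity theorem and the passage from $\mc{M}^w(*D)$ down to the submodule $\mc{M}^w$ are routine.
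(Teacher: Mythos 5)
Your proof is correct, but it takes a genuinely different route from the paper's. The paper works with the open embedding $j\colon O\to X$ directly: equivariance forces the $\O_X$-torsion subsheaf of $\mc{M}$ to equal $\Gamma_{X\setminus O}(\mc{M})$, so $\mc{M}^w$ is exactly the image of $\mc{M}$ in $j_*j^*\mc{M}$; since $j^*\mc{M}$ is a coherent equivariant $\D_O$-module, Theorem~\ref{thm:orbit} makes it regular holonomic, the direct image $j_*j^*\mc{M}$ is regular holonomic because $j_*$ preserves regular holonomicity along open embeddings, hence so is the submodule $\mc{M}^w$, and finite rank then follows from Lemma~\ref{lem:rankhol}. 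You instead first extract finite rank from $\mc{M}|_O$, then pass through the larger open $X\setminus D$ and establish regularity by hand: you reduce to showing that $j^*\mc{M}^w$ has quasi-unipotent (indeed finite) local monodromy, via $\pi_1(X\setminus D)\cong\pi_1(O)$ from codimension-$\geq 2$ invariance, the fibration sequence for $H\to G\to O$ identifying $\pi_1^G(O)\cong\Gamma$, and the equivariance-forces-$\pi_1^G$-factorization principle from the proof of Proposition~\ref{prop:gfinreg}, finally invoking Deligne's regularity criterion. Both arguments are sound and both rely on $\mc{M}^w$ being a submodule of a direct image along an open embedding. The paper's version is shorter and treats the preservation of regular holonomicity under $j_*$ as a black box; yours unwinds that black box by exhibiting the finiteness of the monodromy explicitly, at the cost of the topological detour and an appeal to Deligne's theorem. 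Once you observed that $j^*\mc{M}$ is already regular on $O$ by Theorem~\ref{thm:orbit}, you could have skipped the passage through $X\setminus D$ and $\pi_1$ altogether, since $j_*$ preserves regular holonomicity for arbitrary open embeddings of smooth varieties --- this is precisely the shortcut the paper takes.
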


\begin{proof}
Let $Z=X\setminus O$ and $j: O \to X$ the open embedding. Due to equivariance, $\Gamma_Z \mc{M}$ is the torsion subsheaf of $\mc{M}$. Hence, from the exact sequence
\[0 \to \Gamma_Z (\mc{M}) \to \mc{M} \xrightarrow{\alpha} j_*j^*(\mc{M}) \xrightarrow{\beta} \mc{H}^1_Z(\mc{M}) \to 0,\]
we see that $\mc{M}^w = \op{im} \alpha$ is a submodule of $j_*j^*(\mc{M})$. It is enough to show that the latter is regular holonomic. Note that $j^*(\mc{M})$ is an equivariant coherent $\D_O$-module, hence it is regular holonomic by Theorem \ref{thm:orbit}. Hence, so is $j_*j^*(\mc{M})$, and $\mc{M}$ has finite rank by Lemma \ref{lem:rankhol}.
\end{proof}

Take an arbitrary simply-connected domain $\Omega \subset X\setminus D$. We see below that $\Gf_{\Omega,X}$ does not depend on the choice of $\Omega$ (up to isomorphism), hence we simply put $\Gf_X:=\Gf_{\Omega,X}$.

\begin{corollary}\label{cor:gfinalg}
We have $\Gf_{X} = \conn_X^{\!\!\op{im} \psi_{X\!\setminus \! D}}\!(D) \,\,\,\subset \alg_X(D)$, with a decomposition into indecomposable $\D_X \times \Gamma$-modules
\[\Gf_{X} = \bigoplus_{\chi\in \Lambda(\Gamma)} \mc{S}^\chi(*D)\oo \chi.\]
In particular, $\Gf_{X}$ is a regular meromorphic connection. Furthermore, $\Gf_{X}$ is injective in $\opMod_G(\D_X)$ and
\[\dim \Hom_{\D_X}(\mc{M},\Gf_{X}) = \rank \mc{M},\]
for any $\mc{M} \in \opmod_G(\D_X)$.
\end{corollary}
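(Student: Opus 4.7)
The plan is to reduce everything to the Nilsson-class decomposition established in Section \ref{sec:prelim}. First, I would show $\Gf_X \subset \conn_X(D)$ by picking a local section $h$: Lemma \ref{lem:gfindmod} gives that $\D_X h$ is $G$-equivariant, and Proposition \ref{prop:prehomrank} then upgrades it to a finite-rank module with regular holonomic Weyl closure. Because $\D_X h$ is already $\O_X$-torsion-free as a subsheaf of $i_*^{an}\O_\Omega^{an}$, it coincides with its Weyl closure and is itself regular holonomic; by Corollary \ref{cor:singtor} its singular locus is a hypersurface, and by $G$-stability it must lie inside $D$ (the only codimension-one $G$-stable part of $X\setminus O$). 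Combining this with Proposition \ref{prop:gfinreg} then identifies $\Gf_X = \conn_X^{\op{im}\psi_U}(D)$, and Proposition \ref{prop:connsheaf}(c) gives independence of $\Omega$.

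Next, since $\pi_1(U)/\op{im}\psi_U = \pi_1^G(U) = \Gamma$ is finite, the containment $\Gf_X \subset \alg_X(D)$ is immediate from (\ref{eq:algfin}), and the $\D_X\times\Gamma$-decomposition follows from Theorem \ref{thm:algdecomp} by restricting to those simples $\chi$ whose monodromy factors through $\Gamma$ (equivalently, from Theorem \ref{thm:generator} with $N=\op{im}\psi_U$: for the semisimple finite group $\Gamma$ the projective covers coincide with the simples themselves, so the indecomposable summands become the $\mc{S}^\chi(*D)$'s paired with the corresponding $\chi$). This display realizes $\Gf_X$ as a finite direct sum of regular meromorphic connections along $D$, hence itself a regular meromorphic connection.

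For the dimension formula I would apply Lemma \ref{lem:gfinsol} to identify $\Hom_{\D_X}(\mc{M},\Gf_X)$ with $\Sol_\Omega(\mc{M})$, note that solutions descend to the torsion-free quotient $\mc{M}^w$, and use Proposition \ref{prop:prehomrank} together with $G$-stability to ensure $\mc{M}^w$ is regular holonomic with $\Sing \mc{M}^w \subset D$; Theorem \ref{thm:ranksol} applied to the simply-connected $\Omega \subset X\setminus D$ then gives $\dim \Sol_\Omega(\mc{M}^w) = \rank \mc{M}^w = \rank \mc{M}$. Injectivity in $\opMod_G(\D_X)$ proceeds in two steps: for an injection $\mc{N} \hookrightarrow \mc{M}$ of coherent equivariant modules, the additivity of rank on short exact sequences of finite-rank modules combined with the dimension formula just proved forces surjectivity of $\Hom(\mc{M},\Gf_X) \to \Hom(\mc{N},\Gf_X)$ by a pure dimension count against the left-exact sequence of Hom-spaces; the general case reduces to the coherent case by writing equivariant modules as directed unions of coherent equivariant submodules and invoking Mittag--Leffler exactly as in the proof of Proposition \ref{prop:connsheaf}(a). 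The main obstacle I foresee is the clean $G$-stability argument pinning $\Sing \mc{M}^w$ inside the hypersurface $D$ rather than inside all of $D\cup C$; this should follow by combining Theorem \ref{thm:codim2} with the observation that the singular locus of a torsion-free equivariant module must be a union of $G$-stable hypersurfaces, ruling out the codimension-$\geq 2$ components in $C$.
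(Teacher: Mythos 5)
Your proposal is correct and follows essentially the same route as the paper: identify $\Gf_X$ with $\conn_X^{\op{im}\psi_U}(D)$ via Propositions \ref{prop:prehomrank} and \ref{prop:gfinreg}, invoke Theorem \ref{thm:algdecomp} for the decomposition and finiteness of $\Gamma$ for the containment in $\alg_X(D)$, and reduce injectivity and the rank formula to (the proof of) Proposition \ref{prop:connsheaf}(a)--(b), whereas the paper phrases this last step by noting that $\opMod_G(\D_X)\subset\opMod_{D,w}^{\op{im}\psi_{X\setminus D},rh}(\D_X)$ and citing that proposition directly. The worry you raise about pinning $\Sing\mc{M}^w$ inside $D$ rather than $D\cup C$ is not a gap: Corollary \ref{cor:singtor} already forces the singular locus of a torsion-free holonomic module to be a hypersurface, so the $G$-stable codimension-$\geq2$ pieces in $C$ are excluded automatically; the appeal to Theorem \ref{thm:codim2} is not needed for that point.
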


\begin{proof}
Take a coherent submodule $\mc{M} \subset \Gf_{X}$. By Proposition \ref{prop:prehomrank}, $M$ is a regular holonomic torsion-free $\D$-module. Since $\Sing M$ is a $G$-stable divisor, we must have $\Sing M \subset D$. Thus, we have an inclusion $M\subset \conn_X(D)$. Since $M$ was an arbitrary coherent submodule of $\Gf_{X}$, we get $\Gf_{X} \subset \conn_X(D)$. By Proposition \ref{prop:gfinreg}, this implies that $\Gf_{X} = \conn_X^{\!\!\op{im} \psi_{X\!\setminus \! D}}\!(D)$, and this is contained in $\alg_X(D)$ since $\Gamma = \pi_1^G(X)$ is finite. The claim on the decomposition of $\Gf_{X}$ follows from Theorem \ref{thm:algdecomp}. Noticing from Proposition \ref{prop:prehomrank} that $\opMod_G(\D_X) \subset \opMod_{D,w}^{\op{im} \psi_{X\!\setminus\!D},rh}(\D_X)$, the rest follows from Proposition \ref{prop:connsheaf}.
\end{proof}

\begin{lemma}\label{lem:singsimp}
Let $\chi$ be a non-trivial irreducible representation of $\Gamma$. Then the hypersurface $\Sing \mc{S}^\chi$ is contained in $D$, and is non-empty if $X$ is simply-connected.
\end{lemma}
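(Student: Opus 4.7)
The plan is to handle the two claims in sequence: first that $\Sing \mc{S}^\chi \subset D$, which only uses torsion-freeness and equivariance, and second that this set is non-empty when $X$ is simply-connected, via a Riemann--Hilbert style contradiction.

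For the inclusion $\Sing \mc{S}^\chi \subset D$, I would first note that $\mc{S}^\chi$ is a simple equivariant $\D_X$-module with full support; in particular its $\mc{O}_X$-torsion subsheaf is a proper equivariant submodule, hence zero by simplicity. Corollary \ref{cor:singtor} then tells me that $\Sing \mc{S}^\chi$ is a hypersurface (possibly empty). On the other hand, since $\mc{S}^\chi$ is equivariant, its characteristic variety is $G$-stable, and therefore so is its projection $\Sing \mc{S}^\chi$. As a proper $G$-stable closed subset of $X$ it must lie in the complement of the dense orbit, i.e. $\Sing \mc{S}^\chi \subset X\setminus O = D \cup C$. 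Being of pure codimension one, it can only meet $C$ in components of codimension $\geq 2$, hence $\Sing \mc{S}^\chi \subset D$.

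For the non-emptiness in the simply-connected case, I would argue by contradiction: if $\Sing \mc{S}^\chi = \emptyset$, then $\mc{S}^\chi$ is a regular holonomic $\D_X$-module whose characteristic variety equals the zero section, i.e.\ an (algebraic, regular) connection on all of $X$. Since $X$ is simply-connected, the Riemann--Hilbert correspondence identifies regular connections on $X$ with representations of $\pi_1(X) = 1$, so the underlying local system of $\mc{S}^\chi$ is trivial and $\mc{S}^\chi \cong \mc{O}_X^{\oplus r}$ with $r = \rank \mc{S}^\chi$. Simplicity forces $r = 1$, whence $\mc{S}^\chi \cong \mc{O}_X$. Under the correspondence of Theorem \ref{thm:orbit}, $\mc{O}_X$ is associated to the trivial character $1 \in \Lambda(\Gamma)$, contradicting the hypothesis that $\chi$ is non-trivial.

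The only slightly delicate point, and the main thing to verify rather than a real obstacle, is the compatibility of the labeling $\chi \mapsto \mc{S}^\chi$ on $X$ with the labeling on $O$ coming from Theorem \ref{thm:orbit}, so that indeed $\mc{S}^\chi \cong \mc{O}_X$ occurs only for the trivial $\chi$. This is built into the construction in Section \ref{sec:equivd}, since $\mc{O}_X$ restricts on $O$ to the connection associated to the trivial representation of $\Gamma$, and simple equivariant $\D_X$-modules with full support are determined by their restriction to $O$.
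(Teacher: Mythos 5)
Your proof is correct and follows essentially the same route as the paper: for the first claim, observe that $\Sing \mc{S}^\chi$ is a $G$-stable hypersurface disjoint from the dense orbit (hence contained in the divisorial part $D$ of $X\setminus O$); for the second, if $\Sing \mc{S}^\chi$ were empty then $\mc{S}^\chi$ would be a regular connection on the simply-connected $X$, so by Riemann--Hilbert it would be a direct sum of copies of $\mc{O}_X$, which contradicts either simplicity (if $\dim\chi>1$) or non-triviality of $\chi$ (if $\dim\chi=1$). Your spelled-out version, including the reduction to $r=1$ via simplicity and the remark about compatibility of labelings, fills in the same steps the paper leaves implicit (the paper cites Proposition~\ref{prop:prehomrank} to get regularity, which you could invoke as well, but regularity is already built into the definition of $\mc{S}^\chi$).
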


\begin{proof}
Since $\Sing \mc{S}^\chi$ is a $G$-stable divisor, and $\Sing \mc{S}^\chi \cap O = \emptyset$, the first claim is clear. Assuming that it is empty, we get by Proposition \ref{prop:prehomrank} that $\mc{S}^\chi$ is a regular connection on $X$. If $X$ is simply-connected, we get $\Sing \mc{S}^\chi \cong \O_X^{\oplus \dim \chi}$ by the Riemann--Hilbert correspondence, contradicting the non-triviality of $\chi$.
\end{proof}

The geometric counterpart of rank finiteness in Proposition \ref{prop:prehomrank} is as follows. Take the moment map
\[\mu: T^* X \to \lie^*.\]
Since $X$ has a dense $G$-orbit, the zero section $T_X^* X$ is an irreducible component of the zero fiber $\mu^{-1}(0)$. Moreover, the multiplicity of the scheme-theoretic zero fiber $\mu^{-1}(0)$ along $T_X^* X$ is one (see \cite[Lemma 3.12]{catdmod}). We have the following result, which follows as in the proof of  \cite[Proposition 3.14]{catdmod}.

\begin{lemma}\label{lem:rankrep}
Let $V$ be a finite dimensional representation of $G$. Then $\rank \mc{P}(V)\leq \dim V$.
\end{lemma}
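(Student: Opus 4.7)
The plan is to read off the rank from the characteristic cycle by filtering $\mc{P}(V)$ and using the hypothesis about the moment map. Equip $\D_X$ with the order filtration $F_\bullet \D_X$, $U\lie$ with the induced (PBW) filtration, and $V$ with the trivial filtration $F_0 V = V$, $F_{-1}V = 0$. The filtration on $V$ is compatible with the $U\lie$-action because $F_i U\lie \cdot V \subset V = F_0 V$ for all $i\geq 0$; consequently, $\gr V = V$ sits in degree $0$ and $\lie \subset \gr_1 U\lie = \Sym^1\lie$ acts on $\gr V$ by zero (it raises filtration degree but lands in $F_0 V / F_{-1} V$ shifted into degree $1$, which is $0$). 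Put the tensor product filtration on $\mc{P}(V) = \D_X \otimes_{U\lie} V$.

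The first step is to identify the natural surjection of graded $\gr \D_X = \pi_* \mc{O}_{T^*X}$-modules
\[
\pi_* \mc{O}_{T^*X} \otimes_{\Sym \lie} V \,\twoheadrightarrow\, \gr \mc{P}(V),
\]
where $\Sym \lie \to \pi_* \mc{O}_{T^*X}$ is the comorphism of the moment map $\mu: T^*X \to \lie^*$. Since $\lie$ acts trivially on $\gr V = V$, the left hand side is $(\pi_* \mc{O}_{T^*X}/\pi_* \mc{O}_{T^*X}\cdot \lie) \otimes_{\mc{O}_X} V = \pi_* \mc{O}_{\mu^{-1}(0)} \otimes_{\mc{O}_X} V$. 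This identifies $\Supp \gr \mc{P}(V) \subset \mu^{-1}(0)$, and, more importantly, bounds the characteristic cycle componentwise by $\dim V \cdot [\mu^{-1}(0)]$.

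Now we use the hypotheses stated right before the lemma: the zero section $T_X^*X$ is an irreducible component of $\mu^{-1}(0)$, and the scheme-theoretic multiplicity of $\mu^{-1}(0)$ along $T_X^*X$ is exactly $1$. Therefore the multiplicity of $T_X^*X$ in $\charC \mc{P}(V)$ is at most $\dim V \cdot 1 = \dim V$. Since $\rank \mc{P}(V)$ equals the multiplicity of $T_X^*X$ in $\charC \mc{P}(V)$ (as recalled in Section \ref{sec:weylclosure}), this gives $\rank \mc{P}(V) \leq \dim V$.

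The only point that requires genuine care is verifying the identification of $\gr \mc{P}(V)$ as a quotient of $\gr \D_X \otimes_{\gr U\lie} \gr V$ with the correct actions; this is where the choice of trivial filtration on $V$ matters, and it is the step that makes the moment-map ideal appear. Everything else is formal, relying on standard properties of good filtrations and the definition of characteristic cycle.
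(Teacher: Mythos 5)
Your proof is correct and is essentially the argument that the paper defers to by citing \cite[Proposition~3.14]{catdmod}: take the good filtration on $\mc{P}(V)=\D_X\otimes_{U\lie}V$ induced by the order filtration on $\D_X$ (equivalently, generated in degree $0$ by $1\otimes V$), pass to the surjection $\gr\D_X\otimes_{\Sym\lie}V\onto\gr\mc{P}(V)$, identify the left side with $\mc{O}_{\mu^{-1}(0)}\otimes_\C V$ using that $\lie$ acts by zero on $\gr V$, and then read off the multiplicity of the zero section using the fact (recalled just before the lemma) that $\mu^{-1}(0)$ has multiplicity one along $T_X^*X$. One tiny notational slip: the tensor $\pi_*\mc{O}_{\mu^{-1}(0)}\otimes_{\mc{O}_X}V$ should be $\otimes_\C V$ (or $\otimes_{\mc{O}_X}(\mc{O}_X\otimes_\C V)$), since $V$ is a complex vector space rather than an $\mc{O}_X$-module; the upshot, a direct sum of $\dim V$ copies of $\mc{O}_{\mu^{-1}(0)}$, is what you actually use and is correct.
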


The following is a basic result toward understanding the category $\op{mod}_G(\D_X)$.

\begin{prop}\label{prop:injproj}
For a simple $\D_X$-module $\mc{S}^\chi$ with $\chi \in \Lambda(\Gamma)$, the following holds in $\opmod_G(\D_X)$:
\begin{itemize}
\item[(a)] The module $\mc{I}^\chi:=\mc{S}^\chi(*D)$ is its injective envelope.
\item[(b)] The module $\mc{P}^\chi:=\bb{D} (\mc{S}^{\chi^*}(*D))$ is its projective cover, and $(\mc{P}^\chi)^w = \mc{S}^\chi$.
\end{itemize}
\end{prop}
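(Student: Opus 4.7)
My plan for part (a) is to establish injectivity and essentiality of $\mc{S}^\chi \hookrightarrow \mc{I}^\chi$ separately. By Corollary \ref{cor:gfinalg}, the sheaf $\Gf_X$ is injective in $\opMod_G(\D_X)$ and decomposes as $\bigoplus_{\chi \in \Lambda(\Gamma)} \mc{I}^\chi \oo \chi$. Since the $\Gamma$-action commutes with the $G$-action (indeed $\Gamma = \pi_1^G(X\setminus D)$ is by construction the quotient of $\pi_1(X\setminus D)$ by the central image of $\pi_1(G)$), this isotypic decomposition is $G$-equivariant, and $\mc{I}^\chi \oo \chi \cong (\mc{I}^\chi)^{\oplus \dim \chi}$ as $G$-equivariant $\D_X$-modules. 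Hence $\mc{I}^\chi$ is an equivariant direct summand of an injective, so injective in $\opMod_G(\D_X)$, and being coherent, injective in $\opmod_G(\D_X)$.

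For essentiality, I would take any non-zero equivariant submodule $\mc{N} \subset \mc{I}^\chi$. Since $\mc{I}^\chi = j_* j^* \mc{S}^\chi$ is $\O_X$-torsion-free, $\mc{N}|_O \neq 0$; as $\mc{S}^\chi|_O$ is simple in $\opmod_G(\D_O)$ by Theorem \ref{thm:orbit}, this forces $\mc{N}|_O = \mc{S}^\chi|_O$. Then $\mc{N} \cap \mc{S}^\chi$ is an equivariant submodule of the simple $\D_X$-module $\mc{S}^\chi$, hence either $0$ or $\mc{S}^\chi$. If it were zero, $\mc{N}$ would embed into $\mc{I}^\chi / \mc{S}^\chi \cong \mc{H}^1_D(\mc{S}^\chi)$, which is supported on $D$, contradicting $\mc{N}|_O \neq 0$. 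Thus $\mc{S}^\chi \subset \mc{N}$, and $\mc{I}^\chi$ is the injective envelope of $\mc{S}^\chi$.

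For part (b), I would invoke holonomic duality $\bb{D}$, a contravariant self-equivalence on the equivariant holonomic category that commutes with intermediate extensions and exchanges $\mc{S}^{\chi^*}$ with $\mc{S}^\chi$ (the underlying local system on $O$ dualizes correspondingly). Applying $\bb{D}$ to the essential inclusion from (a) for $\chi^*$, namely $\mc{S}^{\chi^*} \hookrightarrow \mc{I}^{\chi^*}$, yields an essential surjection $\mc{P}^\chi = \bb{D}(\mc{I}^{\chi^*}) \twoheadrightarrow \bb{D}(\mc{S}^{\chi^*}) = \mc{S}^\chi$ from a projective object. This exhibits $\mc{P}^\chi$ as the projective cover of $\mc{S}^\chi$.

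Finally, $(\mc{P}^\chi)^w = \mc{S}^\chi$ should follow from a rank count. Duality and localization to $O$ both preserve generic rank, so $\rank \mc{P}^\chi = \rank \mc{I}^{\chi^*} = \rank \mc{S}^{\chi^*}|_O = \dim \chi = \rank \mc{S}^\chi$. Since $\mc{S}^\chi$ is $\O_X$-torsion-free, the surjection $\mc{P}^\chi \twoheadrightarrow \mc{S}^\chi$ factors through $(\mc{P}^\chi)^w$ by the universal property of Weyl closure, and the induced surjection $(\mc{P}^\chi)^w \twoheadrightarrow \mc{S}^\chi$ has $\O_X$-torsion-free kernel of rank zero, hence zero. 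The most delicate step will be verifying that holonomic duality transports injective envelopes to projective covers in the equivariant setting; this rests on $\bb{D}$ being an exact contravariant equivalence preserving equivariance and simple objects, together with the duality preserving the intermediate extension functor on $O$.
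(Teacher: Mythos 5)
Your proof is correct and follows the same strategy the paper uses, but fills in much more detail than the paper's terse proof (which disposes of (a) with a citation to Corollary \ref{cor:gfinalg} or to \cite[Lemma 2.4]{bindmod}, and of (b) with the single word ``analogously'' plus a short exact sequence). Your careful chain for (a) --- direct-summand injectivity from the $\Gamma$-isotypic decomposition, then essentiality via the torsion-free/localization exact sequence $0 \to \mc{S}^\chi \to \mc{I}^\chi \to \mc{H}^1_D(\mc{S}^\chi) \to 0$ --- is exactly the argument the paper implicitly relies on. For (b), your duality argument is correct: the one point worth stating explicitly is that $\bb{D}$ is a contravariant exact self-equivalence of $\opmod_G(\D_X)$ sending $\mc{S}^{\chi^*}$ to $\mc{S}^\chi$, so it takes injective envelopes to projective covers. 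For the Weyl-closure claim, the paper instead observes directly that the kernel of $\mc{P}^\chi \onto \mc{S}^\chi$ is supported in $X\setminus O$ (being the kernel of the adjunction $j_!j^* \to \op{id}$), hence equals the torsion subsheaf; your rank-count via the universal property of Weyl closure is an equivalent and equally clean route.
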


\begin{proof}
The first part follows from \cite[Lemma 2.4]{bindmod} or Corollary \ref{cor:gfinalg}. Analogously, the module $\mc{P}^\chi=j_! j^* \mc{S}^{\chi^*}$ is the projective cover, where $j:X\setminus D \to X$ is the open embedding and $j_! = \bb{D} j_* \bb{D}$.

By construction, we have an exact sequence 
\[0\to \mc{K} \to j_! j^* \mc{S}^{\chi^*} \to \mc{S}^\chi \to 0.\]
for some $\mc{K} \in \opmod_G(\D_X)$ that has support in $X\setminus O$. This yields $(\mc{P}^\chi)^w = \mc{S}^\chi$.
\end{proof}

\subsection{The case when $G$ is reductive}\label{sec:red}

Here we assume additionally that $X$ is affine and $G$ is reductive. In particular, we identify $\Gf_{X}$ with the algebra of $G$-finite functions on $X$.

\begin{lemma}\label{lem:PGf}
For an irreducible representation $\ll$ of $G$, we have 
\[m_\ll(\Gf_{X}) = \rank \mc{P}(\ll) =\sum_{\chi\in\Lambda(\Gamma)} \dim \chi \cdot m_{\ll}(\mc{S}^\chi(*D)).\] 
In particular, any torsion-free module in $\opmod_G(\D_X)$ is an admissible $G$-representation.
\end{lemma}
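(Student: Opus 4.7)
The plan is to derive both equalities from the adjunction \eqref{eq:PVHom} combined with Corollary \ref{cor:gfinalg}, and then deduce admissibility by embedding any torsion-free equivariant module into a finite sum of copies of $\Gf_X$.

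For the first equality $m_\ll(\Gf_X)=\rank \mc{P}(\ll)$, I will apply \eqref{eq:PVHom} to $V=V_\ll$ and $\mc{M}=\Gf_X$ to obtain
\[\Hom_{\D_X}(\mc{P}(\ll),\Gf_X)\cong \Hom_G(V_\ll,\Gamma(X,\Gf_X)).\]
Because $\Gf_X$ is by construction the sum of its equivariant coherent $\D_X$-submodules, $\Gamma(X,\Gf_X)$ is a rational $G$-module, and Schur's lemma identifies the dimension of the right-hand side with $m_\ll(\Gf_X)$. Since $\mc{P}(\ll)\in\opmod_G(\D_X)$, the last statement of Corollary \ref{cor:gfinalg} gives that the dimension of the left-hand side equals $\rank \mc{P}(\ll)$.

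For the second equality, I will simply take $G$-isotypic multiplicities in the $\D_X\times \Gamma$-module decomposition
\[\Gf_X=\bigoplus_{\chi\in\Lambda(\Gamma)}\mc{S}^\chi(*D)\oo\chi\]
of Corollary \ref{cor:gfinalg}. The $G$-action is carried entirely on the $\mc{S}^\chi(*D)$-factor (the $\chi$-factor is a $G$-trivial vector space of dimension $\dim\chi$), so reading off $m_\ll$ gives $m_\ll(\Gf_X)=\sum_\chi \dim\chi\cdot m_\ll(\mc{S}^\chi(*D))$.

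For the admissibility statement, let $\mc{M}\in\opmod_G(\D_X)$ be torsion-free and let $j:O\hookrightarrow X$ be the open embedding. By Proposition \ref{prop:prehomrank}, $\mc{M}$ embeds into $j_*j^*\mc{M}$, and $j^*\mc{M}$ is an equivariant coherent $\D_O$-module, so by Theorem \ref{thm:orbit} it splits as $\bigoplus_\chi (j^*\mc{S}^\chi)^{\oplus n_\chi}$ for some non-negative integers $n_\chi$. Pushing forward yields $\mc{M}\hookrightarrow \bigoplus_\chi \mc{S}^\chi(*D)^{\oplus n_\chi}$, and since each $\mc{S}^\chi(*D)$ is a direct summand of $\Gf_X$, we obtain $\mc{M}\hookrightarrow \Gf_X^{\oplus N}$ for $N=\sum_\chi n_\chi$. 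The first part of the lemma combined with Lemma \ref{lem:rankrep} gives $m_\ll(\Gf_X)=\rank \mc{P}(\ll)\leq \dim V_\ll<\infty$ for every $\ll\in\Lambda(G)$, so $\Gf_X$ is admissible, and $\mc{M}$ inherits admissibility as a $G$-submodule. The only subtle point is verifying that the embedding $\mc{M}\hookrightarrow \Gf_X^{\oplus N}$ is $G$-equivariant, but this is immediate from the functoriality of $j_*j^*$ and the explicit $\D_X\times\Gamma$-decomposition of $\Gf_X$ in Corollary \ref{cor:gfinalg}.
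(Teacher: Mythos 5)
Your proof is correct and follows essentially the same path as the paper's: the first equality comes from pairing the adjunction \eqref{eq:PVHom} with the rank formula $\dim\Hom_{\D_X}(\mc{M},\Gf_X)=\rank\mc{M}$ of Corollary \ref{cor:gfinalg}, the second by reading off $G$-multiplicities from the $\D_X\times\Gamma$-decomposition of $\Gf_X$, and admissibility by embedding a torsion-free $\mc{M}$ into a finite sum of copies of $\Gf_X$ via Proposition \ref{prop:prehomrank} and invoking Lemma \ref{lem:rankrep} for finiteness. You have merely supplied explicitly the intermediate step (semisimplicity of $j^*\mc{M}$ over $\opmod_G(\D_O)$ by Theorem \ref{thm:orbit}) that the paper leaves implicit.
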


\begin{proof}
By (\ref{eq:PVHom}) and Corollary \ref{cor:gfinalg} we have
\[m_\ll(\Gf_{X})=\Hom_\D(\mc{P}(\ll),\Gf_{X}) = \rank \mc{P}(\ll).\]
Lemma \ref{lem:rankrep}, this is finite. We get the second equality by Corollary \ref{cor:gfinalg} again. By Proposition \ref{prop:prehomrank}, an $\O_X$-torsion-free equivariant coherent $\D_X$-module can be realized as a submodule of a (finite) direct sum of copies of $\Gf_{X}$, hence it must be admissible as a representation of $G$ (see also Lemma \ref{lem:git}).
\end{proof}

We recall the explicit correspondence between representations of $\Gamma$ and connections on $O$ as described in the discussion after Theorem \ref{thm:orbit}.

\begin{lemma}\label{lem:peterweyl}
For any irreducible representation $\ll$ of $G$ and $\chi\in\Lambda(\Gamma)$, we have
\[m_{\ll}(\mc{S}^\chi(*D)) = m_{\chi}(V_\ll^{*H_0}).\]
\end{lemma}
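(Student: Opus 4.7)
The plan is to compute $\Gamma(X, \mc{S}^\chi(*D))$ explicitly as a $G$-representation using the concrete model of $\mc{S}^\chi$ on the open orbit $O$, and then extract the multiplicity of $V_\ll$ by algebraic Peter--Weyl plus Frobenius reciprocity.

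First, I would reduce to the open orbit. Letting $j:O\hra X$ be the open embedding, we have $\mc{S}^\chi(*D)=j_*(\mc{S}^\chi|_O)$ because $X\setminus O \subset D\cup C$ with $\op{codim}_XC\geq 2$, and meromorphic connections along $D$ extend uniquely across higher codimension loci (cf. Theorem \ref{thm:codim2}). Hence $m_\ll(\mc{S}^\chi(*D)) = m_\ll\bigl(\Gamma(O,\mc{S}^\chi|_O)\bigr)$ as $G$-modules, and this is a finite number by Lemma \ref{lem:PGf}.

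Next, I would unwind the explicit construction of $\mc{S}^\chi|_O$ given after Theorem \ref{thm:orbit}: as an $\O_O$-module,
\[
\mc{S}^\chi|_O \;=\; \bigl(\pi_*(\O_G\oo \chi^*)\bigr)^H,
\]
where $\pi:G\to G/H=O$ is the principal $H$-bundle and $H$ acts on $\O_G$ by right translation. Taking global sections,
\[
\Gamma(O,\mc{S}^\chi|_O) \;=\; (\bb{C}[G]\oo \chi^*)^H,
\]
with $G$-action induced by left translation on $\bb{C}[G]$. By the algebraic Peter--Weyl theorem for the reductive group $G$, we have a $(G\times G)$-decomposition
\[
\bb{C}[G]\;\cong\;\bo_{\mu\in\LL(G)} V_\mu\oo V_\mu^{*},
\]
with $G$ acting by left translation on the first tensor factor and by right translation on the second. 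Restricting the right $G$-action to $H$ and taking $H$-invariants commutes with the direct sum (and each summand is $H$-admissible), yielding
\[
(\bb{C}[G]\oo \chi^*)^H \;\cong\; \bo_{\mu\in\LL(G)} V_\mu \oo (V_\mu^{*}\oo \chi^*)^H \;\cong\; \bo_{\mu\in\LL(G)} V_\mu \oo \Hom_H(\chi, V_\mu^{*}).
\]
Reading off the $V_\ll$-isotypical component gives $m_\ll\bigl(\mc{S}^\chi(*D)\bigr)=\dim\Hom_H(\chi, V_\ll^{*})$.

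Finally, since $\chi$ is an irreducible representation of $\Gamma=H/H^0$, the group $H^0$ acts trivially on $\chi$, so any $H$-map $\chi\to V_\ll^{*}$ factors through the $H^0$-invariants $V_\ll^{*H^0}$, on which $\Gamma$ acts; thus
\[
\Hom_H(\chi,V_\ll^{*})\;=\;\Hom_\Gamma(\chi, V_\ll^{*H^0}),
\]
whose dimension is $m_\chi(V_\ll^{*H^0})$ by Schur's lemma for the finite group $\Gamma$. Combining the three identifications yields the claim.

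The routine computations are straightforward; the one step that requires care is the commutation of $H$-invariants with the (infinite) Peter--Weyl direct sum, which is justified by the admissibility of $\bb{C}[G]$ as an $H$-module (each $V_\mu^{*}$ is finite dimensional). This is the only point where reductivity of $G$ is really used, via Peter--Weyl.
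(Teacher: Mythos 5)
Your proof is correct and follows essentially the same route as the paper: pass to global sections on the open orbit via the meromorphic-connection description, identify $\Gamma(O,\mc{S}^\chi|_O)$ with $(\C[G]\oo\chi^*)^H$ using the model after Theorem \ref{thm:orbit}, apply algebraic Peter--Weyl, and finish with $\dim(V_\ll^*\oo\chi^*)^H=m_\chi(V_\ll^{*H_0})$. You merely spell out the reduction to the open orbit and the Frobenius-reciprocity/Schur step, which the paper leaves implicit.
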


\begin{proof}
We have the following decomposition as $G$-modules 
\[\mc{S}^\chi(*D) = H^0(G/H, (\pi_*(\O_G \oo \chi^*))^H) = (\C[G]\oo \chi^*)^H \cong \bigoplus_{\ll \in \Lambda(G)} V_\ll \oo (V_\ll^*\oo \chi^*)^H.\]
The claim now follows from $\dim (V_\ll^*\oo \chi^*)^H = m_{\chi}(V_\ll^{*H_0})$.
\end{proof}

\begin{definition}\label{def:witness}
An irreducible representation $\ll$ of $G$ is called a \defi{witness representation} for $\mc{S}^\chi$, if $m_\ll(\mc{S}^\chi) \neq 0$ and $\rank \mc{P}(\ll) = \dim \chi$.
\end{definition}

Recall that for a holonomic $\D$-module $\mc{M}$ and a simple $\D$-module $\mc{S}$, we denote by $[\mc{M}:\mc{S}]$ the multiplicity of $\mc{S}$ in a composition series of $\mc{M}$. The following clarifies the terminology introduced above.
 
\begin{lemma}\label{lem:witness}
Assume that $\ll$ is an irreducible representation of $G$. Then $\ll$ is a witness representation for $\mc{S}^\chi$ if and only if for any torsion-free equivariant coherent $\D$-module $\mc{M}$, we have $[\mc{M}:\mc{S}^\chi] = m_{\ll}(\mc{M})$.
\end{lemma}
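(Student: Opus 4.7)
First I would restate the witness hypothesis more usefully. Combining Lemma \ref{lem:PGf} with the inclusion $\mc{S}^\chi\subset\mc{I}^\chi$ from Proposition \ref{prop:injproj}, the witness condition $\rank\mc{P}(\ll)=\dim\chi$ together with $m_\ll(\mc{S}^\chi)\neq 0$ forces
\[
m_\ll(\mc{S}^\chi)=m_\ll(\mc{I}^\chi)=1, \qquad m_\ll(\mc{I}^{\chi'})=0 \text{ for all } \chi'\in\Lambda(\Gamma) \text{ with }\chi'\neq\chi,
\]
since every $\dim\chi'$ is a positive integer in $\sum_{\chi'}\dim\chi'\cdot m_\ll(\mc{I}^{\chi'})=\dim\chi$.

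For the forward implication, fix a torsion-free $\mc{M}\in\opmod_G(\D_X)$. Since $X$ is affine, $\Gamma(X,-)$ is exact, so $m_\ll$ is additive on short exact sequences. Summing along a composition series of $\mc{M}$ and using that $m_\ll(\mc{S}^\chi)=1$ while every other simple subquotient contributes non-negatively produces the lower bound $m_\ll(\mc{M})\geq[\mc{M}:\mc{S}^\chi]$. For the matching upper bound, I would observe that the localization $\mc{M}(*D)$ is again $\mc{O}_X$-torsion-free, so the $\Gamma_C$-summand in the decomposition of Theorem \ref{thm:codim2} vanishes and $\mc{M}(*D)\cong j_*j^*\mc{M}$ is an equivariant meromorphic connection along $D$, into which $\mc{M}$ embeds. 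Its restriction to $X\setminus D$ is an equivariant connection; using the identification $\pi_1^G(X\setminus D)=\Gamma$ noted at the opening of the section and the equivariant Riemann--Hilbert correspondence, this connection decomposes as $\bigoplus_{\chi'}(\mc{S}^{\chi'}|_{X\setminus D})\otimes V_{\chi'}$, and further restricting to $O$ together with Theorem \ref{thm:orbit} identifies $\dim V_{\chi'}=[\mc{M}|_O:\mc{S}^{\chi'}|_O]=[\mc{M}:\mc{S}^{\chi'}]$. Pushing forward gives $\mc{M}(*D)\cong\bigoplus_{\chi'}\mc{I}^{\chi'}\otimes V_{\chi'}$, and the witness identities then yield $m_\ll(\mc{M})\leq m_\ll(\mc{M}(*D))=[\mc{M}:\mc{S}^\chi]$.

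For the converse, assuming the displayed identity holds for every torsion-free $\mc{M}\in\opmod_G(\D_X)$, I would specialize it. Taking $\mc{M}=\mc{S}^\chi$ (torsion-free by Proposition \ref{prop:prehomrank}) gives $m_\ll(\mc{S}^\chi)=[\mc{S}^\chi:\mc{S}^\chi]=1\neq 0$. Taking $\mc{M}=\mc{I}^{\chi'}$ for each $\chi'\in\Lambda(\Gamma)$ and using that $\mc{I}^{\chi'}|_O=\mc{S}^{\chi'}|_O$ is simple gives $m_\ll(\mc{I}^{\chi'})=[\mc{I}^{\chi'}:\mc{S}^\chi]=\delta_{\chi\chi'}$. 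Plugging these into Lemma \ref{lem:PGf} yields $\rank\mc{P}(\ll)=\dim\chi$, completing the witness condition. The main obstacle is the decomposition of $\mc{M}(*D)$ used in the upper bound: both the reduction to a meromorphic connection along $D$ (via the $\mc{O}$-torsion-freeness argument inside Theorem \ref{thm:codim2}) and the identification of equivariant connections on $X\setminus D$ with $\Gamma$-representations (requiring the codim-$\geq 2$ hypothesis on $C$ and the equivariant Riemann--Hilbert correspondence) need careful justification.
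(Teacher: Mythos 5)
Your proof is correct and follows essentially the same route as the paper: both reduce via Lemma \ref{lem:PGf} to the equivalent condition $m_\ll(\mc{S}^{\chi'}(*D))=\delta_{\chi\chi'}$, and both exploit the fact that a torsion-free equivariant coherent module sits inside a direct sum of the injectives $\mc{I}^{\chi'}=\mc{S}^{\chi'}(*D)$ (equivalently, inside copies of $\Gf_X$ as decomposed in Theorem \ref{thm:algdecomp}). The paper dispatches the remaining argument in one sentence, whereas you supply the details it leaves implicit — in particular the two-sided bound in the forward direction (lower bound by additivity of $m_\ll$ along a composition series, upper bound via $\mc{M}\hookrightarrow\mc{M}(*D)\cong\bigoplus_{\chi'}\mc{I}^{\chi'}\otimes V_{\chi'}$) and the specialization to $\mc{M}=\mc{S}^\chi$ and $\mc{M}=\mc{I}^{\chi'}$ in the converse. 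All of these steps are sound and constitute a more explicit version of the same argument.
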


\begin{proof}
By Lemma \ref{lem:PGf}, $\ll$ is a witness representation for $\mc{S}^\chi$ in and only if $m_\ll(\mc{S}^\chi)=m_\ll(\mc{S}^\chi(*D))=1$ and $m_\ll(\mc{S}^{\chi'}(*D))=0$, for all $\chi'\in \Lambda(\Gamma), \chi'\neq \chi$. Since $\mc{M}$ can be embedded into a direct sum of $\Gf_{X}$, we conclude by Theorem \ref{thm:algdecomp}.
\end{proof}

\begin{example}
There are no non-constant $\lie$-invariants in $\Gf_{X}$, hence the trivial representation is a witness representation for $\C[X]$. More generally, the weights of products of powers of semi-invariants as in Proposition \ref{prop:normal} are witness representations for the corresponding simple $\D$-modules.
\end{example}

The projective equivariant $\D_X$-modules $\mc{P}(\ll)$ have explicit presentations as a cyclic $\D$-modules \cite[Section 2]{catdmod}. Thus, the following gives a computable strategy for finding explicit presentations of the simples $\mc{S}^\chi$ using Weyl closure (compare with Proposition \ref{prop:injproj}).

\begin{lemma}\label{lem:witpres}
Assume $\ll$ is a witness representation for $\mc{S}^\chi$. Then $\mc{P}(\ll)^w=\mc{S}^\chi$.
\end{lemma}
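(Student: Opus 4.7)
The plan is to construct a canonical surjection $\mc{P}(\ll)^w \twoheadrightarrow \mc{S}^\chi$ and then show it is an isomorphism by a rank comparison that forces its kernel to vanish.

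First, I would use the adjunction (\ref{eq:PVHom}) together with the witness hypothesis $m_\ll(\mc{S}^\chi) \neq 0$ to produce a nonzero $\D_X$-linear map $\phi : \mc{P}(\ll) \to \mc{S}^\chi$. Since $\mc{S}^\chi$ is simple, $\phi$ is surjective, and since $\mc{S}^\chi$ is a simple equivariant $\D_X$-module whose restriction to the open orbit $O$ is nonzero (hence $\op{Supp}\mc{S}^\chi = X$), its $\mc{O}_X$-torsion subsheaf is a proper submodule and therefore zero; so $\mc{S}^\chi$ is $\mc{O}_X$-torsion-free. The universal property of Weyl closure then lets $\phi$ factor through a surjection $\bar\phi : \mc{P}(\ll)^w \twoheadrightarrow \mc{S}^\chi$.

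Next, I would compare ranks on both sides. The witness definition gives $\rank \mc{P}(\ll) = \dim \chi$, and since Weyl closure preserves rank we get $\rank \mc{P}(\ll)^w = \dim \chi$. On the other hand, the restriction $\mc{S}^\chi|_O$ is the equivariant connection attached to $\chi$ via the construction following Theorem \ref{thm:orbit}, so it is locally free of rank $\dim \chi$; as the generic point $\xi$ of $X$ lies in $O$, this yields $\rank \mc{S}^\chi = \dim \chi$ as well. Consequently $\ker \bar\phi$ has generic rank zero, meaning its stalk at $\xi$ vanishes and every local section is annihilated by some nonzero element of $\mc{O}_X$. But $\ker \bar\phi$ is a submodule of $\mc{P}(\ll)^w$, which is $\mc{O}_X$-torsion-free by construction, so $\ker \bar\phi = 0$ and $\bar\phi$ is an isomorphism.

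The main place to be careful is that $\mc{P}(\ll)^w$ remain inside $\opmod_G(\D_X)$ so that the ranks quoted above are the correct invariants; this is automatic because the $\mc{O}_X$-torsion subsheaf of a coherent equivariant module is itself coherent and equivariant, so the quotient inherits both properties. Beyond that, the argument is a straightforward bookkeeping with the witness definition, the Weyl-closure universal property, and the known rank of the simples on the open orbit.
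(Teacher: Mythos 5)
Your argument is correct and follows essentially the same path as the paper: obtain a surjection $\mc{P}(\ll) \twoheadrightarrow \mc{S}^\chi$ from (\ref{eq:PVHom}), observe that $\rank \mc{P}(\ll) = \dim\chi = \rank \mc{S}^\chi$ by the witness hypothesis and the description of $\mc{S}^\chi|_O$ as a rank-$\dim\chi$ connection, and conclude that the kernel is torsion and hence killed by Weyl closure. The paper states this more tersely (observing directly that the kernel $\mc{K}$ has rank $0$ so $\mc{K}^w = 0$), while you spell out the torsion-freeness of $\mc{S}^\chi$ and the factorization through $\mc{P}(\ll)^w$; these are the same ideas.
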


\begin{proof}
By (\ref{eq:PVHom}), we have an exact sequence 
\[0 \to \mc{K} \to \mc{P}(\ll) \to \mc{S}^\chi\to 0.\]
for some $\mc{K}\in\opmod_G(\D_X)$. Since $\ll$ is a witness representation, $\rank \mc{P}(\ll)=\rank \mc{S}^\chi$. Hence, we must have $\rank \mc{K}=0$ so that $\mc{K}^w=0$. This gives $\mc{P}(\ll)^w=\mc{S}^\chi$.
\end{proof}

Besides finding presentations, witness representations are also ideal for computing Bernstein--Sato polynomials using the technique developed in Section \ref{sec:multfree}. For the rest of the section, $X$ is the affine space and $D$ is defined by a reduced semi-invariant $f$ of weight $\sigma$. 

To simplify notation, from now on for any $G$-finite function $h$ we put $b_h(s) := b_{h,D}(s)$ -- albeit $\Sing \D_X h$ can be smaller, see also Lemma \ref{lem:singsimp}. Note that we can always divide $h$ by a suitable power of $f$ so that the singular locus becomes $D$, and this produces only a shift in the Bernstein--Sato polynomial.

\begin{lemma}\label{lem:witmult}
Let $h \in \mc{S}^\chi$ such that $U\lie \cdot h \, \cong V_\lambda$ with $\ll$ a witness representation for $\mc{S}^{\chi}$. Then $h$ is a multiplicity-free holonomic function.
\end{lemma}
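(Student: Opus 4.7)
The plan is to verify the condition $m_{\ll + k\sigma}(\D_X h) = 1$ from Definition \ref{def:multfree}. Since $V_\ll \cong U\lie \cdot h$ is non-zero, $h$ itself is non-zero, so the simplicity of $\mc{S}^\chi$ yields $\D_X h = \mc{S}^\chi$. Write $f = \prod_{i=1}^l f_i$ for the reduced semi-invariant of weight $\sigma$ defining $\Sing \mc{S}^\chi$, and $k = \max_i \deg f_i$; the task reduces to computing $m_{\ll + k\sigma}(\mc{S}^\chi)$.

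For the upper bound, I would pass to the injective hull $\mc{S}^\chi \hookrightarrow \mc{S}^\chi(*D)$ of Proposition \ref{prop:injproj}. Because $f$ divides the defining equation of $D$, multiplication by $f^k$ is an $\mc{O}_X$-linear bijection on $\mc{S}^\chi(*D)$, and since $f^k$ is a semi-invariant of weight $k\sigma$ this map sends every $G$-submodule isomorphic to $V_\mu$ to one isomorphic to $V_{\mu + k\sigma}$, yielding $m_{\ll + k\sigma}(\mc{S}^\chi(*D)) = m_{\ll}(\mc{S}^\chi(*D))$. The witness hypothesis together with Lemma \ref{lem:PGf} gives
\[\dim \chi \,=\, \rank \mc{P}(\ll) \,=\, \sum_{\chi' \in \Lambda(\Gamma)} \dim \chi' \cdot m_{\ll}(\mc{S}^{\chi'}(*D)),\]
and since $m_{\ll}(\mc{S}^\chi(*D)) \geq m_{\ll}(\mc{S}^\chi) \geq 1$ by Definition \ref{def:witness}, this equality is tight precisely when $m_{\ll}(\mc{S}^\chi(*D)) = 1$ (and $m_{\ll}(\mc{S}^{\chi'}(*D)) = 0$ for $\chi' \neq \chi$). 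Hence $m_{\ll + k\sigma}(\mc{S}^\chi) \leq m_{\ll + k\sigma}(\mc{S}^\chi(*D)) = 1$.

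For the matching lower bound, I would exhibit an explicit copy of $V_{\ll + k\sigma}$ inside $\mc{S}^\chi$: namely the $G$-submodule $f^k \cdot V_\ll \subset \mc{S}^\chi$, where $V_\ll = U\lie \cdot h$. Torsion-freeness of $\mc{S}^\chi$ (as a submodule of the meromorphic connection $\mc{S}^\chi(*D)$) makes multiplication by $f^k$ injective, so this submodule is non-zero; and semi-invariance of $f^k$ makes it $G$-isomorphic to $V_\ll$ twisted by the character $k\sigma$, i.e. to $V_{\ll + k\sigma}$. Combining the two bounds yields $m_{\ll + k\sigma}(\mc{S}^\chi) = 1$, which is the required multiplicity-free condition. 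The argument is essentially a bookkeeping of $G$-weight shifts under multiplication by $f^k$ on the localization $\mc{S}^\chi(*D)$, with the witness hypothesis supplying the key numerical input at weight $\ll$; I foresee no serious technical obstacle.
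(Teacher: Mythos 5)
Your proof is correct and follows essentially the same route as the paper's one-line argument: derive $m_\ll(\mc{S}^\chi(*D))=1$ from the witness hypothesis via Lemma \ref{lem:PGf}, then transport this to $m_{\ll+k\sigma}(\mc{S}^\chi)=1$ using multiplication by $f^k$ on the localization together with torsion-freeness. You have simply spelled out the upper and lower bound steps that the paper leaves implicit.
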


\begin{proof}
By Lemma \ref{lem:PGf}, we have $m_\ll(\mc{S}^\chi(*D))=1$, so $m_{\ll+k \sigma}(\mc{S}^\chi)=1$ for all $k\in \bb{N}$.
\end{proof}

When searching for (multiplicity-free) holonomic functions generating witness representations, the following is our main criterion.

\begin{prop}\label{prop:witcri}
Let $\ll\in \Lambda(G)$ and $\chi \in \Lambda(\Gamma)$. The following statements hold:
\begin{itemize}
\item[(a)] If $\ll$ is a witness representation for $\mc{S}^\chi$, then $(V_\ll^*)^{H_0} \cong \chi$ as $\Gamma$-modules.
\item[(b)] Conversely, if $(V_\ll^*)^{H_0} \cong \chi$ then a non-zero $h \in (\mc{S}^\chi(*D))_{\ll}$ is a multiplicity-free holonomic function. Also, if $\alpha$ is the largest integer root of $b_h(s)$, then $\ll + (\alpha+1) \sigma$ is a witness representation for $\mc{S}^\chi$.
\end{itemize}
\end{prop}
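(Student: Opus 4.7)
The plan is to translate both directions through the identities $\rank \mc{P}(\ll) = \dim(V_\ll^*)^{H_0}$ (obtained by combining Lemmas \ref{lem:PGf} and \ref{lem:peterweyl}) and $m_\ll(\mc{S}^{\chi'}(*D)) = m_{\chi'}((V_\ll^*)^{H_0})$, and then to exploit the fact that multiplication by $f^k$ is an isomorphism on any meromorphic connection along $D$, shifting the $G$-isotypical decomposition by $k\sigma$. For part (a), summing Lemma \ref{lem:peterweyl} over $\chi' \in \Lambda(\Gamma)$ weighted by $\dim \chi'$ and comparing with Lemma \ref{lem:PGf} yields
\[\rank \mc{P}(\ll) \;=\; \sum_{\chi' \in \Lambda(\Gamma)} \dim \chi' \cdot m_{\chi'}\bigl((V_\ll^*)^{H_0}\bigr) \;=\; \dim (V_\ll^*)^{H_0}.\]
The witness assumption $\rank \mc{P}(\ll) = \dim \chi$ reads as $\dim (V_\ll^*)^{H_0} = \dim \chi$, while $m_\ll(\mc{S}^\chi) \neq 0$ implies $m_\ll(\mc{S}^\chi(*D)) \neq 0$ and hence $m_\chi((V_\ll^*)^{H_0}) \geq 1$; together these force $(V_\ll^*)^{H_0} \cong \chi$ as $\Gamma$-modules.

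For the first claim of part (b), the hypothesis $(V_\ll^*)^{H_0} \cong \chi$ combined with Lemma \ref{lem:peterweyl} gives $m_\ll(\mc{S}^\chi(*D))=1$ and $m_\ll(\mc{S}^{\chi'}(*D))=0$ for $\chi'\neq\chi$. Since $f$ acts invertibly and with weight $\sigma$ on each $\mc{S}^{\chi'}(*D)$, these multiplicities are stable under shifts by $k\sigma$, so $m_{\ll+k\sigma}(\mc{S}^\chi(*D))=1$ for every $k\in\Z$. Because $h$ together with its $U\lie$-translates spans an irreducible copy of $V_\ll$, the inclusion $\D_X h \subset \mc{S}^\chi(*D)$ yields $m_{\ll+k\sigma}(\D_X h) \leq 1$, and $f^k h \in \D_X h$ for $k \geq 0$ forces equality, which is precisely multiplicity-freeness in the sense of Definition \ref{def:multfree}.

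For the witness-representation claim, the key step is to establish $\mc{S}^\chi \subset \D_X h$. This follows from Proposition \ref{prop:injproj}: since $\mc{S}^\chi(*D)$ is the injective envelope of $\mc{S}^\chi$ in $\opmod_G(\D_X)$, its socle equals $\mc{S}^\chi$, and any nonzero equivariant coherent $\D_X$-submodule must contain it. Applying the proof of Lemma \ref{lem:bgen} with $\mc{M}=\mc{S}^\chi$, $m=h$, and $f$ defining $D$ (noting that the $b$-function equation $P\cdot f^{s+1}h = b_h(s)\cdot f^s h$ remains valid whenever $\Sing \D_X h \subset \{f=0\}$) produces a nonzero $f^{\alpha+1}h \in \mc{S}^\chi$ in the $(\ll+(\alpha+1)\sigma)$-isotype, so $m_{\ll+(\alpha+1)\sigma}(\mc{S}^\chi) \neq 0$. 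The same shift argument further yields $(V_{\ll+(\alpha+1)\sigma}^*)^{H_0} \cong \chi$, whence $\rank \mc{P}(\ll+(\alpha+1)\sigma) = \dim \chi$ by part (a), completing the verification that $\ll+(\alpha+1)\sigma$ is a witness representation for $\mc{S}^\chi$. The main subtlety to be careful with is the socle step $\mc{S}^\chi \subset \D_X h$; the rest is bookkeeping with isotypical components under the shift by $f$ and an invocation of the integer-root descent in Lemma \ref{lem:bgen}.
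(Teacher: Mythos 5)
Your proof is correct and follows essentially the same route as the paper: translate witness conditions through Lemmata \ref{lem:PGf} and \ref{lem:peterweyl}, exploit that multiplication by $f$ shifts isotypes by $\sigma$, and apply Lemma \ref{lem:bgen} to produce $f^{\alpha+1}h \in \mc{S}^\chi$. You have additionally spelled out two steps the paper leaves implicit (that the socle of $\mc{S}^\chi(*D)$ is $\mc{S}^\chi$, via Proposition \ref{prop:injproj}, which justifies the hypothesis $\mc{M}\subset\D_X m$ of Lemma \ref{lem:bgen}; and the verification of multiplicity-freeness from the shift argument), which is sound and helpful.
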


\begin{proof}
Assume that $\ll$ is a witness representation of $\mc{S}^\chi$ for some $k\in\bb{N}$. Then by Lemmata \ref{lem:witness} and \ref{lem:peterweyl} we have $m_\chi(V_\ll^{*H_0})=1$ and  $m_{\chi'}(V_\ll^{*H_0})=0$ for all irreducible representations $\chi' \neq \chi$ of $\Gamma$. This implies that we have $V_\ll^{*H_0} \cong \chi$.

Conversely, assume that $(V_\ll^*)^{H_0} \cong \chi$ as $\Gamma$-modules. By Lemma \ref{lem:peterweyl}, we have $m_\ll(\mc{S}^\chi(*D))=1$ and $m_\ll(\mc{S}^{\chi'}(*D))=0$ for all $\chi' \in\Lambda(\Gamma),\chi' \neq \chi$. By Lemma \ref{lem:bgen}, we have $f^{\alpha+1} \cdot h \in (\mc{S}^\chi)_{\ll+(\alpha+1)\sigma}$. In particular, $\ll+(\alpha+1)\sigma$ is a witness representation as seen in Lemma \ref{lem:witness}.
\end{proof}

The following result reveals a perfect symmetry between the roots of Bernstein--Sato polynomials (of several variables) with respect to duality.

\begin{theorem}\label{thm:bfourier}
Assume that $X\setminus O = D$ is a hypersurface, $f_1,\dots,f_l$ define its irreducible components with respective weights $\sigma_1, \dots, \sigma_l$, and $\ll$ is a witness representation for $\mc{S}^\chi$, for some $\chi \in \Lambda(\Gamma)$. We can write  $\det X = c_1 \sigma_1 + \dots + c_l \sigma_l$, with $2 c_i \in \bb{Z}$, and $\ll^* +p \cdot \sigma$ is a witness representation for $\mc{S}^{\chi^*}$, for some $p\in \bb{N}$. Moreover, for non-zero $h \in (\mc{S}^\chi)_\ll$ and $h^* \in (\mc{S}^{\chi^*})_{\ll^*}$, we have for any tuple $\underline{m}=(m_1,\dots,m_l)\in \bb{N}^l$
\[b_{h, \ul{m}}(\ul{s})  \,\, = \,\, \pm \, b_{h^*,\ul{m}}(-\ul{s}-\ul{m}+\ul{c}).\] 
\end{theorem}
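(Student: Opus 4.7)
The proof splits into the existence of the dual witness representation and the proof of the functional equation, which I handle via the Fourier transform.

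\textbf{Step 1 (Dual witness representation).} By Proposition \ref{prop:witcri}(a), the hypothesis that $\lambda$ is a witness representation for $\mathcal{S}^\chi$ gives $(V_\lambda^*)^{H_0} \cong \chi$ as a $\Gamma$-module. The natural $H$-equivariant perfect pairing $V_\lambda \otimes V_\lambda^* \to \mathbb{C}$ restricts to a perfect $\Gamma$-equivariant pairing on $H_0$-invariants, so $V_{\lambda^*}^{*H_0} = V_\lambda^{H_0} \cong \chi^*$. Applying Proposition \ref{prop:witcri}(b) to $(\lambda^*,\chi^*)$ then yields that every non-zero $h^* \in (\mathcal{S}^{\chi^*}(\ast D))_{\lambda^*}$ is multiplicity-free and that $\lambda^* + (\alpha+1)\sigma$ is a witness representation for $\mathcal{S}^{\chi^*}$, where $\alpha$ is the largest integer root of $b_{h^*}(s)$ (and $-1$ if none); this provides the integer $p$.

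\textbf{Step 2 (Functional equation).} By Lemma \ref{lem:witmult}, both $h$ and $h^*$ are multiplicity-free holonomic functions, so Proposition \ref{prop:several} gives
\[
\underline{f}^{*\underline{m}}(\partial) \cdot \underline{f}^{\underline{s}+\underline{m}} h = b_{h,\underline{m}}(\underline{s}) \cdot \underline{f}^{\underline{m}} h, \qquad \underline{f}^{\underline{m}}(\partial) \cdot \underline{f}^{*(\underline{s}+\underline{m})} h^* = b_{h^*,\underline{m}}(\underline{s}) \cdot \underline{f}^{*\underline{m}} h^*.
\]
The strategy is to identify these two equations via the twisted Fourier transform $\mathcal{F}$ on $\mathcal{D}_X[\underline{s}]$-modules, defined by the algebra automorphism $x_i \mapsto \partial_i$, $\partial_i \mapsto -x_i$ of $\mathcal{D}_X$ and extended identically in $\underline{s}$. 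Under $\mathcal{F}$, the operator $\underline{f}^{*\underline{m}}(\partial)$ becomes $(-1)^{\sum m_i \deg f_i}\,\underline{f}^{*\underline{m}}(x)$, so the roles of ``coefficient polynomial'' and ``differential operator'' get interchanged. The shift by $\underline{c}$ is dictated by Lemma \ref{lem:four}: $\mathcal{F}(\mathcal{P}(\lambda)) = \mathcal{P}(\lambda^* + \det X) = \mathcal{P}(\lambda^* + \underline{c}\cdot\underline{\sigma})$, which matches (up to an $\underline{m}\cdot\underline{\sigma}$-shift) the witness representation for $\mathcal{S}^{\chi^*}$ from Step~1; the parameter change $\underline{s}\mapsto -\underline{s}-\underline{m}+\underline{c}$ records the duality between $\underline{f}^{\underline{s}}$ on $X$ and $\underline{f}^{*\underline{t}}$ on $X^*$ under the $\det X$-twist. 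Reading off the Bernstein--Sato polynomial from the Fourier-transformed equation then yields $b_{h,\underline{m}}(\underline{s}) = \pm b_{h^*,\underline{m}}(-\underline{s}-\underline{m}+\underline{c})$, with the sign $\pm$ coming from the factor $(-1)^{\sum m_i \deg f_i}$.

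\textbf{Main obstacle.} The technical heart is rigorously transporting the formal symbol $\underline{f}^{\underline{s}} h$ through $\mathcal{F}$, because $\mathcal{F}$ is defined on $\mathcal{D}_X$-modules rather than directly on relative modules over $\mathcal{D}_X[\underline{s}]$. A natural approach is to realize $\mathcal{D}_X[\underline{s}]\cdot \underline{f}^{\underline{s}} h$ by interpolating the integer specializations $\mathcal{D}_X\cdot \underline{f}^{\underline{k}} h \subset \mathcal{S}^\chi(\ast D)$, $\underline{k}\in \mathbb{Z}^l$, apply $\mathcal{F}$ on each using Lemma \ref{lem:four} together with the cyclic presentation $\mathcal{P}(\lambda)^w = \mathcal{S}^\chi$ from Lemma \ref{lem:witpres}, and then interpolate polynomially in $\underline{s}$. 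Multiplicity-freeness is then decisive: it pins down $\underline{f}^{*\underline{m}}(\partial)$ (and after Fourier, $\underline{f}^{*\underline{m}}(x)$) as the unique, up to scalar, operator realizing the functional equation in each weight, which forces the two Bernstein--Sato polynomials to agree up to the stated sign.
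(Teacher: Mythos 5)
Your plan mirrors the paper's: dual witness representation via duality of $H_0$-invariants, then a Fourier-transform identification of the two functional equations. Both steps, however, need more than what you wrote. In Step~1, the assertion that the pairing $V_\lambda \otimes V_\lambda^* \to \C$ ``restricts to a perfect $\Gamma$-equivariant pairing on $H_0$-invariants'' is not automatic: for a non-reductive group $H_0$ the spaces $V_\lambda^{H_0}$ and $(V_\lambda^*)^{H_0}$ can have different dimensions, so the pairing can degenerate. The missing input is that $O=G/H$ is affine, so by Matsushima's theorem $H$ (hence $H_0$) is reductive; then $V_\lambda$ is a semisimple $H_0$-module and $V_\lambda^{H_0}$ is a direct summand on which the pairing is perfect. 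This is precisely how one deduces $m_\mu(\mc{S}^\psi(*D)) = m_{\psi}(V_\mu^{*H_0}) = m_{\psi^*}(V_\mu^{H_0}) = m_{\mu^*}(\mc{S}^{\psi^*}(*D))$ from Lemma~\ref{lem:peterweyl}, and without it the existence of the dual witness representation is unjustified.

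In Step~2 you correctly locate the obstruction (you ``cannot apply $\mc{F}$ directly to a $\D_X[\ul{s}]$-module'') and you point at the right tools (Lemmas~\ref{lem:witpres} and~\ref{lem:four}), but the sketch does not overcome it. The relation $\bigl(\ul{f}^{*\ul{m}}(\partial)\,\ul{f}^{\ul{m}} - b_{h,\ul{m}}(\ul{k})\bigr)\cdot \ul{f}^{\ul{k}} h = 0$ holds in $\mc{S}^\chi(*D)$, but the surjection $\mc{P}(\ul{k}\cdot\ul{\sigma}+\ll) \twoheadrightarrow \D_X\cdot\ul{f}^{\ul{k}}h$ is not injective --- $\mc{P}(\ll)^w = \mc{S}^\chi$ means the kernel is the torsion submodule, which is nonzero in general --- so the relation does not lift to $\mc{P}(\ul{k}\cdot\ul{\sigma}+\ll)$, and ``apply $\mc{F}$ on each'' has nothing to act on. One must first kill the offending torsion element by multiplying by some power $\ul{f}^{\ul{p}}$ (Nullstellensatz). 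After Fourier this factor turns into $\ul{f}^{*\ul{p}}(\partial)$, and projecting to $\mc{S}^{\chi'}(*D)$ via the map $\mc{P}(-\ul{k}\cdot\ul{\sigma}+\ll')\to\mc{S}^{\chi'}(*D)$ --- using that $\ul{f}^{\ul{c}-\ul{k}} h^*$ lies in the $(-\ul{k}\cdot\ul{\sigma}+\ll')$-isotypic component, which is where the shift by $\ul{c}$ actually comes from --- gives a scalar identity of the form $\bigl( (-1)^d b_{h^*,\ul{m}}(\ul{c}-\ul{k}-\ul{m}) - b_{h,\ul{m}}(\ul{k})\bigr)\cdot b_{h^*,\ul{p}}(\ul{c}-\ul{k}-\ul{p}) = 0$ for each $\ul{k}\in\bb{N}^l$, with an extraneous factor $b_{h^*,\ul{p}}$. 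To cancel it one must show $b_{h^*,\ul{p}}(\ul{c}-\ul{k}-\ul{p})\neq 0$ for $\ul{k}$ large, uniformly in the (a priori $\ul{k}$-dependent) $\ul{p}\in\bb{N}^l$; this is where the linear-factor decomposition with nonnegative coefficients in Proposition~\ref{prop:several} is indispensable. Your closing claim that ``multiplicity-freeness \dots forces the two Bernstein--Sato polynomials to agree'' compresses all of this away, and the argument as written would not yield the stated identity.
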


\begin{proof}
Since $O$ is affine, by Matsushima's theorem \cite{matsu} the generic stabilizer $H$ is reductive. Hence, it follows from Lemma \ref{lem:peterweyl} that for any irreducible representation $\mu$ of $G$ and any $\psi \in \Lambda(\Gamma)$, we have
\begin{equation}\label{eq:multdual}
m_\mu(\mc{S}^\psi(*D)) = m_{\psi}(V_\mu^{*H_0}) = m_{\psi^*}(V_\mu^{H_0}) = m_{\mu^*}(\mc{S}^{\psi^*}(*D)).
\end{equation}
Since $\ll$ is a witness representation, this shows that $m_{\ll^*}(\mc{S}^{\chi^*}(*D))=1$, and $m_{\ll^*}(\mc{S}^{\psi}(*D))=0$ when $\psi \neq \chi^*$. Thus, the representation $\ll^*$ twisted by a suitable power $p$ of $\sigma$ is a witness representation for $\mc{S}^{\chi^*}$. By \cite[Proposition 8]{saki}, there is a semi-invariant rational function that has weight $\det^2 X$, i.e. we can write  $\det X = c_1 \sigma_1 + \dots + c_l \sigma_l$, with $2 c_i \in \bb{Z}$ (and such an expression is unique, e.g. \cite[Lemma 4.4]{saki}). As each $\sigma_i$ descends to a character $\sigma_i : G/H \to \bb{C}^*$, we have an induced character $\left.\det X\right|_{\Gamma}$ of $\Gamma$. Then (up to twisting by a power of $\sigma$), $\ll' := \ll^* \oo \det X$ is also a witness representation of $\mc{S}^{\chi'}$, where we put $\chi':=\chi \oo \left.\det X\right|_{\Gamma}$.

Fix a tuple $\underline{k}=(k_1,\dots, k_l) \in \mathbb{N}^l$. We have an equation
\[ \left( \ul{f}^{*\ul{m}}(\partial) \cdot \ul{f}^{\ul{m}} -b_{h,\ul{m}}(\ul{k}) \right) \cdot \ul{f}^{\ul{k}}\cdot h = 0.\]
By Lemma \ref{lem:witpres} and the Nullstellensatz, there exists some power $\ul{p}\in \bb{N}^l$ such that
\[ 0= \ul{f}^{\ul{p}}\cdot \left( \ul{f}^{*\ul{m}}(\partial) \cdot \ul{f}^{\ul{m}} -b_{h,\ul{m}}(\ul{k}) \right) \cdot \, 1 \oo V_{\ul{k} \cdot \ul{\sigma} + \ll}  \,\,\, \subset \D_X \oo_{U\lie}  V_{\ul{k} \cdot \ul{\sigma} + \ll} = P(\ul{k} \cdot \ul{\sigma} + \ll).\]
Put $d=m_1 \deg f_1 + \dots + m_l \deg f_l$. Taking the twisted Fourier transform (see (\ref{eq:fourier}) -- note that the twisting takes care of complex conjugates), we obtain by Lemma \ref{lem:four}
\[ 0= \ul{f}^{*\ul{p}}(\partial) \cdot \left( (-1)^d \cdot \ul{f}^{\ul{m}} \cdot \ul{f}^{*\ul{m}}(\partial) -b_{h,\ul{m}}(\ul{k}) \right) \cdot \, 1 \oo V_{-\ul{k} \cdot \ul{\sigma} + \ll'} \,\,\,  \subset P(-\ul{k} \cdot \ul{\sigma} + \ll').\]
Note that we have $\ul{f}^{\ul{c}-\ul{k}} \cdot h^* \in (\mc{S}^{\chi'})_{-\ul{k} \cdot \ul{\sigma}+\ll'}$. Via the (unique, up to scalar) non-zero map $P(-\ul{k} \cdot \ul{\sigma} + \ll') \to \mc{S}^{\chi'}(*D)$ induced by (\ref{eq:PV}), we get
\[0\!=\! \ul{f}^{*\ul{p}}(\partial) \cdot \left( (-1)^d \cdot \ul{f}^{\ul{m}} \cdot \ul{f}^{*\ul{m}}(\partial) -b_{h,\ul{m}}(\ul{k}) \right) \cdot \ul{f}^{\ul{c}-\ul{k}} \cdot h^*  = \! \left( (-1)^d \cdot b_{h^*,\ul{m}}(\ul{c}-\ul{k}-\ul{m}) -b_{h,\ul{m}}(\ul{k}) \right) \cdot f^{*p}(\partial) \cdot \ul{f}^{\ul{c}-\ul{k}} \cdot h^* .\]
We have $\ul{f}^{*\ul{p}}(\partial) \cdot \ul{f}^{\ul{c}-\ul{k}} \cdot h^* = b_{h^*,\ul{p}}(\ul{c}-\ul{k}-\ul{p}) \cdot \ul{f}^{\ul{c}-\ul{k}-\ul{p}} \cdot h^* \in \mc{S}^{\chi'}(*D)$. Since the latter module is torsion-free, we obtain 
\[\left( (-1)^d \cdot b_{h^*,\ul{m}}(\ul{c}-\ul{k}-\ul{m}) -b_{h,\ul{m}}(\ul{k}) \right) \cdot b_{h^*,\ul{p}}(\ul{c}-\ul{k}-\ul{p}) = 0.\]
As this holds for all $\underline{k}=(k_1,\dots, k_l) \in \mathbb{N}^l$, for sufficiently large tuples $\ul{k}$ we get $b_{h^*,\ul{p}}(\ul{c}-\ul{k}-\ul{p}) \neq 0$ by Proposition \ref{prop:several} (independent of the respective $\ul{p}\in \bb{N}^l$), hence the conclusion.
\end{proof}

The above result generalizes M. Sato's functional equation of $b$-functions \cite[Proposition 4.19]{preh}, which corresponds to the case $h=1$ below (as well as \cite[Lemma 3.4]{graschu}; see also \cite[Lemma 3.1]{gyoja}).

\begin{corollary}\label{cor:bsymm}
Assume that $X\setminus O = D$ is irreducible. Put $d=\deg f$, $n=\dim X$, and let $\ll$ be a witness representation of $\mc{S}^\chi$. Then for non-zero $h \in (\mc{S}^\chi)_\ll$ and $h^* \in (\mc{S}^{\chi^*})_{\ll^*}$, we have
\[b_{h}(s) \,\, = \,\, \pm \, b_{h^*}\left(-s-\frac{n}{d}-1\right).\]
\end{corollary}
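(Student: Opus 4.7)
The plan is to specialize Theorem~\ref{thm:bfourier} to $l=1$ with multi-index $\underline m=(1)$, and then to determine the constant $c$ appearing there. Writing $D=\{f=0\}$ with $f$ the unique (up to scalar) reduced semi-invariant of weight $\sigma$ and degree $d$, Theorem~\ref{thm:bfourier} yields
\[b_{h,(1)}(s)=\pm\, b_{h^*,(1)}(-s-1+c),\]
where $c\in\tfrac12\bb{Z}$ satisfies $\det X=c\,\sigma$. Reading Proposition~\ref{prop:several} in the case $\underline m=(1)$ recovers the defining equation $f^*(\partial)\cdot f^{s+1}h=b_h(s)\cdot f^s h$ of Definition~\ref{def:bho}, so $b_{h,(1)}(s)$ and $b_h(s)$ agree up to a non-zero scalar, and analogously for $h^*$. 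The corollary is therefore reduced to showing $c=-n/d$.

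To identify $c=-n/d$ I would exhibit a $G$-invariant rational top form on $X$. The coordinate volume form $dx_1\wedge\cdots\wedge dx_n$ has $G$-weight $-\det X$ (coordinate functions transform contravariantly), while $f^{n/d}$ has weight $(n/d)\sigma$; consequently $\omega:=(dx_1\wedge\cdots\wedge dx_n)/f^{n/d}$ has weight $-\det X-(n/d)\sigma$. To accommodate the fractional exponent one passes to $\omega^{\otimes 2}$, a rational section of $(\Lambda^n T^*X)^{\otimes 2}$ of weight $-2\det X-(2n/d)\sigma$; the half-integrality $2c\in\bb Z$ from \cite[Proposition~8]{saki} ensures this is honestly rational (polynomial in $f$ up to powers). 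By Matsushima's theorem (already invoked in the proof of Theorem~\ref{thm:bfourier}) the generic stabilizer $H$ is reductive, so the affine homogeneous space $O\cong G/H$ admits a $G$-invariant algebraic section of $(\Lambda^n T^*O)^{\otimes 2}$ which is unique up to scalar. Hence $\omega^{\otimes 2}$ restricted to $O$ must be a scalar multiple of this invariant section, forcing its weight to vanish and giving $\det X=-(n/d)\sigma$, i.e.\ $c=-n/d$.

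Substituting $c=-n/d$ into the one-variable functional equation then produces $b_h(s)=\pm\, b_{h^*}(-s-n/d-1)$, as desired. The only step requiring genuine care is the volume-form identification producing $c=-n/d$; once that is in hand, the corollary is an immediate specialization of Theorem~\ref{thm:bfourier}, with no further obstacle.
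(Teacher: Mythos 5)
Your reduction of the corollary to Theorem~\ref{thm:bfourier} with $l=1$, $\underline m=(1)$ is exactly the paper's step, and the identification of $b_{h,(1)}$ with $b_h$ via Theorem~\ref{thm:bfunmult} (using that $h$ is multiplicity-free by Lemma~\ref{lem:witmult} and that $\D_X h \subset \mc S^\chi$ is regular holonomic) is fine. The discrepancy is that the paper simply invokes \cite[Proposition~14]{saki} for $c_1=-n/d$, whereas you attempt a self-contained proof of that identity, and it is there that the argument breaks down.

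The step ``hence $\omega^{\otimes 2}$ restricted to $O$ must be a scalar multiple of this invariant section, forcing its weight to vanish'' does not follow. What uniqueness-up-to-scalar of a $G$-invariant section $\eta$ of $(\Lambda^nT^*O)^{\otimes 2}$ buys you is that $\eta$ is determined up to $\C^*$; it does not force an arbitrary $G$-\emph{semi}-invariant nowhere-vanishing section to be proportional to $\eta$. Both $\eta$ and $\omega^{\otimes 2}|_O$ are nowhere-zero semi-invariant sections, so a priori $\omega^{\otimes 2}|_O = \eta\cdot u$ for some unit $u$ of $\C[O]=\C[X]_f$; the semi-invariant units are exactly $cf^{2q}$ with $q\in\Z$, and the weight equation becomes $-2\det X-(2n/d)\sigma=2q\sigma$, i.e.\ $c=-n/d-q$, with no control on $q$. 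Declaring $q=0$ is precisely the assertion that $\omega^{\otimes 2}$ is $G$-invariant, which is the statement you set out to prove — the argument is circular. (There is also the smaller issue that $f^{2n/d}$, and hence $\omega^{\otimes 2}$, is only a genuine rational section once you already know $2n/d\in\Z$; the half-integrality of $c$ from \cite[Proposition~8]{saki} gives $2c\in\Z$, not $2n/d\in\Z$.) To actually pin down $q=0$ one must compute the order of the $G$-invariant volume form along $D$, which is the nontrivial content of \cite[Proposition~14]{saki}; absent that computation, the cleanest course is to cite the proposition as the paper does.
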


\begin{proof}
Put $l=1$, $\ul{m}=1$, in Theorem \ref{thm:bfourier}, and use $c_1= - n/d$ by \cite[Proposition 14]{saki}.
\end{proof}

\begin{remark}\label{rmk:free}
Let us mention some overlap between Corollary \ref{cor:bsymm} and the work \cite{narv}. Assume additionally that $D$ is a linear free divisor (see \cite{graschu}; for example, this holds for the cases (1), (2), (6) in Section \ref{sec:class}). In particular, then the generic stabilizer $H$ is finite, $X\setminus O = D$, and $n=d$. Note that by Proposition \ref{prop:witcri}, the existence of a witness weight $\ll$ for $\chi$ is equivalent to $\mc{S}^\chi(*D)$ being a free $\mc{O}_O$-module (see discussion after Theorem \ref{thm:orbit}). Then the module $\mc{E} = \mc{O}_X \oo V_\ll \,\subset \mc{S}^\chi$ is an integrable logarithmic connection with respect to $D$ (see \cite[Section 5]{narv}). Thus, when the divisor is linear free, Corollary \ref{cor:bsymm} can be deduced from \cite[Theorem 5.1]{narv}.
\end{remark}

\section{Classification of $G$-finite functions on irreducible prehomogeneous vector spaces}\label{sec:class}

We first assume that $X$ is a prehomogeneous vector space. We continue with the notation from the previous section. The hypersurface $D$ is defined by a reduced semi-invariant $f$ of weight $\sigma$, and we denote by $O \subset X\setminus D$ the dense orbit.

We denote by $\Pi(G)$ group of the characters of $G$, viewed as a subgroup of the group of infinitesimal characters of $\lie$. The point of departure of the classification is the following statement.

\begin{prop}\label{prop:normal}
Assume that $D$ is a hypersurface with only normal crossing singularities in codimension one. Let $f=\prod_{i=1}^l f_i$ be its reduced defining equation, with $f_i$ an irreducible semi-invariant with character $\sigma_i$, for $i=1,\dots, l$. Then
\[\Gf_{X}= \bigoplus_{ \substack{\ul{\alpha} \in \, \bb{Q}^l \cap [0,1)^l \\ \ul{\alpha}\cdot \ul{\sigma} \, \in \, \Pi(G)}} \O_X(*D) \cdot \prod_{i=1}^l f_i^{\alpha_i} .\]
\end{prop}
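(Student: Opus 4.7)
The strategy is to apply Corollary~\ref{cor:gfinalg} to decompose $\Gf_X=\bigoplus_{\chi\in\Lambda(\Gamma)}\mc{S}^\chi(*D)\otimes\chi$ and then, using the codimension-one normal crossings hypothesis, to identify each isotypic summand with an explicit piece $\mc{O}_X(*D)\cdot\prod_i f_i^{\alpha_i}$ indexed by a unique tuple $\ul{\alpha}$.

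First I would pin down $\Gamma=\pi_1^G(X\setminus D)$ via the fundamental group of the complement. Since $X$ is a prehomogeneous vector space, it is simply connected, and the non-normal-crossing locus of $D$ has codimension $\geq 2$ in $D$, hence codimension $\geq 3$ in $X$; removing it leaves both the simple connectedness of the ambient space and $\pi_1(X\setminus D)$ unchanged. On the resulting open, $D$ is a smooth normal crossings divisor, so $\pi_1(X\setminus D)$ is the free abelian group $\bb{Z}^l$ on commuting meridians $\gamma_1,\dots,\gamma_l$ around the components $\{f_i=0\}$. Consequently $\Gamma$ is a finite abelian quotient of $\bb{Z}^l$, every $\chi\in\Lambda(\Gamma)$ is one-dimensional, and $\chi$ is uniquely determined by values $\chi(\gamma_i)=e^{2\pi i\alpha_i}$ with $\alpha_i\in\bb{Q}\cap[0,1)$.

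Next I would determine which characters $\chi_{\ul{\alpha}}$ of $\pi_1(X\setminus D)$ descend to $\Gamma$. By the exact sequence \eqref{eq:eqfun}, this happens iff $\chi_{\ul{\alpha}}$ is trivial on $\op{im}\,\psi_{X\setminus D}$. Using $g\cdot f_i=\sigma_i(g)\,f_i$, a direct computation via Proposition~\ref{prop:analcont} shows that the monodromy of the multi-valued function $\prod f_i^{\alpha_i}$ along the loop $\psi_{X\setminus D}(\gamma)$ for $\gamma\in\pi_1(G)$ equals the value of $\sum_i\alpha_i\sigma_i$ on $\gamma$. Since $G$ is connected, this vanishes for all $\gamma$ precisely when the rational character $\ul{\alpha}\cdot\ul{\sigma}$ of $\lie$ integrates to an algebraic character of $G$, i.e., when $\ul{\alpha}\cdot\ul{\sigma}\in\Pi(G)$. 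Equivalently, this is the condition that the locally defined function $\prod f_i^{\alpha_i}$ is itself $G$-finite: its $\lie$-span is one-dimensional with infinitesimal weight $\ul{\alpha}\cdot\ul{\sigma}$, and $G$-finiteness holds iff this weight is integral.

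Finally, for each admissible $\ul{\alpha}$, the rank-one flat connection on $X\setminus D$ with monodromy $\chi_{\ul{\alpha}}$ is generated by the multi-valued section $\prod f_i^{\alpha_i}$, and its $*$-extension $\mc{S}^{\chi_{\ul{\alpha}}}(*D)$ from Proposition~\ref{prop:injproj}, realized as a subsheaf of $\alg_X(D)$, is exactly $\mc{O}_X(*D)\cdot\prod f_i^{\alpha_i}$. Tensoring with the one-dimensional representation $\chi_{\ul{\alpha}}$ amounts to fixing a branch and introduces no extra multiplicity, so combining with Corollary~\ref{cor:gfinalg} produces the asserted decomposition; directness is automatic because distinct $\ul{\alpha}\in[0,1)^l$ yield non-isomorphic monodromy characters, hence linearly disjoint subsheaves of $\Gf_X$. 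The main obstacle is the first step---controlling $\pi_1(X\setminus D)$ via the codimension-one normal crossings hypothesis so as to force $\Gamma$ to be abelian with characters parametrized by tuples of roots of unity; once this is settled, the remaining identifications amount to matching monodromy data on both sides.
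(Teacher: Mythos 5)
Your overall route matches the paper's: decompose $\Gf_X$ via Corollary~\ref{cor:gfinalg}, compute $\pi_1(X\!\setminus D)$ to identify the relevant characters, and match rank-one connections with products of rational powers $\prod_i f_i^{\alpha_i}$. You also usefully spell out the monodromy calculation (via Proposition~\ref{prop:analcont}) showing that a character $\chi_{\ul{\alpha}}$ of $\pi_1(X\!\setminus D)$ kills $\op{im}\psi_{X\setminus D}$ exactly when $\ul{\alpha}\cdot\ul{\sigma}\in\Pi(G)$; the paper leaves this step implicit.

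There is, however, a genuine gap in your first step. After excising the non-normal-crossings locus (codimension $\geq 3$ in $X$) you assert that because the remaining divisor is a smooth normal crossings divisor in a simply connected ambient space, $\pi_1(X\!\setminus D)$ \emph{is} the free abelian group $\bb{Z}^l$ on the meridians. That assertion is not elementary. Simple connectedness of $X$ only gives you that the meridians \emph{generate} $\pi_1(X\!\setminus D)$, and normal crossings gives that two meridians commute in the local fundamental group near a double point of $D$, but propagating that local commutativity to a global relation --- proving that $\pi_1(X\!\setminus D)$ is actually abelian --- is precisely the content of the L\^e--Saito theorem. The paper simply invokes it (Dimca, Theorem~3.2.9); that theorem is stated for hypersurfaces with normal crossings in codimension one and does not require your preliminary reduction to the smooth NC case (which, since removing a codimension $\geq 3$ set from $X$ does not alter $X\!\setminus D$ at all, is in any case only a cosmetic restatement). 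As written, the word ``so'' in ``$D$ is a smooth normal crossings divisor, so $\pi_1(X\setminus D)$ is the free abelian group $\bb{Z}^l$'' carries the entire weight of a nontrivial theorem and would need either a citation of L\^e--Saito or an independent proof of abelianness (which is not something one can manufacture on the fly).
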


\begin{proof}
Clearly, the right-hand side is always contained in $\Gf_{X}$. By a Theorem of L\^e--Saito (see \cite[Theorem 3.2.9]{dimca}), the assumptions imply that we have $\pi_1(X\!\setminus D) \cong \bb{Z}^l$. By the Riemann--Hilbert correspondence, the simples $S^\chi$ in Corollary \ref{cor:gfinalg} correspond to products of powers via exponentiating characters of $\pi_1(X\!\setminus\!D)$. Since these factor through the finite group $\pi_1^G(X\!\setminus\!D) = \Gamma$, the powers must be rational of the required type. 
\end{proof}

\begin{remark}\label{rem:ratsing}
By a result of Saito \cite[Theorem 0.4]{saito}, the $b$-function gives a criterion for $D$ to have rational singularities, hence a sufficient condition for normality.
\end{remark}

We now proceed case-by-case assuming that $X$ is irreducible prehomogeneous vector space (with $G$ a connected reductive group). Such spaces were classified by Sato and Kimura \cite{saki}. By Theorem \ref{thm:algdecomp}, the classification of $G$-finite functions reduces to finding the solutions of the simple equivariant $\D$-modules $\mc{S}^\chi$, $\chi\in \Lambda(\Gamma)$. We have that $D$ is irreducible \cite[Proposition 4.12]{saki}, and if it is non-empty then $C=\emptyset$ so that $X\setminus D = O$. By Proposition \ref{prop:normal} the only interesting cases are when the irreducible hypersurface $D$ is not normal. Using Remark \ref{rem:ratsing} together with the list of Bernstein--Sato polynomials in \cite{kimu}, as well as the calculations of generic stabilizers \cite{sasada}, \cite[Appendix]{preh}, we have to consider only $6$ cases (reduced, i.e. up to castling transforms):

\begin{enumerate}
\item $(\GL_2,  \, 3\Lambda_1)$ -- binary cubic forms;
\item $(\SL_3 \times \GL_2, \, 2\Lambda_1 \oo \Lambda_1)$ -- pairs of $3\times 3$ symmetric matrices;
\item $(\SL_3 \times \SL_3 \times \GL_2, \, \Lambda_1 \oo \Lambda_1 \oo \Lambda_1)$ -- $2\times 3 \times 3$ tensors;
\item $(\SL_6 \times \GL_2, \, \Lambda_2 \oo \Lambda_1)$ -- pairs of $6\times 6$ skew-symmetric matrices;
\item $(\op{E}_6 \times \GL_2, \, \Lambda_1 \oo \Lambda_1)$ -- pairs of exceptional simple Jordan algebras;
\item $(\SL_5 \times \GL_4, \, \Lambda_2 \oo \Lambda_1)$ -- quadruples of $5\times 5$ skew-symmetric matrices.
\end{enumerate}

Here we used the Bourbaki notation for irreducible highest weight representations. Case (6) is notoriously complicated, and the $b$-function of $f$ (which has degree $40$) was finally obtained only after a series of papers and conjectures \cite{ozeki1,ozeki2,ozeki3}. We leave the respective classification for case (6) for future work.

\subsection{Binary cubics}\label{sec:bin}
As in Section \ref{sec:bin0}, we put $X=\Sym^3 W$ with $G=\GL_2/K$, where $\dim_{\bb{C}} W=2$ and $K=\{\omega I_2 \, \mid \, \omega^3=1 \}$. The action has $4$ orbits:
\begin{itemize}
 \item The zero orbit $O_0 = \{0\}$.
 \item The orbit $O_1 = \{ l^3 : 0\neq l \in W\}$ of cubes of linear forms, whose closure $\ol{O_1}$ is the affine cone over the twisted cubic.
 \item The orbit $O_2 = \{ l_1^2 \cdot l_2 : 0 \neq l_1,l_2 \in W \mbox{ distinct up to scaling}\}$ whose closure $\ol{{O}_2}$ is the hypersurface defined by the vanishing of the cubic discriminant $f$, which is a $\GL_2$-semi-invariant of weight $(6,6)$.
 \item The dense orbit $O = \{ l_1 \cdot l_2 \cdot l_3 : 0\neq l_1,l_2,l_3 \in W \mbox{ distinct up to scaling}\}$.
\end{itemize}

It is known that the hypersurface defined by $f$ is not normal, as its $b$-function is \cite{kimu}
\begin{equation}\label{eq:bbin}
(s+1)^2 \cdot (s+5/6)\cdot (s+7/6),
\end{equation}
and it is also a consequence of the fact that the component group $\Gamma = H/H_0$ of $O=G/H$ is non-abelian, with $H_0=\{1\}$ and $\Gamma = H \cong S_3$, where $S_3$ stands for the symmetric group of order $3$. The group $\Gamma$ has two $1$-dimensional representations $\op{triv}, \op{sign}$ and the standard $2$-dimensional representation $\op{st}$. We have $\mc{S}^{\op{triv}} = \C[X], \, \mc{S}^{\op{sign}} = \mc{S}^{\op{sign}}(*D) = \C[X]_f \cdot \sqrt{f}$ (by (\ref{eq:bbin}) and Lemma \ref{lem:bgen}, see also \cite{bindmod}). 

Although this particular space has been studied in detail in \cite{bindmod}, our purpose is to give an explicit functional interpretation of $\mc{S}^{\op{st}}$ and understand its implications on $\op{mod}_G(\D_X)$, as these methods generalize to the other cases as well. We note the differences between $G$-equivariant and $\GL_2$-equivariant $\D_X$-modules, as in the latter case there are more torsion-free simples, but they can all be obtained easily through tensoring by a suitable rational power of $f$ (see \cite{bindmod}).

We want to find an algebraic function generating a witness representation in $\mc{S}^{\op{st}}$. As it turns out, the natural candidate from Theorem \ref{thm:bin} will do. Consider a root $r(x_0,x_1,x_2,x_3)$, as analytic function in $x_0,x_1,x_2,x_3$ on some domain, of the cubic

\begin{equation}\label{eq:root}
x_3 \cdot y^3 +  x_2 \cdot y^2 + x_1 \cdot y + x_0 =0.
\end{equation}
We note again that $r$ itself is not $G$-finite, but $h(x_0,x_1,x_2,x_3)=x_2 +3x_3 \cdot r(x_0,x_1,x_2,x_3)$ is.

\begin{prop}\label{prop:binary}
Put $h=x_2 + 3x_3 \cdot r$. Then $h \in \mc{S}^{\op{st}}$ is a $G$-finite holonomic function, and $U\lie \cdot h \, \cong V_{(2,1)}$ is a witness representation for $\mc{S}^{\op{st}}$. We have
\[ \partial f \cdot f^{s+1} h = b_h(s) \cdot f^s h, \mbox{ with } \quad b_h(s)=(s+1)^2 \cdot \left(s+\frac{3}{2}\right)^2.\]
\end{prop}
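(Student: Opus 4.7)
First I would apply Theorem~\ref{thm:bin} with $n=3$: this directly yields that $h = x_2 + 3 x_3 r$ is $G$-finite with $U\lie \cdot h \cong V_{(2,1)}$, and that $\D_X h$ is a simple equivariant $\D_X$-module with $\Sing \D_X h = D$ and $\rank \D_X h = 2$. Since $\Gamma \cong S_3$ has irreducibles $\op{triv}, \op{sign}, \op{st}$ of dimensions $1, 1, 2$ and the corresponding simples $\mc{S}^{\op{triv}} = \O_X$ and $\mc{S}^{\op{sign}} = \O_X \cdot \sqrt{f}$ have rank $1$, the rank-$2$ simple $\D_X h$ must be $\mc{S}^{\op{st}}$, so $h \in (\mc{S}^{\op{st}})_{(2,1)}$.

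To show that $V_{(2,1)}$ is a witness for $\mc{S}^{\op{st}}$, note first that $\dim G = \dim O = 4$ forces $H$ to be finite, so $H_0 = \{1\}$. By Lemmata~\ref{lem:PGf} and~\ref{lem:peterweyl}, $\rank \mc{P}((2,1)) = \sum_\chi \dim \chi \cdot m_\chi(V_{(2,1)}^*)$, so combined with $m_{(2,1)}(\mc{S}^{\op{st}}) \geq 1$ the witness condition reduces to showing $V_{(2,1)}^* \cong \op{st}$ as $\Gamma$-modules. By Proposition~\ref{prop:analcont} the $H$-action on $U\lie \cdot h$ coincides with the Galois monodromy permuting the three analytic branches $h_i = x_2 + 3 x_3 r_i$ of $h$; Vieta's formula $r_1 + r_2 + r_3 = -x_2/x_3$ gives $h_1 + h_2 + h_3 = 0$, so $\langle h_1, h_2, h_3 \rangle$ is the $2$-dimensional standard representation of $S_3$, and self-duality of $\op{st}$ then gives $V_{(2,1)}^* \cong \op{st}$ as required.

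The witness property, via Lemma~\ref{lem:witmult}, makes $h$ a multiplicity-free holonomic function, so Theorem~\ref{thm:bfunmult} furnishes the functional equation $f^*(\partial) \cdot f^{s+1} h = b(s) \cdot f^s h$ with $\deg b = \deg f = 4$. Regularity of $\D_X h$ (as $h$ is algebraic) upgrades this to $b(s) = b_h(s)$ up to a nonzero scalar, and Proposition~\ref{prop:binroot} supplies two of the four roots, namely $-1$ and $-3/2$.

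The main obstacle is pinning down the remaining factors and verifying $b_h(s) = (s+1)^2(s+3/2)^2$. I would do this by direct computation: eliminating $r$ from~(\ref{eq:root}) gives the depressed cubic $h^3 + P h + Q = 0$ with $P = -3(x_2^2 - 3 x_1 x_3)$ and $Q = 2 x_2^3 - 9 x_0 x_1 x_3 + 27 x_0 x_3^2$ (whose discriminant is $729 x_3^2 f$); this relation together with the explicit annihilators of $h$ from Theorem~\ref{thm:bin} allows one to expand $f^*(\partial) \cdot f^{s+1} h$ via the Leibniz rule and reduce to an expression of the form $b_h(s) \cdot f^s h$, from which $b_h(s)$ can be read off. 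The tedious bookkeeping in this reduction is the main technical hurdle. Alternatively, Corollary~\ref{cor:bsymm} gives the symmetry $b_h(s) = \pm b_{h^*}(-s-2)$ for the Fourier dual $h^* \in (\mc{S}^{\op{st}})_{(5,4)}$; combined with an analogous direct analysis of $h^*$ and its own multiplicity-free equation, this pins down all four roots.
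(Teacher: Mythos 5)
Your first three paragraphs track the paper closely and are essentially correct: invoking Theorem~\ref{thm:bin}, identifying the monodromy/Galois action via Vieta's formula to get the standard representation, applying Lemma~\ref{lem:witmult} and Theorem~\ref{thm:bfunmult} for the degree-4 functional equation, and Proposition~\ref{prop:binroot} for the roots $-1$ and $-3/2$. (Small slip: $h^* = h/\sqrt{f}$ has $\GL_2$-weight $(-1,-2)$, not $(5,4)$.)

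The gap is in the final step, where you need to rule out any other pair of roots. Your primary route is ``direct computation by Leibniz-rule expansion,'' but you do not carry it out, and you concede it is the main technical hurdle; a proof sketch that defers the hard part to an uncompleted calculation is not a proof. Your alternative via Corollary~\ref{cor:bsymm} also stalls: you state the symmetry $b_h(s) = \pm b_{h^*}(-s-2)$ but then propose ``an analogous direct analysis of $h^*$,'' without noticing the crucial fact that $\chi \cong \chi^*$ forces $h^* = h/\sqrt{f}$ up to scalar, so $b_{h^*}(s) = b_h(s - 1/2)$ and the symmetry becomes the self-symmetry $b_h(s) = b_h(-s - 5/2)$. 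Even with that in hand, the symmetry only constrains the remaining factor to the form $(s+a)(s+5/2-a)$ for some rational $a \geq 5/4$; pinning down $a = 3/2$ requires the further argument the paper gives -- constructing the nonzero $\SL_2$-equivariant module $M = (\D_X f^{-a}h)/(\D_X f^{-a+1})$ supported in $D$, using the orbit stabilizers to force $2a \in \Z$, and then invoking the character formula for $H^2_{\ol{O}_1}(\mc{O}_X)$ together with Lemma~\ref{lem:bgen} to show $\mc{S}^{\op{st}}(*D)$ is generated by both $f^{-1}h$ and $f^{-3/2}h$, whence $a = 3/2$. None of that machinery appears in your proposal, so the remaining two roots are not actually determined.
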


\begin{proof}
By Theorem \ref{thm:bin}, $h \in \mc{S}^{\op{st}}$ is a $G$-finite holonomic function, and $U\lie \cdot h \, \cong V_{(2,1)}$. Proposition \ref{prop:witcri} shows readily that $\ll =(2,1)$ is a witness representation for $\mc{S}^{\op{st}}$, and together with Theorem \ref{thm:bfunmult} it gives the required differential equation for $b_h(s)$, with $\deg b_h(s)=4$.

As $\chi \cong \chi^*$, we have $(\mc{S}^{\chi})_{\ll^*} =1$  (see Theorem \ref{thm:bfourier} and Proposition \ref{prop:witcri}). Note that $h^*:= h/\sqrt{f} \, \in (\mc{S}^{\chi})_{\ll^*}$, and clearly $b_{h^*}(s)=b_h(s-1/2)$. Therefore, by Corollary \ref{cor:bsymm}, we obtain
\[b_h(s)=b_{h^*}(-s-2) = b_h(-s-5/2).\]
By Proposition \ref{prop:binroot}, we know that $(s+1)(s+3/2)$ divides $b_h(s)$, and therefore their quotient is of the form $(s+a)(s+5/2-a)$, for some rational number $a \geq 5/4$ (cf. Corollary \ref{cor:integral}). Put $M = (\D_X \cdot f^{-a}h)/(\D_X \cdot f^{-a+1})$. By Lemma \ref{lem:bgen}, the $\D_X$-module $M$ is non-zero. Clearly, it is an $\SL_2$-equivariant $\D_X$-module supported in $D$. For each $i=0,1,2$, the $G$-orbit $O_i$ is also an $\SL_2$-orbit with its $G$-stabilizer equal to its $\SL_2$-stabilizer (see \cite[Section 3.1]{bindmod}). Thus, any composition factor of $M$ is a $G$-equivariant $\D_X$-module, which forces $2a \in \bb{Z}$. From the character formula of $H_{\ol{O}_1}^2(\mc{O}_X)$ (see \cite[Section 3.2]{bindmod}), we see that $\mc{S}^{\chi}(*D)\cong(\D_X \cdot f^{-a}h)$ is generated by both $f^{-1}h$ and $f^{-3/2}h$ as a $\D_X$-module. By Lemma \ref{lem:bgen} again, we conclude that $a=3/2$.
\end{proof}

The proof above illustrates the circle of ideas that we have developed, but since we have explicit generators of $\Ann(h)$ by Theorem \ref{thm:bin}, they can be used to calculate $b_h(s)$ through a computer algebra system (this direct approach does not terminate for rest of the series (2)--(6)).  Conversely, given $b_h(s)$ we can obtain various results from \cite{bindmod} by different means. For example, Lemma \ref{lem:bgen} then readily implies
\[\D_X \cdot h/\sqrt{f}\, \cong \, \D_X \cdot h \cong \mc{S}^{\op{st}} \,\, \subsetneq \,\, \mc{S}^{\op{st}}(*D)\cong\D_X \cdot f^{-1} h \, \cong \, \D_X \cdot f^{-3/2} h.\]

\subsection{The series (2)--(5)} \label{sec:hard}

As in \cite{kimuro}, we treat the cases (2),(3),(4),(5) as a series depending on the parameter $l=1,2,4,8$, respectively. We have $\dim X = 6l+6$, and we quotient out by the kernel of the action map so that $G$ acts faithfully. The component group is $\Gamma \cong S_4$ for $l=1$ and $\Gamma \cong S_3$ for $l=2,4,8$ by \cite{sasada} (see also \cite[Appendix]{preh}). As in the binary cubics case, for $l=2,4,8$ the interesting torsion-free equivariant simple $\D$-module is $\mc{S}^{\op{st}}$ corresponding to the standard representation of $S_3$. 

On the other hand, the case $l=1$ is slightly degenerate, as can be also seen from the corresponding holonomy diagram \cite{kimuro}, and is a phenomenon that appears in the subexceptional series in \cite{locsub} as well. For uniformity, we still denote the simple $\D$-module corresponding to the $2$-dimensional irreducible representation of $S_4$ by $\mc{S}^{\op{st}}$, since it factors through the group morphism $S_4 \to S_3$. We denote by $\mc{S}^{(3,1)}$ the simple $\D$-module corresponding to the standard representation of $S_4$. Note that the simple $\D$-module $\mc{S}^{(2,1,1)}$ corresponding to the other irreducible $3$-dimensional irreducible representation of $S_4$ is related to $\mc{S}^{(3,1)}$ through multiplication by $\sqrt{f}$ in $\Gf_{X}$, since $\mc{S}^{(2,1,1)}(*D) \cdot \sqrt{f}  \cong \mc{S}^{(3,1)}(*D)$ as $\D_O$-modules.

The representation $X$ has the following uniform description in terms of Hurwitz algebras $\bb{R}, \bb{C}, \bb{H}, \bb{O}$ \cite{kimuro}. For $l=1,2,4,8$, let $A$ denote $\bb{R} \oo_{\bb{R}} \bb{C},\, \bb{C} \oo_{\bb{R}} \bb{C}, \, \bb{H} \oo_{\bb{R}} \bb{C}, \, \bb{O} \oo_{\bb{R}} \bb{C}$, respectively. We identify $X$ with the space of pairs of $3\times 3$ hermitian matrices $(X_1,X_2)$ with the natural action $\rho : \SL_3(A) \times \GL_2 \to \GL(X)$, and we put $G= (\SL_3(A) \times \GL_2) / \ker \rho$. The irreducible semi-invariant $f$ of degree $12$ is the discriminant of the binary cubic
\[\det(uX_1 + v X_2) = d_1 \cdot u^3 +  d_2 \cdot u^2 v + d_3 \cdot uv^2 + d_4 \cdot v^3.\]

Based on standard methods from representation theory, we obtain the following result.

\begin{lemma}\label{lem:poly}
The polynomials $d_1, d_2, d_3, d_4$ are algebraically independent and $\C[X]^{\SL_3(A)}=\C[d_1,d_2,d_3,d_4]$. 
\end{lemma}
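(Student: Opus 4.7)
The plan is to treat invariance and algebraic independence directly, and to deduce the generation claim from a combination of orbit-dimension counting with a birationality argument. That each $d_i$ lies in $\C[X]^{\SL_3(A)}$ is immediate, since the Jordan-algebra cubic norm $\det$ on $H_3(A \otimes \C)$ is $\SL_3(A)$-invariant by construction, and the coefficients of $\det(uX_1 + vX_2) \in \C[X][u, v]$ therefore inherit $\SL_3(A)$-invariance.

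For algebraic independence I would restrict to the $3$-dimensional Cartan subalgebra of diagonal elements in $H_3(A \otimes \C)$. Setting $X_i = \operatorname{diag}(x_{i, 1}, x_{i, 2}, x_{i, 3})$ for $i = 1, 2$, one has the factorization $\det(u X_1 + v X_2) = \prod_{k = 1}^{3}(x_{1, k} u + x_{2, k} v)$. Dividing by $d_1 = x_{1, 1} x_{1, 2} x_{1, 3}$, the ratios $d_j/d_1$ become, up to sign, the elementary symmetric polynomials of degree $j - 1$ in the three independent ratios $t_k = x_{2, k}/x_{1, k}$. Since the elementary symmetric polynomials in three variables are algebraically independent, and $d_1$ depends non-trivially on the $x_{1, k}$, this forces $d_1, d_2, d_3, d_4$ to be algebraically independent as elements of $\C[X]$.

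For generation I would combine a dimension count with a birationality argument. The generic stabilizer calculations referenced in \cite{sasada} and \cite[Appendix]{preh} show that in each case the generic $\SL_3(A)$-orbit in $X$ has codimension exactly $4$, so $\dim X /\!/ \SL_3(A) = 4$; combined with algebraic independence, the inclusion $\C[d_1, d_2, d_3, d_4] \subset \C[X]^{\SL_3(A)}$ is therefore an integral extension of normal finitely generated $\C$-algebras of equal Krull dimension. To conclude equality it suffices to show the extension is birational, equivalently that the morphism $\phi : X \to \C^4$, $(X_1, X_2) \mapsto (d_1, d_2, d_3, d_4)$, sends a generic $\SL_3(A)$-orbit injectively to its image. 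The key input is $\GL_2$-equivariance of $\phi$, with $\GL_2$ acting on $\C^4$ as on binary cubic forms in $(u, v)$; this action has a dense orbit (cubics with three distinct roots). Since $G$ has a dense orbit on $X$ and $\phi$ is $G$-equivariant with $\SL_3(A)$ acting trivially on the target, the dense $G$-orbit must surject onto the dense $\GL_2$-orbit in $\C^4$, and matching dimensions within this dense $G$-orbit forces generic $\phi$-fibers to consist of a single $\SL_3(A)$-orbit, yielding birationality.

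The main obstacle is closing the final birationality step cleanly. A priori a generic $\phi$-fiber could split into several $\SL_3(A)$-orbits permuted by the finite $\GL_2$-stabilizer of a generic binary cubic, and ruling this out uniformly requires extra input --- the cleanest being the classical fact that in each of the four cases $l = 1, 2, 4, 8$ the ring $\C[V \oplus V]^{\SL_3(A)}$, with $V = H_3(A \otimes \C)$, is generated by polarizations of the cubic norm form. This is well-known for pairs of ternary quadrics, pairs of $3 \times 3$ matrices, and pairs of skew-symmetric $6 \times 6$ matrices, and follows from the structure of the Freudenthal cubic for $l = 8$.
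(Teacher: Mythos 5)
Your argument for invariance and for algebraic independence is clean and correct. Restricting to pairs of diagonal elements, factoring $\det(uX_1 + vX_2) = \prod_k (x_{1,k}u + x_{2,k}v)$, and then observing that $d_j/d_1$ are the elementary symmetric polynomials in the independent ratios $t_k = x_{2,k}/x_{1,k}$ while $d_1$ supplies an independent transcendental, is exactly the right way to establish independence. (The paper gives no proof, only the remark ``based on standard methods from representation theory,'' so there is nothing to compare against on this point.)

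The generation argument, however, has two real gaps. First, the sentence ``the inclusion $\C[d_1,d_2,d_3,d_4]\subset \C[X]^{\SL_3(A)}$ is therefore an integral extension of normal finitely generated $\C$-algebras of equal Krull dimension'' is a non-sequitur: equal Krull dimension between a polynomial subring and a larger domain does not imply integrality (consider $\C[x]\subset\C[x,x^{-1}]$). What you actually need is that $d_1,\dots,d_4$ form a homogeneous system of parameters in the graded ring $\C[X]^{\SL_3(A)}$, equivalently that the common zero locus of the $d_i$ on $X//\SL_3(A)$ is just the vertex; this requires a separate argument (e.g.\ identifying the $\SL_3(A)$-null cone in $X$, or a Hilbert-series comparison) and cannot be read off from the dimension count alone. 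Second, you flag the birationality step as the main obstacle and then propose, as ``the cleanest'' resolution, the classical fact that $\C[V\oplus V]^{\SL_3(A)}$ is generated by the polarizations of the cubic norm form. But those polarizations, evaluated on a pair $(X_1,X_2)$, are precisely $d_1,\dots,d_4$ (up to scalars), so the classical fact you invoke \emph{is} the lemma. This makes the final step circular as stated. If you want a genuine proof along your birationality line, you need to actually compare the generic $\GL_2$-stabilizer of a point of $X//\SL_3(A)$ (the image in $\GL_2$ of the generic isotropy $H$) with the $\GL_2$-stabilizer of a generic binary cubic, case by case; alternatively, verify the Hilbert series $\C[X]^{\SL_3(A)}(t) = (1-t^3)^{-4}$ via Molien--Weyl or a plethysm computation, which simultaneously handles finiteness and the rank-one claim.
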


By \cite{kimuro}, the Bernstein--Sato polynomial of $f$ is
\[(s+1)^2\left(s+\frac{5}{6}\right)\left(s+\frac{7}{6}\right)\left(s+\frac{l+1}{2}\right)^2\left(s+\frac{l+2}{4}\right)^2\left(s+\frac{l+4}{4}\right)^2\left(s+\frac{3l+2}{6}\right)\left(s+\frac{3l+4}{6}\right).\]

We denote by $h(x_0,x_1,x_2,x_3)=x_2 +3x_3 \cdot r(x_0,x_1,x_2,x_3)$ the function from Proposition \ref{prop:binary}, where $r$ is the root of the cubic (\ref{eq:root}). We obtain the following uniform result in terms of the parameter $l=1,2,4,8$.

\begin{theorem}\label{thm:paramb}
Put $h_0=h(d_1,d_2,d_3,d_4)$. Then $h_0 \in \mc{S}^{\op{st}}$ is a $G$-finite holonomic function with $U\lie \cdot h_0 \, \cong \op{triv}\oo  V_{(2,1)}$ a witness representation for $\mc{S}^{\op{st}}$. We have
\[b_{h_0}(s)\!=\!(s+1)^2\left(\!s+\frac{3}{2}\right)^{\!2}\!\!\left(\!s+\frac{l+1}{2}\right)^{\!2}\!\!\left(\!s+\frac{l+2}{2}\right)^{\!2}\!\!\left(\!s+\frac{3l+8}{12}\right)\!\left(\!s+\frac{3l+10}{12}\right)\!\left(\!s+\frac{3l+14}{12}\right)\!\left(\!s+\frac{3l+16}{12}\right)\!\!.\]
\end{theorem}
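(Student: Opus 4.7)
The plan is to realize $h_0$ as the pullback of the function $h$ from Theorem \ref{thm:bin} along the $\SL_3(A)$-invariant, $\GL_2$-equivariant morphism $\phi = (d_1,\dots,d_4)\colon X \to \Sym^3\C^2$, well-defined by Lemma \ref{lem:poly} (the target carrying its natural $\GL_2$-action on binary cubics). Since pullback of holonomic functions along algebraic morphisms preserves holonomicity (cf.\ the discussion after Definition \ref{def:holonomic}), $h_0 = \phi^* h$ is $G$-finite holonomic; $\mathfrak{sl}_3(A)$ annihilates $h_0$ (the $d_i$ are invariants) and $\mathfrak{gl}_2$ acts through $h$, yielding $U\lie \cdot h_0 \cong \op{triv}\oo V_{(2,1)}$. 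The monodromy of $h_0$ around $D$ lifts via $\phi$ to the standard $S_3$-monodromy of $h$ and factors through $\G \onto S_3$ (identity for $l=2,4,8$, quotient $S_4\to S_3$ for $l=1$), so $\D_X\cdot h_0 \cong \mc{S}^{\op{st}}$. To show $\ll := \op{triv}\oo V_{(2,1)}$ is a witness representation I apply the converse direction of Proposition \ref{prop:witcri}: $H_0$ sits inside the $\SL_3(A)$-factor (up to the kernel of $\rho$) and hence acts trivially on $V_{(2,1)}$, while $\G$ acts on $V_{(2,1)}^*$ through its $\GL_2$-image as the standard $S_3$-representation, so $(V_\ll^*)^{H_0}\cong \op{st}$ as $\G$-module; combining Lemmas \ref{lem:PGf} and \ref{lem:peterweyl} yields $\rank \mc{P}(\ll) = \dim \op{st} = 2$.

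Lemma \ref{lem:witmult} and Theorem \ref{thm:bfunmult} then give multiplicity-freeness and
\[
 f^*(\pd)\cdot f^{s+1} h_0 \;=\; b_{h_0}(s)\cdot f^s h_0, \qquad \deg b_{h_0}(s) = 12,
\]
with $b_{h_0}$ the true Bernstein--Sato polynomial (regularity of $\D_X\cdot h_0 \subset \Gf_X$ via Corollary \ref{cor:gfinalg}). Corollary \ref{cor:bsymm}, combined with $\op{st}^*\cong \op{st}$ and the identification $h_0^* = c\cdot h_0 f^{-1/2}$ (giving $b_{h_0^*}(s) = b_{h_0}(s-\tfrac{1}{2})$), produces the symmetry
\[
 b_{h_0}(s) \;=\; \pm\, b_{h_0}\!\left(-s - \tfrac{l+4}{2}\right),
\]
so the roots pair up around $-(l+4)/4$. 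Lemma \ref{lem:normalize} yields the root $-1$, with symmetric partner $-(l+2)/2$; pulling back Proposition \ref{prop:binroot}(b) along $\phi$ (the algebraic function $h_0/\sqrt{f}$ extends to a non-zero value at a suitable point of $D$) yields $-3/2$, with symmetric partner $-(l+1)/2$.

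The main obstacle is establishing the multiplicity-$2$ structure of these four roots and identifying the remaining four simple roots $-\tfrac{3l+8}{12}$, $-\tfrac{3l+10}{12}$, $-\tfrac{3l+14}{12}$, $-\tfrac{3l+16}{12}$, which are genuinely $l$-dependent and do not descend from the binary-cubic reduction. Following the analytic-expansion strategy flagged in the introduction (as $\D$-module Gr\"obner-basis algorithms do not terminate in this case), I would expand $h_0$ as a Puiseux series around a generic smooth point of $D$ via the implicit function theorem applied to (\ref{eq:root}), then expand both sides of the functional equation order-by-order in a convenient local frame and read off $b_{h_0}(s)$ as the proportionality factor. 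The symmetry about $-(l+4)/4$, together with the already-found factors and the degree constraint $\deg b_{h_0}(s) = 12$, forces the missing roots to form two paired quadratic blocks; a direct comparison of the lowest-order coefficients in the expansion pins down their exact values (the denominators $12$ reflecting the combined cubic and quartic-discriminant structure of $f$) and simultaneously establishes the multiplicity-$2$ structure of the four roots found above.
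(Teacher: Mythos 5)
The structural setup — realizing $h_0$ as the pullback of $h$ along $(d_1,\dots,d_4)$, identifying $U\lie\cdot h_0\cong \op{triv}\oo V_{(2,1)}$, verifying the witness-representation condition via Proposition \ref{prop:witcri}, and deriving the functional equation of degree $12$ together with the symmetry from Corollary \ref{cor:bsymm} — is sound and matches the paper's reasoning. A small improvement is available: since the paper's footnote observes that $b_h(s)\mid b_{h_0}(s)$ via the descent of the functional equation along $p:X\to X/\!/\SL_3(A)$, you could cite Proposition \ref{prop:binary} (which gives $b_h(s)=(s+1)^2(s+3/2)^2$ exactly) rather than Proposition \ref{prop:binroot} (which has no part (b) and gives only simple vanishing), and that immediately yields the multiplicity-$2$ structure of $-1$, $-3/2$ and, by symmetry, of $-(l+2)/2$, $-(l+1)/2$, accounting for eight of the twelve roots directly.

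The genuine gap lies in the last step. You write that ``$\D$-module Gr\"obner-basis algorithms do not terminate in this case,'' but that remark in the introduction refers to Theorem \ref{thm:paramb1}, not the present statement; for Theorem \ref{thm:paramb} the paper's argument \emph{is} a terminating Gr\"obner computation, made possible by the crucial reduction step you omit: the functional equation $\partial f\cdot f^{s+1}h_0=b_{h_0}(s)f^s h_0$ on the $(6l+6)$-dimensional $X$ is pushed down through the algebra map $P:\D_X^{\SL_3(A)}\to\D_{X/\!/\SL_3(A)}\cong\D_{\C^4}$, writing $P(\partial f)=d\bigl(P(\partial d_1),\dots,P(\partial d_4)\bigr)$ and computing each $P(\partial d_i)$ explicitly; one then reduces $P(\partial f)\cdot d$ modulo the explicit annihilating ideal of $d^s h$ from Theorem \ref{thm:bin}. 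Without this descent, neither a Gr\"obner nor a Puiseux-expansion computation is tractable on $X$ itself, and you give no way to carry out the expansion of $\partial f\cdot f^{s+1}h_0$ there. Moreover, the claim that symmetry about $-(l+4)/4$ plus $\deg b_{h_0}=12$ and the already-found roots ``forces the missing roots to form two paired quadratic blocks'' is not a deduction: symmetry only requires the remaining root multiset to be invariant under $s\mapsto -s-(l+4)/2$, which is far from pinning down the four $l$-dependent roots $-\tfrac{3l+8}{12},-\tfrac{3l+10}{12},-\tfrac{3l+14}{12},-\tfrac{3l+16}{12}$. An explicit computation (the paper's reduction-plus-Gr\"obner route, or a genuine order-by-order expansion on the reduced space) is unavoidable here and must be supplied.
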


We explain how we obtained the Bernstein--Sato polynomial, as all the other claims can be checked easily. The computation of $b_{h_0}(s)$ is performed by the method of reducing invariant differential operators through passing to an affine quotient. Here we only sketch this technique, and give some further details for the case in Theorem \ref{thm:paramb1}, as it becomes more intricate there.

By Theorem \ref{thm:bfunmult}, we have an equation of the form
\begin{equation}\label{eq:start}
\partial f \cdot f^{s+1} h_0 = b_{h_0}(s) \cdot f^s h_0.
\end{equation}
Consider the quotient map $\,\,p: X \longrightarrow X/\!/\SL_3(A)$. Note that by Lemma \ref{lem:poly} the space $X/\!/\SL_3(A)$ can be identified with the space of binary cubic forms, and we have $h_0=h \circ p$ and $f=d\circ p$, where $d$ is the discriminant of the binary cubic from Section \ref{sec:bin}. There is an induced algebra map
\[ P: \quad \D_X^{\SL_3(A)} \longrightarrow \D_{X/\!/\SL_3(A)} .\]
The equation (\ref{eq:start}) on $X$ descends to an equation on  $X/\!/\SL_3(A)$ \footnote{This explains the relation $b_{h}(s) \mid b_{h_0}(s)$. The same reasoning explains why the $b$-function (\ref{eq:bbin}) divides that of $f$.}
\begin{equation}\label{eq:red}
P(\partial f) \cdot d^{s+1} h = b_{h_0}(s) \cdot d^{s} h.
\end{equation}
First, we explain how to obtain $Q=P(\partial f)$. As the degree of $f$ is $12$, so is the degree of the differential operator $Q$. In order to write down $Q$ in the Weyl algebra $\D_{X/\!/\SL_3(A)}$ explicitly, in principle we need to evaluate $Q \cdot d_1^{s_1}d_2^{s_2}d_3^{s_3}d_4^{s_4}$ and express the result in $\C[d_1,d_2,d_3,d_4]$, for each $\ul{s}=(s_1,s_2,s_3,s_4) \in \bb{N}^4$ with $s_1+s_2+s_3+s_4 \leq 12$. The powers $\ul{s}$ are too large for calculations to terminate on a computer, but we can make the following simplification. As $f \in \C[d_1,d_2,d_3,d_4]$, we also have $\partial f \in \C[\partial d_1, \partial d_2, \partial d_3, \partial d_4]$ for $\SL_3(A)$-invariants $\partial d_1, \partial d_2, \partial d_3, \partial d_4$ dual to $d_1,d_2,d_3,d_4$. Since $P$ is an algebra map, in order to get $Q$, it is enough to compute $P(\partial d_i)$ for each $i=1,2,3,4$, which now requires to evaluate $P(\partial d_i) \cdot d_1^{s_1}d_2^{s_2}d_3^{s_3}d_4^{s_4}$ only for $\ul{s}$ with $s_1+s_2+s_3+s_4 \leq 3$. These evaluations do terminate, and by writing the obtained results in terms of $d_1,d_2,d_3,d_4$, we can inductively construct each $P(\partial d_i)$ as the entries of $\ul{s}$ increase. 

While the calculations involved in obtaining $P(\partial d_i)$ are in principle computable by hand, we developed a program in \defi{Macaulay2} to handle the whole process automatically in a more general setting.

Once $Q$ is determined, we rewrite (\ref{eq:red}) as $Q \cdot d - b_{h_0}(s) \,\, \in \, \Ann(d^s h)$. By Theorem \ref{thm:bin}, the ideal $I=(g_{11}-6s-2, g_{12}^2, g_{21}, g_{22}-6s-1) \, \subset \D_{\bb{C}^4}[s]$ is contained in $\Ann(d^s h)$. Computing the normal form of $Q \cdot d$ with respect to a Gr\"obner basis of $I$, we obtain $b_{h_0}(s)$ (up to a constant).

\medskip

Now we consider the remaining simple $\D$-modules $\mc{S}^{(2,1,1)}$ and $\mc{S}^{(3,1)}$ in the case $l=1$.

\begin{theorem}\label{thm:paramb1}
The representation $\ll^1 = \Lambda_1 \oo (1,1)$
 (resp. $\ll^2 = \Lambda_1 \oo (-2,-2)$) is a witness representation for $\mc{S}^{(3,1)}$ (resp. $\mc{S}^{(2,1,1)}$). For a non-zero $h_1\in (\mc{S}^{(3,1)})_{\ll^1}$ (resp. $h_2\in (\mc{S}^{(2,1,1)})_{\ll^2}$),  we have
\[ b_{h_1}(s)=(s+1)^4\left(s+\frac{3}{2}\right)^4 \left(s+\frac{5}{6}\right)^2\left(s+\frac{7}{6}\right)^2\!, \text{ and } \quad b_{h_2}(s)=b_{h_1}\left(s-\frac{1}{2}\right).\]
\end{theorem}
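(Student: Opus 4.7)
The plan is to treat the two claims together. I will first establish that $\ll^1$ is a witness for $\mc{S}^{(3,1)}$ and compute $b_{h_1}$, then derive the corresponding statement for $\mc{S}^{(2,1,1)}$ from the isomorphism $\mc{S}^{(2,1,1)}(*D) \cdot \sqrt{f} \cong \mc{S}^{(3,1)}(*D)$ quoted in the paper.

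For the witness property, note that since $\dim G = \dim X = 12$ the generic stabilizer $H \subset G$ is $0$-dimensional, so $H_0 = \{1\}$ and $\Gamma = H \cong S_4$. Geometrically, the Klein four subgroup $V \triangleleft S_4$ acts on $\bb{C}^3$ by signed diagonals of determinant $1$ (preserving a generically simultaneously-diagonalized pair $(X_1,X_2)$), while the quotient $S_3 = S_4/V$ lifts to $G$ as a subgroup permuting the three pencil eigendirections via a combined $\SL_3 \times \GL_2$ action. By Proposition \ref{prop:witcri} the witness check reduces to verifying that $V_{\ll^1}^* = V_{\Lambda_2} \oo (-1,-1)$ is isomorphic to $(3,1)$ as an $S_4$-representation under this embedding, which is a finite character computation. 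The twist by $\det^{-1}$ on $\GL_2$ selects $(3,1)$ rather than $(2,1,1) = (3,1)\oo\op{sgn}$, in agreement with the fact that multiplication by $\sqrt{f}$ (which has monodromy $\op{sgn}$) interchanges the two assignments.

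The main step is the calculation of $b_{h_1}(s)$. A natural candidate for $h_1$ is an $\SL_3$-covariantly normalized kernel vector: for a branch of algebraic function $r$ satisfying $\det(X_1 + r X_2) = 0$, the one-dimensional space $\ker(X_1 + r X_2)$ is spanned by an algebraic vector-valued function $v(r)$, and a suitable scalar prefactor built from the invariants $d_1,\dots,d_4$ converts $v(r)$ into an element of weight $\ll^1$. As the paper emphasizes, direct algorithmic $\D$-module computations do not terminate here; I would instead expand $h_1$ in local analytic coordinates adapted to the pencil eigenvalues and apply $f^*(\partial)$ to $f^{s+1}h_1$ termwise, extracting $b_{h_1}(s)$ from the functional equation of Theorem \ref{thm:bfunmult}. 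Before any such expansion the candidate polynomial can be heavily constrained: rationality of roots (Corollary \ref{cor:integral}); $\deg b_{h_1} = \deg f = 12$ (Theorem \ref{thm:bfunmult}); divisibility by $b$-functions of related semi-invariants via Lemma \ref{lem:bcomp} applied to the surjection $\mc{P}(\ll^1) \onto \D_X h_1$; the presence of roots $-1$ and $-3/2$ via variants of Proposition \ref{prop:binroot} (using Lemma \ref{lem:normalize} on $h_1$ and $h_1/\sqrt{f}$ along appropriate strata of $D$); and the duality symmetry $b_{h_1}(s) = \pm b_{h_1^*}(-s-2)$ from Corollary \ref{cor:bsymm} (since $n/d = 12/12 = 1$). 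Together these cut the candidate down to a small list that the analytic expansion disambiguates.

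For the $\mc{S}^{(2,1,1)}$ statement, observe that $f$ has $\GL_2$-weight $(6,6)$ (as the discriminant of a binary cubic of degree $3$), so $\sqrt{f}$ has weight $(3,3)$ and $h_1/\sqrt{f}$ has weight $\ll^1 - \sigma/2 = \ll^2$. By the isomorphism $\mc{S}^{(2,1,1)}(*D)\cdot \sqrt{f} \cong \mc{S}^{(3,1)}(*D)$ we may take $h_2 := h_1/\sqrt{f} \in (\mc{S}^{(2,1,1)}(*D))_{\ll^2}$, which gives $b_{h_2}(s) = b_{h_1}(s - 1/2)$ via the general relation $b_{f^\alpha h}(s) = b_h(s+\alpha)$ that follows immediately from the functional equation for $h$. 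The integer roots of $b_{h_2}$ are precisely those $k \in \bb{Z}$ with $k - 1/2$ a root of $b_{h_1}$; the only such is $k = -1$ (from $-3/2$), so Lemma \ref{lem:bgen} together with Proposition \ref{prop:witcri}(b) confirms that $\ll^2$ is a witness for $\mc{S}^{(2,1,1)}$ and that $h_2$ lies in $\mc{S}^{(2,1,1)}$, not merely its localization. The principal obstacle throughout is the explicit computation of $b_{h_1}$: algorithmic $\D$-module methods fail, so one must either extend the invariant-quotient reduction of Theorem \ref{thm:paramb} to capture non-$\SL_3$-invariant eigenvector data, or carry out the analytic series expansion carefully enough to determine all roots with multiplicities.
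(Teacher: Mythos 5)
Your overall framework is right: reduce the $\mc{S}^{(2,1,1)}$ claim to the $\mc{S}^{(3,1)}$ claim via $h_2 = h_1/\sqrt{f}$ (using that $\sqrt{f}$ has $\GL_2$-weight $(3,3)$ and monodromy $\op{sgn}$), verify the witness property through Proposition \ref{prop:witcri} by inspecting the restriction of $V_{\ll^1}^*$ to $\Gamma \cong S_4$, and derive $b_{h_2}(s)=b_{h_1}(s-1/2)$ from the functional equation of Theorem \ref{thm:bfunmult}. This matches the paper's structure exactly, as does your observation that the only integer root of $b_{h_2}$ is $-1$, confirming $\ll^2$ is a witness representation in its own right. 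So the architecture of the argument is sound.

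The genuine gap is that you never actually compute $b_{h_1}(s)$, which is the substance of the theorem, and the constraints you list are not sufficient to determine it. Monicity, rationality, degree $12$, and the presence of $-1$ and $-3/2$ as roots still leave a large family of candidates. Moreover, the duality constraint you invoke does not close the gap: Corollary \ref{cor:bsymm} relates $b_{h_1}(s)$ to $b_{h_1^*}(-s-2)$, where $h_1^*$ lives in the $\Lambda_2 \oo (-1,-1)$ isotypic slot of $\mc{S}^{(3,1)}$ --- a different function whose $b$-function you do not know independently, and indeed the target polynomial $(s+1)^4(s+3/2)^4(s+5/6)^2(s+7/6)^2$ is visibly not symmetric about any point, so there is no self-referential symmetry to exploit. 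The paper's proof proceeds very differently on the hard part: it pins down the explicit quartic minimal polynomial $y^4 - 2v_4 y^2 + 8v_6 y - v_8 = 0$ satisfied by $h_1$ (using a plethysm count to reduce the coefficient-search to four constants and the divisibility of the discriminant by $f$ to fix them), reduces the functional equation from $X$ to the $7$-dimensional quotient $\Spec(\C[x_{11},y_{11},d_1,\dots,d_4,v_4])$ by a carefully chosen non-reductive subgroup $H \subset \GL_3$ --- not merely $\SL_3$, which would lose the non-invariant eigenvector data you flag --- applies the conjugation trick $\tfrac{1}{v_6}\,\partial d_i\, v_6$ (Lemma \ref{lem:trick}) to remove the $\sqrt{w}$ computational bottleneck, and then extracts $b_{h_1}(s)$ by applying the reduced operator to a truncated Puiseux expansion of $h_1$ and reading off a single coefficient. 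None of these steps appear in your proposal; you correctly identify them as the two possible routes at the very end but leave both unexplored, which means the central claim of the theorem is not actually proved.
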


In the rest of this subsection we outline the proof of Theorem \ref{thm:paramb1} above. The fact that $\ll^1$ (resp. $\ll^2$) is a witness representation follows from Proposition \ref{prop:witcri}, by inspection. We can assume that $h_1, h_2$ are highest weight vectors. Since $h_1 = \sqrt{f} \cdot h_2$ (up to a constant), it is enough to compute $b_{h_1}(s)$. We can further choose $h_1$ such that its orbit under the Galois group has $4$ elements only. Therefore, $y=h_1$ must satisfy an algebraic equation of the form (the constants are chosen for convenience)
\begin{equation}\label{eq:alg}
y^4-2\cdot v_4 \cdot y^2+8 \cdot v_6 \cdot y-v_8 =0.
\end{equation}
Each $v_i$ must be a degree $i$ polynomial of highest weight $(i/2) \cdot \ll^1$. By a plethysm calculation, we see that both the $2 \ll^1$ and $3 \ll^1$ isotypical components of $\C[X]$ have dimension one, while the $4 \ll^1$ component dimension two. Therefore, finding the equation (\ref{eq:alg}) amounts to just finding 4 constants.

The fact that $h_1$ is $G$-finite implies, in particular, that the discriminant of (\ref{eq:alg}) is divisible by $f$. It turns out, this is enough to find the 4 constants (up to a scaling factor), which is done by an explicit calculation.

 We think of the space $X$ as pairs of $3\times 3$ symmetric matrices $\{(x_{ij}), (y_{ij})\}$. Write
 \[Z=\begin{pmatrix}
 x_{11} & x_{12} & x_{13} &x_{22} &x_{23} &x_{33} \\
 y_{11} & y_{12} & y_{13} &y_{22} &y_{23} &y_{33}
 \end{pmatrix}.\]
We number the colums of $Z$ from $0$ to $5$. Let $d_{ij}$ denote the $2\times 2$ minor of $Z$ formed by the columns $i$ and $j$, with $0 \leq i \leq j \leq 5$. The calculation yields the following:
\[\begin{array}{ll}
\!\!\!v_4 &\!\!\!\!=d_{04}^2-d_{03}d_{05}+2d_{02}d_{23}+2d_{04}d_{12}+3d_{12}^2-4d_{02}d_{14}+2d_{01}d_{15}, \\
\!\!\!v_6 &\!\!\!\!=d_{04}d_{12}^2\!+\!d_{12}^3\!+\!d_{02}d_{05}d_{13}\!-\!d_{01}d_{05}d_{14}\!-\!2d_{02}d_{12}d_{14}\!-\!d_{02}d_{03}d_{15}\!+\!d_{01}d_{04}d_{15}\!+\!d_{01}d_{12}d_{15}\!+\!d_{02}d_{23}d_{12}\!+\!\!d_{02}^2d_{34},\\
\!\!\!v_8 &\!\!\!\!=(-1/3)\cdot(4y_{11}^2d_2^2-12y_{11}^2d_1d_3-4x_{11}y_{11}d_2d_3+4x_{11}^2d_3^2+36x_{11}y_{11}d_1d_4-12x_{11}^2d_4-v_4^2).
\end{array}\]
By Theorem \ref{thm:bfunmult}, we have an equation of the form
\begin{equation}\label{eq:desthis}
\partial f \cdot f^{s+1} h_1 = b_{h_1}(s) \cdot f^s h_1.
\end{equation}
As in the previous case, we reduce (\ref{eq:desthis}) to a smaller space, inspired by invariant theory. Consider the subgroup $H\subset \GL_3 \subset \GL_3 \times \GL_2$ consisting of matrices of the form \footnote{We have tried using smaller subgroups and also $\SL_2$ (on the second factor), but the computations did not terminate then.} 
\[H \, = \, \left\{\begin{pmatrix}
1 & a \\
0 & A
\end{pmatrix}: \,\, a \in \C^2 \, \mbox{ and } \det(A) \in \{-1,1\} \right\}.\]

The following can be shown with the help of the computer, for example.

\begin{lemma}\label{lem:subpol}
The elements $x_{11},y_{11},d_1,d_2,d_3,d_4,v_4 \in \C[X]^H$ are algebraically independent.
\end{lemma}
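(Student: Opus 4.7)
The plan is to verify the seven polynomials are algebraically independent by the standard Jacobian rank criterion, which is also what the author's remark about computer assistance hints at. Order the $12$ coordinates on $X$ as $z_1,\dots,z_{12}=x_{11},x_{12},x_{13},x_{22},x_{23},x_{33},y_{11},y_{12},y_{13},y_{22},y_{23},y_{33}$, and write $F_1,\dots,F_7$ for the seven proposed invariants $x_{11},y_{11},d_1,d_2,d_3,d_4,v_4$. Then $F_1,\dots,F_7\in\C[X]$ are algebraically independent over $\C$ if and only if the Jacobian matrix $J=(\partial F_i/\partial z_j)_{i,j}$ has rank $7$ at some point $p\in X$. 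Thus the task reduces to exhibiting a single point $p$ where this rank is maximal.

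First, the rows of $J$ can be written down explicitly. The rows for $F_1=x_{11}$ and $F_2=y_{11}$ are standard basis vectors $e_1$ and $e_7$. The rows for $F_3,\dots,F_6$ come from cofactor expansion applied to the characteristic pencil $\det(uX_1+vX_2)=d_1u^3+d_2u^2v+d_3uv^2+d_4v^3$, and have entries that are quadratic in the $z_j$; the contribution of $x_{ij}$ (resp.\ $y_{ij}$) to $d_k$ is read off from the coefficient of $u^{4-k}v^{k-1}$ in the appropriate minor of $uX_1+vX_2$. The row for $F_7=v_4$ is obtained by differentiating its explicit expression as a quadratic form in the $2\times 2$ minors $d_{ij}$ of $Z$, giving entries cubic in the $z_j$. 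Second, I would choose a convenient numerical test point $p$, for instance a pair of diagonal matrices $X_1,X_2$ with distinct generic entries (or a slight perturbation thereof if some $d_{ij}$ vanish), evaluate $J$ at $p$, and verify by direct linear algebra that the resulting $7\times 12$ matrix of numbers has rank $7$. This last step is entirely mechanical.

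The main obstacle is really just bookkeeping for the $v_4$-row: the given expression has seven monomials involving nine distinct $2\times 2$ minors $d_{ij}$ of $Z$, and each minor depends on four of the $z_j$, so differentiating and collecting terms must be done carefully. As a sanity check against the claim, a dimension count gives exactly the expected answer: $\dim H=5$ (a $2$-dimensional translation factor coming from $a$, together with the $3$-dimensional group $\{A\in\GL_2:\det A=\pm 1\}$), so provided $H$ acts on $X$ with generically finite stabilizer, the transcendence degree of $\C[X]^H$ over $\C$ is $12-5=7$, which agrees precisely with the number of proposed algebraically independent generators.
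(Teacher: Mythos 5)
Your proposal is correct and matches the paper's approach: the paper simply records that the lemma "can be shown with the help of the computer," and the Jacobian rank criterion applied at a single test point is exactly the standard mechanical way to do this. Your closing dimension-count remark ($12 - \dim H = 12 - 5 = 7$) is a useful sanity check but, as you note, is contingent on $H$ having generically finite stabilizer on $X$; the Jacobian computation is the step that actually carries the proof.
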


We denote by $R=\C[x_{11},y_{11},d_1,d_2,d_3,d_4,v_4] \subseteq \C[X]^H$ \footnote{We believe that this is actually an equality, and while this would simplify the argument a bit, it is not essential.}. We have $f,v_8 \in R$, and we also get that $v_6^2 \in R$ by an explicit verification (it is clear that $v_6^2 \in \C[X]^H$). We write $f=d(d_1,d_2,d_3,d_4)$, where $d$ is the discriminant of the binary cubic, and we have the dual version $\partial f = d(\partial d_1, \partial d_2, \partial d_3, \partial d_4)$.

Through the computer program that we developed and mentioned already in the previous case, we can compute each $\partial d_i \cdot  x_{11}^{s_1}y_{11}^{s_2}d_1^{s_3}d_2^{s_4}d_3^{s_5}d_4^{s_6} v_4^{s_7}$ for $\ul{s} \in \bb{N}^7$ with $\sum_{i=1}^7 s_i \leq 3$. Moreover, we also verified that each result lies in $R$ (clearly, it lies in $\C[X]^H$). Therefore, we can reduce (\ref{eq:desthis}) to an equation on $\op{Spec}(R) = \C^7$, where we denote by $Q$ the differential operator induced by $\partial f$. 

Now we consider a series expansion of $y$ as given in \cite{sturm} \footnote{We have implemented also various Gr\"obner methods, but none of these computations terminated.}. By a careful argument using highest weights, it is not difficult to show that in order to obtain $b_{h_1}(s)$ from (\ref{eq:desthis}), it is enough to consider a truncation of the series expansion of $h_1$, apply $\partial f \cdot f^{s+1}$ to it, and then identify the coefficient of the first monomial in the result. More precisely, it suffices to find the coefficient of the monomial $f^s \sqrt{w}/u$ after applying $\partial f \cdot f^{s+1}$ to the following truncation of $h_1$ (here $u=-2v_4,z=-v_8,w=64v_{6}^2$)
\[ T= \sqrt{w}\cdot \left[u^{-1} - w/u^4 + (2z)/u^3 - (10wz)/u^6 + (6z^2)/u^5 - (70wz^2)/u^8 - (330wz(w^2 + 7z^3))/u^{12} + \right. \]
\[+(3w^2 + 20z^3)/u^7 \!- 12(w^3 + 35wz^3)/u^{10}\! + 14(4w^2z + 5z^4)/u^9\!+ 126(5w^2z^2 + 2z^5)/u^{11}\! - 1716(3w^3z^2 + 7wz^5)/u^{14}\!+\] 
\[+11(5w^4 +504w^2z^3 + 84z^6))/u^{13} - 273(w^5 + 220w^3z^3 + 220wz^6)/u^{16} + 286(7w^4z + 147w^2z^4 +
12z^7)/u^{15} - \]
\[ - 1768(7w^5z + 330w^3z^4 + 165wz^7)/u^{18} + 286(140w^4z^2 + 1008w^2z^5 + 45z^8)/u^{17} - 25194(12w^5z^2 + 198w^3z^5 + \]
\[+ 55wz^8)/u^{20} + 4522z^2(14157w^2z^6 + 156z^9)/u^{23} + 646z(10725w^4z^3 + 17160w^2z^6 + 286z^9)/u^{21} +\] 
\[\left.+ 68(21w^6 + 8580w^4z^3 + 27027w^2z^6 + 715z^9)/u^{19} - 1292(30030w^3z^6 + 5005wz^9)/u^{22} \right].\]
Write $T=\sqrt{w} \cdot T'$. Viewing $T$ on $\op{Spec}(R)=\bb{C}^7$, we see that $T'$ is a Laurent monomial. In principle, we can apply $Q$ to $T$ (even to the algebraic function $y$ on $\op{Spec}(R)$) using the  \defi{HolonomicFunctions} package, but this does not terminate, as the factor $\sqrt{w}$ creates a computational bottleneck.

Therefore, instead of reducing $\partial d_i$ to $R$, we perform the following trick.

\begin{lemma}\label{lem:trick}
We have a well-defined differential operator $\frac{1}{v_6} \cdot \partial d_i \cdot v_6 \, : \, R \longrightarrow R$, for $i=1,2,3,4$.
\end{lemma}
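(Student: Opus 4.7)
The claim will reduce to showing that $\partial d_i(v_6 r)$ is divisible by $v_6$ in $\C[X]$ with quotient in $R$, for each $r\in R$. The plan is to establish divisibility by an $H$-equivariance argument, and then to address the descent into $R$ either via the belief $R = \C[X]^H$ stated in the footnote or by a direct computational verification on the seven generators of $R$.

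First I would observe that each coefficient $d_i$ of $\det(u X_1 + v X_2)$ is a $\GL_3$-semi-invariant of weight $\det^2$, since $\det(g X_j g^T) = \det(g)^2 \det(X_j)$. Dualizing, the constant-coefficient operator $\partial d_i$ has weight $\det^{-2}$ under $\GL_3$. Every element of $H$ has determinant $\pm 1$, so $\det^{-2}|_H$ is trivial, and therefore $\partial d_i$ is $H$-invariant. In particular, $\partial d_i$ preserves each $H$-isotypic component of $\C[X]$.

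Next, from $v_6^2\in R\subseteq \C[X]^H$ one gets $v_6(h\cdot x)^2 = v_6(x)^2$, so $v_6(h\cdot x)=\epsilon(h)\, v_6(x)$ for a locally-constant sign $\epsilon(h)\in\{\pm1\}$, defining a character $\epsilon: H\to\{\pm1\}$ that factors through $H/H^0\cong\Z/2$. Since $v_6\notin R$ furnishes a genuine quadratic extension, $\epsilon$ must be nontrivial, and standard invariant theory (with $H^0$ normal in $H$ of index two) gives $\C[X]^{H^0}=\C[X]^H\oplus v_6\cdot\C[X]^H$. Combined with the previous step, for $r\in R$ the product $v_6 r$ lies in the $\epsilon$-isotypic, hence so does $\partial d_i(v_6 r)$, forcing $\partial d_i(v_6 r)\in v_6\cdot\C[X]^H$. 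Thus $\tfrac{1}{v_6}\cdot\partial d_i\cdot v_6$ already lands in $\C[X]^H$ when applied to $R$; the final descent into $R$ then follows either from the equality $R = \C[X]^H$, or from a direct verification on the seven generators using the computational framework already in place.

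The main obstacle will be precisely this last descent: the $H$-equivariance argument only secures $H$-invariance of the quotient, and upgrading to membership in the explicit polynomial ring $R$ requires either the invariant-theoretic identity $R = \C[X]^H$ or a concrete computation. The latter is quite tractable since $\partial d_i$ is a constant-coefficient operator of order three and $v_6$ is given explicitly, so $\tfrac{1}{v_6}\partial d_i(v_6 g)$ can be evaluated symbolically for each generator $g \in \{x_{11},y_{11},d_1,\dots,d_4,v_4\}$ of $R$ and checked to lie in $R$.
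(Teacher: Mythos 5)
The paper proves this lemma purely computationally: it runs the previously developed \defi{Macaulay2} program to verify, generator by generator, that $\tfrac{1}{v_6}\partial d_i(v_6\cdot g)$ lands in $R$ for each $g\in\{x_{11},y_{11},d_1,\dots,d_4,v_4\}$. Your proposal instead attempts a conceptual route via $H$-equivariance, and this route has a genuine gap in the middle.

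Specifically, the assertion that ``standard invariant theory (with $H^0$ normal in $H$ of index two) gives $\C[X]^{H^0}=\C[X]^H\oplus v_6\cdot\C[X]^H$'' is not a standard fact and is false in general. What \emph{is} standard is the isotypic decomposition $\C[X]^{H^0}=\C[X]^H\oplus(\C[X]^{H^0})_\epsilon$, where $(\C[X]^{H^0})_\epsilon$ denotes the $\epsilon$-semi-invariant piece. Identifying $(\C[X]^{H^0})_\epsilon$ with the cyclic module $v_6\cdot\C[X]^H$ requires that this piece be free of rank one over $\C[X]^H$ with $v_6$ as a generator, and nothing in the setup forces that. Already for $H=\Z/2$ acting on $\Spec\C[x,y]$ by $(x,y)\mapsto(-x,-y)$ (so $H^0=\{1\}$), the $\epsilon$-component is generated over $\C[x,y]^H=\C[x^2,xy,y^2]$ by both $x$ and $y$ and is not cyclic. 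So $H$-equivariance of $\partial d_i$ plus the $\epsilon$-semi-invariance of $v_6$ place $\partial d_i(v_6 r)$ in $(\C[X]^{H^0})_\epsilon$ but do not give the divisibility $v_6\mid\partial d_i(v_6 r)$; one would in addition have to establish a geometric fact such as the zero set of $v_6$ being a reduced divisor that exactly cuts out the locus where stabilizers in $H$ meet $H\setminus H^0$, which your proposal does not address.

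Moreover, even if that gap were filled, your argument only places $\tfrac{1}{v_6}\partial d_i(v_6 r)$ in $\C[X]^H$, while the paper deliberately avoids assuming the equality $R=\C[X]^H$ (it states in the footnote that it believes but does not prove it). The descent into $R$ therefore still requires the same explicit verification you offer as a fallback, and that verification is precisely the paper's entire proof. In short: your $H$-equivariance observations are correct and explain heuristically why the lemma should hold, but the decomposition step is unjustified, and as written the conceptual portion does not supersede the computational check.
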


We use again our computer program to prove this and compute each $\frac{1}{v_6} \cdot \partial d_i \cdot v_6$ on $\op{Spec}(R)$, which we denote by $Q_i$. Then (\ref{eq:desthis}) reduces to the following on $\C^7$ via truncating
\begin{equation}\label{eq:lasteq}
d(Q_1,Q_2,Q_3,Q_4) \cdot d(d_1,d_2,d_3,d_4)^{s+1}  \cdot T' \,\,\,= \,\,\, \frac{b_{h_1}(s)}{u} \cdot d(d_1,d_2,d_3,d_4)^{s} + \mbox{ \,other irrelevant terms}.
\end{equation}

Even with the knowledge of $Q_i$, evaluating $d(Q_1,Q_2,Q_3,Q_4)$ in $\D_{\C^7}$ is computationally expensive. Since it is enough to evaluate the right-hand side of (\ref{eq:lasteq}) at a point in $\C^7$ (with as many zero entries as possible) such that $u \neq 0 \neq d(d_1,d_2,d_3,d_4)$ and $w=0,z=0$, we further wrote a program in \defi{Macaulay2} that takes this into account while working in the Weyl algebra on the left hand side of (\ref{eq:lasteq}). Once we performed all of these reductions, we finally found $b_{h_1}(s)$ using the \defi{HolonomicFunctions} package.

\begin{remark} Using the method of reduction by invariant differential operators as above, we can equally compute the $b$-functions of the semi-invariants $f$ as well, thus offering an alternative approach to the microdifferential method for many cases in \cite{kimu}.
\end{remark}

\subsection{$G$-finite functions under castling transformations}

We now discuss the behavior of algebraic functions under castling transforms. At first, $G$ can be any linear algebraic group.

Let $(\pi,V)$ and $(\rho,W)$ be two finite dimensional representations of $G$ with $\dim V=n$. We denote by $\Lambda_1$ the standard representation of general linear group. Take two numbers $n_1,n_2\in\bb{N}$ with $n_1+n_2=n$, and put $(\pi',V')$ to be the representation of $G$ corresponding to $V':=V^* \oo (\det V)^\frac{1}{n_2}$. We can form two representations (see \cite[Section 2.3]{kac})\footnote{Technically, $(\det V)^\frac{1}{n_2}$ is only a $\lie$-module, but we will continue calling $V'$ a $G$-module. The twist by $(\det V)^\frac{1}{n_2}$ is convenient to give $G$-equivariant correspondences, otherwise \cite[Proposition 2.1]{kac} does not hold as stated. For example, take $n_1=n_2=1, n=2,$ with $G=\bb{C}^*$ acting on $V=\bb{C}^2$ and $W=\bb{C}$ by scalar multiplication.}
$$X_1=(G\times \GL_{n_1},\,(\pi\otimes \Lambda_1)\oplus (\rho\otimes 1), \, V^{\oplus n_1}\oplus W),$$
$$X_2=(G\times \GL_{n_2},\,(\pi'\otimes \Lambda_1)\oplus (\rho\otimes 1), \, (V')^{\oplus n_2}\oplus W).$$
Following \cite{saki}, the representations $X_1,X_2$ are called \defi{castling transforms} of each other. In the representation theory of algebras the corresponding functors are called reflection functors. As in \cite[Proposition 2.1]{kac}, we have a $G$-equivariant isomorphism $\psi$ of graded algebras (graded by their $\GL_{n_i}$-weights)
\begin{equation}\label{eq:isom}
\quad \begin{array}{ll}
\bb{C}[X_1]^{\op{SL}_{n_1}} \,\, \xrightarrow{\,\cong\,} \,\, \bb{C}[X_2]^{\op{SL}_{n_2}}\, & \text{ when } n_1,n_2>0, \\[0.2cm] 
\bb{C}[X_1]^{\op{SL}_{n_1}} \,\, \xrightarrow{\,\cong\,} \,\, \bb{C}[X_2]^{\op{SL}_{n_2}}\oo\bb{C}[\op{det}_{n_1}], & \text{ when } n_2=0.
\end{array}
\end{equation}
For simplicity, we will assume $n_1, n_2>0$ throughout, but the formulas extend also to the degenerate cases. 

The paper \cite{bfunquiv} gives relations between the $b$-functions of semi-invariants of prehomogeneous spaces related under castling transforms as above (see also \cite{kimu,saoc}, and \cite[Theorem 7.51]{preh}). Here we generalize the results to multiplicity-free algebraic functions.

First, we extend $\psi$ to an isomorphism for algebraic functions. Fix an $\SL_{n_1}$-stable hypersurface $D\subset X_1$ defined by a (reduced) polynomial, which must be an $\SL_{n_1}$-invariant. We denote the $\SL_{n_2}$-stable hypersurface defined by the latter with $\psi(D) \subset X_2$. The following is a consequence of Proposition \ref{prop:invalg}.

\begin{lemma}\label{lem:casalg}
The map $\psi$ extends to a $G$-equivariant isomorphism of algebras 
\[\psi: \, (\alg_{X_1}(D))^{\mathfrak{sl}_{n_1}} \xrightarrow{\, \cong \, } (\alg_{X_2}(\psi(D)))^{\mathfrak{sl}_{n_2}}.\]
\end{lemma}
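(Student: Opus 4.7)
The plan is to reduce to Proposition~\ref{prop:invalg} by interpreting both sides as algebras of algebraic functions on a common GIT quotient. First, I would apply Proposition~\ref{prop:invalg} to the reductive groups $\SL_{n_i}$ acting on $X_i$, obtaining $G$-equivariant isomorphisms of $\D^{\SL_{n_i}}_{X_i}$-algebras
\[
p_i^*: \alg_{Y_i} \xrightarrow{\,\cong\,} (\alg_{X_i})^{\mathfrak{sl}_{n_i}}, \qquad Y_i := X_i /\!/ \SL_{n_i}, \quad i=1,2.
\]
The castling identity (\ref{eq:isom}) is, by construction, a $G$-equivariant isomorphism of graded algebras; passing to $\op{Spec}$ globalizes it to a $G$-equivariant isomorphism of affine varieties $\bar\psi: Y_2 \xrightarrow{\cong} Y_1$ whose pullback on regular functions is $\psi$.

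Since $D \subset X_1$ is the vanishing locus of an $\SL_{n_1}$-invariant polynomial $f_1 \in \bb{C}[X_1]^{\SL_{n_1}}$, and $\psi(D)\subset X_2$ is cut out by $\psi(f_1) \in \bb{C}[X_2]^{\SL_{n_2}}$, both hypersurfaces are pulled back along the quotient maps from a single hypersurface $D_Y\subset Y_1\cong Y_2$. To upgrade Proposition~\ref{prop:invalg} to the localized setting, I would observe that taking $\mathfrak{sl}_{n_i}$-invariants commutes with the pushforward along the open inclusion $X_i\setminus D \hookrightarrow X_i$ (the complement being $\SL_{n_i}$-stable, and $\SL_{n_i}$ reductive), so that applying Proposition~\ref{prop:invalg} to the induced quotient map $X_i\setminus D \to Y_i\setminus D_Y$ and then pushing forward yields $(\alg_{X_i}(D))^{\mathfrak{sl}_{n_i}} \cong \alg_{Y_i}(D_Y)$. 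Composing,
\[
(\alg_{X_1}(D))^{\mathfrak{sl}_{n_1}} \xleftarrow{\,p_1^*\,} \alg_{Y_1}(D_Y) \xrightarrow{\,\bar\psi^*\,} \alg_{Y_2}(D_Y) \xrightarrow{\,p_2^*\,} (\alg_{X_2}(\psi(D)))^{\mathfrak{sl}_{n_2}},
\]
gives an isomorphism of algebras which restricts to $\psi$ on the polynomial subalgebras by construction.

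The main (minor) obstacle is the compatibility of localization along the invariant hypersurface $D$ with the operation of taking $\mathfrak{sl}_{n_i}$-invariants; this is routine since $D$ is $\SL_{n_i}$-stable and $\SL_{n_i}$ is reductive, so the two functors commute. Every other step is a direct application of Proposition~\ref{prop:invalg} and (\ref{eq:isom}). The $G$-equivariance of the final isomorphism is automatic: the $G$- and $\SL_{n_i}$-actions on each $X_i$ commute, so $G$ descends to $Y_i$, and $\bar\psi$ is $G$-equivariant because (\ref{eq:isom}) is.
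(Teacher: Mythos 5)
Your proof is correct and takes essentially the same route as the paper, which states Lemma~\ref{lem:casalg} simply as a consequence of Proposition~\ref{prop:invalg} without further elaboration. You have filled in the (routine but non-trivial to organize) details: applying Proposition~\ref{prop:invalg} to the reductive $\SL_{n_i}$-actions, globalizing (\ref{eq:isom}) to a $G$-equivariant isomorphism of the GIT quotients, and handling the localization along the $\SL_{n_i}$-invariant hypersurface $D$ (noting that $X_i\setminus D$ remains affine, so Proposition~\ref{prop:invalg} applies there, and that $(\bb{C}[X_i]_f)^{\SL_{n_i}}=(\bb{C}[X_i]^{\SL_{n_i}})_f$ by reductivity).
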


We can now formulate the main result of this subsection. We assume again that $G$ is a connected reductive group, and denote by $G_i = G\times \GL_{n_i}$ the group acting on $X_i$. Let $f_1,\dots,f_l \in \bb{C}[X_1]^{\SL_{n_1}}$ (resp. $f_1',\dots,f_l' \in \bb{C}[X_2]^{\SL_{n_2}}$) define the irreducible components of $D$ (resp. $\psi(D)$). Each $f_i$ (resp. $f_i'$) is semi-invariant with respect to $\GL_{n_1}$ (resp. $\GL_{n_2}$) of weight $d_i \in \bb{N}$, say.

\begin{theorem}\label{thm:castle}
Assume that $h$ is a $G_1$-multiplicity-free algebraic function on some domain of $X_1$ of weight $\ll^1=\ll \oo \det^d$, for some $\ll \in \Lambda(G)$ and $d \in \bb{Z}$, and let $D=\Sing \D_{X_1} h$. Then $\psi(h) \in \alg_{X_2}(\psi(D))$, and for any tuple $\underline{m} \in \bb{N}^l$ we have
$$b_{h,\,\ul{m}}(\ul{s}) \, \cdot \, \prod_{i=1}^{n_2} \prod_{j=0}^{\, \ul{d}\cdot \ul{m}-1} (\ul{d}\cdot \ul{s}+d+i+j) \, = \, b_{\psi(h), \, \ul{m}}(\ul{s})  \, \cdot \, \prod_{i=1}^{n_1} \prod_{j=0}^{\, \ul{d}\cdot \ul{m}-1} (\ul{d}\cdot \ul{s}+d+i+j).$$
\end{theorem}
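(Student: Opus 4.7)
The plan is to compare the defining functional equation for $b_{h,\underline{m}}(\underline{s})$ on $X_1$ with the one for $b_{\psi(h),\underline{m}}(\underline{s})$ on $X_2$, tracking how the invariant differential operator $\underline{f}^{*\underline{m}}(\partial)$ transforms under $\psi$. The shift in Pochhammer factors indexed by $n_1$ versus $n_2$ should arise from a Capelli-type identity on each side of the castling correspondence.

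First I would verify that $\psi(h)$ is well-defined and that the formalism applies. Since $\SL_{n_1}$ acts trivially on $\det^d$, the function $h$ of weight $\lambda \otimes \det^d$ lies in $(\alg_{X_1}(D))^{\SL_{n_1}}$, so $\psi(h) \in (\alg_{X_2}(\psi(D)))^{\SL_{n_2}}$ by Lemma~\ref{lem:casalg}. The multiplicity-free condition transfers to $\psi(h)$ because $\psi$ is $G$-equivariant and preserves the graded/weight structure on invariants. Next, starting from Proposition~\ref{prop:several},
\[
\underline{f}^{*\underline{m}}(\partial)\cdot \underline{f}^{\underline{s}+\underline{m}} h \,=\, b_{h,\underline{m}}(\underline{s})\cdot \underline{f}^{\underline{m}} h,
\]
both sides are $\SL_{n_1}$-invariant since each $f_j^*$ is $\SL_{n_1}$-invariant on $X_1^*$. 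The core step is a Capelli-type identity: realizing $X_1 \cong \op{Mat}_{n\times n_1} \oplus W$ and expressing $\underline{f}$ through the $n_1\times n_1$ minors of the matrix part, each application of $f_j^*(\partial)$ to an $\SL_{n_1}$-invariant of $\GL_{n_1}$-weight $e$ equals a ``radial'' operator on the quotient $X_1/\!/\SL_{n_1}$ multiplied by a shift $\prod_{i=1}^{n_1}(e+i)$. Iterating this $\underline{d}\cdot\underline{m}$ times (once per unit of degree in $\underline{f}^{*\underline{m}}$), and using that the relevant invariant $\underline{f}^{\underline{s}+\underline{m}} h$ has $\GL_{n_1}$-weight $\underline{d}\cdot\underline{s}+\underline{d}\cdot\underline{m}+d$, produces the total shift
\[
\prod_{i=1}^{n_1}\prod_{j=0}^{\,\underline{d}\cdot\underline{m}-1}(\underline{d}\cdot\underline{s}+d+i+j)\cdot R_{\underline{m}}(h),
\]
where $R_{\underline{m}}$ is a differential operator on $X_1/\!/\SL_{n_1}$ that corresponds under $\psi$ to its analogue on $X_2/\!/\SL_{n_2}$ (the castling isomorphism of Lemma~\ref{lem:casalg} being compatible with the rings of invariant differential operators on the common GIT quotient).

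Performing the symmetric Capelli reduction on $X_2$ gives the identical radial operator $R_{\underline{m}}(\psi(h))$ but with the Pochhammer product now indexed by $i=1,\dots,n_2$ instead of $i=1,\dots,n_1$. Dividing the two resulting scalar equations and cross-multiplying yields the claimed formula. The main obstacle is the Capelli identity itself: while the classical Capelli relation for a single determinant is standard, here $\underline{f}$ is a product of (possibly inequivalent) irreducible $\SL_{n_1}$-semi-invariants of various degrees, and one must verify that the successive shifts collapse into the clean double-product form indexed by $\{1,\dots,n_i\}\times\{0,\dots,\underline{d}\cdot\underline{m}-1\}$. The cleanest route is probably to reduce to the basic one-step relation $l=1$, $\underline{m}=(1)$ via induction, establishing this single step by either an explicit Cayley-operator computation in matrix coordinates or a geometric argument on the Grassmannian bundle interpolating $X_1$ and $X_2$, and then deducing the general case by formal iteration together with the commutation of the operators $f_j^*(\partial)$ on the invariant ring.
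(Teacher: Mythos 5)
Your high-level strategy — compare the operator $\ul{f}^{*\ul{m}}(\partial)$ across the castling correspondence, observe that the induced ``radial'' operator on $X_1/\!/\SL_{n_1}$ matches that on $X_2/\!/\SL_{n_2}$ via $\psi$, and extract the Pochhammer discrepancy from a Capelli-type identity — is the right shape of argument, and it does parallel what the paper does. However, there are two genuine gaps.

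First, the identity you name as ``the main obstacle'' is indeed the crux, and you have not supplied it. Note that $f_j^*(\partial)$ is generally \emph{not} a determinantal operator (the semi-invariants $f_j$ are arbitrary polynomials in the maximal minors and the $W$-coordinates), so the classical Capelli/Cayley identity does not factor $f_j^*(\partial)$ into unit-weight steps, and your proposed iteration ``once per unit of degree'' has no literal meaning. The paper sidesteps this: it works with the whole block $B_k$ of degree-$n_1 k$ constant-coefficient minor operators, views the natural action $\phi_p:B_k\otimes A_p\to A_{p-k}$ and the transported action $\tau_p:=\psi^{-1}\circ\phi'_p\circ(\psi'\otimes\psi)$ as two $\GL_n$-equivariant maps into the \emph{irreducible} $\GL_n$-module $A_{p-k}$ (the graded piece of the Plücker ring), and concludes by Schur's lemma that they differ by a scalar. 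The scalar's value as a ratio of Pochhammer products in $p$ is then quoted from \cite{bfunquiv} (also \cite{preh}), not recomputed. If you want to prove the shift factor from scratch, that is precisely the content you would need to reconstruct; the single-determinant Capelli formula does not directly hand it to you.

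Second, and more seriously, you gloss over the passage from polynomials to the algebraic function $h$. You invoke ``$\ul{f}^{\ul{s}+\ul{m}}h$ has $\GL_{n_1}$-weight $\ul{d}\cdot\ul{s}+\ul{d}\cdot\ul{m}+d$,'' but $h$ is multivalued and not an element of the invariant ring $A\oo\C[W]$, so the operators $\phi,\tau$ and the Schur-lemma comparison are not yet defined on it. The paper handles this carefully: it first makes the Pochhammer identity polynomial in the exponents (so that it holds for rational tuples), then writes the minimal polynomial of $h$, observes the coefficients $a_i$ are $\GL_{n_1}$-semi-invariants of weight $\det^{id}$, expands $h$ as a Puiseux series $\sum \ul{a}^{\ul{v}}$ whose exponent vectors all lie in a coset $\ul{u}+\mc{L}$ with $\ul{v}\cdot\ul{\delta}$ constant (equal to $d$), so that the shift factor is term-by-term the same, and then sums. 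Without this step, asserting that the scalar shift ``sees'' $h$ as having weight $d$ is not justified. You should either incorporate this Puiseux-series argument or find a substitute for extending the polynomial identity to the algebraic-closure of the invariant ring.
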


\begin{proof}
It follows from Lemma \ref{lem:casalg} readily that $\psi(h) \in \alg_{X_2}(\psi(D))$ with weight $\ll \oo \det^d$. We briefly recall some facts from the proof of \cite[Theorem 2.1]{bfunquiv}.

Let $A$ (resp. $A'$) be the subring of $\C[X_1]$ (resp. $\C[X_2]$) generated by the maximal minors of the space of $n \times n_1$  (resp. $n\times n_2$) matrices. Let $A_k$ (resp. $A'_k$) denote the respective homogeneous parts of degree $n_1k$ (resp. $n_2k$). Similarly, we define the ring of (constant) differential operators $B$ (resp. $B'$) generated by the maximal minors in the partial variables, and $B_k$ (resp. $B'_k$) denote the homogeneous part of degree $n_1k$ (resp. $n_2k$). The map $\psi$ restricts to a $\GL_n$-equivariant isomorphism of graded algebras $\psi: A\to A'$. Dually, we also have a $\GL_n$-equivariant isomorphism of graded algebras $\psi': B \to B'$.

Fix $k \in \bb{N}$. For any $p\in \bb{N}, p\geq k$,  we have a $\GL_n$-equivariant map $\phi_{p}:B_k\otimes A_p \to A_{p-k}$ (resp. $\phi'_{p}:D'_k\otimes A'_p\to A'_{p-k}$) given by applying differential operators. So $\phi_{p}$ and $\tau_p : = \psi^{-1}\circ \phi' _{p}\circ (\psi'\otimes \psi)$ are two $\GL_n$-module morphisms $B_k\otimes A_p \to A_{p-k}$, and by Schur's lemma they agree up to a constant, e.g. calculated in \cite[Theorem 7.51]{preh}, \cite[Theorem 2.1]{bfunquiv}. Denote by $\phi : B_k \otimes A \to A$ (resp $\tau: B_k \otimes A \to A$) the sum of all maps $\phi_{p}$ (resp. $\tau_{p})$ over $p$. Hence, for any $Q\in B_k$ and $P \in A_p$ we have
\[\prod_{i=0}^{k-1} \prod_{j=0}^{n_2-1} (p-i+j)\, \cdot \phi(Q\oo P) =\prod_{i=0}^{k-1} \prod_{j=0}^{n_1-1} (p-i+j) \, \cdot \tau(Q \oo P).\]
We now extend the domain of $\phi$ (resp. $\tau$) gradually to semi-invariant algebraic functions on $X_1$ (by abuse of notation, we will use the same letters). First, as $\C[X_1]^{\SL_{n_1}}=A \oo \C[W]$ by the First Fundamental Theorem for $\op{SL}$ (cf. \cite{popvin}), we note that the maps $\phi$ and $\tau$ extend naturally to $\phi, \tau: B_k \oo \C[X_1]^{\SL_{n_1}}  \to \C[X_1]^{\SL_{n_1}}$ with $B$ acting trivially on $\C[W]$.

The above equation implies that for any $P_1, \dots ,P_m$ with $P_i \in A_{p_i}\oo \C[W]$ (with $p_i \in \bb{N}$), we have
\begin{equation}\label{eq:variable}
\prod_{i=0}^{k-1} \prod_{j=0}^{n_2-1} (\ul{p} \cdot \ul{s} -i+j)\,\, \cdot \phi(Q\oo (P_1^{s_1}\cdots P_m^{s_m})) =\prod_{i=0}^{k-1} \prod_{j=0}^{n_1-1} (\ul{p} \cdot \ul{s}-i+j) \,\, \cdot \tau(Q \oo (P_1^{s_1}\cdots P_m^{s_m})),
\end{equation}
for all $(s_1,\dots,s_m) \in \bb{N}^m$. But then the same equation must hold by letting $(s_1,\dots,s_m)$ to be a tuple of variables (extending the domains of maps appropriately), and so must hold for tuples of rational numbers.

Now let $y$ be a $\GL_{n_1}$-semi-invariant algebraic function of weight $\det^d$, and write $y^t + a_1 y^{t-1} \dots + a_{t-1} y+a_t$ for the minimal monic polynomial of $y$. As seen in Proposition \ref{prop:invalg}, the coefficient $a_i \in \C(X_1)$ is also $\GL_{n_1}$-semi-invariant, of weight $\det^{i \cdot d}$, for $i=1, \dots, t$. In particular, each $a_i$ can be written as the quotient of two elements in $A\oo \C[W]$ (see \cite[Theorem 3.3]{popvin}).

Next, let $\mc{L} \subset \bb{Z}^{t+1}$ be the sublattice spanned by the vectors $e_{i-1}-2e_i+e_{i+1}$, where $i=1,\dots,t-1$. We observe that there is a vector $\ul{u} \in  \bb{Q}^{t+1}$ such $y$ admits Puiseux series expansions in $1=a_0, \, a_1,\dots, \, a_t$ with the property that the exponents all lie in $\ul{u} + \mc{L}$ (see \cite[Lemma 1]{gkz} and \cite{sturm}). More specifically, one can take the coarsest triangulation in \cite[Theorem 3.2]{sturm} to avoid potential zeroes in denominators. For an exponent $\ul{v} \in \, \ul{u} + \mc{L}$, write $\ul{a}^{\ul{v}}$ for the corresponding Puiseux term, and put $\ul{\delta} = (0, \, d, \, 2\cdot d ,\dots, \, t\cdot d)$. Since $\ul{v} \cdot \ul{\delta} = \ul{u} \cdot \ul{\delta}$,  from (\ref{eq:variable}) we deduce that for any $P_1, \dots ,P_m$ with $P_i \in A_{p_i}\oo \C[W]$ we have
\[
\prod_{i=0}^{k-1} \prod_{j=0}^{n_2-1} (\ul{p} \cdot \ul{s}+\ul{u} \cdot \ul{\delta}-i+j)\,\, \cdot \phi(Q\oo (P_1^{s_1}\cdots P_m^{s_m}\cdot \ul{a}^{\ul{v}})) =\prod_{i=0}^{k-1} \prod_{j=0}^{n_1-1} (\ul{p} \cdot \ul{s}+\ul{u} \cdot \ul{\delta}-i+j) \,\, \cdot \tau(Q \oo (P_1^{s_1}\cdots P_m^{s_m} \cdot \ul{a}^{\ul{v}})).
\]
Thus, summing over all terms of $y$ in its Puiseux expansion, we obtain
\[
\prod_{i=0}^{k-1} \prod_{j=0}^{n_2-1} (\ul{p} \cdot \ul{s}+d-i+j)\,\, \cdot \phi(Q\oo (P_1^{s_1}\cdots P_m^{s_m}\cdot y)) =\prod_{i=0}^{k-1} \prod_{j=0}^{n_1-1} (\ul{p} \cdot \ul{s}+d-i+j) \,\, \cdot \tau(Q \oo (P_1^{s_1}\cdots P_m^{s_m} \cdot y)).
\]
Now putting $k = \ul{d} \cdot \ul{m}, \, Q = \ul{f}^{* \ul{m}}(\partial), \, m=l, \, P_i =f_i,  \, s_i \to s_i+m_i,$ and $y = h$ yields the result according to Proposition \ref{prop:several}.
\end{proof}

\begin{remark}\label{rem:notmult}
The proof above shows that $\psi(h)$ also satisfies the equation as in Proposition \ref{prop:several}, yet, in principle, it might not be multiplicity-free, since $\deg f_i'$ may be larger than $\deg f_i$. Nevertheless, the statement is entirely symmetric, since the existence of such an equation for $h$ is the only requirement for Theorem \ref{thm:castle} to hold, and multiplicity-freeness was only used in order to guarantee this.
\end{remark}

Assume now that $X_1$ is prehomogeneous under the action of $G_1$. By \cite[Propositions 7, 9]{saki}, the space $X_2$ is also prehomogeneous under the action of $G_2$, and their generic stabilizers $\Gamma$ agree.

Write $X_i \setminus O_i = D_i \, \cup \, C_i$, with $\codim_{X_i} C_i \geq 2$. Note that $\psi(D_1) = D_2$. Let $\sigma$ be the weight of the semi-invariant defining $D_1$ (and $D_2$). We denote the correspondence between the simple equivariant torsion-free $\D$-modules by $\mc{S}_1^\chi \mapsto \mc{S}_2^\chi$, for all $\chi \in \Lambda(\Gamma)$.

\begin{lemma}\label{lem:castlewit}
Assume that $\ll^1 = \ll \oo \det^d$ is a witness representation of $G_1$ for $\mc{S}^\chi_1$, for some $\ll \in \Lambda(G)$ and $d\in \bb{Z}$. Then for some $p \in \bb{N}$,\,  $\ll^2+p \cdot \sigma$ is a witness representation of $G_2$ for $\mc{S}_2^{\chi}$,  with $\ll^2 = \ll \oo \det^d$.
\end{lemma}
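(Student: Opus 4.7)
The plan is to transport a multiplicity-free witness element from $X_1$ to $X_2$ via the castling isomorphism $\psi$ of Lemma \ref{lem:casalg}, and then invoke Proposition \ref{prop:witcri}(b) on the target side. First I will use Lemma \ref{lem:witmult} to pick a non-zero multiplicity-free $h \in (\mc{S}_1^\chi)_{\ll^1}$. Since $\SL_{n_1}$ has no non-trivial characters, the weight $\ll^1 = \ll \oo \det^d$ forces $h$ to be $\SL_{n_1}$-invariant, so $h \in (\alg_{X_1}(D))^{\mathfrak{sl}_{n_1}}$. I will then consider $\psi(h) \in (\alg_{X_2}(\psi(D)))^{\mathfrak{sl}_{n_2}}$ and, invoking the $G$-equivariance of $\psi$ together with the $\det^d$-weight transfer used in the proof of Theorem \ref{thm:castle}, conclude that $\psi(h)$ has $G_2$-weight $\ll^2 = \ll \oo \det^d$.

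The central step will be to show that $\psi(h)$ lies in $\mc{S}_2^\chi(*D_2)$. For this, I apply Proposition \ref{prop:invalg} to the reductive group $\SL_{n_i}$ acting on the smooth affine $X_i \setminus D_i$ and push forward along the open immersion, yielding
\[ (\alg_{X_i}(D_i))^{\mathfrak{sl}_{n_i}} \; \cong \; \alg_Y(D_Y), \qquad i = 1, 2, \]
where $Y := X_1/\!/\SL_{n_1} \cong X_2/\!/\SL_{n_2}$ (identified via $\psi$) and $D_Y \subset Y$ is the common image of $D$ and $\psi(D)$. Under these identifications $\psi$ becomes the identity on $\alg_Y(D_Y)$, so the canonical $\Gamma$-decomposition of $\alg_Y(D_Y)$ from Theorem \ref{thm:algdecomp} pulls back on both sides to the $\Gamma$-isotypic decompositions of the two invariant subalgebras, with matching $\chi$-isotypic pieces. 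Hence $\psi(h) \in (\mc{S}_2^\chi(*D_2))_{\ll^2}$.

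The same identification will further give multiplicity equalities $m_\mu((\mc{S}_1^{\chi'}(*D_1))^{\mathfrak{sl}_{n_1}}) = m_\mu((\mc{S}_2^{\chi'}(*D_2))^{\mathfrak{sl}_{n_2}})$ for every $G$-weight $\mu$ and every $\chi' \in \Lambda(\Gamma)$. Specializing to the weight of $h$ and combining with the witness assumption $m_{\ll^1}(\mc{S}_1^{\chi'}(*D_1)) = \delta_{\chi,\chi'}$, I deduce $m_{\ll^2}(\mc{S}_2^{\chi'}(*D_2)) = \delta_{\chi,\chi'}$, which by Lemma \ref{lem:peterweyl} amounts to $(V_{\ll^2}^*)^{H_{2,0}} \cong \chi$ as $\Gamma$-modules. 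Proposition \ref{prop:witcri}(b) applied to the non-zero $\psi(h) \in (\mc{S}_2^\chi(*D_2))_{\ll^2}$ will then deliver the required $p \in \bb{N}$ (namely $\alpha + 1$ with $\alpha$ the largest integer root of $b_{\psi(h)}(s)$) such that $\ll^2 + p \cdot \sigma$ is a witness representation for $\mc{S}_2^\chi$. The main technical obstacle I foresee is the clean identification of $(\alg_{X_i}(D_i))^{\mathfrak{sl}_{n_i}}$ with $\alg_Y(D_Y)$: Proposition \ref{prop:invalg} as stated concerns $\alg_X$ rather than $\alg_X(D)$, so one must restrict to $X_i \setminus D_i$ first and then push forward, and one needs the castling map to be genuinely compatible with the two GIT quotients (which is classical, descending from the polynomial identification (\ref{eq:isom})).
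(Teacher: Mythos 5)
Your overall strategy — transport along the castling isomorphism $\psi$ of Lemma~\ref{lem:casalg}, match the $\Gamma$-isotypic pieces, then conclude from a witness criterion — is the same as the paper's, and the endgame (using $m_{\ll^2}(\mc{S}_2^{\chi'}(*D_2)) = \delta_{\chi,\chi'}$ plus a twist by $\sigma$, whether packaged as Proposition~\ref{prop:witcri}(b) or Lemma~\ref{lem:witness}) is equivalent to what the paper does.

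The genuine wrinkle is your middle step. You route through the GIT quotient $Y := X_1/\!/\SL_{n_1} \cong X_2/\!/\SL_{n_2}$ and invoke the $\Gamma$-decomposition of $\alg_Y(D_Y)$ from Theorem~\ref{thm:algdecomp}. But $Y$ is a cone over a Grassmannian (times $W$), which is singular in general — including in the very case exploited in Example~\ref{ex:castle}, where $n_1=2$, $n=6$. The sheaves $\conn$, $\alg$ and the decomposition theorem (Theorem~\ref{thm:algdecomp}) are all built on $\D_Y$-module theory over a \emph{smooth} variety; your restriction to $X_i\setminus D_i$ before quotienting does not obviously produce a smooth variety either, nor is it clear a priori that $(X_i\setminus D_i)/\!/\SL_{n_i}$ equals $Y\setminus D_Y$. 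You already flag the $\alg$-versus-$\alg(D)$ mismatch and the compatibility of quotients; the smoothness of $Y$ is the sharper obstruction you should add to that list. The paper avoids the detour entirely: since $\psi$ of Lemma~\ref{lem:casalg} is built field-theoretically from a graded isomorphism of the $\SL_{n_i}$-invariant rings, it commutes with the Galois/monodromy action, so the $\Gamma$-isotypic pieces in Corollary~\ref{cor:gfinalg}'s decomposition correspond directly, yielding $G$-isomorphisms $\bigl(\mc{S}_1^{\chi'}(*D_1)\bigr)^{\mathfrak{sl}_{n_1}} \cong \bigl(\mc{S}_2^{\chi'}(*D_2)\bigr)^{\mathfrak{sl}_{n_2}}$ with no mention of the quotient variety; then Lemma~\ref{lem:witness} finishes. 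If you replace your $\alg_Y$-identification with the observation that $\psi$ commutes with monodromy and hence preserves the $\Gamma$-decomposition of Corollary~\ref{cor:gfinalg}, the rest of your argument goes through cleanly.
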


\begin{proof}
First, note that the isomorphism in Lemma \ref{lem:casalg} commutes with monodromy. Therefore, by Corollary \ref{cor:gfinalg}, for any $\chi' \in \Lambda(\Gamma)$ we have $G$-isomorphisms 
\[\left(\mc{S}_1^{\chi'}(*D_1)\right)^{\mathfrak{sl}_{n_1}} \,  \cong \, \left(\mc{S}_2^{\chi'}(*D_2)\right)^{\mathfrak{sl}_{n_2}}.\]
By Lemma \ref{lem:witness}, twisting by a suitable power of $\sigma$ so that $\ll^2+p \cdot \sigma \in \mc{S}_2^\chi$, we obtain the desired result.
\end{proof}

\begin{example}\label{ex:castle}
We take case (2) from our series, so $G_1 = \SL_3 \oo \GL_2$ acting on $X_1 = 2 \Lambda_1 \oo \Lambda_1$, and the irreducible semi-invariant has weight $\sigma = \op{triv} \oo \det^6$. By Theorem \ref{thm:paramb1} that $\ll^1 = \Lambda_1 \oo \det^2$ is a witness representation for $\mc{S}_1^{(3,1)}$. Then we have $G_2 =  \SL_3 \oo \GL_4$ acting on $X_2 = 2 \Lambda_1 \oo \Lambda_1$, and we let $\ll^2 =\Lambda_1 \oo \det^2$. By Theorems \ref{thm:paramb1}, \ref{thm:castle} and Lemma \ref{lem:castlewit}, for non-zero $h' \in (\mc{S}_2^{(3,1)}(*D_2))_{\ll^2}$ we get
\[b_{h'}(s) = (s+1)^4\left(s+\frac{3}{2}\right)^4 \left(s+\frac{5}{6}\right)^2\left(s+\frac{7}{6}\right)^2 \cdot \, \prod_{i=5}^{6} \, \prod_{j=0}^5 \left(s+\frac{i+j}{6}\right).\]
Since $b_{h'}(s)$ has only negative roots, we see by Lemma \ref{lem:bgen} and Proposition \ref{prop:witcri} that in fact $h' \in \mc{S}_2^{(3,1)}$, so that $\ll^2$ is also a witness representation for $\mc{S}_2^{(3,1)}$.
\end{example}

\section*{Acknowledgments}

I am grateful to Bernd Sturmfels for inspirational conversations about Weyl closure and holonomic functions. I thank Uli Walther for providing me valuable suggestions and comments. I also thank Nero Budur for pointing out some of the relevant literature concerning $V$-filtrations.

\bibliographystyle{alpha}
\bibliography{biblo}

\end{document}